\documentclass[a4paper,10pt]{article}

\usepackage{amssymb,latexsym, amsfonts, amsmath, amsthm}
\usepackage{graphicx}
\usepackage{setspace}
\usepackage{enumerate}
\usepackage{wrapfig}
\usepackage{mathdots}
\usepackage{lscape}
\usepackage[paperwidth=210mm, paperheight=297mm, top=1in, bottom=1in, left=1in, right=1in, asymmetric, marginparwidth=0.6in]{geometry}
\usepackage{rotating}
\usepackage{epstopdf}
\usepackage[backgroundcolor=green!40, linecolor=green!40]{todonotes}
\usepackage{tikz}
\usepackage{todonotes}
\usepackage{MnSymbol}

\newcommand{\Lie}{\text{Lie}}

\usetikzlibrary{arrows,calc,matrix,fit}

\theoremstyle{definition}
\newtheorem{theorem}[equation]{Theorem}
\newtheorem{proposition}[equation]{Proposition}
\newtheorem{conjecture}[equation]{Conjecture}
\newtheorem{notation}[equation]{Notation}

\newtheorem{definition}[equation]{Definition}
\newtheorem{lemma}[equation]{Lemma}
\newtheorem{corollary}[equation]{Corollary}
\newtheorem{remark}[equation]{Remark}

\usepackage[utf8]{inputenc}
%

\newcommand{\mc}{\mathcal}

\numberwithin{equation}{section}


\newcommand{\be}{\begin{enumerate}}
\newcommand{\ee}{\end{enumerate}}
\newcommand{\bi}{\begin{itemize}}
\newcommand{\ei}{\end{itemize}}
\newcommand{\beq}{\begin{equation}}
\newcommand{\eeq}{\end{equation}}
\newcommand{\mf}{\mathfrak}
\newcommand{\ra}{\rightarrow }


\def\End{\operatorname{End}}
\def\Aut{\operatorname{Aut}}

\def\Irr{\operatorname{Irr}}

\def\dim{\operatorname{dim}}

\def\id{\operatorname{id}}

\def\Gal{\operatorname{Gal}}
\def\GL{\operatorname{GL}}

\def\supp{\operatorname{supp}}

\def\Sp{\operatorname{Sp}}
\def\Res{\operatorname{Res}}

\def\I{\operatorname{I}}


\def\bbZ{\mathbb{Z}}
\def\bbR{\mathbb{R}}


\def\C{{\mathcal{C}}}

\def\P{{\rm P}}







\def\a{\alpha}



\def\>{\geqslant}
\def\<{\leqslant}


\def\black#1{\textcolor{black}{#1}}



\def\({\left(}
\def\){\right)}

\def\presuper#1#2%
  {\mathop{}%
   \mathopen{\vphantom{#2}}^{#1}%
   \kern-\scriptspace%
   #2}
\setlength\parindent{0pt}
\setlength\parskip{10pt}
\makeatletter
\tikzstyle{notestyleraw}=[
  draw=\@todonotes@currentbordercolor,
  fill=\@todonotes@currentbackgroundcolor,
  line width=0.5pt,
  text width=\@todonotes@textwidth+9ex,
  inner sep=0.8ex,
  rounded corners=4pt
]
\makeatother

\def\C{{\rm C}}
\newcommand{\ti}[1]{\tilde{#1}}
\newcommand{\trd}{\text{trd}}

\newcommand{\Seq}{\text{Seq}}
\def\loccit{\emph{loc.cit.}}

\title{Semisimple characters for inner forms I:~$\GL_m(D)$}
\date{March 2017}
\author{D. Skodlerack\footnote{Daniel Skodlerack, Shanghaitech University, Institute of Mathematica; Sciences, Middle Ring Road 393, 201210 Pudong New District, China, dskodlerack@shanghaitech.edu.cn}}

\begin{document}

\maketitle

\begin{abstract}
The article is about the representation theory of an inner form~$G$ of a general linear group over a non-Archimedean local field. 
We introduce semisimple characters for~$G$ whose intertwining classes describe conjecturally via the Local Langlands correspondence the behaviour on wild inertia.
These characters also play a potential role to understand the classification of irreducible smooth representations of inner forms of classical groups. 
We prove the intertwining formula for semisimple characters and an intertwining implies conjugacy like theorem. 
Further we show that endo-parameters for~$G$, i.e. invariants consisting of simple endo-classes and a numerical part, classify the intertwining classes of semisimple 
characters for~$G$. They should be the counter part for restrictions of Langlands-parameters to wild inertia under the Local Langlands correspondence. 
MSC2010  [11E95] [20G05] [22E50]
\end{abstract}

%

\section{Introduction}
Semisimple characters occur extensively on the automorphic side of the Local Langlands correspondence, as in the classification of cuspidal irreducible representations of~$p$-adic classical groups of odd residue characteristic or in the description of the restrictions of Langlands parameters to wild inertia on the Galois side (see \cite{KSS:21}).
For the classification of all  irreducible smooth representations of~$\GL_m(F)$ via type theory Bushnell and Kutzko developed the theory of simple characters  over a non-Archimedean local field~$F$, see~\cite{bushnellKutzko:93}. These characters are essential in the construction of types, i.e. certain irreducible representations on open compact subgroups which describe the blocks of the Bernstein decomposition. This strategy has been successfully generalized to~$\GL_m(D)$ in a series of papers
(\cite{grabitz:07,secherreStevensIV:08,broussousSecherreStevens:12,secherreStevensVI:12}) for a central division algebra of finite degree over~$F$, by S\'echerre, Stevens, Broussous and Grabitz.
In the case of~$p$-adic classical groups the simple characters were not sufficient enough for reasonable classification results as the collection of non-split tori which arise from a field extension of~$F$ are not enough to construct all irreducible cuspidal representations. So more general non-split tori were needed, more precisely the ones which arise from a product of fields. This leads to the theory of semisimple characters which has been extensively studied by Stevens and the author in~\cite{stevens:05} and~\cite{skodlerackStevens:18}, and it succeeded in the classification of irreducible cuspidal representations for~$p$-adic classical groups
with odd residual characteristic, see~\cite{stevens:08} and~\cite{KSS:21}. 

The aim of this paper is to generalize the theory of semisimple characters to~$\GL_m(D)$. It will have the potential of four major applications:
\begin{enumerate}
 \item At first it provides the foundation to generalize to semisimple characters for inner forms of~$p$-adic classical groups which leads to the classification of their 
 irreducible cuspidal representations in odd residue characteristic. 
 \item The endo-parameters which classify intertwining classes of semisimple characters have conjecturally a concrete connection to the Galois side
 of the Local Langlands correspondence. 
 \item They also have the potential to help for the study of inner forms of classical groups in even residue characteristic.
 \item For the study of the representation theory of~$\GL_m(D)$, semisimple characters provide a way to decompose and help to describe the category of smooth representations.
\end{enumerate}

Semisimple characters are complex characters defined on compact open subgroups~$H(\Delta)$
which are generated by congruence subgroups depending on arithmetic data~$\Delta$ (so called semisimple strata). 
Part of the data~$\Delta$ is an element~$\beta$ of the Lie algebra of~$G:=\GL_D(V)$ which generates a mod centre anisotropic torus~$F[\beta]^\times$ of a Levi subgroup of~$G$. 
The construction of a semisimple character is done by an inductive procedure using complex characters which factorize through reduced norms of certain
centralizers and a character which factorizes through a ``$\beta$-twisted'' reduced trace of~$\End_D(V)$. Thus, the ``essential'' data for the construction are the 
characters on the tori and the generator~$\beta$ of the torus of~$\Delta$. Semisimple characters which are defined with the same essential data but may be on different 
central algebras over~$F$ are called transfers and all semisimple characters which intertwine up to transfer create a class called an endo-class. 
These endo-classes can be broken into simple endo-classes (the associated torus is the multiplicative group of a field). And conversely given a collection of finitely many 
simple endo-classes one can unify them to a semisimple endo-class. An endo-parameter for~$G$ is a finite collection of simple endo-classes where to every endo-class
is attached a non-negative integer. 

\begin{theorem}[\textbf{2nd Main Theorem~\ref{thmEndoParameter}}]
The set of intertwining classes of full semisimple characters is in canonical bijection with the set of endo-parameters for~$G$. 
\end{theorem}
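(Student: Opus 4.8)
The plan is to establish the bijection by constructing maps in both directions and checking they are mutually inverse. First I would attach to an intertwining class of full semisimple characters an endo-parameter as follows. Given a full semisimple character $\theta$ attached to a semisimple stratum $\Delta$ with associated decomposition $V = \bigoplus_i V^i$ and generator $\beta = \sum_i \beta_i$, the restriction $\theta_i$ of (the transfer of) $\theta$ to the block $\End_D(V^i)$ is a simple character, and the pair consisting of its simple endo-class together with the integer recording the relevant multiplicity/degree datum (the ``numerical part'', essentially $\dim_{F[\beta_i]} V^i$ suitably normalized, or the exponent of $\mathfrak{p}$ in the level together with the $F$-dimension) defines one component of the endo-parameter. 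Collecting these over $i$, after merging components whose simple endo-classes coincide by adding their integers, yields an endo-parameter for $G$. I would then invoke the intertwining implies conjugacy theorem (Theorem A) and the intertwining formula to check this is independent of all choices: two full semisimple characters in the same intertwining class are, up to conjugacy, built from the same essential data, hence have the same collection of simple endo-classes with the same numerical invariants.

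The reverse map takes an endo-parameter, i.e.\ a finite set of simple endo-classes $\Theta_j$ with attached non-negative integers $n_j$, and produces a full semisimple character. For each $j$, the simple endo-class $\Theta_j$ is realized by a simple character on some $\GL$ over a division algebra; using the integer $n_j$ one fixes an $F$-vector space (or $D$-vector space) of the prescribed dimension on which to realize a simple stratum in this endo-class, and then one forms the direct sum over $j$ to obtain a semisimple stratum $\Delta$ in $\End_D(V)$ with $V$ of the correct total dimension $m$ over $D$. The transfer property of semisimple characters guarantees that the resulting semisimple character, as an intertwining class, does not depend on which realizations were chosen within each endo-class; one does need to check that the ``full'' condition is met and that the resulting stratum genuinely lives in $\End_D(V)$ for the given $G$ — this is where the dimension bookkeeping must be exactly right so that $\sum_j (\text{dimension contribution of } \Theta_j, n_j) = m$.

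Then I would verify the two composites are the identity. Starting from an endo-parameter, building a semisimple character and reading off its endo-parameter returns the original data essentially by construction, since restriction to blocks inverts direct sum and the numerical invariants are preserved under transfer. Starting from an intertwining class $[\theta]$, decomposing into simple blocks and reassembling gives a semisimple character whose essential data agrees block-by-block with that of $\theta$; by the intertwining formula the reassembled character intertwines $\theta$, and by intertwining implies conjugacy it is conjugate to $\theta$, hence lies in $[\theta]$. The one subtlety here is that the decomposition $V = \bigoplus V^i$ underlying $\theta$ refines the merged decomposition indexed by distinct endo-classes; one must check that merging blocks with the same simple endo-class and then splitting again lands in the same intertwining class, which follows from the fact that a semisimple character on a block whose pieces share an endo-class is itself (an intertwining-class representative of) a simple character on that merged block — this uses the compatibility of semisimple characters with such coarsenings of the decomposition.

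The main obstacle I anticipate is precisely this last point: showing that the endo-parameter is a complete invariant requires knowing that the numerical part, together with the simple endo-classes, captures the intertwining class on the nose, with no residual ambiguity coming from how the $V^i$ are arranged or from the choice of which central simple algebra over $F$ (equivalently, which inner form and which $D$-dimension per block) one uses to realize a given simple endo-class. Pinning down the correct normalization of the integers $n_j$ so that the bijection is canonical — rather than depending on $D$ or on an ordering — is the delicate bookkeeping step, and it is where the inner-form setting genuinely differs from the split case $\GL_m(F)$; I would expect the proof to lean on the earlier classification results (Theorems C and D on conjugacy classes) to make this normalization unambiguous.
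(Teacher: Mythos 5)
Your outline reproduces the paper's skeleton (block-wise endo-classes plus a dimension datum; realize each simple endo-class on a space of prescribed dimension and glue for surjectivity; compare block-wise for injectivity), but the key technical inputs are misidentified or missing. For well-definedness and for the ``composite is the identity'' step you repeatedly invoke intertwining implies conjugacy; Theorem~\ref{thmIntertwiningImplConjugacy} is conditional (it needs $\Lambda=\Lambda'$ and an element $t$ inducing $\bar{\zeta}$ on residue algebras), and two intertwining full semisimple characters are in general \emph{not} conjugate. What is actually needed, and what the paper uses, is the Matching Theorem~\ref{thmMatching}: intertwining characters admit a unique block matching with equal $D$-dimensions and block-wise intertwining, which already forces equality of the simple endo-classes and of the numerical parts. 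Likewise, for injectivity the essential statement is that two characters in the same simple endo-class, realized on blocks of the same $D$-dimension, intertwine --- i.e.\ intertwining of one comparison pair implies intertwining of every comparison pair (Theorem~\ref{thmEndo}, which rests on transitivity of intertwining for simple characters over $D$, Proposition~\ref{propTranstivityOfIntertwiningSimpleCharacters}). This does not follow from ``the transfer property'' or from the intertwining formula~\ref{thmTransferAndInt} alone; it is precisely where the inner-form theory enters, and your proposal offers no substitute.

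The most serious gap is in surjectivity: you take simple strata realizing the given endo-classes and simply assert that their direct sum is a semisimple stratum. That is not automatic. Distinct endo-classes of \emph{characters} do not prevent the chosen \emph{strata} from being endo-equivalent (both characters could even be defined on the same simple stratum), in which case the naive direct sum is equivalent to a simple stratum, and $\theta_1\otimes\theta_2$ is then not a semisimple character for it. One must be allowed to replace one block by a different stratum with the same set of characters so that the sum becomes semisimple; this is exactly Proposition~\ref{propAddingSemisimpleCharacters}, whose proof needs the translation principle (Theorem~\ref{thmTranslation}) and the extension Lemma~\ref{lemTriangleToSquareExtension}, none of which your plan supplies. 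Two further points: your ``merging'' step is vacuous and its justification false --- distinct blocks of one semisimple character can never share an endo-class, by the uniqueness part of Theorem~\ref{thmMatching} --- and the dimension bookkeeping you only flag is in fact carried out in the paper by realizing each endo-class on multiples of a minimal $E\otimes_F D$-module via $j_E^{-1}$, which is where the divisibility condition in the definition of an endo-parameter for $G$ is used; together with the gluing step this is the real content of surjectivity.
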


\newcommand{\bigslant}[2]{{\raisebox{.2em}{$#1$}\left/ \raisebox{-.2em}{$#2$}\right.}}
\newcommand{\Rep}{\text{Rep}}
The Local Langlands correspondence attaches to an irreducible representation of~$G$ (automorphic side) a Weil-Deligne representation~$\phi$ (Langlands parameter) of the Weil 
group~$W_F$ in the absolute Galois group of~$F$. Work on~$\Sp_{2n}(F)$ by Blondel, Henniart and Stevens \cite{blondelHenniartStevens:18} suggests that the endo-parameters should describe the 
collection of possible restrictions of Langlands parameters to the wild inertia group~$P_F$. 
For~$G$ we can define the following diagram: 
\begin{equation}\begin{matrix}\label{diagEndoparLLC}
 \Irr(G) & \stackrel{LLC}{\longrightarrow} & \text{LP}_G(W_F)\\
 \downarrow & & \downarrow \\
 \mathcal{E}_{par}(F)& \stackrel{B.-H.}{\longrightarrow} &  \bigslant{\Rep(P_F)}{W_F} \\
\end{matrix}\end{equation}

where the above horizontal line is the Local Langlands correspondence, for a description see~\cite{hiragaSaito:12}, and the bottom line is the map resulting from Bushnell-Henniart's first ramification theorem~\cite[Theorem 8.2]{bushnellHenniart:03}. The right map is the restriction map from the set~$\text{LP}_G(W_F)$ of Langlands parameters for~$G$ 
to the set of smooth representations of~$P_F$, up to~$W_F$-conjugacy, and the left map is given in the following way:
Given an irreducible representation~$\pi$ of~$G$ it should contain a full semisimple character~$\theta$ 
and two such semisimple characters must intertwine and they therefore define the same endo-parameter, where~$\mathcal{E}_{par}(F)$ is the set of endo-parameters.  
The conjecture has the following form: 

\begin{conjecture}
 Diagram~\eqref{diagEndoparLLC} is commutative.  
 \end{conjecture}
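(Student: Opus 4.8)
The plan is to transport the diagram~\eqref{diagEndoparLLC} to the split group $\GL_n(F)$, $n=md$, along the Jacquet--Langlands correspondence, to reduce there to cuspidal representations, and to invoke the ramification theory of Bushnell--Henniart for $\GL_n(F)$; the one non-formal ingredient will be the compatibility of the endo-parameter map with Jacquet--Langlands. \emph{Step 1.} By construction (see \cite{hiragaSaito:12}) the top edge $\Irr(G)\xrightarrow{LLC}LP_G(W_F)$ is the composite $\Irr(G)\xrightarrow{\mathrm{JL}}\Irr(\GL_n(F))\xrightarrow{LLC}LP(W_F)$, the target being canonically identified for $G$ and for $\GL_n(F)$. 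Since a simple endo-class over $F$ is characterised by intertwining up to transfer and remembers nothing of the ambient central simple $F$-algebra, the set $\mathcal{E}_{par}(F)$ and the bottom map are literally the same in the two cases. Hence~\eqref{diagEndoparLLC} for $G$ commutes as soon as (a) the corresponding square for $\GL_n(F)$ commutes, and (b) writing $\mathrm{ep}(\pi)$ for the endo-parameter of an irreducible representation $\pi$, one has $\mathrm{ep}(\pi)=\mathrm{ep}(\mathrm{JL}(\pi))$ for every $\pi\in\Irr(G)$.

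\emph{Step 2.} For $\GL_n(F)$, realise an irreducible $\sigma$ as the Langlands quotient of a representation parabolically induced from cuspidal representations $\rho_1,\dots,\rho_r$ (possibly after unramified twists) on the blocks $\GL_{n_i}(F)$ of a Levi subgroup. Restriction to $P_F$ annihilates unramified twists and the Weil--Deligne nilpotent, so $LLC(\sigma)|_{P_F}=\bigoplus_i LLC(\rho_i)|_{P_F}$ in $\Rep(P_F)/W_F$. On the automorphic side $\sigma$ contains a full semisimple character and, by the type theory of \cite{bushnellKutzko:93} together with the second Main Theorem of this paper, $\mathrm{ep}(\sigma)$ is the union with multiplicities of the $\mathrm{ep}(\rho_i)$; the bottom map is additive for this operation. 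This reduces the $\GL_n(F)$ square to the cuspidal case.

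\emph{Step 3.} A cuspidal $\rho$ of $\GL_n(F)$ contains a simple character $\theta$, with simple endo-class $\Theta$, and $\mathrm{ep}(\rho)$ records $\Theta$ together with the attached non-negative integer. The required identity $LLC(\rho)|_{P_F}=\text{B.-H.}(\mathrm{ep}(\rho))$ says precisely that the Bushnell--Henniart ramification theory, through which the bottom map is defined, computes the restriction of $LLC(\rho)$ to wild inertia out of the endo-class of $\theta$: in the essentially tame range this is \cite[8.2]{bushnellHenniart:03}, and in general it is the compatibility of the local Langlands correspondence for $\GL_n(F)$ with the wild part of the construction of simple types.

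\textbf{Main obstacle.} The decisive, genuinely non-formal point is Step~1(b): the Jacquet--Langlands invariance of the endo-parameter for $\GL_m(D)$. For cuspidal $\pi$ this asserts that $\mathrm{JL}$ preserves the simple endo-class and the non-negative integer attached to it; one expects it from the classification of cuspidals by simple types on the two sides (\cite{secherreStevensIV:08} versus \cite{bushnellKutzko:93}), matching the interior data through the character identity that characterises $\mathrm{JL}$ and through invariants such as the parametric degree, but a rigorous proof --- above all for the wild part of $\Theta$ --- is the heart of the problem and is presently available only in special cases; a likely route is to globalise and compare simple trace formulas. Once this is in hand, compatibility of the extended Jacquet--Langlands map with parabolic induction, together with the additivity observations of Step~2, upgrades it to arbitrary $\pi\in\Irr(G)$, and the remaining commutativity in~\eqref{diagEndoparLLC} is bookkeeping resting on the two Main Theorems of this paper and on the additivity of restriction to $P_F$.
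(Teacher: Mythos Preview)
The statement you are addressing is labelled a \emph{Conjecture} in the paper, and the paper gives no proof of it; it is presented as an expected compatibility between endo-parameters and restriction of Langlands parameters to wild inertia, motivated by work in progress of Blondel--Henniart--Stevens on $\Sp_{2n}(F)$. There is therefore nothing to compare your argument against on the paper's side.

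As a strategy your outline is coherent, and you are honest about where it breaks: the genuine gap is exactly your Step~1(b), the Jacquet--Langlands invariance of the endo-parameter. You yourself note that a rigorous proof of this is ``presently available only in special cases'', so what you have written is a reduction of the conjecture to another open (or at least non-trivial) statement, not a proof. Two further points deserve scrutiny before this reduction could be called complete. First, your Step~2 asserts that every irreducible $\sigma$ of $\GL_n(F)$ contains a full semisimple character and that $\mathrm{ep}(\sigma)$ decomposes additively along the cuspidal support; this is plausible from type theory but is not proved in the present paper and would need a precise reference or argument. Second, your Step~3 invokes \cite[8.2]{bushnellHenniart:03} ``in the essentially tame range'' and then appeals to an unspecified ``compatibility of the local Langlands correspondence for $\GL_n(F)$ with the wild part of the construction of simple types'' in general; this last clause is again the conjecture itself in the split case, so Step~3 as written is circular outside the essentially tame range. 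In short, you have sketched a reasonable plan of attack and correctly located the hard core, but the proposal does not constitute a proof, consistently with the paper's decision to state the result as a conjecture.
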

 

A common strategy for the classification of all irreducible representations of~$\mathbb{G}(F)$ for a reductive group~$\mathbb{G}$ defined over~$F$ starts with
the Bernstein decomposition of the category~$\Rep(\mathbb{G}(F))$ of all smooth representations of~$\Rep(\mathbb{G}(F))$. 
\[\Rep(\mathbb{G}(F))=\prod_{\mf{s}}\Rep(\mathbb{G}(F))_{\mf{s}},\]
where~$\mf{s}$ passes through all inertia classes of~$\mathbb{G}(F)$. We call~$\Rep(\mathbb{G}(F))_{\mf{s}}$ the Bernstein block of~$\mf{s}$.   
Now the classification can be broken into three parts:
\begin{enumerate}
 \item \label{enumClassification.i} Exhaustiveness: Find a list of types to cover all Bernstein blocks.
 \item \label{enumClassification.ii} Rigidity: Describe the relation between two types which give the same Berstein block. 
 \item Description of the  Bernstein block (using spherical Hecke algebras). 
\end{enumerate}

The construction of types for~$G$ starts with a semisimple character, say defined on a group called~$H^1=H(\Delta)$
and it is extended to an irreducible representation~$\kappa$ of a compact open group~$J\supseteq H^1$ and tensored with the inflation~$\tau$ of a cuspidal irreducible 
representation of a finite quotient~$J/J^1$. It is a similar construction in the case of~$p$-adic classical groups.  
Beautiful descriptions of this construction can be found in~\cite{bushnellKutzko:93},~\cite{secherreI:04} and~\cite{stevens:08}.

Semisimple characters are important for parts~\ref{enumClassification.i} and~\ref{enumClassification.ii}. 
Given two types which are contained in a cuspidal irreducible representation of~$G$, it is reasonable to ask if they are conjugate by an element of~$G$. 
As an example to study the non-cuspidal Bernstein blocks for~$G$, S\'echerre and Stevens constructed in~\cite{secherreStevensVI:12} (maximal) semisimple types but without 
using semisimple characters, see also the work of Bushnell and Kutzko~\cite{bushnellKutzko:99} for the split case. The advantage of the use of the theory of semisimple characters would be to approach the rigidity question. An example where this has been successful 
is the case of~$p$-adic classical groups~\cite{KSS:21}.
The base of these rigidities is an ``intertwining and conjugacy'' result for semisimple characters. 

We give a sketch of the ingredients and underline the ones which are new. 
Denote by~$\C(\Delta)$ the set of semisimple characters which are constructed with the arithmetic data~$\Delta$. 
Recall that~$\Delta$ consists of~$\beta\in\Lie(G)$ which generates a commutative semisimple algebra~$E$ over~$F$, a rational point~$\Lambda$ of the Bruhat--Tits building~$\mf{B}(G)$ of~$G$ 
which is in the image of~$\mf{B}(C_G(\beta))$,~$C_G(\beta)$ being the centralizer of~$\beta$,  by a map constructed by Broussous and Lemaire in~\cite{broussousLemaire:02}. The data~$\Delta$ consists further of two non-negative integers~$r\leq n$, where~$r$ 
indicates the biggest integer~$m$  such that the semisimple characters are defined on a subgroup of the~$1$-unit group~$P_{m+1}(\Lambda)$, and where~$n$ is the smallest integer~$m$ such that the semisimple characters are trivial on~$P_{m+1}(\Lambda)$. 
 The primitive idempotents of~$E$ split~$\theta\in\C(\Delta)$ into simple characters~$\theta_i$,~$i\in I$,  
for some general linear group~$\GL_D(V^i)$, say~$\theta_i\in \C(\Delta_i)$, and if two semisimple characters~$\theta$ and~$\theta'$ intertwine then the intertwining matches 
these simple characters to each other:

\begin{theorem}[\textbf{Matching Theorem~\ref{thmMatching}}]
 Suppose~$\theta\in\C(\Delta)$ and~$\theta'\in\C(\Delta')$  are intertwining semisimple characters, and suppose~$r=r'$ and~$n=n'$. Then there is a unique bijection~$\zeta:I\ra I'$ between the index sets 
 for~$E$ and~$E'$ such that 
 \begin{enumerate}
  \item $\dim_DV^i=\dim_DV^{\zeta(i)}$, and 
  \item there is a~$D$-linear isomorphism~$g_i$ from~$V^i$ to~$V^{\zeta(i)}$ such that~$g_i.\theta_i$ and~$\theta'_{\zeta(i)}$ intertwine by a~$D$-linear 
  automorphism of~$V^{\zeta(i)}$,
 for all indexes~$i\in I$. 
 \end{enumerate}
\end{theorem}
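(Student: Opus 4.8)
The plan is to reduce the statement to the corresponding matching result for semisimple \emph{strata}, which involves only the lattice structure and the element $\beta$, and then to transport the resulting bijection down to the simple characters by means of the intertwining formula for semisimple characters. \textbf{Step 1: from characters to strata.} A semisimple character $\theta\in\C(\Delta)$ determines the equivalence class of its underlying semisimple stratum $[\Lambda,n,r,\beta]$ over the range of levels relevant to its construction, and likewise for $\theta'$ and $[\Lambda',n',r',\beta']$. Because $r=r'$ and $n=n'$ the two strata are directly comparable, so any $g\in G$ intertwining $\theta$ with $\theta'$ also intertwines the two semisimple strata. In the inner-form setting the one point to be careful about is to view $\Lambda$ and $\Lambda'$ inside a common building by means of the Broussous--Lemaire embedding $\mf{B}(C_\beta(G))\hookrightarrow\mf{B}(G)$; with that in place the matching theorem for semisimple strata yields a bijection $\zeta\colon I\ra I'$ with $\dim_D V^i=\dim_D V^{\zeta(i)}$ together with an intertwining of each block stratum of $\beta$ with the matching block stratum of $\beta'$. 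This gives (i) and $|I|=|I'|$.

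\textbf{Step 2: transporting $\zeta$ to the characters.} This is where I expect the real work to lie. Combining Step~1 with the intertwining formula for semisimple characters, one wants to produce an element $g_0\in G$ of block--monomial shape $g_0=\bigoplus_{i\in I}g_i$, with $g_i\colon V^i\ra V^{\zeta(i)}$ a $D$-linear isomorphism, that \emph{still} intertwines $\theta$ with $\theta'$: concretely, the set of elements intertwining $\theta$ with $\theta'$ ought to be a union of double cosets of the groups $J^1$ attached to $\beta$ and to $\beta'$ in which one can choose a representative normalising both splittings according to $\zeta$, using that the centralizer $C_\beta(G)$ is the product of the block centralizers $C_{\beta_i}(\GL_D(V^i))$. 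Granting such a $g_0$, claim (ii) follows by restriction: since $\theta$ restricted to $H^1\cap\GL_D(V^i)$ equals $\theta_i$, and likewise for $\theta'$ on the $\zeta(i)$-block, while $g_0$ carries $H^1\cap\GL_D(V^i)$ onto $g_i\big(H^1\cap\GL_D(V^i)\big)g_i^{-1}\subseteq\GL_D(V^{\zeta(i)})$, the restriction of a nonzero intertwiner of ${}^{g_0}\theta$ with $\theta'$ to $\GL_D(V^{\zeta(i)})$ is a nonzero intertwiner of $g_i.\theta_i$ with $\theta'_{\zeta(i)}$; in fact the connecting automorphism can then be taken to be the identity.

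\textbf{Step 3: uniqueness of $\zeta$.} Two simple characters that are components of one and the same semisimple character cannot intertwine one another, this being part of the defining condition of a semisimple stratum (two blocks may not be merged) and also a consequence of the simple-character ``intertwining implies conjugacy'' theorem applied block by block. Since intertwining of simple characters up to a connecting $D$-linear isomorphism --- that is, up to transfer --- is an equivalence relation, which is exactly what is needed for simple endo-classes to be well defined, any two bijections $\zeta,\zeta'$ satisfying (i) and (ii) must coincide: for each $j\in I'$ the characters $\theta_{\zeta^{-1}(j)}$ and $\theta_{{\zeta'}^{-1}(j)}$ both intertwine $\theta'_j$ up to transfer, hence intertwine each other, forcing $\zeta^{-1}(j)={\zeta'}^{-1}(j)$.

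\textbf{Main obstacle.} The delicate point is Step~2: distilling from one global intertwining element a block--monomial one that still intertwines both semisimple characters. This rests on having the intertwining formula for semisimple characters in a sufficiently precise form over $\GL_D(V)$ and on controlling the interplay between the groups $H^1\subseteq J^1\subseteq J$ attached to $\beta$ and the decomposition $V=\bigoplus_i V^i$. A secondary technical point, already used in Step~1, is to ensure that ``intertwining characters imply intertwining strata'' survives in the non-split setting, where lattice chains cannot be freely conjugated and one must argue through the Broussous--Lemaire map.
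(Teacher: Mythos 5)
Your Step 1 rests on a claim that is not justified and is, in general, false: an element intertwining~$\theta$ with~$\theta'$ need not intertwine the underlying semisimple strata. A semisimple character only pins down~$\beta$ modulo a lattice much larger than~$\mf{a}_{-r}$ (in the notation of the paper, intertwining of the characters only yields intertwining of the cosets~$\beta+\mf{h}(\Delta)^*$ and~$\beta'+\mf{h}(\Delta')^*$, and even that only in the range~$r\geq\lfloor n/2\rfloor$), and moreover~$\C(\Delta)$ does not determine~$\Delta$ up to equivalence or intertwining --- distinct, non-intertwining strata can give the same set of characters. This is exactly why the paper cannot reduce the matching of characters to Proposition~\ref{propMatchingGLDStrata} in one step. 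Your Step 2 is then not a proof but a restatement of the difficulty: the double-coset description of~$I(\theta,\theta')$ you want to invoke (Theorem~\ref{thmTransferAndInt}) is only established when~$\theta'$ is the \emph{transfer} of~$\theta$, which is precisely the situation you have not yet produced; for two arbitrary intertwining characters no such structure of~$I(\theta,\theta')$ is available a priori.

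The paper's actual argument is an induction on~$n-r$ along defining sequences. The base case~$n=r+1$ is where the stratum matching (Proposition~\ref{propMatchingGLDStrata}) is legitimately used, since there~$\theta=\psi_\beta$ determines the stratum up to equivalence. In the inductive step one first matches~$\theta(1+)$ and~$\theta'(1+)$, then uses Proposition~\ref{propDiagonalizationSemisimpleCharacters} and the translation principle (Theorem~\ref{thmTranslation}) to replace~$\Delta'$ by a stratum whose approximation~$\beta'(1)$ equals~$\beta(1)$, and finally transfers the problem to \emph{derived} characters (Proposition~\ref{lemDerivedCharacters}), where the minimal-case stratum matching applies; none of this machinery appears in your outline, and it cannot be bypassed by the direct reduction you propose. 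Your uniqueness argument (Step 3) is closer to the paper's, but note that the assertion that two block restrictions of one semisimple character cannot intertwine is not immediate from the definition of a semisimple stratum: the paper deduces it from transitivity of intertwining for simple characters (Proposition~\ref{propTranstivityOfIntertwiningSimpleCharacters}) together with the invariance of the number of blocks, which itself uses the existence part of the matching theorem.
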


The statement is a generalization of the split case~$D=F$, see~\cite{skodlerackStevens:18}, but the proof heavily needs a fine study of embeddings. For the intertwining to force conjugacy we need an extra new ingredient. 
Two intertwining semisimple characters~$\theta\in\C(\Delta)$ and~$\theta'\in\C(\Delta')$ induce a canonical bijection~$\bar{\zeta}$ between the 
residue algebras~$\kappa_E$ of~$E$ and~$\kappa_{E'}$ of~$E'=F[\beta']$. When these two characters are conjugate by an element of~$G$ then the conjugation has to induce
this bijection~$\bar{\zeta}$. This leads to the ``intertwining and conjugacy'' theorem. To avoid too much notation we make the assumption that~$\Delta$ and~$\Delta'$
have the same point of~$\mf{B}(G)$. 

\begin{theorem}[\textbf{1st Main Theorem: Intertwining and Conjugacy~\ref{thmIntertwiningImplConjugacy}}]
 Suppose that~$\theta\in\C(\Delta)$ and~$\theta'\in\C(\Delta')$ are two semisimple characters such that~$n=n'$,~$r=r'$ and~$\Lambda=\Lambda'$. 
 If there is an element~$t\in G$ which normalizes~$\Lambda$ and maps the splitting of~$V$ with respect to~$\beta$ to the one with respect to~$\beta'$ such that the  conjugation with~$t$ induces the map~$\bar{\zeta}$ then there is an element~$g$ in the normalizer of~$\Lambda$ such that~$g.\theta_i=\theta'_{\zeta(i)}$, for all~$i\in I$, and such that~$g^{-1}t$ fixes~$\Lambda$. In particular~$g.\theta$ and~$\theta'$ coincide.
\end{theorem}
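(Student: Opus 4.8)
The plan is to reduce the statement, step by step, from the global $G = \GL_D(V)$ down to the centralizer of $\beta$, exploiting the matching data $\zeta, \bar\zeta$ and the intertwining formula for semisimple characters. First I would use the Matching Theorem~\ref{thmMatching} to fix the bijection $\zeta: I \to I'$ and, after replacing $\theta'$ by a $t$-conjugate (which is harmless since $t$ normalizes $\Lambda$ and matches the two splittings), reduce to the case where $V^i = V^{\zeta(i)}$ as $D$-vector spaces, the two decompositions of $V$ coincide, and $t$ acts block-diagonally inducing $\bar\zeta$ on $\kappa_E \cong \kappa_{E'}$. Thus the problem becomes: given two semisimple characters defined with respect to the \emph{same} decomposition $V = \bigoplus_i V^i$ and the same lattice chain $\Lambda$, with each pair $\theta_i, \theta'_i$ intertwining in $\GL_D(V^i)$ by an element compatible with $\bar\zeta|_{\kappa_{E_i}}$, produce a single $g \in \mathfrak{N}(\Lambda)$ conjugating $\theta_i$ to $\theta'_i$ for all $i$ simultaneously, with $g^{-1}t$ fixing $\Lambda$.

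The core of the argument is the simple (single-block) case, which I would handle first. Here $\theta_i \in \C(\Delta_i)$ and $\theta'_i \in \C(\Delta'_i)$ are simple characters in $\GL_D(V^i)$ attached to field extensions $E_i = F[\beta_i]$ and $E'_i = F[\beta'_i]$; they intertwine, so by the intertwining formula and the already-established theory of simple characters for $\GL_m(D)$ (the transfer of the Bushnell--Kutzko/S\'echerre--Stevens machinery), there exists $h_i \in \GL_D(V^i)$ with $h_i.\theta_i = \theta'_i$. The point is to adjust $h_i$ so that it normalizes $\Lambda|_{V^i}$ and induces the prescribed residual map $\bar\zeta|_{\kappa_{E_i}}$. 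For the normalization of $\Lambda$: since both characters live on congruence subgroups of the same chain order and $\Lambda$ lies in the image of the building of the centralizer, $h_i.\theta_i = \theta'_i$ forces $h_i$ to conjugate the hereditary order of $\theta_i$ to that of $\theta'_i$, and both equal the one attached to $\Lambda$; hence $h_i$ already normalizes $\Lambda$ up to an element of the centralizer $C_{\beta'_i}(\GL_D(V^i))$, which itself contains elements normalizing $\Lambda$ (by Broussous--Lemaire), so we may arrange $h_i \in \mathfrak{N}(\Lambda|_{V^i})$. For the residual map: $h_i$ conjugates $\beta_i$ to an element that generates the same field as $\beta'_i$, and the induced isomorphism $\kappa_{E_i} \to \kappa_{E'_i}$ is determined up to the Galois action of $\Gal(\kappa_{E'_i}/\kappa_F)$; modifying $h_i$ on the right by a suitable element of $C_{\beta'_i}(\GL_D(V^i)) \cap \mathfrak{N}(\Lambda)$ — whose image in the residual general linear group realizes the needed Frobenius power — corrects the residual map to $\bar\zeta|_{\kappa_{E_i}}$ without disturbing $h_i.\theta_i = \theta'_i$. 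This last adjustment is exactly where the new ingredient $\bar\zeta$ enters and, I expect, is the main obstacle: one must show the relevant group of "residually nontrivial" centralizer elements preserving $\Lambda$ surjects onto the required Galois group, which uses that $C_{\beta'_i}(\GL_D(V^i)) \cong \GL_{m_i}(D_i)$ for a division algebra $D_i/E'_i$ and that its chain-order normalizer maps onto the full Galois group $\Gal(\kappa_{D_i}/\kappa_F)$ via the residue field extension.

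Once the simple case is done, I would assemble the global $g$: set $g = \bigoplus_i g_i$ where $g_i \in \mathfrak{N}(\Lambda|_{V^i})$ is the corrected conjugator, identifying $V^i$ with $V^{\zeta(i)}$. Because the $\theta_i$ for distinct $i$ live on "disjoint" factors and the semisimple character $\theta$ is built from them by the inductive construction intertwined by the $\beta$-twisted trace character, $g.\theta_i = \theta'_{\zeta(i)}$ for all $i$ implies $g.\theta = \theta'$ — here I would invoke the compatibility of the construction with the decomposition, i.e. that $\theta$ is determined by its components $\theta_i$ together with the common data $(\Lambda, r, n)$, which is part of the setup of $\C(\Delta)$. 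Finally, $g^{-1}t$ normalizes $\Lambda$ and acts block-diagonally, and on residue fields both $g$ and $t$ induce $\bar\zeta$, so $g^{-1}t$ induces the identity on $\bigoplus_i \kappa_{E_i}$; combined with the fact that an element of $\mathfrak{N}(\Lambda)$ centralizing the residual semisimple algebra and fixing each block must actually fix $\Lambda$ (not merely normalize it) — because the translation part of its action on the building lies in the split centre, which is trivial here by mod-centre anisotropy of $F[\beta]^\times$ restricted to each field factor — we conclude $g^{-1}t$ fixes $\Lambda$, completing the proof.
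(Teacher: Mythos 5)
There is a genuine gap, and it sits exactly at the point you yourself flag as "the main obstacle": the correction of the residual isomorphism. You propose to take an element~$h_i$ with~$h_i.\theta_i=\theta'_i$ and then adjust it by an element of~$C_{\beta'_i}(\GL_D(V^i))\cap\mf{n}(\Lambda)$ whose reduction "realizes the needed Frobenius power", so as to force the induced map~$\kappa_{E_i}\ra\kappa_{E'_i}$ to be~$\bar{\zeta}|_{\kappa_{E_i}}$. This cannot work: an element of the centralizer of~$\beta'_i$ commutes with~$E'_i$, so if it normalizes~$\Lambda$ its conjugation action on~$\mf{a}_0/\mf{a}_1$ fixes the image of~$o_{E'_i}$ pointwise, i.e.\ it induces the \emph{identity} on~$\kappa_{E'_i}$ — it never realizes a nontrivial Galois automorphism. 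This is precisely the content of Lemma~\ref{lemMatchingOfResidueAlg}: every intertwining (in particular every conjugating) element induces one and the same map~$\bar{\zeta}$ on residue algebras, so the residual map attached to a conjugator is rigid and cannot be "corrected" afterwards. If your adjustment were available, the hypothesis on~$t$ would be superfluous and intertwining alone would imply conjugacy in the normalizer of~$\Lambda$ for~$\GL_m(D)$ — which is false in the inner-form case; that failure is exactly why embedding types (Theorem~\ref{thmINtImplConSimpleD}, Proposition~\ref{propIntImplConjSimpleChar}, both of which \emph{assume} equal embedding type) and the residual matching~$\bar{\zeta}$ exist at all. For the same reason your opening move in the simple case — "they intertwine, so by the already-established theory there exists~$h_i$ with~$h_i.\theta_i=\theta'_i$" — is unjustified: the established intertwining-implies-conjugacy results all carry the embedding-type hypothesis, and producing a conjugator compatible with~$\Lambda$ and~$\bar\zeta$ is the very statement to be proven, not an input. (A smaller flaw: your closing argument that a block-diagonal, residually trivial element of~$\mf{n}(\Lambda)$ must \emph{fix}~$\Lambda$ fails as stated — central elements such as~$\pi_F$ normalize~$\Lambda$, act trivially on~$\kappa_E$, but translate~$\Lambda$; the conclusion~$g^{-1}t\in P(\Lambda)$ has to be built into the construction of~$g$, not deduced afterwards.)

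For comparison, the paper's route avoids this trap by never trying to adjust a character-level conjugator: after conjugating by~$t$ one may assume~$t=1$ and~$\bar\zeta=\id$; Corollary~\ref{corDeltakNonTrivialIntersec} reduces to the simple case; Proposition~\ref{propDiagonalizationSemisimpleCharacters} lets one change~$\beta'$ so that~$\theta'$ is the \emph{transfer} of~$\theta$, and Corollary~\ref{corMatchingForTransfers} identifies~$\bar\zeta_{\Delta,\Delta'}$ with~$\bar\zeta=\id$; then the strata-level intertwining-and-conjugacy Theorem~\ref{thmIntConjSesiTiG} (where the embedding-type machinery has already been absorbed) produces an element of~$P(\Lambda)$ conjugating~$\Delta$ to~$\Delta'$, and this element automatically carries~$\theta$ to its transfer, i.e.\ to~$\theta'$. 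In other words, the residual hypothesis is consumed at the level of strata, where Theorem~\ref{thmequivalentToSameEmbedType} and Proposition~\ref{propFieldEmbedingsConjugateUnderaUnit} apply, rather than repaired at the level of characters, where no repair is possible.
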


%
%

The article is structured as follows: 
In Section~\ref{secLattices} we collect results and study the points in the Bruhat--Tits building of~$G$ and some of its centralizers. 
It is followed by the technical heart of the article: Section~\ref{secSemisimpleStrata} about semisimple strata~$\Delta$. 
The highlights of this section is the ``intertwining and conjugacy''-result for semisimple strata (\ref{thmIntConjSesiTiG}). 
The section is followed by Section~\ref{secSemisimpleCharacters}, where the intertwining formula for transfers (Proposition~\ref{propTransferAndInt}),
the matching theorem (\ref{thmMatching}) and the ``intertwining and conjugacy'' theorem for semisimple characters (\ref{thmIntertwiningImplConjugacy}) are proven. 
The final section, Section~\ref{secEndoPar}, gives the first application of the study of semisimple characters of~$\GL_m(D)$: The theory of  endo-parameters.

I have to thank a lot Imperial College London and the Engineering and Physical Sciences Research Council (EP/M029719/1) for their financial support, in particular David Helm, who also showed me aspects of possible applications. 
Secondly the University of East Anglia and in particular Shaun Stevens, for his interest and feedback, and Vincent S\'echerre, who has pointed to me a subtlety which makes this 
theory so interesting and difficult at the same time. 

\textbf{Data Availability Statement:} Data sharing not applicable to this article as no datasets were generated or analysed during the current study.
%
%
%
%
%
%

\section{Notation}\label{secNot}

Let~$F$ be a non-Archimedean local field. We denote by~$G$ the group~$\GL_D(V)$ of invertible~$D$-linear automorphisms 
on a finite dimensional right~$D$-vector space~$V$, where~$D$ is a central division algebra over~$F$ of finite degree~$d$. We denote the $D$-dimension of~$V$ by~$m$. 
We denote by~$\nu_D,\ o_D,\ \mf{p}_D,$ and~$\kappa_D$ the valuation (with image~$\bbZ\cup\{\infty\})$, valuation ring, valuation ideal and the residue field of~$D$ and we use similar notation for non-Archimedean local fields.
Let~$L|F$ be a maximal unramified field extension in~$D$, i.e. of degree~$d$, and~$\pi_D$ be a uniformizer of~$D$ which normalizes~$L$. The conjugation by~$\pi_D$ is a 
generator~$\tau$ of the Galois group~$G(L|F)$, i.e.~$\tau(x)=\pi_D x\pi_D^{-1}$ for all elements~$x\in L$. 
By~$A$ we denote the ring of~$D$-endomorphisms of~$V$. 
We fix an additive character~$\psi_F$ of~$F$ of level~$1$, i.e.~$\psi_F|_{o_F}$ is non-trivial and factorizes through the residue field. 
We denote the character~$\psi_F\circ\trd_{A|F}$ by~$\psi_A$. For an element~$c\in A$ we define
the map~$\psi_c: A\ra \mathbb{C}$ via~$\psi_c(x):=\psi_A(c(x-1))$. 
Given a reel number~$x$ we write~$\lfloor x\rfloor$ for the greatest integer which is not greater than~$x$. We also put~$\lfloor \infty\rfloor:=\infty$ and~$\lceil x\rceil:=-\lfloor -x\rfloor$.

\section{Bruhat--Tits buildings and Embeddings}\label{secLattices}
\subsection{Bruhat--Tits buildings}\label{subsecBuidlings}
An~$o_D$-lattice sequence~$\Lambda$ in~$V$ is a function from~$\bbZ$ to the set of full~$o_D$ sub-modules of~$V$, called lattices, which is decreasing with respect to the 
inclusion and which is periodic, i.e. there is an integer~$e=e(\Lambda|D)$ such that for all integers~$i$ the lattice~$\Lambda_{i+e}$ is equal to~$\Lambda_i \pi_D$ 
for any uniformizer~$\pi_D$ of~$D$. The number~$e(\Lambda|D)$ is called the~$D$-period of~$\Lambda$. 
We call an~$o_D$-lattice sequence~$\Lambda$ {\it strict} if it is injective and we call~$\Lambda$~\emph{regular} if the~$\kappa_D$-dimension of the quotient~$\Lambda_{i}/\Lambda_{i+1}$ is independent of~$i\in\mathbb{Z}$.
A strict lattice sequence is also called~\emph{lattice chain} in the literature, see~\cite{grabitz:99}.
The square lattice sequence~$\mf{a}_\Lambda$ in~$A$ is an~$o_F$-lattice sequence  in~$A$ defined as follows: 
$\mf{a}_{\Lambda,i}$ is the set of elements of~$A$ which map~$\Lambda_j$ to~$\Lambda_{j+i}$ for all integers~$j$. Given an~$o_D$-lattice sequence~$\Lambda$ in~$V$, a positive integer~$a$ and an integer~$b$ we can define a new~$o_D$-lattice sequence~$a\Lambda+b$ in~$V$ via
\[(a\Lambda+b)_r:=\Lambda_{\lfloor \frac{r-b}{a}\rfloor}.\]
We call~$a\Lambda+b$ an \emph{affine translate} of~$\Lambda$. We call~$\Lambda-b$ an~\emph{(integral) translate} of~$\Lambda$ by~$b$.

One can generalize the notion of lattice sequence to lattice functions, i.e. where the domain is~$\bbR$, such that they can be identified with a point in the Bruhat--Tits building~$\mf{B}( G)$ of~$ G$. The square lattice function of a point is then the Moy-Prasad filtration. 

\begin{definition}
 An~$o_D$-lattice function~$\Gamma$ in~$V$ is a function from~$\bbR$ to the set of~$o_D$-lattices in~$V$ with the following properties:
\begin{enumerate}
 \item~$\Gamma$ is decreasing,
 \item For all elememts~$t\in\bbR$ the lattice~$\Gamma_{t+\frac{1}{d}}$ is 
 equal to~$\Gamma_t \pi_D$.  We say that~$\Gamma$ is~\emph{$\pi_D$-periodic} with period~$\frac{1}{d}$.
 \item~$\Gamma$ is left continuous, i.e.~$\Gamma_r$ is the intersection of all~$\Gamma_{r'}$ where~$r'$ passes through all real numbers smaller than~$r$.
\end{enumerate}
The square lattice function~$\mf{a}_\Gamma$ in~$A$ is an~$o_F$-lattice function  in~$A$ defined as follows: 
$\mf{a}_{\Gamma,t}$ is the set of elements of~$A$ which map~$\Gamma_s$ to~$\Gamma_{t+s}$ for all~$s\in\bbR$.
For an~$o_D$-lattice function~$\Gamma$ and~$s\in\bbR$ we define
the~\emph{translate of~$\Gamma$ by~$s$} to be the~$o_D$-lattice function given by
\[(\Gamma-s)_t:=\Gamma_{t+s},\ t\in\bbR.\]
\end{definition}

\begin{definition}\label{defTranslationClasses}
Two lattice functions~$\Gamma$ and~$\Gamma'$ (resp. lattice sequences~$\Lambda$ and~$\Lambda'$) are~\emph{equivalent} if they are translates of each other. We denote the equivalence class by~$[\Gamma]$, resp. by~$[\Lambda]$.  We write~$\mf{n}(\Gamma)$ for the normalizer of~$\Gamma$ in~$ G$, i.e. the set of elements~$g$  
of~$ G$ for which~$g\Gamma$ is a translate of~$\Gamma$. We denote~$(1+\mf{a}_{\Gamma,t})^\times$ by~$P_t(\Gamma)$ for non-negative~$t\in\bbR$, and we write~$P(\Gamma)$ for~$P_0(\Gamma)$. Analogously we define those objects for lattice sequences (with~$t$ a non-negative integer). If~$\mf{a}$ is the hereditary order attached to a strict lattice sequence~$\Lambda$, i.e.~$\mf{a}=\mf{a}_{\Lambda,0}$ then one can find the notation~$P_t(\mf{a})$ for~$P_t(\Lambda)$ in the literature. This includes~$P_t(o_E)$ for skew-fields~$E$.

\end{definition}

We can attach to an~$o_D$-lattice sequence an~$o_D$-lattice function~$\Gamma$ as follows:
\[\Gamma_t:=\Lambda_{\lceil te(\Lambda|F)\rceil},\ t\in\bbR.\]

We write~$C_S(T)$ for the centralizer of~$T$ in~$S$ where~$S$ is a group or an algebra acting on an ambient set of~$T$ from both sides, e.g.
the centralizer in a subgroup~$H$ of~$ G$ of an element~$\beta$ of the Lie algebra of~$ G$ is denoted by~$C_{ H}(\beta)$.
The Bruhat--Tits building of the group~$H$ in question, denoted by~$\mf{B}(H)$, is always the enlarged one.
If we refer to the reduced Bruhat--Tits building, we write~$\mf{B}_{red}(H)$. We write~$\mf{b}$ for  square lattice functions in the~$A$-centralizer~$B$ of~$E$ where~$E$ is an extension field of~$F$ in~$A$ . \black{Note that~$B$ is~$E$-algebra isomorphic to~$\End_{D_E}W$ for some central division algebra~$D_E$ over~$E$ and a right-$D_E$-vector space $W$. So the square lattice functions are attached to~$o_{D_E}$-lattice functions.}

The reduced Bruhat--Tits building~$\mf{B}_{red}( G)$ can be described using translation classes of lattice functions (or equivalently square lattice functions) and the Bruhat--Tits building~$\mf{B}( G)$ can be described using lattice functions. For more details see~\cite{broussousLemaire:02}. So we consider 
~$\mf{B}( G)$ and~$\mf{B}_{red}( G)$ as the set of lattice functions and their translation classes, respectively. 

\begin{theorem}[{{\cite[part II, Theorem 1.1]{broussousLemaire:02}}}]\label{thmBroussousLemaire1punkt1}
Let~$E|F$ be a field extension in~$A$, then there is exactly one map~$j_E$ from the set of~$E^\times$-fixed points of~$\mf{B}_{red}( G)$ to
$\mf{B}_{red}(C_{ G}(E))$
such that
\[\mf{b}_{j_E([\Lambda]),te(E|F)}=\mf{a}_{[\Lambda],t}\cap C_A(E).\]
Further,
\begin{enumerate}
 \item~$j_E$ is~$C_{ G}(E)$-equivariant,
 \item~$j_E$ is affine,
 \item~$j_E$ is bijective. 
\end{enumerate}
The map~$j_E^{-1}$ can be characterized to be the unique affine,~$C_{ G}(E)$-equivariant map from~$\mf{B}_{red}(C_{ G}(E))$ to~$\mf{B}_{red}( G)$.
\end{theorem}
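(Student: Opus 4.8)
The plan is to transport the whole statement to a combinatorial question about $\mathfrak{o}_F$-lattice functions on the algebra $A=\End_D(V)$ and about $\mathfrak{o}_E$-lattice functions on its $E$-centralizer $B:=C_A(E)$, using the identification recalled above (following~\cite{broussousLemaire:02}) of $\mf{B}_{red}(G)$ with the set of translation classes of square lattice functions on $A$, and likewise of $\mf{B}_{red}(C_G(E))$ with the square lattice functions on $B$; note here that $C_G(E)=B^\times$ and that $B$ is central simple over $E$, so $C_G(E)\cong\GL_{D'}(V')$ for a division algebra $D'$ over $E$ and a $D'$-vector space $V'$. Under these identifications the asserted equality $\mf{b}_{j_E([\Lambda]),te(E|F)}=\mf{a}_{[\Lambda],t}\cap C_A(E)$ is an explicit formula for the value of $j_E$, and since $t\mapsto te(E|F)$ is a bijection of $\bbR$ it determines the square lattice function $\mf{b}_{j_E([\Lambda])}$, hence the point $j_E([\Lambda])$, completely. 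So uniqueness of $j_E$ is automatic; the content is existence, i.e. that for $[\Lambda]$ fixed by $E^\times$ the function $s\mapsto\mf{a}_{[\Lambda],s/e(E|F)}\cap B$ is a square lattice function of the group $C_G(E)$.

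For existence I would first unwind the fixed-point condition: fixing a representative $\Lambda$, being $E^\times$-fixed means $E^\times\subseteq\mf{n}(\Lambda)$, so every $x\in E^\times$ shifts $\mf{a}_{[\Lambda]}$; the resulting homomorphism $E^\times\to\bbR$ kills $\mathfrak{o}_E^\times$, hence is proportional to the normalized valuation $v_E$, and computing its value on a uniformizer of $E$ against the $D$-period of $\Lambda$ produces exactly the factor $e(E|F)$ — this routine ramification bookkeeping is the origin of the reindexing in the theorem. The key step is then the structural lemma: if $\mathfrak{A}=\mf{a}_{[\Lambda],0}$ is the hereditary $\mathfrak{o}_F$-order attached to $\Lambda$ and $E^\times$ normalises $\mathfrak{A}$, then $\mathfrak{A}\cap B$ is a hereditary $\mathfrak{o}_E$-order in $B$ whose radical filtration is $s\mapsto\mf{a}_{[\Lambda],s/e(E|F)}\cap B$. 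One proves this by observing that $E^\times$-stability forces the members of the underlying $\mathfrak{o}_D$-lattice chain to be, up to the periodicity, stable under the order generated by $\mathfrak{o}_E$ and $\mathfrak{o}_D$, so that the $\mathfrak{o}_E$-stable ones among them form an $\mathfrak{o}_{D'}$-lattice chain on $V'$ whose $\mathfrak{o}_{D'}$-endomorphisms recover $\mathfrak{A}\cap B$; one may treat strict lattice sequences (lattice chains) first and pass to general lattice functions by affineness and left continuity. This is the $\GL_D(V)$-analogue of the classical fact that hereditary orders behave well under passage to a centralizer subalgebra (\cite{bushnellKutzko:93} in the split case $D=F$), and it is the step I expect to be the main obstacle, since this is where the division algebra $D$ genuinely intervenes and one must keep careful track of $\mathfrak{o}_D$- versus $\mathfrak{o}_E$-module structures. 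Granting the lemma, decreasingness and left continuity of $s\mapsto\mf{a}_{[\Lambda],s/e(E|F)}\cap B$ are inherited at once and periodicity is the valuation computation, so the right-hand side of the displayed formula is $\mf{b}_\Gamma$ for a unique translation class $[\Gamma]$, and we set $j_E([\Lambda]):=[\Gamma]$.

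It remains to establish (i)--(iii) and the final characterization. Equivariance is formal: for $g\in C_G(E)\subseteq G$ one has $\mf{a}_{[g\Lambda],t}=g\,\mf{a}_{[\Lambda],t}\,g^{-1}$, conjugation by $g$ preserves $B=C_A(E)$ and the $E^\times$-fixed locus, and commutes with intersection. Affineness is checked by restricting to an apartment of $\mf{B}_{red}(C_G(E))$ and its image apartment in $\mf{B}_{red}(G)$, where in affine coordinates the assignment $\mf{a}\mapsto\mf{a}\cap B$ is visibly the affine reindexing $t\mapsto te(E|F)$. For bijectivity and the characterization of $j_E^{-1}$ I would argue together: since $E$ is central in $B$, the group $E^\times$ is central in $C_G(E)$, hence acts trivially on $\mf{B}_{red}(C_G(E))$, so any $C_G(E)$-equivariant map $\mf{B}_{red}(C_G(E))\to\mf{B}_{red}(G)$ has image inside the $E^\times$-fixed locus; a standard transitivity argument for the $C_G(E)$-action on its building shows there is at most one affine $C_G(E)$-equivariant such map; and one exists, namely $\mf{b}_\Gamma\mapsto$ the smallest square lattice function on $A$ restricting to the reindexed $\mf{b}_\Gamma$ on $B$ (equivalently, spreading the $\mathfrak{o}_{D'}$-lattice chain on $V'$ out to an $\mathfrak{o}_D$-lattice chain on $V$). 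Unwinding the defining formula shows this map is a two-sided inverse of $j_E$ on the $E^\times$-fixed locus, which gives (iii); and since it is the unique affine $C_G(E)$-equivariant map, it is the asserted description of $j_E^{-1}$.
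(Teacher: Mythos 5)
You should first be aware that the paper does not prove this statement at all: Theorem~\ref{thmBroussousLemaire1punkt1} is imported verbatim from~\cite[1.1]{broussousLemaire:02}, so there is no internal proof to compare with, and what your sketch has to be measured against is the argument of Broussous--Lemaire themselves. In outline your plan follows their route (model $\mf{B}_{red}( G)$ and $\mf{B}_{red}(C_{ G}(E))$ by translation classes of square lattice functions, read off uniqueness of~$j_E$ from the displayed formula, prove existence by a descent lemma for $E^\times$-stable lattice functions, and get the characterization of~$j_E^{-1}$ from a rigidity statement for affine equivariant maps), but the two places where the real mathematical content sits are exactly the places your proposal waves through.

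First, in the existence step you propose to ``treat strict lattice sequences (lattice chains) first and pass to general lattice functions by affineness and left continuity''; this is circular, since affineness of~$j_E$ is one of the conclusions and is not available while you are still constructing the map. The passage from chains to arbitrary lattice functions has to be done either by an explicit formula for the image point (this is what~\cite[Lemma 3.1]{broussousLemaire:02} provides, and what Theorem~\ref{thmAlgebraicDescriptionOfBuildingEmbedding} of the present paper recalls in the unramified case) or by a genuine continuity/density argument for barycentric coordinates, and the underlying Morita-type statement identifying $o_E$-$o_D$-stable lattice functions in~$V$ with $o_{D'}$-lattice functions in~$V'$ is itself several pages of work in the source. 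Second, and more seriously, the uniqueness of the affine, $C_{ G}(E)$-equivariant map $\mf{B}_{red}(C_{ G}(E))\rightarrow\mf{B}_{red}( G)$ is not ``a standard transitivity argument.'' The naive argument (the image of a point~$y$ must lie in the fixed-point set of its stabilizer) only works once one knows that the fixed-point set in the \emph{big} building of the stabilizer of~$y$ in~$C_{ G}(E)$ is a single point; establishing that, or equivalently the rigidity of affine equivariant embeddings, is the technical heart of~\cite{broussousLemaire:02} and again rests on the same descent analysis of $E^\times$-stable lattice functions --- it cannot be quoted as routine. Relatedly, your description of the inverse as ``the smallest square lattice function on~$A$ restricting to the reindexed~$\mf{b}_\Gamma$'' is not obviously well defined, and bijectivity plus affineness of~$j_E$ do not immediately give affineness of~$j_E^{-1}$ without an argument about how apartments are mapped. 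So the proposal is a reasonable reconstruction of the shape of the proof, but as written it has genuine gaps at precisely the steps the cited paper was written to close.
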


The reader can find in~\cite{broussousLemaire:02} a precise description of the map~$j_E$, which we only recall for a special unramified case.  

\begin{proposition}[{{\cite[part II, Lemma 3.1]{broussousLemaire:02}}}]\label{propAlgebraicDescriptionOfBuildingEmbedding}
Let~$E|F$ be an unramified extension which has an isomorphic field extension of~$F$ in~$D$. 
The map~$j_E$ can be described as follows.~$V$ is the direct sum of~$V^i$,~$i$ passing from~$1$ to~$[E:F]$, where the idempotent~$1^i$ (projections onto~$V^i$) 
are obtained by the decomposition of~$E\otimes_F L$, where~$L$ is a maximal unramified extension of~$F$ in~$D$. Further the indexing can be adjusted such that
$V^i$ is equal to~$V^1\pi_D^{i-1}$ for all indexes~$i$.
Now, given a point~$y\in\mf{B}_{red}(C_{ G}(E))$, then the point~$x:=j_E^{-1}(y)$ is given by the translation class of the lattice function:
\[\Gamma_t:=\oplus_{i=0}^{[E:F]-1} \Sigma_{(t -\frac{i}{d})}\pi_D^i\]
where the translation class of~$\Sigma$ corresponds to the point~$y$. 
\end{proposition}

\begin{remark}\label{remjE}
There is a canonical projection map~$[\ ]$ from~$\mf{B}( G)$ to~$\mf{B}_{red}( G)$ sending~$\Lambda$ to~$[\Lambda]$. 
There is a~$C_{ G}(E)$-equivariant, affine and bijective map from~$[\ ]^{-1}(\mf{B}_{red}( G)^{E^\times})$ to~$\mf{B}(C_{ G}(E))$ which on the level of 
reduced buildings induces~$j_E$. This map is unique up to translation. We just choose one and we still call this map~$j_E$. One could take the map given by the 
formula~\cite[[part II, Lemma 3.1]{broussousLemaire:02}, a generalization of the formula in Proposition~\ref{propAlgebraicDescriptionOfBuildingEmbedding}.
\end{remark}

We will need later a more general notion of lattice sequence: 
\begin{definition}\label{defLatticeSequencOverProdOfFields}
 Let~$E|F=\prod_jE_j$ be a commutative reduced~$F$-algebra in~$A$. The idempotents split~$V$ into~$V=\oplus_j V^j$. An~$o_D$-lattice sequence~$\Lambda$ is called an~\emph{$o_E$-$o_D$-lattice sequence}\footnote{$o_E$ has not been defined here if~$E$ is not a field. We could set it as~$\oplus_jo_{E_j}$.}
if~$\Lambda$ is split by the decomposition of~$V$, i.e.~$\Lambda$ is the direct sum of the intersections~$\Lambda^j:=\Lambda\cap V^j$, 
 and~$\Lambda^j$ is an~$o_{E_j}$-lattice sequence, for every index~$j$. We define~$e(\Lambda|E)$ as the greatest common divisor of the integers~$e(\Lambda^j|E_j)$. 
\end{definition}


We need a lattice sequence version of Remark~\ref{remjE}.

\black{
For an extension field~$E$ of~$F$ in~$A$ we can use~$j_E$ from Remark~\ref{remjE} (more precisely~\cite[part II Lemma 3.1]{broussousLemaire:02}) to attach to an~$o_E$-$o_D$-lattice sequence~$\Lambda$ an~$o_{D_E}$-lattice sequence~$M$ in the following way:
Let~$\Gamma$ be the lattice function attached to~$\Lambda$ and~$\Sigma$ be~$j_E(\Gamma)$ then we define
\[M_{z}:=\Sigma_{\frac{z}{e(\Lambda|E)}},\ z\in\bbZ. \]
}

\subsection{Embedding types}

The embeddings of buildings~$j_E$ from \S\ref{subsecBuidlings} can be used to classify conjugacy classes of pairs $(E,\Lambda)$ consisting of a finite field extension of~$F$ in~$A$ and a lattice 
sequence~$\Lambda$ normalized by~$E^\times$. Such a pair is called \emph{embedding in~$A$} in analogy to the pairs~$(E,\mf{a})$ introduced and studied in~\cite{broussousGrabitz:00}. 
Let~$E_D|F$ be the unramified sub-extension of~$E|F$ whose degree is the greatest common divisor of~$d$ and~$f(E|F)$. Then two embeddings~$(E,\Lambda)$
and~$(E',\Lambda')$ in~$\End_D(V)$ and~$\End_D(V')$, respectively, are called {\it equivalent} if the field extensions~$E_D|F$
and~$E'_D|F$  are conjugate by a~$D$-isomorphism~$g$ from~$V$ to~$V'$ such that~$g\Lambda$ is equal to~$\Lambda'$ up to
integral translation. 
The equivalence classes are called  (Broussous-Grabitz) {\it embedding types}. 

\begin{proposition}[{{\cite[Proposition~3.3, Lemma~3.5]{broussousSecherreStevens:12}}}]\label{propbroussousSecherreStevens3p3u3p5}
Let~$\phi_i$,~$i=1,2$, be two~$F$-algebra embeddings of a field extension~$E|F$ into~$A$ and let~$\Lambda$ be a lattice sequence such that~$\phi_i(E)^\times$ normalizes~$\Lambda$ for both indexes~$i$. Suppose that the embeddings~$(\phi_i(E),\Lambda)$,~$i=1,2$, are equivalent. Then there is an element~$u$ of the normalizer of~$\Lambda$ in~$A^\times$ such that~$u\phi_1(x)u^{-1}$ is equal to~$\phi_2(x)$, for all~$x\in E$.
If~$\phi_1$ and~$\phi_2$ are equal on~$E_D$ then one can choose~$u$ in~$P(\Lambda)$. 
\end{proposition}


 \begin{theorem}[{see \cite[Theorem 1.2]{skodlerack:14-2}} for the strict case]\label{thmGeometricInterpretationOfEquivalenceOfEmbeddings}
Suppose~$(E,\Lambda)$ and~$(E',\Lambda')$ are two embeddings in~$A$. Suppose~$\Lambda$ and~$\Lambda'$ have the same~$\pi_D$-period. Let~$\Gamma$ and~$\Gamma'$ be the~$o_D$-lattice functions associated to~$\Lambda$ and~$\Lambda'$. Consider~$\Gamma$ and~$\Gamma'$ as points in~$\mf{B}( G)$. 
Then~$(E,\Lambda)$ and~$(E',\Lambda')$ are equivalent if and only if the 
barycentric coordinates of~$j_{E_D}([\Gamma])$ and of~$j_{E'_D}([\Gamma'])$ with respect to chambers are equal up to cyclic permutation. 
\end{theorem}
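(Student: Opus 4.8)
The plan is to reduce the statement to Theorem~\ref{thmbroussousSecherreStevens3p3u3p5} (the equivalence of embeddings in terms of the normalizer of~$\Lambda$) together with the algebraic description of the building embedding~$j_E$ for unramified extensions given in Theorem~\ref{thmAlgebraicDescriptionOfBuildingEmbedding}. The key observation is that the equivalence of embeddings only sees the unramified part~$E_D$, so we may as well work with the unramified extensions~$E_D|F$ and~$E'_D|F$ and their corresponding building maps~$j_{E_D}$ and~$j_{E'_D}$.

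First I would prove the ``only if'' direction. Suppose~$(E,\Lambda)$ and~$(E',\Lambda')$ are equivalent. By definition there is a~$D$-isomorphism~$g\colon V\to V'$ conjugating~$E_D$ to~$E'_D$ with~$g\Lambda=\Lambda'$ up to translation in~$\bbZ$. Identifying~$V'$ with~$V$ via~$g$, we are reduced to the situation of two embeddings~$(\phi_1(E_D),\Lambda)$ and~$(\phi_2(E_D),\Lambda)$ of the \emph{same} unramified field~$E_D$, with the same~$\Lambda$ up to translation; after possibly composing with an element of~$\mf{n}(\Lambda)$ we may even assume~$\phi_1$ and~$\phi_2$ agree on~$E_D$ by the last sentence of Theorem~\ref{thmbroussousSecherreStevens3p3u3p5}. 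Since~$j_{E_D}$ is canonical and~$C_G(E_D)$-equivariant (Theorem~\ref{thmBroussousLemaire1punkt1} and Remark~\ref{remjE}), and since the normalizer of~$\Lambda$ maps under~$j_{E_D}$ to the normalizer of~$j_{E_D}([\Lambda])$ in~$\mf{B}_{red}(C_G(E_D))$, translating~$\Lambda$ in~$\bbZ$ corresponds to a cyclic permutation of barycentric coordinates of~$j_{E_D}([\Lambda])$; conjugation by~$g$ transports everything isometrically. Hence the barycentric coordinates of~$j_{E_D}([\Lambda])$ and~$j_{E'_D}([\Lambda'])$ agree up to cyclic permutation. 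The only subtlety here is tracking how a $\bbZ$-translate of~$\Lambda$, which is the image of a uniformizer~$\pi_D$ in the normalizer of~$\Lambda$, translates into a shift on the building of the centralizer; this is exactly the mechanism made explicit by the shearing formula in Theorem~\ref{thmAlgebraicDescriptionOfBuildingEmbedding}, where the $i$-th summand is shifted by~$\tfrac{i\,e(E|F)}{d}$, and it is what forces ``up to cyclic permutation'' rather than ``equal''.

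Conversely, suppose the barycentric coordinates of~$j_{E_D}([\Lambda])$ and~$j_{E'_D}([\Lambda'])$ agree up to cyclic permutation. Since~$E_D|F$ and~$E'_D|F$ are unramified of the same degree~$\gcd(d,f(E|F))=\gcd(d,f(E'|F))$ (the degrees must match because the number of simplices in a chamber of the relevant building is determined by this degree), they are abstractly~$F$-isomorphic, and one may choose~$F$-embeddings of a fixed model into~$A$ and into~$\End_D(V')$. Using~$j_{E_D}$ and~$j_{E'_D}$ backwards (both affine and bijective, Theorem~\ref{thmBroussousLemaire1punkt1}) and Theorem~\ref{thmAlgebraicDescriptionOfBuildingEmbedding}, the equality of barycentric coordinates up to cyclic permutation produces a~$D$-isomorphism~$g\colon V\to V'$ intertwining the two copies of~$E_D$ and carrying~$[\Lambda]$ to~$[\Lambda']$, i.e. carrying~$\Lambda$ to~$\Lambda'$ up to translation in~$\bbZ$; the cyclic permutation is absorbed by a power of~$\pi_D$ as in the displayed formula of Theorem~\ref{thmAlgebraicDescriptionOfBuildingEmbedding}. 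That is precisely the defining condition for~$(E,\Lambda)$ and~$(E',\Lambda')$ to be equivalent.

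The main obstacle I anticipate is \emph{not} in either implication per se but in the bookkeeping of the passage between~$E$ and~$E_D$, and between lattice \emph{functions} (points of~$\mf{B}(G)$) and lattice \emph{sequences}: the excerpt explicitly flags that earlier work (\cite{skodlerack:14-2}) handled only the strict case, so the new content is checking that nothing breaks for non-strict lattice sequences. Concretely one must verify that the normalizer~$\mf{n}(\Lambda)$ still surjects, via~$j_{E_D}$, onto the group of automorphisms of~$j_{E_D}([\Lambda])$ realizing cyclic permutations of barycentric coordinates, even when~$\Lambda$ has repetitions; this is where Theorem~\ref{thmbroussousSecherreStevens3p3u3p5} is doing the real work, and the rest is transport of structure along the canonical, equivariant, affine bijection~$j_{E_D}$.
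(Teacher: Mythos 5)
Your overall strategy --- transporting the problem along the equivariant, affine, bijective map~$j_{E_D}$ and a conjugation carrying~$E_D$ to~$E'_D$ --- is the same as the paper's, but at the two places where the real work happens your mechanisms do not function. In the ``only if'' direction you attribute the cyclic permutation to $\bbZ$-translates of~$\Lambda$ via the shearing formula of Theorem~\ref{thmAlgebraicDescriptionOfBuildingEmbedding}, and you describe conjugation by~$g$ as mere isometric transport. This is backwards: a translate~$\Lambda-t$ has the same class~$[\Lambda-t]=[\Lambda]$ in~$\mf{B}_{red}(G)$, so translation changes the point~$j_{E_D}([\Lambda])$ not at all; the cyclic permutation arises precisely because the isomorphism~$\mf{B}_{red}(C_G(E_D))\to\mf{B}_{red}(C_G(E'_D))$ induced by conjugation is only type-rotating rather than type-preserving. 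That type-rotation statement is the entire content of this direction (it is the paper's one-line argument), and your write-up never establishes it: ``isometric'' alone only gives agreement of barycentric coordinates up to \emph{some} permutation of the vertex types, not a cyclic one. Moreover the assertion that~$\mf{n}(\Lambda)$ ``maps under~$j_{E_D}$ to the normalizer of~$j_{E_D}([\Lambda])$'' is not meaningful: $\mf{n}(\Lambda)$ does not normalize~$E_D$, so it does not act on~$\mf{B}_{red}(C_G(E_D))$, and~$j_{E_D}$ is a map of buildings, not of groups; the appeal to Theorem~\ref{thmbroussousSecherreStevens3p3u3p5} is likewise not doing any work here.

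In the converse direction there are two gaps. First, your reason for~$[E_D:F]=[E'_D:F]$ --- that the number of vertices of a chamber of~$\mf{B}_{red}(C_G(E_D))$ is determined by this degree --- is false: one has~$C_A(E_D)\cong M_m(D'')$ with~$D''$ of index~$d/[E_D:F]$, so a chamber always has~$m=\dim_DV$ vertices, independently of~$E_D$; the barycentric data does not detect the degree, and the conjugacy of~$E_D$ with~$E'_D$ has to be taken as an input (as the paper does implicitly when it produces the element~$g_1$). Second, and more seriously, the construction of the conjugating element is only gestured at. After moving~$E_D$ onto~$E'_D$, both points lie in the single building~$\mf{B}_{red}(C_G(E'_D))$, and one must (i) realize the cyclic permutation of barycentric coordinates by an element of the centralizer~$C_G(E'_D)$ (of suitable reduced-norm valuation), and then (ii) carry one point to the other by an element of~$C_G(E'_D)$ of reduced norm one, before pulling back through~$j_{E'_D}$; this two-step argument is what the paper actually does. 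Your proposed shortcut --- absorbing the cyclic permutation ``by a power of~$\pi_D$'' as in the displayed formula of Theorem~\ref{thmAlgebraicDescriptionOfBuildingEmbedding} --- does not take place inside~$G$: right multiplication by~$\pi_D$ is not~$D$-linear, so~$\pi_D$ is not an element of~$G=\Aut_D(V)$ and cannot serve as (part of) the conjugating element the definition of equivalence requires.
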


\begin{proof}
At first we prove the if-part.  There is an element~$g_1$ of~$A^\times$ which conjugates~$E_D$ to~$E'_D$. The conjugation induces an affine, simplicial isomorphism 
from~$\mf{B}_{red}( G_{E_D})$ 
 to~$\mf{B}_{red}( G_{E'_D})$ which respects barycentric coordinates up to cyclic permutation. Thus~$j_{E'_D}([g_1\Gamma])$ and~$j_{E'_D}([\Gamma'])$ have the 
 same barycentric coordinates up to cyclic permutation, and therefore  there is an element~$g_2$ of~$ G_{E'_D}$ such that~$j_{E'_D}([g_2g_1\Gamma])$ and~$j_{E'_D}([\Gamma'])$ have 
 the same barycentric coordinates and thus there is an element of~$ G_{E'_D}$ of reduced norm~$1$ which conjugates~$j_{E'_D}([g_2g_1\Gamma])$ to~$j_{E'_D}([\Gamma'])$. 
 Thus, the embeddings~$(E,\Lambda)$ and~$(E',\Lambda')$ are equivalent. 
 For the only-if-part, assume that both embeddings are equivalent, in particular there is an element~$g$ of~$A^\times$  conjugating~$E_D$ to~$E'_D$ 
 such that~$j_{E'_D}([g\Gamma])$ is equal to
~$j_{E'_D}([\Gamma'])$. In particular,~$j_{E_D}([\Gamma])$ and~$j_{E'_D}([\Gamma'])$ have the same barycentric coordinates up to cyclic permutation. 
 \end{proof}
 
 Theorem~\ref{thmBroussousLemaire1punkt1} implies that the existence of an embedding with a fixed lattice sequence only depends on the ramification index and the inertia degree: 
 
 \begin{corollary}\label{corconsructingEmbeddingsFromNumericalData}
 Suppose~$E|F$ is a field extension and~$\phi$ an injective~$F$-algebra homomorphism from~$E$ into~$A$ such that~$(\phi(E),\Lambda)$ is an embedding in~$A$. Suppose that~$E'|F$ 
 is a field extension in an algebraic closure~$\bar{E}$ of~$E$ such that~$e(E'|F)$ divides~$e(E|F)$ and~$f(E'|F)$ divides~$f(E|F)$ and~$E_D|F$ is equal to 
~$E'_D|F$. Then, there is an injective~$F$-algebra homomorphism~$\phi'$ from~$E'$ into~$A$ such that~$\phi'(E')^\times$ normalizes~$\Lambda$ and~$\phi$ is equal to~$\phi'$ on~$E\cap E'$. In particular, the embeddings~$(\phi(E),\Lambda)$ and~$(\phi'(E'),\Lambda)$ are equivalent.
 \end{corollary}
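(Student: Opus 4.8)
The plan is to reduce Corollary~\ref{corconsructingEmbeddingsFromNumericalData} to the geometric criterion of Theorem~\ref{thmGeometricInterpretationOfEquivalenceOfEmbeddings} together with the surjectivity statement about embeddings contained in Theorem~\ref{thmBroussousLemaire1punkt1} and the algebraic description in Theorem~\ref{thmAlgebraicDescriptionOfBuildingEmbedding}. First I would observe that, since $E_D|F=E'_D|F$, the unramified parts coincide, so the point $y:=j_{E_D}([\Lambda])\in\mf{B}_{red}(C_G(E_D))$ is already available; by Theorem~\ref{thmBroussousLemaire1punkt1} the group $C_G(E_D)=\GL_{D'}(V')$ for a suitable central division algebra $D'$ over $E_D$ (the $E_D$-centralizer of $A$), and $[\Lambda]$ corresponds, via the bijective affine map $j_{E_D}$, to $y$. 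So the task becomes: produce inside $C_A(E_D)$ an $F$-algebra (equivalently $E_D$-algebra) embedding of $E'$ whose image normalizes the lattice function corresponding to $y$, and which agrees with $\phi$ on $E\cap E'$.

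The key steps, in order, are: (1) Replace $G$ by $C_G(E_D)$, working over the base field $E_D$ rather than $F$; note $E|E_D$ is totally ramified of degree $e(E|F)$ and likewise $E'|E_D$ is totally ramified of degree $e(E'|F)$, and $e(E'|F)\mid e(E|F)$ by hypothesis — here I would also need that $E'_D=E_D$ forces $f(E'|F)/f(F\text{-part})$ to be absorbed, so the only genuine invariant left to match is the $E_D$-period divisibility. (2) Inside the division algebra $D'$ (or rather inside $C_A(E_D)$, an $E_D$-central simple algebra) the existence of a totally ramified embedding of $E'$ normalizing a given lattice function is governed, by Theorem~\ref{thmbroussousSecherreStevens3p3u3p5} and the building-theoretic criterion, purely by the compatibility of $e(E'|E_D)$ with the barycentric coordinates of $y$ in its chamber: since $E|E_D$ already embeds with $\Lambda$ normalized, the coordinates of $y$ are of the shape forced by $e(E|E_D)$, and because $e(E'|E_D)\mid e(E|E_D)$ a coarser such embedding exists — concretely, one builds $\phi'(E')$ as a sub-extension-compatible embedding using the explicit lattice-function formula of Theorem~\ref{thmAlgebraicDescriptionOfBuildingEmbedding} applied to $E'_D=E_D$ and then adjoining a uniformizer of $E'$ that permutes the relevant lattices with the correct period. (3) Arrange the agreement $\phi|_{E\cap E'}=\phi'|_{E\cap E'}$: since $E\cap E'$ contains $E_D\cap E'$ and by the tower structure one may first fix $\phi'$ on $E\cap E'$ (a field between $E_D$ and both $E,E'$) and then extend, using Theorem~\ref{thmbroussousSecherreStevens3p3u3p5} (the refined statement: if the embeddings agree on the unramified part $E_D$, the conjugating element can be taken in $P(\Lambda)$, hence it preserves the normalization of $\Lambda$) to adjust $\phi'$ by a $P(\Lambda)$-conjugation so that it matches $\phi$ on $E\cap E'$. (4) Conclude equivalence of $(\phi(E),\Lambda)$ and $(\phi'(E'),\Lambda)$ directly from Theorem~\ref{thmGeometricInterpretationOfEquivalenceOfEmbeddings}, since $j_{E_D}([\Lambda])=j_{E'_D}([\Lambda])$ with $E_D=E'_D$ gives literally the same point, so the barycentric coordinates agree (even without a cyclic permutation).

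The main obstacle I expect is step~(2)–(3): ensuring that the totally ramified embedding of $E'$ inside $C_A(E_D)$ can be chosen to contain the prescribed sub-embedding $\phi|_{E\cap E'}$ while simultaneously having its image normalize $\Lambda$. The freedom to match $\phi$ on $E\cap E'$ consumes the $P(\Lambda)$-conjugation ambiguity, and one has to check that the remaining uniformizer of $E'$ over $E\cap E'$ can be placed inside the normalizer $\mf{n}(\Lambda)$ with the correct period behaviour; this is where the explicit description of $j_{E_D}$ via the lattice-function formula in Theorem~\ref{thmAlgebraicDescriptionOfBuildingEmbedding}, combined with the divisibility $e(E'|F)\mid e(E|F)$, does the real work — essentially one reads off from the barycentric coordinates of $j_{E_D}([\Lambda])$ that a uniformizer acting with the coarser period is already present in $\mf{n}(\Lambda)\cap C_A(E\cap E')$. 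Once this compatibility is in hand, everything else is a formal consequence of the three cited theorems.
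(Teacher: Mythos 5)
There is a genuine gap, and it sits exactly where you admit ``the real work'' happens: your step (2) asserts, but does not prove, the existence of an embedding of $E'$ normalizing $\Lambda$. None of the results you invoke can deliver this: Theorem~\ref{thmbroussousSecherreStevens3p3u3p5} and Theorem~\ref{thmGeometricInterpretationOfEquivalenceOfEmbeddings} only compare or conjugate embeddings that \emph{already exist}, and Theorem~\ref{thmAlgebraicDescriptionOfBuildingEmbedding} only describes $j_{E}$ for an unramified $E$ embeddable in $D$, so it cannot produce the (totally ramified part of the) embedding of $E'$. The sentence ``one reads off from the barycentric coordinates of $j_{E_D}([\Lambda])$ that a uniformizer acting with the coarser period is already present in $\mf{n}(\Lambda)\cap C_A(E\cap E')$'' is precisely the statement that has to be proved. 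Moreover the reduction you base it on is faulty: $[E_D:F]=\gcd(d,f(E|F))$, so $f(E|E_D)=f(E|F)/\gcd(d,f(E|F))$ need not be $1$; hence $E|E_D$ is in general \emph{not} totally ramified, and after passing to $C_G(E_D)$ there remains an unramified part of $E|E_D$ and $E'|E_D$ to control, so ``the only genuine invariant left to match is the $E_D$-period divisibility'' is unjustified (existence of an embedding normalizing a fixed lattice sequence depends on the inertia degree as well as on the period). A secondary gap is step (3): to apply Theorem~\ref{thmbroussousSecherreStevens3p3u3p5} and correct $\phi'$ by a conjugation, you first need the two embeddings of $E\cap E'$ (via $\phi$ and via your candidate $\phi'$) to be \emph{equivalent}, which is again something to verify, not a formality.

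For comparison, the paper's proof is organized so that both difficulties disappear or are dealt with head on. It first enlarges $E'$ (harmlessly) so that $E|F$ and $E'|F$ have the same $e$ and $f$, defines $\phi'$ on $E\cap E'$ to be $\phi$, and passes to the centralizer $C_A(\phi(E\cap E'))$ via Theorem~\ref{thmBroussousLemaire1punkt1}; this builds the agreement on $E\cap E'$ into the construction (no conjugation fix-up needed) and reduces to $E\cap E'=F$ with both extensions totally ramified. The existence is then obtained by a genuine construction: embed a maximal totally ramified extension $\tilde{E'}\supseteq E'$ of degree $\deg(A)$; its centralizer fixes a single point of $\mf{B}_{red}(G)$, the barycenter of a chamber $C$; after conjugation $[\Lambda]$ lies in the closure of $C$; since $\phi(\pi_E)$ and $\tilde{\phi'}(\pi_{E'})$ have equal $F$-valuation of reduced norm (here the equality $e(E|F)=e(E'|F)$ is used), they rotate the Coxeter diagram identically, so $[\tilde{\phi'}(\pi_{E'})\Lambda]$ has the same barycentric coordinates as $[\phi(\pi_E)\Lambda]=[\Lambda]$ and still lies in $\overline{C}$, hence equals $[\Lambda]$; thus $\tilde{\phi'}(E')^\times$ normalizes $\Lambda$ and $\phi'=\tilde{\phi'}|_{E'}$ works. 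Some argument of this kind is what your step (2) still has to supply before the rest of your outline can be run.
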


 \begin{proof}
  Without loss of generality we can assume that~$E|F$ and~$E'|F$ have the same ramification index and the same inertia degree, because we can replace~$E'$ by an extension field of~$E'$ with inertia degree~$f(E|F)$ and ramification index~$e(E|F)$. We consider an algebraic closure~$\overline{\phi(E)}$ of~$\phi(E)$ and a field isomorphism~$\psi$ from~$\bar{E}$ to~$\overline{\phi(E)}$ extending~$\phi$. We replace~$(E,\phi,\bar{E},E')$ by~$(\phi(E),\id_{\phi(E)},\overline{\phi(E)},\psi(E'))$, i.e. we can assume without loss of generality that~$E$ is an extension field of~$F$ in~$A$.
  
  We just define~$\phi'$ to be the identity on~$E\cap E'$ and we go to the centralizer~$C_A(E\cap E')$. Thus, using Theorem \ref{thmBroussousLemaire1punkt1}, we can assume that~$E\cap E'$ is~$F$. In particular both extensions are totally ramified. We extend~$E'|F$ to a purely ramified field extension
 ~$\tilde{E'}|F$ of degree~$\deg(A)$. It can be embedded into~$A$ via an injective~$F$-algebra homomorphism~$\tilde{\phi'}$. The reduced building of~$C_A(\phi'(\tilde{E'}))^\times$  only consists of one point, which is given by a mid point of a chamber   in~$\mf{B}_{red}( G)$, by Theorem \ref{thmBroussousLemaire1punkt1}. Let us call this chamber~$C$.   By conjugation we can assume that~$[\Lambda]$ is an element of the closure of~$C$. We take uniformizers~$\pi_E$ and 
 ~$\pi_{E'}$. The~$F$-valuations of the reduced norm of~$\pi_E$ and~$\tilde{\phi'}(\pi_{E'})$ coincide. Thus they rotate the Coxeter diagram of~$\mf{B}_{red}( G)$ in the same way and thus they transform barycentric coordinates the same way. In particular: The translation class of~$\Lambda$ which coincides with~$[\phi(\pi_E)\Lambda]$ has the same barycentric coordinates as~$[\tilde{\phi'}(\pi_{E'})\Lambda]$. The latter is still an element of the closure of~$C$ and must therefore coincide 
  with~$[\Lambda]$. We define~$\phi'$ to be the restriction of~$\tilde{\phi'}$ to~$E'$.  
 \end{proof}

\section{Semisimple Strata}\label{secSemisimpleStrata}

\subsection{Definitions for semisimple strata}\label{subsecSemisimpleStrataFirstDef}
Here we introduce semisimple strata for proper inner forms of the general linear groups, and we introduce a notation which makes it more convenient to work with strata.
A very good introduction for semisimple strata for the case of~$D=F$ can be foand in~\cite[\S 6]{skodlerackStevens:18} and~\cite[\S 2.4, \S 8.1]{KSS:21}.
There is a very detailed study of simple strata in~\cite{secherreI:04}. And the aim of this subsection is to state the straightforward generalizations.
A stratum is a quadruple
\[\Delta=[\Lambda,n,r,\beta]\]
such that~$\Lambda$ is an~$o_D$-lattice sequence,~$\beta$ is an element of~$\mf{a}_{\Lambda,-n}$ and~$0\leq r\leq n$.
Given a stratum~$\Delta$ and an integer~$j$ such that~$0\leq r+j\leq n$ we define~$\Delta(j+)$ to be the stratum obtained from~$\Delta$ replacing~$r$ by~$r+j$, and we define~$\Delta(j-)$ to be~$\Delta((-j)+)$ for~$0\leq r-j\leq n$. 
If~$n=r$ and~$\beta=0$ we call the stratum a~\emph{null-stratum}. 
A stratum~$\Delta$ is called~\emph{pure} if it is null or~$E:=F[\beta]$ is a field and~$\Lambda$ is an~$o_E$-lattice sequence 
and~$\nu_\Lambda(\beta)=-n$. Given a pure stratum~$\Delta$ we define the~\emph{critical exponent}~$k_0(\Lambda,\beta)$ 
 similar to \cite[Definition 2.3]{secherreI:04} in the following way: 
Let~$\mathfrak{n}_k(\beta,\Lambda)=\{x\in\mathfrak{a}_{\Lambda,0}: \beta x-x\beta\in \mathfrak{a}_{\Lambda,k}\}$ and define~$k_0(\beta,\Lambda)$ by
\[k_0(\beta,\Lambda)=\max\left\{\nu_{\Lambda}(\beta),\sup\{k\in\mathbb{Z}:\mathfrak{n}_{k}(\beta,\Lambda)\not\subseteq \mathfrak{a}_{j_E(\Lambda),0}+\mathfrak{a}_{\Lambda,1}\}\right\},\]
for non-zero~$\beta$ and~$k_0(0,\Lambda)=-\infty$. This is slightly different from the definition in \cite[before Definition 2.3]{secherreI:04}. In~\loccit\ the critical exponent for non-zero elements of~$F$ is~$-\infty$. If we consider~$\Lambda$ as an~$o_F$-lattice sequence, we write~$\Lambda_F$, then the critical exponent satisfies
\begin{equation}\label{eqCriticalExponent}
k_0(\beta,\Lambda_F)=e(\Lambda|E)k_0(\beta,\mf{p}_E^\bbZ),
 \end{equation}
 see~\cite[before Lemma 5.6]{stevens:01}.
 Given a positive integer~$a$ and an integer~$b$ then the~\emph{affine translates} of a stratum~$\Delta=[\Lambda,n,r,\beta]$ with respect to~$(a,b)$ are the strata
 \[[a\Lambda+b,rn,s,\beta],\ s\in\{ar,ar+1,\ldots,(a+1)r-1\}.\]

\begin{notation}
If we write that we are given a stratum~$\Delta'$ then we mean that the entries have the superscript~$'$, i.e.~$\Delta'$ is equal to~$[\Lambda',n',r',\beta']$. This also applies to all other superscripts. 
For subscripts we have the following rule:~$\Delta_i$ is the stratum~$[\Lambda^i,n_i,r_i,\beta_i]$. This also defines the notation~$E',V',E_i,V^i$ etc. .
\end{notation}

A pure stratum~$\Delta$ is called \emph{simple} if
~$k_0(\beta,\Lambda)<-r$, in particular if~$n=r$ then the stratum has to be null. 
Two strata~$\Delta$ and~$\Delta'$ are called equivalent if~$r=r'$ and for all integers~$s\leq -r$
the coset~$\beta+\mf{a}_{\Lambda,s}$ is equal to~$\beta'+\mf{a}_{\Lambda',s}$.
\begin{proposition}[{\cite[Theorem 2.2]{secherreIII:05}}]\label{propPureIsEquivToSimple}
 Let~$\Delta$ be a pure stratum. Then there is a simple stratum~$\Delta'$ equivalent to~$\Delta$ such that the unramified sub-extension of~$E'|F$ is contained
 in~$E|F$. 
\end{proposition}

To define semisimple strata it is convenient to use the direct sum of strata: 
We recall that the~\emph{period} of a stratum~$\Delta$ is the~$o_D$-period of~$\Lambda$.
Let~$\Delta'$ and~$\Delta''$ be two strata such that~$r'=r''$ and with the same period, then we define
\[\Delta'\oplus \Delta'':=[\Lambda'\oplus\Lambda'',\max(n',n''),r',\beta'\oplus\beta''],\]
and if a stratum~$\Delta$ decomposes in this way, it is called~\emph{split} by the decomposition~$V'\oplus V''$ with projections~$\Delta|_{V'}$ and~$\Delta|_{V''}$. 
A stratum~$\Delta$ is called~\emph{semi-pure} if~$\Delta$ is a direct sum of pure strata
\[\Delta=\bigoplus_{i\in I}\Delta_i,\ E_i:=F[\beta_i],\]
such that~$\beta$ generates over~$F$ the product of the fields~$E_i$. We call the strata~$\Delta_i$ the blocks of~$\Delta$.
We write~$B$ for the centralizer of~$E$ in~$A$. 


\begin{definition}\label{defSemisimpleStratum}
A semi-pure stratum~$\Delta$ is called~\emph{semisimple} if the direct sum~$\Delta_{i_1}\oplus\Delta_{i_2}$ is not 
equivalent to a pure stratum (or equivalently simple stratum) for all different indexes~$i_1,i_2 \in I$ and such that~$\Delta_i$ is a simple stratum for all~$i\in I$. 
\end{definition}

Given a non-null semisimple stratum~$\Delta$ with~$r=0$ we can define the critical exponent~$k_0(\beta,\Lambda)$ analogously as 
in~\cite[Definition 8.2]{KSS:21} to be 
\[k_0(\beta,\Lambda)=-\min\{s\in\bbZ\ |\ n\geq s>  0\text{ and }[\Lambda,n,s,\beta]\text{ is not semisimple}\}.\]

\begin{remark}
Analogously to~\cite[Remark 8.3]{KSS:21} this can be generalized to all pairs~$(\beta,\Lambda)$ where~$\beta$ generates over~$F$ a product of fields
which decomposes~$\Lambda$ into~$o_{E_i}$-lattice sequences.
\end{remark}

We also write~$k_0(\Delta)$ for~$k_0(\beta,\Lambda)$.
We have extension and restriction of scalars for strata: Given a stratum~$\Delta$ and a finite field extension~$\tilde{F}|L$ we define extension of scalars of~$\Delta$ to~$\ti{F}$ by 
$\Delta\otimes_F\tilde{F}=[\Lambda\otimes_{o_L} o_{\tilde{F}},n,r,\beta\otimes_F1]$
seen as a stratum with respect to~$\End_D(V)\otimes_F\ti{F}$, and we define the restriction of scalars of~$\Delta$ to a sub-skewfield~$\ti{D}$ of~$D$ to be the stratum 
$\Res_{\ti{D}}(\Delta)=[\Lambda,n,r,\beta]$ seen as a stratum of~$\End_{\ti{D}}(V)$. For example~$\Delta\otimes L$ and~$\Res_F(\Delta)$ are very important. 
The extension of scalars to~$\ti{F}$ starting from a semi-pure stratum~$\Delta$ comes equipped with an action of the group~$\Aut(\ti{F}|F)$ ($F$-linear
field automorphisms of~$\ti{F}$) on the set of  blocks of~$\Delta\otimes\ti{F}$ induced form the action on~$\End_D(V)\otimes \ti{F}$ on the second factor.  We recall that the 
intertwining from~$\Delta$ to~$\Delta'$ is the set~$I(\Delta,\Delta')$ of elements~$g\in G$ such that 
$g(\beta+\mf{a}_{-r})g^{-1}$ intersects~$(\beta'+\mf{a}'_{-r'})$. We say that an element~$g\in G$ intertwines~$\Delta$ with~$\Delta'$
if~$g$ is an element of~$I(\Delta,\Delta')$. 

The following construction attaches to a stratum a strict stratum in a canonical way. 
\begin{definition}[{\cite[\S 2.5]{broussousSecherreStevens:12}}]\label{defFirstDdagForStrata}
 Given a stratum~$\Delta$ we define $\Delta^\dag=\oplus_{i=0}^{e(\Lambda|F)-1} [\Lambda-i,n,r,\beta]$. Now~$\Delta^\dag$ is strict. 
\end{definition}

\subsection{Fundamental strata}
Let~$\Delta$ be a stratum such that~$n=r+1$. We recall the construction of the characteristic and the minimal polynomial of~$\Delta$ which we denote 
by~$\chi_\Delta$ and~$\mu_\Delta$, for more details see~\cite[Definition 2.5]{secherreStevensVI:12}. 
Let~$g$ be the greatest common divisor of~$e(\Lambda|F)$ and~$n$. We put~$\mf{y}(\Delta):=\pi_F^{\frac{n}{g}}\beta^{\frac{e(\Lambda|F)}{g}}$ and write~$\bar{\mf{y}}(\Delta)$ for  the class of~$\mf{y}(\Delta)$ in~$\mf{a}_{\Lambda_F,0}/\mf{a}_{\Lambda_F,1}$. We denote by~$\chi_\Delta$ and~$\mu_\Delta$ (elements of~$\kappa_F[X]$) the characteristic and the minimal polynomial 
of~$\bar{\mf{y}}(\Delta)$, respectively. These polynomials only depend on the equivalence class of~$\Delta$. 
The stratum~$\Delta$ is called \emph{fundamental} if~$\mu_\Delta$ is not a power of~$X$. We generalize the following 
proposition.

\begin{proposition}[{{\cite[Proposition 2.7]{secherreStevensVI:12}}}]\label{propCriteriaFundtoSimplStratum}
 Let~$\Delta$ be a stratum with~$n=r+1$. Then~$\Delta$ is equivalent to a non-null simple stratum if and only if~$\mu_\Delta$ is irreducible and different from~$X$. 
\end{proposition}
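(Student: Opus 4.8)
The plan is to deduce Proposition~\ref{propCriteriaFundtoSimplStratum} from the split case~\cite[Proposition 2.7]{secherreStevensVI:10} by restriction of scalars, together with the already-stated dictionary between simple strata over~$D$ and over~$F$. The key observation is that ``being equivalent to a non-zero simple stratum'' and ``fundamentality'' are properties which can be tested after passing to~$\Res_F(\Delta)=[\Lambda,n,r,\beta]$, viewed as a stratum of~$\End_F(V)$, provided one keeps track of the change of lattice-sequence period. Recall that the minimal polynomial~$\mu_\Delta$ is built from~$\bar{\mf y}(\Delta)=\overline{\pi_F^{n/g}\beta^{e(\Lambda|F)/g}}$ in~$\mf a_0/\mf a_1$; this element, and hence~$\mu_\Delta$, is visibly unchanged (up to the bookkeeping of~$g$) under~$\Res_F$ once one notes that~$e(\Lambda_F|F)=e(\Lambda|D)\cdot e$ is a multiple of~$e(\Lambda|D)$ by a fixed factor coprime to... — more carefully, I will use that for a stratum with~$n=r+1$ the polynomial~$\mu_\Delta$ coincides with~$\mu_{\Res_F(\Delta)}$ up to replacing the exponents by their analogues for~$\Lambda_F$, which S\'echerre–Stevens already handle.

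First I would reduce to~$n=r+1$ throughout (this is the hypothesis), and treat the trivial direction: if~$\Delta$ is equivalent to a non-zero simple stratum~$[\Lambda,n,r,\beta_0]$ with~$F[\beta_0]$ a field, then by definition~$k_0(\beta_0,\Lambda)<-r=-n+1$, so~$\nu_\Lambda(\beta_0)=-n$ and~$\beta_0$ is ``minimal'' in the sense that its reduction generates the residue field extension; a direct computation, identical to the split case, shows~$\bar{\mf y}(\Delta)$ is a unit times a generator of~$\kappa_{E_0}$ over~$\kappa_F$, so~$\mu_\Delta$ is irreducible and~$\neq X$. For the converse, suppose~$\mu_\Delta$ is irreducible and~$\neq X$. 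Apply~\cite[Proposition 2.7]{secherreStevensVI:10} to~$\Res_F(\Delta)=[\Lambda_F,n,r,\beta]$ (after matching up the minimal polynomials, which is the routine part): it yields that~$\Res_F(\Delta)$ is equivalent to a non-zero simple stratum~$[\Lambda_F,n,r,\gamma]$ of~$\End_F(V)$. Now~$\beta-\gamma\in\mf a_{\Lambda,-r}$ forces~$\gamma\in\mf a_{\Lambda,-n}$ and~$\nu_{\Lambda}(\gamma)=-n$, and~$F[\gamma]$ is a field; but I need~$\gamma$ to lie in~$\End_D(V)$, i.e. to commute with the~$D$-action, which it need not a priori.

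To fix the last point I would not insist on~$\gamma$ itself but instead argue that~$\Delta$ is already pure, hence by Theorem~\ref{thmPureIsEquivToSimple} equivalent to a simple stratum over~$D$. Purity of~$\Delta$ means~$E=F[\beta]$ is a field, $\Lambda$ is an~$o_E$-lattice sequence, and~$\nu_\Lambda(\beta)=-n$. The last equality is immediate from~$\beta\in\mf a_{-n}$ together with~$\mu_\Delta\neq X$ (if~$\nu_\Lambda(\beta)>-n$ then~$\bar{\mf y}(\Delta)=0$ and~$\mu_\Delta=X$). That~$F[\beta]$ is a field: from~\cite[Proposition 2.7]{secherreStevensVI:10} applied to~$\Res_F(\Delta)$ we get that~$\Res_F(\Delta)$ is equivalent to a simple stratum, and S\'echerre–Stevens' theory gives that the minimal polynomial of~$\beta$ over~$F$ then has the shape forcing~$F[\beta]$ to be a field — alternatively, $\mu_\Delta$ irreducible of degree~$>0$ shows~$\bar{\mf y}(\Delta)$ is not nilpotent, so by the structure of~$\mf a_0/\mf a_1$ (a product of matrix algebras over residue fields) the element~$\beta$ cannot generate a non-trivial idempotent, whence~$F[\beta]$ is a field and~$\Lambda$ is automatically an~$o_E$-lattice sequence because~$E^\times\subseteq\mf n(\Lambda)$ — this last inclusion follows since~$\beta$, being~$\mf a_0/\mf a_1$-regular of this form, normalizes the filtration. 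So~$\Delta$ is pure, and Theorem~\ref{thmPureIsEquivToSimple} produces a simple stratum~$[\Lambda,n,r,\beta']$ equivalent to~$\Delta$; it is non-zero since~$\mu_\Delta\neq X$ rules out~$n=r$ with~$\beta=0$.

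The main obstacle is the middle step: cleanly transporting the minimal/characteristic polynomial across~$\Res_F$ and ensuring the descent argument for ``$F[\beta]$ is a field / $\Lambda$ is an~$o_E$-lattice sequence'' is valid in the division-algebra setting, since the naive ``take~$\gamma$ from the split case'' does not stay inside~$\End_D(V)$. My approach sidesteps that by only using the split proposition to certify non-nilpotence of~$\bar{\mf y}(\Delta)$ and then running the intrinsic purity argument, after which Theorem~\ref{thmPureIsEquivToSimple} (already available) does the rest. I expect the polynomial-transport bookkeeping (relating~$g=\gcd(e(\Lambda|F),n)$ for~$\Lambda$ as an~$o_D$- versus~$o_F$-lattice sequence, using~\eqref{eqCriticalExponent}-style period identities) to be the only genuinely computational piece, and it is entirely parallel to~\cite[Definition 2.5]{secherreStevensVI:10}.
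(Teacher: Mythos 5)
There is a genuine gap in your converse direction, and it sits exactly at the point you tried to sidestep. First, a remark on sources: the paper gives no proof of Proposition~\ref{propCriteriaFundtoSimplStratum} at all — it is imported verbatim from S\'echerre--Stevens, and in that reference the statement is already proved for strata in~$\End_D(V)$, not just the split case. So the result you propose to ``deduce from the split case \cite[Proposition 2.7]{secherreStevensVI:10}'' is in fact the very statement being quoted; the honest version of your plan is to deduce the~$D$-case from the Bushnell--Kutzko split-case criterion via~$\Res_F$, and that is where the problem lies.

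The fatal step is your claim that~$\mu_\Delta$ irreducible and different from~$X$ forces~$\Delta$ itself to be \emph{pure} (i.e.\ that~$F[\beta]$ is a field and~$\Lambda$ is an~$o_E$-lattice sequence), so that Theorem~\ref{thmPureIsEquivToSimple} applies directly. This is false:~$\mu_\Delta$ depends only on the equivalence class of~$\Delta$, that is on~$\beta$ modulo~$\mf{a}_{1-n}$, whereas purity of~$\beta$ is destroyed by exactly such perturbations. Already for~$D=F$: take~$A=M_4(F)$,~$\Lambda$ of~$F$-period~$1$,~$n=1$,~$r=0$, and~$\beta=\pi_F^{-1}\diag(w,w')$ with~$w\in M_2(o_F)$ having irreducible residual characteristic polynomial and~$w'=w+\pi_F z$ chosen so that~$w$ and~$w'$ have distinct (irreducible) characteristic polynomials over~$F$. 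Then~$\bar{\mf{y}}(\Delta)=\diag(\bar w,\bar w)$ has irreducible minimal polynomial~$\neq X$, and~$\Delta$ is equivalent to the non-zero simple stratum~$[\Lambda,1,0,\pi_F^{-1}\diag(w,w)]$, but~$F[\beta]\cong F[w]\times F[w']$ is not a field, so~$\Delta$ is not pure. Hence the non-nilpotence of~$\bar{\mf{y}}(\Delta)$ does not rule out non-trivial idempotents in~$F[\beta]$, and your purity argument cannot bridge the difficulty you correctly identified in the middle of the proposal: the simple stratum produced by the split-case criterion applied to~$\Res_F(\Delta)$ has a~$\gamma$ with no reason to be~$D$-linear, and converting ``$\Res_F(\Delta)$ is equivalent to a simple stratum'' into ``$\Delta$ is equivalent to a simple stratum over~$D$'' is precisely the nontrivial descent. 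Note also that you cannot borrow this descent from Corollary~\ref{corEquivToMinSimpleStratum} or Proposition~\ref{propGeneralizedPureSimple}, since their proofs in the paper use Proposition~\ref{propCriteriaFundtoSimplStratum} over~$D$ as an input, so that would be circular. Your forward direction (simple with~$n=r+1$ forces~$\beta_0$ minimal, whence~$\kappa_F[\bar{\mf{y}}]$ is a field) is fine in outline, but the converse as written does not close.
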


To state the analogue criterion for semisimple strata we need a multiplication map: 
For an element~$b\in \mf{a}_{-n}(\Lambda)$ and an integer~$t$, the map 
\[
m_{t,n,b}: \mf{a}_{-tn}/\mf{a}_{1-tn}\ra\mf{a}_{-(t+1)n}/\mf{a}_{1-(t+1)n}
\]
is defined via multiplication with~$b$.

\begin{proposition}\label{propCriteriaFundtoSemisimplStratum}
  Let~$\Delta$ be a stratum with~$n=r+1$. Then~$\Delta$ is equivalent to a semisimple stratum if and only if the minimal 
  polynomial $\mu_\Delta$ is  square-free and for every integer~$t$ the kernel 
of~$m_{t+1,n,\beta}$ and the image of~$m_{t,n,\beta}$ intersect trivially.  
\end{proposition}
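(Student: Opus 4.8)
The plan is to reduce the semisimple case to the simple case (Proposition~\ref{propCriteriaFundtoSimplStratum}) by decomposing along the primary factorisation of the minimal polynomial, exactly as in the split case in \cite{skodlerackStevens:15-1} and \cite{kurinczukSkodlerackStevens:16}.

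First I would prove the "only if" direction. Suppose $\Delta=\bigoplus_{i\in I}\Delta_i$ is equivalent to a semisimple stratum; we may assume $\Delta$ itself is semisimple. Then each block $\Delta_i$ is simple with $n_i=n$ (the blocks with $n_i<n$ contribute nothing modulo $\mathfrak{a}_1$, so after replacing $\Delta$ by an equivalent stratum we may assume all $n_i=n$, or handle the degenerate blocks separately — this bookkeeping is routine). Proposition~\ref{propCriteriaFundtoSimplStratum} gives that $\mu_{\Delta_i}$ is irreducible and $\neq X$ for each $i$. A direct computation shows $\bar{\mathfrak{y}}(\Delta)=\bigoplus_i\bar{\mathfrak{y}}(\Delta_i)$ acting block-diagonally, so $\chi_\Delta=\prod_i\chi_{\Delta_i}$, whence $\mu_\Delta=\operatorname{lcm}_i\mu_{\Delta_i}$ is a product of distinct irreducibles, i.e. square-free. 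For the multiplication-map condition: on the block $V^i$, the map $m_{t,n,\beta}$ restricts to $m_{t,n,\beta_i}$, and the kernel–image triviality for a simple stratum with $n=r+1$ follows because $\bar{\mathfrak{y}}(\Delta_i)$ is invertible in the appropriate graded piece (its minimal polynomial is not a power of $X$, indeed is irreducible $\neq X$). Across different blocks, the hypothesis that $\Delta_{i_1}\oplus\Delta_{i_2}$ is not equivalent to a pure stratum for $i_1\neq i_2$ translates — via the factorisation of $\chi$ and the defining property of semisimplicity — into $\mu_{\Delta_{i_1}}\neq\mu_{\Delta_{i_2}}$, and this coprimality is what forces $\ker m_{t+1,n,\beta}\cap\operatorname{im} m_{t,n,\beta}=0$ globally, by a graded Chinese-Remainder / primary-decomposition argument on the $\kappa_F[\bar{\mathfrak{y}}(\Delta)]$-module structure of $\bigoplus_t\mathfrak{a}_{-tn}/\mathfrak{a}_{1-tn}$.

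Next I would prove the "if" direction, which is the substantive one. Given $\Delta$ with $n=r+1$, $\mu_\Delta$ square-free, and the kernel–image condition, factor $\mu_\Delta=\prod_{j}\mu_j$ into distinct monic irreducibles over $\kappa_F$. The condition on $m_{t,n,\beta}$ guarantees that the generalised eigenspace decomposition of the graded module $\bigoplus_t\mathfrak{a}_{-tn}/\mathfrak{a}_{1-tn}$ under $\bar{\mathfrak{y}}(\Delta)$ coincides with the genuine eigenspace decomposition (no nilpotent part linking gradings), and it descends to a decomposition $V=\bigoplus_j V^j$ over $\kappa_D$, hence — lifting idempotents through the congruence filtration, as in \cite{secherreStevensVI:10} — to an honest $D$-decomposition $V=\bigoplus_j V^j$ splitting $\Lambda$ into $o_D$-lattice sequences $\Lambda^j$. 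One arranges that $\beta$ is, up to the equivalence class, block-diagonal: $\beta\equiv\bigoplus_j\beta_j\pmod{\mathfrak{a}_{1-n}}$, where on the $X$-block (if $X\mid\mu_\Delta$) one takes $\beta_j=0$ with a larger value of $r$, and on each block with $\mu_j\neq X$ the stratum $[\Lambda^j,n,n-1,\beta_j]$ has $\mu_{\Delta_j}=\mu_j$ irreducible $\neq X$, hence by Proposition~\ref{propCriteriaFundtoSimplStratum} is equivalent to a non-zero simple stratum $\Delta_j'$. Finally one checks that $\bigoplus_j\Delta_j'$ is semi-pure (the $\beta_j$ generate a product of fields, essentially because the $\mu_j$ are coprime so the $F[\beta_j]$ sit in distinct factors) and semisimple (the non-equivalence of $\Delta_{j_1}'\oplus\Delta_{j_2}'$ to a pure stratum follows from $\mu_{j_1}\neq\mu_{j_2}$, reversing the implication from the first half), and that it is equivalent to $\Delta$.

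The main obstacle I expect is the passage from the graded-module decomposition over the residue algebra to an actual $D$-rational splitting $V=\bigoplus_j V^j$ with $\beta$ genuinely block-diagonal up to equivalence — i.e. controlling the interaction of the semisimplicity condition with the non-commutativity of $D$ and with the lattice-sequence (non-strict) setting. Over $F$ this is handled in \cite[Section 6]{skodlerackStevens:15-1} by idempotent-lifting arguments combined with the building-embedding theory; here one additionally needs Theorem~\ref{thmAlgebraicDescriptionOfBuildingEmbedding} / Remark~\ref{remjE} and the embedding-type results (Theorem~\ref{thmbroussousSecherreStevens3p3u3p5}, Corollary~\ref{corconsructingEmbeddingsFromNumericalData}) to ensure the lattice sequence $\Lambda$ is simultaneously an $o_{E_j}$-lattice sequence for each block, and that the $k_0$-estimates making each $\Delta_j'$ simple survive. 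The kernel–image condition on $m_{t,n,\beta}$ is precisely the extra input — absent from the naive "square-free" guess — that rules out the pathological case where distinct blocks share a congruence and fail to separate; verifying that it does exactly this, and no more, is the delicate point, and I would model the argument closely on \cite[Proposition 8.4 and its proof]{kurinczukSkodlerackStevens:16}.
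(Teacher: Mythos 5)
Your overall plan (primary decomposition of $\mu_\Delta$, blockwise reduction to Proposition~\ref{propCriteriaFundtoSimplStratum} and the zero case, then checking the resulting sum is semisimple) is the same skeleton as the paper's proof, and your identification of the real difficulty --- turning the residue-level eigenspace decomposition into a genuine $D$-rational splitting of~$V$ and~$\Lambda$ with~$\beta$ block-diagonal up to equivalence --- is exactly right. But that step is precisely where your proposal has a gap: you defer it to ``idempotent-lifting arguments combined with the building-embedding theory'' and to the embedding-type results (Theorem~\ref{thmbroussousSecherreStevens3p3u3p5}, Corollary~\ref{corconsructingEmbeddingsFromNumericalData}), and those tools do not produce the splitting. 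Embedding types compare two given field embeddings normalizing a fixed lattice sequence; they do not manufacture $D$-stable idempotents out of an $F$-rational (or residue-level) decomposition, and no $k_0$-estimate is relevant at level $n=r+1$. So the delicate point you flag is left unresolved, and the route you sketch for it would not close it.

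The paper closes it with a much more elementary mechanism, which is the actual new content of the proposition in the non-split setting. One restricts scalars and applies the known split case (\cite[Proposition 6.11]{skodlerackStevens:15-1}) to get an equivalent stratum for~$\Res_F(\Delta)$ split along the prime factorization of~$\mu_\Delta$; square-freeness makes $\kappa_F[\bar{\mf{y}}(\Delta)]$ a product of fields, so each idempotent~$e$ of that splitting is congruent modulo~$\mf{a}_{\Lambda_F,1}$ to~$P(\mf{y}(\Delta))$ for some~$P\in o_F[X]$. Since~$\beta$ is $D$-linear and~$D^\times$ normalizes~$\Lambda$, these idempotents satisfy~$x e x^{-1}-e\in\mf{a}_{\Lambda_F,1}$ for all~$x\in D^\times$, and Lemma~\ref{lemLiftingSplitting} (a Nakayama-type idempotent lift over~$\kappa_D$) then conjugates the splitting by an element of~$P_1(\Lambda_F)$ into honest $D$-subspaces, after which Proposition~\ref{propCriteriaFundtoSimplStratum} and Lemma~\ref{lemCriteriaFundtoZerostratum} finish blockwise. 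If you replace your appeal to embedding types by this ``idempotents are polynomials in~$\mf{y}(\Delta)$, hence almost $D$-central'' argument (or prove Lemma~\ref{lemLiftingSplitting} yourself), your proof goes through; note also that the paper treats your first paragraph (the ``only if'' direction) as routine, and that re-deriving the graded-module analysis is unnecessary once you restrict scalars and quote the split case.
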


For the proof we need two lemmas.

\begin{lemma}\label{lemCriteriaFundtoZerostratum}
 Let~$\Delta$ be a stratum with~$n=r+1$. Then~$\Delta$ is equivalent to a null-stratum if and only if~$\mu_\Delta$ is equal to~$X$ and for every integer~$t$ the kernel 
of~$m_{t+1,n,\beta}$ and the image of~$m_{t,n,\beta}$ intersect trivially. 
\end{lemma}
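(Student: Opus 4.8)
The plan is to reduce the statement to a bookkeeping exercise about the graded ring $\bigoplus_t \mathfrak{a}_{-tn}/\mathfrak{a}_{1-tn}$ and the element $\bar{\mathfrak{y}}(\Delta)$, exactly as one does in the split case, but recording that the multiplication maps $m_{t,n,\beta}$ are the honest carriers of the ``no overlap'' condition. First I would recall that, since $n=r+1$, a stratum $\Delta'$ equivalent to $\Delta$ has $\beta'\equiv\beta\pmod{\mathfrak{a}_{1-n}}$, so ``$\Delta$ is equivalent to a zero-stratum'' means precisely $\beta\in\mathfrak{a}_{1-n}$, i.e. $\bar{\mathfrak{y}}(\Delta)$ can be taken to be $0$ after adjusting $\beta$ within its class — but one must be careful: $\bar{\mathfrak{y}}(\Delta)=\pi_F^{n/g}\bar\beta^{e(\Lambda|F)/g}$ involves a power, so $\bar{\mathfrak{y}}(\Delta)=0$ does \emph{not} a priori force $\bar\beta=0$. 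This is exactly why the second condition is needed, and I expect this to be the crux of the argument.

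For the ``only if'' direction: if $\Delta$ is equivalent to the zero-stratum $[\Lambda,n,r,0]$, then $\mu_\Delta=\mu_{[\Lambda,n,r,0]}=X$ (since $\bar{\mathfrak{y}}=0$), and the maps $m_{t,n,\beta}$ are, modulo the equivalence, the maps $m_{t,n,0}=0$; more carefully, replacing $\beta$ by a representative in $\mathfrak{a}_{1-n}$ I would show that the image of $m_{t,n,\beta}$ lands in a higher filtration step relative to what the formula for the graded piece sees, so the trivial-intersection condition holds vacuously. (One has to check the condition is really an invariant of the equivalence class of $\Delta$, which follows because $m_{t,n,\beta}$ only depends on $\beta$ modulo $\mathfrak{a}_{1-n}$ once one passes to the graded quotients — I would spell this out first as it is used in both directions.)

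For the ``if'' direction, assume $\mu_\Delta=X$ and the trivial-intersection condition. From $\mu_\Delta=X$ we get that $\bar{\mathfrak{y}}(\Delta)$ is nilpotent in $\mathfrak{a}_0/\mathfrak{a}_1$. The strategy is to run the standard descent: if $\beta\notin\mathfrak{a}_{1-n}$, pick the largest $n'<n$ with $\beta\in\mathfrak{a}_{-n'}\setminus\mathfrak{a}_{1-n'}$ (WLOG after translating $\Lambda$ we may even assume $\beta\in\mathfrak a_{-n}\setminus\mathfrak a_{1-n}$, treating the stratum $[\Lambda,n',n'-1,\beta]$), and look at $\bar{\mathfrak{y}}([\Lambda,n',n'-1,\beta])$; nilpotency of this class (which I must derive — here is where $\mu_\Delta=X$ together with a gcd argument on $e(\Lambda|F)$ and $n$ enters) means some power $\beta^{k}$ lies in a strictly deeper filtration step than it ``should'', and translating this into the graded ring produces a nonzero element that is simultaneously in $\ker m_{t+1,n,\beta}$ and $\operatorname{im} m_{t,n,\beta}$ for a suitable $t$, contradicting the hypothesis. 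Concretely: a nonzero nilpotent $\bar x$ with $\bar x^{k}=0$, $\bar x^{k-1}\ne 0$ gives $\bar x^{k-1}\in\operatorname{im}(m_{\cdot})$ (it is $\bar x$ times $\bar x^{k-2}$) and $\bar x\cdot \bar x^{k-1}=0$ so $\bar x^{k-1}\in\ker(m_{\cdot})$, with the bookkeeping of which graded pieces arranged by the choice of $t$.

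The main obstacle I anticipate is the careful matching of indices: $m_{t,n,\beta}$ is defined using the ``true'' parameter $n$ of $\Delta$, whereas the nilpotent element naturally arising from $\beta$ lives in a grading governed by the actual valuation $\nu_\Lambda(\beta)$, which may be larger than $-n$. Bridging these — i.e. showing that the overlap one produces in the $\nu_\Lambda(\beta)$-grading can be transported to the exact $n$-grading in which the hypothesis is phrased — is the delicate point, and I would handle it by first reducing (via replacing $\Lambda$ by a translate and $\Delta$ by $\Delta(j+)$/$\Delta(j-)$, which do not affect the conclusion) to the case where either $\beta\in\mathfrak{a}_{1-n}$ outright or $\nu_\Lambda(\beta)=-n$, the latter putting us in the genuinely pure situation where Proposition \ref{propCriteriaFundtoSimplStratum} and the known split-case analysis from \cite{kurinczukSkodlerackStevens:16} apply directly.
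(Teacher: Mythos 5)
Your proposal is correct and is essentially the paper's own argument read contrapositively: $\mu_\Delta=X$ gives $\bar{\mathfrak{y}}(\Delta)=0$ (not merely nilpotent), hence $\beta^{e(\Lambda|F)}\in\mathfrak{a}_{1-e(\Lambda|F)n}$, and your ``first vanishing power of $\beta$'' element is exactly what the paper extracts by applying the trivial-intersection hypothesis successively to the composite $m_{e(\Lambda|F)-1,n,\beta}\circ\cdots\circ m_{0,n,\beta}$ to conclude that $m_{0,n,\beta}=0$, i.e. $\beta\in\mathfrak{a}_{1-n}$. One caveat: the ``delicate point'' you anticipate is vacuous, since $\beta\in\mathfrak{a}_{-n}$ by the definition of a stratum, so either $\beta\in\mathfrak{a}_{1-n}$ (and you are done) or $\nu_\Lambda(\beta)=-n$, in which case the grading carrying the powers of $\beta$ is already the one in which the hypothesis is stated; accordingly, the reductions you propose for it are neither needed nor workable (translating $\Lambda$ does not change the filtration $\mathfrak{a}_i$ at all, and Proposition~\ref{propCriteriaFundtoSimplStratum} is inapplicable here because it concerns $\mu_\Delta$ irreducible and different from $X$).
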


\begin{proof}
 We only have to prove the ``if''-part. The condition on~$\mu_\Delta$ implies that~$\beta^{e(\Lambda|F)}$ is an element 
 of~$\mf{a}_{-e(\Lambda|F)n+1}$. We apply the condition on the multiplication maps successively on
 \[m_{e(\Lambda|F)-1,n,\beta}\circ m_{e(\Lambda|F)-2,n,\beta}\circ\ldots \circ m_{1,n,\beta}\circ m_{0,n,\beta}\] successively to obtain 
 that~$ m_{0,n,\beta}$ is the zero-map. 
\end{proof}

\begin{lemma}\label{lemLiftingSplitting}
Let~$\Lambda$ be an~$o_D$-lattice sequence, and suppose that~$\Lambda_F$ is split by a decomposition~$V=V^1+V^2$ into~$F$ vector spaces such that the corresponding 
idempotents~$1^1$ and~$1^2$ satisfy~$x1^1x^{-1}-1^1\in\mf{a}_{\Lambda_F,1}$, for all~$x\in D^\times$. Then there is an element~$u\in P_1(\Lambda_F)$ such that~$uV^1$ and~$uV^2$ 
are~$D$-vector spaces.
\end{lemma}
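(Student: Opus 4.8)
We have an $o_D$-lattice sequence $\Lambda$, and $\Lambda_F$ (viewed over $o_F$) splits as $V = V^1 \oplus V^2$ over $F$, with idempotents $1^1, 1^2$ satisfying $x 1^i x^{-1} - 1^i \in \mathfrak{a}_1$ for all $x \in D^\times$. We want to find $u \in P_1(\Lambda_F)$ such that $uV^1$ and $uV^2$ are $D$-subspaces (equivalently, $u 1^i u^{-1}$ commutes with the $D$-action, i.e., lies in $\End_D(V)$).

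**The approach.** This is a cohomological lifting / averaging argument. The condition $x 1^1 x^{-1} - 1^1 \in \mathfrak{a}_1$ for all $x \in D^\times$ says that $1^1$ is "$D$-invariant modulo $\mathfrak{a}_1$". The natural idea is to average $1^1$ over $D^\times$ — or rather over a suitable compact subgroup — to produce a genuinely $D$-invariant idempotent close to $1^1$, then to conjugate $1^1$ to it by an element of $P_1(\Lambda_F)$. Concretely:

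First I would observe that since $L \subset D$ is a maximal unramified extension and $\pi_D$ normalizes $L$ with $\pi_D x \pi_D^{-1} = \tau(x)$ generating $G(L|F)$, the $D$-action on $V$ is generated (as an algebra action) by $o_L$ and $\pi_D$. So it suffices to make $1^1$ commute with $L$ and with $\pi_D$. For the $L$-part: since $L|F$ is unramified of degree $d$ and $o_L^\times$ acts, one can average $1^1$ over the action of $o_L^\times$ (or better, use that $o_L$-modules split off) — the key point is that $\Lambda$ being an $o_D$-lattice sequence, each $\mathfrak{a}_{\Lambda,k}$ is an $o_D$-bimodule, hence the subgroup $P_1(\Lambda_F) = 1 + \mathfrak{a}_{\Lambda,1}$ is stable under conjugation by $o_D^\times$. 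I would then set up the standard successive-approximation scheme: given an idempotent $1^1$ that is $D$-invariant modulo $\mathfrak{a}_{\Lambda, k}$ for some $k \ge 1$, produce $v \in 1 + \mathfrak{a}_{\Lambda, k}$ such that $v 1^1 v^{-1}$ is $D$-invariant modulo $\mathfrak{a}_{\Lambda, k+1}$; the obstruction to doing this lives in a cohomology group $H^1(D^\times, \mathfrak{a}_{\Lambda,k}/\mathfrak{a}_{\Lambda,k+1})$, or more precisely in the cohomology of the finite group $G(L|F) \ltimes (\text{something})$ acting on the finite-dimensional $\kappa_F$-vector space $\mathfrak{a}_{\Lambda,k}/\mathfrak{a}_{\Lambda,k+1}$. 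Since this group has order prime to the residue characteristic when we reduce appropriately — $|G(L|F)| = d$ and the relevant cocycles take values in a pro-$p$ group modulo its Frattini-type quotient — the relevant $H^1$ vanishes, so each approximation step succeeds. Taking the limit (which converges since $\mathfrak{a}_{\Lambda, k} \to 0$) gives $u \in P_1(\Lambda_F)$ with $u 1^1 u^{-1} \in \End_D(V)$, whence $uV^1, uV^2$ are $D$-subspaces.

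**The main obstacle.** The delicate point is the vanishing of the obstruction at each step, i.e., showing $H^1$ of the appropriate finite group with coefficients in $\mathfrak{a}_{\Lambda,k}/\mathfrak{a}_{\Lambda,k+1}$ vanishes. The group acting is essentially $D^\times$ acting by conjugation, which factors through a finite quotient once one goes modulo $\mathfrak{a}_{\Lambda,k+1}$; the subtle part is that $D^\times$ is not finite and its action on the graded piece factors through $G(L|F)$ only after quotienting by $1 + \mathfrak{p}_D$, so one must check that the $1+\mathfrak{p}_D$-part of the action is already accounted for by the hypothesis or handled by a separate (pro-$p$ against pro-$p$) argument — this is where the hypothesis "$x 1^1 x^{-1} - 1^1 \in \mathfrak{a}_1$ for \emph{all} $x \in D^\times$", not just for a generating set, is used. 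The cleanest route may be to first reduce to the case where $1^1$ already commutes with $o_L$ (using that $\gcd(d, \#\kappa_F) = 1$-type arguments or direct splitting of the semisimple $o_L/\mathfrak{p}_L$-action on graded pieces) and then separately arrange commutation with $\pi_D$ by a final averaging over the cyclic group generated by $\pi_D$ modulo $P_1$; combining the two while staying inside $P_1(\Lambda_F)$ is the part requiring care.
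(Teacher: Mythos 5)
Your overall strategy (successive approximation, killing the obstruction on each graded piece) could conceivably be pushed through, but as written it has a genuine gap at exactly the point you yourself flag, and the justification you offer for the key vanishing is wrong. You claim the obstruction dies because the acting finite group has order prime to the residue characteristic; that is not available here: the degree~$d$ of~$D$, hence the order of~$\Gal(L|F)$ and of the cyclic group through which conjugation by~$\pi_D$ acts on the graded pieces, may well be divisible by~$p$, while the coefficient modules~$\mathfrak{a}_{\Lambda,k}/\mathfrak{a}_{\Lambda,k+1}$ are $\kappa_F$-vector spaces, i.e.\ $p$-torsion. So the averaging argument collapses precisely for the~$\pi_D$-part (the~$o_L$-part is harmless, since~$o_L^\times$ acts through~$\kappa_L^\times$, of prime-to-$p$ order). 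To rescue the scheme you would need an additive Hilbert~90 type statement for the specific modules involved, which you neither formulate nor prove; and on top of that you explicitly leave open how to combine the~$L$-commutation and the~$\pi_D$-commutation while staying inside~$P_1(\Lambda_F)$ (``the part requiring care''). As it stands, therefore, the proposal is a plan rather than a proof.

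The paper's argument is much more direct and avoids all of this: since~$x1^ix^{-1}\equiv 1^i$ modulo~$\mathfrak{a}_1$ for all~$x\in D^\times$, the reductions~$\bar{1}^1,\bar{1}^2$ in~$\mathfrak{a}_0/\mathfrak{a}_1$ give $\kappa_D$-linear splittings of the finitely many $\kappa_D$-vector spaces~$\Lambda_i/\Lambda_i\pi_D$, $0\leq i<e(\Lambda|D)$. By Nakayama (lifting of idempotents) one finds a decomposition~$V=\tilde{V}^1\oplus\tilde{V}^2$ into $D$-subspaces which splits~$\Lambda$ and whose idempotents~$\tilde{1}^i$ have the same images as~$1^i$ in~$\mathfrak{a}_0/\mathfrak{a}_1$; then the single explicit element~$u:=\tilde{1}^1 1^1+\tilde{1}^2 1^2$ lies in~$P_1(\Lambda_F)$ and satisfies~$uV^i=\tilde{V}^i$. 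In other words, one reduction modulo~$\mathfrak{a}_1$ and one idempotent-lifting step suffice; no iteration, no convergence argument, and no group cohomology are needed. If you want to keep a cohomological flavour, the honest substitute for your coprimality claim is this one-step lifting, not an averaging over~$\langle\pi_D\rangle$.
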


\begin{proof}
The classes $\bar{1}^1$ and~$\bar{1}^2$ in~$\mf{a}_{\Lambda_F,0}/\mf{a}_{\Lambda_F,1}$ split the~$\kappa_D$-vector spaces~$\Lambda_i/\Lambda_{e(\Lambda|D)}$ for all~$0\leq i<e(\Lambda|D)$.
Thus by the Lemma of Nakayama we find a decomposition~$V=\tilde{V}^1\oplus\tilde{V}^2$ into~$D$-vector spaces such the idempotents satisfy~$\bar{\tilde{1}}^i=\bar{1}^i$ for~$i=1,2$. 
In particular~$\Lambda$ is split by the latter decomposition, and the map~$u$ defined via~$u(v):=\tilde{1}^1 1^1(v)+\tilde{1}^2 1^2(v)$ is an element of~$P_1(\Lambda_F)$
such that~$uV^i=\ti{V}^i$, $i=1,2$.
\end{proof}

%

\begin{proof}[Proof of Proposition~\ref{propCriteriaFundtoSemisimplStratum}]
 We prove the ``if''-part. The ``only-if''-part is easy and left for the reader.  
 As in the proof of~\cite[Proposition 6.11]{skodlerackStevens:18}~$\Res_F(\Delta)$ is equivalent to a stratum~$\Delta'_F$ which is a direct sum of strata
 corresponding to the prime decomposition of~$\mu_\Delta=\mu_{\Res_F(\Delta)}$. Those direct summands corresponding to the prime factors different from~$X$ are equivalent to a simple stratum by Proposition~\ref{propCriteriaFundtoSimplStratum}. 
 So, we can assume that those summands are already simple. Using the square-freeness of~$\mu_\Delta$ again, which forces~$\kappa_F[\bar{\mf{y}}(\Delta)]$ to be isomorphic to the product 
 of the residue fields corresponding to the summands, we obtain for every idempotent~$e$
 of the decomposition of~$V$ the existence of a polynomial~$P\in o_F[X]$ such that~$P(\mf{y}(\Delta))$ is congruent to~$e$ modulo~$\mf{a}_{\Lambda_F,1}$. 
 These idempotents satisfy the condition of Lemma~\ref{lemLiftingSplitting}, because~$\beta$ is~$D$-linear. Thus by conjugating the idempotents with an element~$u\in P_1(\Lambda_F)$ 
 we can assume that the constituents of the decomposition of~$V$ are in fact~$D$-sub-vector spaces. This decomposition also splits~$\Delta$ into a sum of strata
 which are equivalent to simple strata by Proposition~\ref{propCriteriaFundtoSimplStratum} and Lemma~\ref{lemCriteriaFundtoZerostratum}.
\end{proof}

Proposition~\ref{propCriteriaFundtoSemisimplStratum} has the following consequence:

\begin{corollary}\label{corEquivToMinSemisimpleStratum}
 Let~$\Delta$ be a stratum with~$n=r+1$ and let~$\ti{F}|L$ be an unramified field extension. Then the following assertions are equivalent:
 \begin{enumerate}
  \item The stratum~$\Delta$ is equivalent to a semisimple stratum.\label{corEquivToMinSemisimpleStratumAss1}
  \item The stratum~$\Delta\otimes \ti{F}$ is equivalent to a semisimple stratum.\label{corEquivToMinSemisimpleStratumAss2}
  \item The stratum~$\Res_F(\Delta)$ is equivalent to a semisimple stratum.\label{corEquivToMinSemisimpleStratumAss3}
 \end{enumerate}
\end{corollary}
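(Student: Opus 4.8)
The plan is to read all three assertions through Proposition~\ref{propCriteriaFundtoSemisimplStratum}, which for a stratum with $n=r+1$ characterises ``equivalent to a semisimple stratum'' by two conditions: that its minimal polynomial is square-free, and that for every $t$ the kernel of $m_{t+1,n,\beta}$ meets the image of $m_{t,n,\beta}$ only in zero. So I would show that each of these two conditions holds for $\Delta$ if and only if it holds for $\Delta\otimes\ti F$, and if and only if it holds for $\Res_F(\Delta)$. For $\Delta\otimes\ti F$ the ambient algebra $\End_D(V)\otimes_F\ti F$ is, because $\ti F$ contains $L$, a matrix algebra over the field $\ti F$; after the standard Morita identification it is again of the form $\End_{D'}(V')$ (with $D'=\ti F$), so the criterion applies there as well, and equivalence to a semisimple stratum is a Morita invariant, so this causes no trouble.

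Consider first the operation $\otimes\ti F$. Since $\ti F\vert F$ is unramified, the relation $\mf a_{\Lambda\otimes_{o_L}o_{\ti F},s}=\mf a_{\Lambda,s}\otimes_{o_F}o_{\ti F}$ identifies the residue algebra and the multiplication maps of $\Delta\otimes\ti F$ with those of $\Delta$ after base change along $\kappa_F\to\kappa_{\ti F}$, carrying $\bar{\mf y}(\Delta)$ to $\bar{\mf y}(\Delta\otimes\ti F)$. For the transversality condition this is all that is needed: forming kernels and images commutes with the faithfully flat extension $\kappa_F\to\kappa_{\ti F}$, and triviality of the intersection holds before exactly when it holds after. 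For the square-freeness condition, note that $\mu_\Delta$ is square-free if and only if the $\kappa_F$-subalgebra $\kappa_F[\bar{\mf y}(\Delta)]$ is reduced; the new minimal polynomial is a divisor of $\mu_\Delta$ inside the (reduced) overalgebra $\kappa_{\ti F}[\bar{\mf y}(\Delta)]$, and conversely $\kappa_F[\bar{\mf y}(\Delta)]$ is a subalgebra of it, so square-freeness is preserved in both directions. This gives \ref{corEquivToMinSemisimpleStratumAss1}$\Leftrightarrow$\ref{corEquivToMinSemisimpleStratumAss2}.

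For $\Res_F$ write $\Lambda_F$ for $\Lambda$ viewed over $o_F$. The identity $\mf a_{\Lambda,s}=\mf a_{\Lambda_F,s}\cap\End_D(V)$ gives for every $s$ a natural injective $\kappa_F$-linear map $\mf a_{\Lambda,s}/\mf a_{\Lambda,s+1}\hookrightarrow\mf a_{\Lambda_F,s}/\mf a_{\Lambda_F,s+1}$, an algebra homomorphism for $s=0$, compatible with the maps $m_{t,n,\beta}$, and sending $\bar{\mf y}(\Delta)$ to $\bar{\mf y}(\Res_F(\Delta))$ (the element $\mf y=\pi_F^{n/g}\beta^{e(\Lambda|F)/g}$ is literally the same, as $n$, $r$, $e(\Lambda|F)$ and $g=\gcd(e(\Lambda|F),n)$ are unchanged). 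An injective unital $\kappa_F$-algebra homomorphism preserves minimal polynomials over $\kappa_F$, so $\mu_{\Res_F(\Delta)}=\mu_\Delta$; and the injective maps carry the intersection of kernel and image computed over $\End_D(V)$ into the one computed over $\End_F(V)$, so transversality for $\Res_F(\Delta)$ forces transversality for $\Delta$. Together these give \ref{corEquivToMinSemisimpleStratumAss3}$\Rightarrow$\ref{corEquivToMinSemisimpleStratumAss1}. The implication \ref{corEquivToMinSemisimpleStratumAss1}$\Rightarrow$\ref{corEquivToMinSemisimpleStratumAss3} is the only one that is not formal, because the natural maps run the wrong way for transporting transversality from $\Delta$ to $\Res_F(\Delta)$; I expect this to be the main obstacle.

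To get it I would instead show that restriction of scalars sends a semisimple stratum to one that is again equivalent to a semisimple stratum. Assuming \ref{corEquivToMinSemisimpleStratumAss1}, pick a semisimple stratum $\Delta_0=\bigoplus_{i\in I}\Delta_{0,i}$ over $D$, with the same $\Lambda$ and $r$, equivalent to $\Delta$; then $\Res_F(\Delta)$ is equivalent to $\Res_F(\Delta_0)=\bigoplus_i\Res_F(\Delta_{0,i})$. Each $\Res_F(\Delta_{0,i})$ is pure over $F$ — a zero stratum, or one for which $F[\beta_{0,i}]$ is still a field, $\Lambda^i_F$ an $o_{F[\beta_{0,i}]}$-lattice sequence and $\nu_{\Lambda^i_F}(\beta_{0,i})=-n_i$ — hence by Theorem~\ref{thmPureIsEquivToSimple} equivalent to a simple stratum $\Delta'_{0,i}$ over $F$; since $\beta_0$ still generates a product of fields over $F$, $\bigoplus_i\Delta'_{0,i}$ is a semi-pure stratum over $F$ with simple blocks, equivalent to $\Res_F(\Delta_0)$. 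It remains to check the pairwise condition of Definition~\ref{defSemisimpleStratum}, that $\Delta'_{0,i_1}\oplus\Delta'_{0,i_2}$ is not equivalent to a pure (equivalently simple) stratum for $i_1\ne i_2$. By Definition~\ref{defSemisimpleStratum} at most one block of $\Delta_0$ is a zero stratum, so this direct sum has $n=r+1$ and its minimal polynomial is defined; by the computation above it equals $\mu_{\Delta_{0,i_1}\oplus\Delta_{0,i_2}}$, the least common multiple of the contributions of the two blocks to $\mu_{\Delta_0}$. Square-freeness of $\mu_{\Delta_0}$ makes these contributions coprime, of positive degree, with at least one different from $X$; hence $\mu_{\Delta'_{0,i_1}\oplus\Delta'_{0,i_2}}$ is reducible, so it is neither $X$ nor irreducible, and Proposition~\ref{propCriteriaFundtoSimplStratum} together with Lemma~\ref{lemCriteriaFundtoZerostratum} rules out equivalence to a simple stratum. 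Thus $\bigoplus_i\Delta'_{0,i}$ is semisimple over $F$ and $\Res_F(\Delta)$ is equivalent to it, which is \ref{corEquivToMinSemisimpleStratumAss3}. (Alternatively, once \ref{corEquivToMinSemisimpleStratumAss1}$\Leftrightarrow$\ref{corEquivToMinSemisimpleStratumAss2} is available for every stratum, one can close the circle by noting that, for $\ti F\supseteq L$ splitting $D$, the strata $\Delta\otimes\ti F$ and $\Res_F(\Delta)\otimes\ti F$ differ by a Morita equivalence followed by tensoring with a matrix algebra over $\ti F$, both of which preserve equivalence to a semisimple stratum.)
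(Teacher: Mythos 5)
Your overall route is legitimate and in fact more explicit than the paper's own argument: the paper also reads everything through Proposition~\ref{propCriteriaFundtoSemisimplStratum}, observes that all three strata have the same minimal polynomial, and lets the intersection condition descend along the inclusions of graded pieces from $\Res_F(\Delta)$ to $\Delta\otimes L$ to $\Delta$ (and from $\Delta\otimes \ti{F}$ to $\Delta\otimes L$), while treating \ref{corEquivToMinSemisimpleStratumAss1}$\Rightarrow$\ref{corEquivToMinSemisimpleStratumAss2},\ref{corEquivToMinSemisimpleStratumAss3} as immediate. Your faithfully flat base-change argument for $\otimes\,\ti{F}$ and your decision to actually build a semisimple stratum over~$F$ for \ref{corEquivToMinSemisimpleStratumAss1}$\Rightarrow$\ref{corEquivToMinSemisimpleStratumAss3} are sound in outline, and you correctly identified that implication as the one the inclusions do not give.

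However, two steps inside that construction are misjustified as written. First, ``square-freeness of $\mu_{\Delta_0}$ makes these contributions coprime'' is not a valid inference: the minimal polynomial of a block-diagonal element is the $\lcm$ of the blockwise minimal polynomials, and an $\lcm$ can be square-free while two factors coincide (two blocks with the same irreducible residual minimal polynomial). The coprimality you need is true, but for a different reason: each non-zero block has irreducible $\mu_{\Delta_{0,i}}\neq X$, so a common factor would force $\mu_{\Delta_{0,i_1}}=\mu_{\Delta_{0,i_2}}$, whence $\mu_{\Delta_{0,i_1}\oplus\Delta_{0,i_2}}$ would be irreducible and $\neq X$, and Proposition~\ref{propCriteriaFundtoSimplStratum} would make that pair equivalent to a simple stratum, contradicting the pairwise condition of Definition~\ref{defSemisimpleStratum} for $\Delta_0$ (two zero blocks are likewise impossible). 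Second, ``since $\beta_0$ still generates a product of fields over $F$, $\bigoplus_i\Delta'_{0,i}$ is semi-pure'' appeals to the wrong element: semi-pureness concerns $\beta'=\oplus_i\beta'_{0,i}$, the new elements supplied by Theorem~\ref{thmPureIsEquivToSimple}, and $F[\beta']=\prod_iF[\beta'_{0,i}]$ needs the (irreducible) minimal polynomials over $F$ of the $\beta'_{0,i}$ to be pairwise distinct; if two coincided, $F[\beta'_{0,i_1}\oplus\beta'_{0,i_2}]$ would be a field and that pair would be pure, something you only rule out later in the paragraph. Both defects are repairable by the same observation (pairwise non-simplicity of the pairs for $\Delta_0$, transported through the equality of minimal polynomials, as in Proposition~\ref{propCriteriaFundtoSimplStratum} and Lemma~\ref{lemCriteriaFundtoZerostratum}), but as stated these two inferences do not hold.
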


\begin{proof}
We only need to study the equivalence to semipure strata. 
 Assertion~\ref{corEquivToMinSemisimpleStratumAss1} implies~\ref{corEquivToMinSemisimpleStratumAss2} and~\ref{corEquivToMinSemisimpleStratumAss3} by the definition of equivalence of strata. 
 All three strata have the same minimal polynomial, and we therefore have to consider the intersection condition on the multiplications maps.
 If this intersection condition is satisfied for~$\Res_F(\Delta)$ then it is also satisfied by~$\Delta\otimes L$ and if it is satisfied for~$\Delta\otimes L$
 then it is satisfied for~$\Delta$, all just by inclusion. Thus for the case~$\ti{F}=L$ all three assertions are equivalent by 
 Proposition~\ref{propCriteriaFundtoSemisimplStratum}.  In the case~$\ti{F}\neq L$ we have that if~$\Delta\otimes \ti{F}$ is equivalent to a semisimple stratum then 
 again by Proposition~\ref{propCriteriaFundtoSemisimplStratum}~$\Delta\otimes L$ is equivalent to a semisimple stratum, which finishes the proof.  
 \end{proof}
 
\begin{corollary}\label{corEquivToMinSimpleStratum}
 Let~$\Delta$ be a stratum with~$n=r+1$. Then the following assertions are equivalent:
 \begin{enumerate}
  \item The stratum~$\Delta$ is equivalent to a simple stratum.\label{corEquivToMinSimpleStratumAss1}
  \item The stratum~$\Res_F(\Delta)$ is equivalent to a simple stratum.\label{corEquivToMinSimpleStratumAss2}
 \end{enumerate}
\end{corollary}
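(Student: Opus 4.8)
The plan is to reduce Corollary~\ref{corEquivToMinSimpleStratum} to the semisimple version, Corollary~\ref{corEquivToMinSemisimpleStratum}, which has just been proved, together with a characterisation of which semisimple strata with $n=r+1$ are equivalent to a \emph{simple} stratum in terms of the minimal polynomial. First I would record, exactly as in the proof of Corollary~\ref{corEquivToMinSemisimpleStratum}, that $\bar{\mf{y}}(\Delta)$ and $\bar{\mf{y}}(\Res_F(\Delta))$ have the same minimal polynomial over $\kappa_F$, so that $\mu_\Delta=\mu_{\Res_F(\Delta)}$, and that $\mu_\Delta$ depends only on the equivalence class of $\Delta$. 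Since a simple stratum is in particular semisimple, each of assertions~\ref{corEquivToMinSimpleStratumAss1} and~\ref{corEquivToMinSimpleStratumAss2} implies the corresponding ``equivalent to a semisimple stratum'' statement, and by Corollary~\ref{corEquivToMinSemisimpleStratum} those two statements are equivalent. So I may assume throughout that $\Delta$, and hence also $\Res_F(\Delta)$, is equivalent to a semisimple stratum with $n=r+1$.

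The heart of the argument is the claim: a semisimple stratum $\Theta$ with $n_\Theta=r_\Theta+1$ is equivalent to a simple stratum if and only if $\mu_\Theta$ is irreducible (including the case $\mu_\Theta=X$). The ``only if'' direction is immediate: if $\Theta$ is equivalent to a simple stratum $\Theta'$ then $\mu_\Theta=\mu_{\Theta'}$, which equals $X$ if $\Theta'$ is a zero-stratum and is irreducible different from $X$ otherwise by Proposition~\ref{propCriteriaFundtoSimplStratum}. For the ``if'' direction I would use that $\mu_\Theta$ is square-free and that $\kappa_F[\bar{\mf{y}}(\Theta)]$ is the product of the residue fields attached to the blocks of $\Theta$: irreducibility of $\mu_\Theta$ makes this product a field, hence at most one block is non-zero. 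Combining this with Definition~\ref{defSemisimpleStratum} (two zero-blocks would make their sum a zero, hence pure, stratum, so $\Theta$ has at most one zero-block, and the sum of a non-zero simple block with a zero-block has reducible minimal polynomial $X\cdot\mu_{\Theta_{i_0}}$) leaves only the possibilities that $\Theta$ is a single non-zero simple block or is a single zero-stratum; in both cases $\Theta$ is itself simple. This proves the claim.

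With the claim in hand the corollary follows: $\Delta$ is equivalent to a simple stratum if and only if it is equivalent to a semisimple stratum $\Theta$ with $\mu_\Theta$ irreducible, if and only if (using $\mu_\Delta=\mu_{\Res_F(\Delta)}$ and Corollary~\ref{corEquivToMinSemisimpleStratum}) $\Res_F(\Delta)$ is equivalent to a semisimple stratum with irreducible minimal polynomial, if and only if $\Res_F(\Delta)$ is equivalent to a simple stratum. I expect the main obstacle to be the ``if'' direction of the claim, namely pinning down the number of blocks of a semisimple stratum with $n_\Theta=r_\Theta+1$ from $\mu_\Theta$ and correctly bookkeeping a possible zero-block; this is precisely where the fundamental-stratum analysis behind Proposition~\ref{propCriteriaFundtoSimplStratum} and Lemma~\ref{lemCriteriaFundtoZerostratum} is needed, and one should double-check that a non-zero simple block really contributes a non-trivial irreducible factor to $\mu_\Theta$ even when the $D$-period of the block differs from that of $\Theta$.
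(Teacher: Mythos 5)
Your proposal is correct in outline, but it takes a genuinely different route from the paper. The paper treats the two directions directly: for \ref{corEquivToMinSimpleStratumAss1}$\Rightarrow$\ref{corEquivToMinSimpleStratumAss2} it only observes that $\Res_F$ of a simple stratum is pure and invokes Theorem~\ref{thmPureIsEquivToSimple}; for \ref{corEquivToMinSimpleStratumAss2}$\Rightarrow$\ref{corEquivToMinSimpleStratumAss1} it transports the level-$(r+1,r)$ criteria themselves, using that $\mu_\Delta=\mu_{\Res_F(\Delta)}$, that irreducibility different from $X$ already suffices in the non-zero case (Proposition~\ref{propCriteriaFundtoSimplStratum}), and that the trivial-intersection condition of Lemma~\ref{lemCriteriaFundtoZerostratum} passes from $\Res_F(\Delta)$ to $\Delta$ by inclusion, exactly as in the proof of Corollary~\ref{corEquivToMinSemisimpleStratum}. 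You instead funnel both directions through Corollary~\ref{corEquivToMinSemisimpleStratum} together with a structural claim (a semisimple stratum of level $(r+1,r)$ with irreducible minimal polynomial has a single block, hence is simple). This works, and it has the mild advantage that the multiplication-map condition never has to be revisited, since it is absorbed into Corollary~\ref{corEquivToMinSemisimpleStratum}; the cost is the extra block-counting claim, which the paper's shorter argument avoids entirely.

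The one step you must tighten is the justification of that claim: ``$\kappa_F[\bar{\mf{y}}(\Theta)]$ is the product of the residue fields attached to the blocks, irreducibility makes it a field, hence at most one non-zero block.'' Identifying the residue algebra with that product is precisely the assertion that the blocks have pairwise coprime minimal polynomials; this is automatic in the proof of Proposition~\ref{propCriteriaFundtoSemisimplStratum}, where the summands are indexed by the distinct prime factors, but not for an arbitrary semisimple stratum. A priori two distinct non-zero blocks could share the same irreducible minimal polynomial, in which case $\kappa_F[\bar{\mf{y}}(\Theta)]$ is still a field and your deduction says nothing. What rules this out is semisimplicity itself: for a level-$(r+1,r)$ semisimple stratum each non-zero block has $n_i=n$ and $e(\Lambda^i|F)=e(\Lambda|F)$ (so your worry about differing periods is vacuous), hence $\bar{\mf{y}}(\Delta_{i_1}\oplus\Delta_{i_2})$ has minimal polynomial the least common multiple of the two block polynomials; if these coincide and are irreducible different from $X$, Proposition~\ref{propCriteriaFundtoSimplStratum} makes $\Delta_{i_1}\oplus\Delta_{i_2}$ equivalent to a simple stratum, contradicting Definition~\ref{defSemisimpleStratum}. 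With that argument inserted (and the zero-block and $\mu_\Theta=X$ cases handled as you indicate), your claim, and with it the corollary, goes through.
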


\begin{proof}
 If~$\Delta$ is equivalent to a simple stratum then~$\Res_F(\Delta)$ is equivalent to a pure stratum and thus by Proposition~\ref{propPureIsEquivToSimple} equivalent to a 
 simple stratum. We prove now that~\ref{corEquivToMinSimpleStratumAss1} follows from~\ref{corEquivToMinSimpleStratumAss2}.
 Suppose that~$\Res_F(\Delta)$ is equivalent to a simple stratum. Thus the polynomial~$\mu_{\Res_F(\Delta)}$ is irreducible and~$\Res_F(\Delta)$ satisfies the trivial intersection property 
 for the multiplication maps by Proposition~\ref{propCriteriaFundtoSimplStratum} and Lemma~\ref{lemCriteriaFundtoZerostratum}. Thus the same is true for~$\Delta$, noting that~$\mu_{\Delta}$ is equal to~$\mu_{\Res_F(\Delta)}$. Thus~$\Delta$ is equivalent to a simple stratum by~\ref{propCriteriaFundtoSimplStratum} and~\ref{lemCriteriaFundtoZerostratum}. 
\end{proof}

One method to prove a question for all semisimple strata  is to use strata induction, see~\cite[Paragraph after Remark 7.2]{skodlerackStevens:18}. It is an induction over~$r$, where one proves at first the statement for all semisimple 
strata of type~$(n,n-1)$ followed by the induction step, where one reduces the problem for a stratum~$\Delta$ to a problem for~$\Delta(1+)$ and a derived stratum which is 
equivalent to a stratum of type~$(r+1,r)$. For that we need to recall
\begin{itemize}
 \item the tame corestriction and 
 \item the derived stratum.
\end{itemize}

We have the following natural isomorphisms:
\begin{enumerate}
 \item $A\otimes_F \End_A(V)\cong \End_F(V)$ where~$\End_A(V)\cong D^{op}$,
 \item $C_A(E)\otimes_F D^{op}\cong C_{\End_F(V)}(E)$, for every field extension~$E|F$ in~$A$.
\end{enumerate}

\begin{definition}[{{\cite[Definition 2.25]{secherreStevensIV:08} for the simple case}}]\label{defTameCor}
 Given a commutative reduced~$F$-algebra~$E$ in~$A$ a map~$s$ from~$A$ to~$B:=C_A(E)$ is called a~\emph{tame corestriction on~$A$ relative to~$E|F$} if the map 
 ~$s\otimes_F\id_{\End_A(V)}$ is a tame corestriction on~$\End_F(V)$ relative to~$E|F$ in the sense of~\cite[Paragraph after Notation 6.17]{skodlerackStevens:18}, or equivalently in the sense
  of~\cite[(1.3.3)]{bushnellKutzko:93} if~$E$ is a field. 
\end{definition}

Following the notation of Definition~\ref{defTameCor} the decomposition~$E=\prod_{i\in I}E_i$ as a product of fields gives decompositions~$1=\Sigma_{i\in I}1^i$,~$1^i:=1_{E_i}$, and~$B=\prod_{i\in I}B_i$, for~$B_i=1^iB 1^i$, and we put~$A_i:=1^i A 1^i$. We fix, for each~$i\in I$, an additive character~$\psi_{E_i}:A_i\rightarrow B_i$ of level~$1$ and define~$\psi_{B_i}$ as~$\psi_{E_i}\circ\trd_{B_i|E_i}$ using the reduced trace. 
By the remark after~\cite[Definition 2.25]{secherreStevensIV:08} (refering to~\cite[Lemma (4.2.1)]{broussous:99} and essentially to~\cite[Proposition (1.3.4)]{bushnellKutzko:93}) there exists, for each~$i\in I$, a tame corestriction~$\ti{s}_i$
on~$A_i$ relative to~$E_i|F$. By~\cite[Proposition~(1.3.4)(i)]{bushnellKutzko:93}, see also the sentence before, there is an element~$u\in o_{E_i}^\times$ such that~$u\ti{s}_i$, which we denote by~$s_i$, is a tame corestictions on~$A_i$ relative to~$E_i|F$ such that
\[\psi_{B_i}(s_i(a))=\psi_{A}(a),~a\in~A_i.\]
Define~$\psi_B$ via~$\psi_B(b)=\prod_{i\in I}
\psi_{B_i}(1^ib),$~$b\in B$, and 
$s(a):=\Sigma_{i\in I}s_i(1^i a 1^i),\ a\in A.$
Then~$s$ is a tame corestriction on~$A$ relative to~$E|F$ such that 
\[\psi_A(a)=\psi_B(s(a)),\ a\in A.\]


\begin{definition}\label{defDerivedstrata} 
Let~$\gamma\in A$ be an element which generates a field~$F[\gamma]$ and~$s$ be a tame corestriction on~$A$ relative to~$F[\gamma]|F$. Let~$\Delta$ be a stratum with~$n\geq r+1$. Suppose~$\Lambda$ (given in~$\Delta$) is an~$o_{F[\gamma]}-o_D$-lattice sequence. The stratum
\[\partial_{\gamma,s}(\Delta):=[j_{F[\gamma]}(\Lambda),\max(r,-\nu_\Lambda(\beta-\gamma)),r,s(\beta-\gamma)]\]
is called the derived stratum of~$\Delta$ with respect to~$\gamma$ and~$s$. $\partial_{\gamma,s}(\Delta)$ is a stratum in
$\End_{F[\gamma]\otimes_FD}V$.
\end{definition}

If~$\gamma$ generates a product of fields~$F[\gamma]=\prod_{j\in J}F[\gamma_j]$ and~$\Lambda$ is an~$o_{F[\gamma]}$-$o_D$-lattice sequence then we obtain several derived strata in the following way. 
Let~$s$ be a tame corestriction on~$A$ relative to~$F[\gamma]|F$. We define for~$j\in J$, 
\[\Delta_j:=[\Lambda^j,n,r,\beta_j:=1^j\beta 1^j].\]
The restriction~$s_j$ of~$s$ to~$\End_DV^j$. is a tame corestriction with image in~$\End_{F[\gamma_j]\otimes_FD}V^j$. We apply Definition~\ref{defDerivedstrata} for every~$j\in J$ with respect to~$\gamma_j$ and~$s_j$ to get the derived strata~$\partial_{\gamma_j,s_j}(\Delta_j)$, $j\in J$. 

The main property for strata induction is: 

\begin{proposition}\label{propDerived}
Let~$\Delta$ be a stratum with~$n\geq r+1$ such that~$\Delta(1+)$ is equivalent to a semisimple stratum with last entry~$\gamma$, such that~$F[\gamma]=\prod_{j\in J}F[\gamma_j]$ with field extensions~$F[\gamma_j]|F$. Suppose we are given a tame corestriction~$s$  on~$A$ relative to~$F[\gamma]|F$.
Then are equivalent:
\begin{enumerate}
 \item The stratum~$\Delta$ is equivalent to a semisimple stratum. \label{propDerivedAss1}
 \item For every~$j\in J$ the derived stratum~$\partial_{\gamma_j,s_j}(\Delta_j)$ is equivalent to a semisimple stratum.\label{propDerivedAss2} 
\end{enumerate}
\end{proposition}

We have to postpone the proof of this Proposition to~\S\ref{secMainTheoremOfStrataInduction}, because we need more theory. Nevertheless, the related proposition for simple strata induction is already proven:

\begin{proposition}[{{\cite[Proposition 2.13, Proposition 2.14]{secherreStevensVI:12}, see also~\cite[(2.2.8), (2.4.1)]{bushnellKutzko:93}}}]\label{propDerivedSimple}
 Let~$\Delta$ be a stratum with~$n\geq r+1$ such that~$\Delta(1+)$ is equivalent to a simple stratum with entry~$\gamma$ and let~$s$ be a tame corestriction
  on~$A$ relative to~$F[\gamma]|F$. Then~$\Delta$ is equivalent to a simple stratum if and only if~$\partial_{\gamma,s}(\Delta)$ is equivalent to a simple stratum.  
\end{proposition}

The property of being fundamental respects the~$\dag$-construction and the restriction. 
\begin{remark}\label{remLevelDdagRes}
Let~$\Delta$ be a stratum such that~$n=r+1$. Then
 the stratum~$\Delta$ is fundamental if and only if~$\Delta^\dag$ is fundamental if and only if~$\Res_F(\Delta)$ is fundamental.
\end{remark}

\subsection{Level of a stratum}

\begin{definition}\label{defLevelsOfstrata}
 Let~$\Delta$ be a stratum.
 The~\emph{ level~$l(\Delta)$} of~$\Delta$ is defined as~$\frac{n}{e(\Lambda|F)}$.
\end{definition}

As an immediate consequence the strata~$\Delta$,~$\Res_F(\Delta)$,~$\Delta^\dag$ and~$\Delta\otimes L$ share the level.

\begin{proposition}[{\cite[Proposition 6.9]{skodlerackStevens:18} for the split case}]\label{propIntertwiningAndLevels}
 Suppose that~$\Delta$ and~$\Delta'$ are strata with~$n=r+1$ and~$n'=r'+1$ and suppose they intertwine.
 \begin{enumerate}
  \item If both strata have the same level and~$\Delta$ is fundamental then~$\Delta'$ is fundamental. 
  \item If both strata are fundamental then both strata have the same level.  
 \end{enumerate}
\end{proposition}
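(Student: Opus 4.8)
The plan is to reduce the statement to the split case $D=F$, where it is \cite[6.9]{skodlerackStevens:15-1}, by restriction of scalars along $D|F$. The three notions involved all behave well under $\Res_F$: by Remark~\ref{remLevelDdagRes} one has $l(\Res_F(\Delta))=l(\Delta)$, $l(\Res_F(\Delta'))=l(\Delta')$, and $\Delta$ is fundamental if and only if $\Res_F(\Delta)$ is fundamental (and likewise for $\Delta'$); moreover $\Res_F(\Delta)=[\Lambda,n,r,\beta]$ keeps the same $n$ and $r$, so the hypotheses $n=r+1$ and $n'=r'+1$ are inherited. The last point to check is that the intertwining hypothesis survives, and this is immediate: $G=\GL_D(V)$ sits inside $\GL_F(V)$, the lattice sequence is unchanged (only reinterpreted over $o_F$ as $\Lambda_F$), and $\mf{a}_{\Lambda,s}\subseteq\mf{a}_{\Lambda_F,s}$, so any $g\in G$ with $g(\beta+\mf{a}_{\Lambda,-r})g^{-1}\cap(\beta'+\mf{a}_{\Lambda',-r'})\neq\emptyset$ automatically lies in $I(\Res_F(\Delta),\Res_F(\Delta'))$.

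Granting this, both parts follow formally from the split case applied to $\Res_F(\Delta)$ and $\Res_F(\Delta')$: for (i), same level together with $\Res_F(\Delta)$ fundamental forces $\Res_F(\Delta')$ fundamental, hence $\Delta'$ fundamental; for (ii), $\Res_F(\Delta)$ and $\Res_F(\Delta')$ both fundamental forces them to have the same level, hence $l(\Delta)=l(\Delta')$. If one were worried that \cite[6.9]{skodlerackStevens:15-1} is phrased only for strict lattice sequences, one can first pass from $\Delta,\Delta'$ to $\Delta^\ddag,(\Delta')^\ddag$ — these are strict, carry the same level and the same fundamentality by Remark~\ref{remLevelDdagRes}, and the diagonal map $\diag(g,\dots,g)$ intertwines $\Delta^\ddag$ with $(\Delta')^\ddag$ whenever $g$ intertwines $\Delta$ with $\Delta'$ — and only then restrict scalars.

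I do not expect a real obstacle here: all the content sits in the split case, and the inner-form version is a formal consequence of the compatibility of $\Res_F$ with level, fundamentality and intertwining. The only thing to be careful about is the bookkeeping of conventions — in particular that the denominator $e(\Lambda|F)$ in Definition~\ref{defLevelsOfstrata} is the $o_F$-period of $\Lambda$, which is $d$ times its $o_D$-period, so that the equality $l(\Res_F(\Delta))=l(\Delta)$ of Remark~\ref{remLevelDdagRes} really is an identity and nothing is distorted by the reduction.
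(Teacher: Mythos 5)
Your proposal is correct and follows essentially the same route as the paper: both proofs hinge on Remark~\ref{remLevelDdagRes}, i.e.\ that $\Res_F$ (and $\ddag$) preserve level and fundamentality, together with the trivial observation that an intertwiner over $D$ is in particular an intertwiner over $F$ since $\mf{a}_{\Lambda,s}\subseteq\mf{a}_{\Lambda_F,s}$. The only difference is cosmetic: after this reduction you invoke the split case \cite[6.9]{skodlerackStevens:15-1} as a black box, while the paper first normalizes further (doubling to equalize periods, then $\ddag$ to strict chains, then conjugating to $\Lambda=\Lambda'$) and writes out the split-case argument directly — note that your optional $\ddag$ fallback would likewise need a doubling step first, since $\Delta^\ddag$ and $(\Delta')^\ddag$ live on the same space only when $e(\Lambda|F)=e(\Lambda'|F)$.
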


\begin{proof}
By passing to affine translates of the strata, if necessary, we can assume that~$\Lambda$ and~$\Lambda'$ have the same period over~$D$. Recall that~$\Delta$ is fundamental if and only if~$\Res_F(\Delta^\dag)$ is 
fundamental. Thus by Remark~\ref{remLevelDdagRes} we can assume without loss of generality that~$\Lambda$ and~$\Lambda'$ are regular lattice sequences of the same period 
and~$D=F$. Thus we can conjugate~$\Lambda$ to~$\Lambda'$ by an element of~$A$ and we can therefore assume~$\Lambda=\Lambda'$. 

\begin{enumerate}
 \item Both strata have the same level and~$\Lambda=\Lambda'$. Thus~$n=n'$. The strata intertwine so they have the same characteristic polynomial. Thus~$\Delta'$ is fundamental because the characteristic polynomial of~$\Delta$ is not a power of~$X$. 
 \item Suppose~$l(\Delta)<l(\Delta')$. Then~$n<n'$ because~$\Lambda=\Lambda'$. Take an element~$g$ of~$A$ which intertwines~$\Delta$ with~$\Delta'$, say
 \[g(\beta+a)g^{-1}=\beta'+a',\ a\in\mf{a}_{1-n},\ a'\in\mf{a}'_{1-n'}.\]
 Then
 \[  ((\beta'+a')^{e}\pi_F^{n'})^t=g(\beta+a)^{et}\pi_F^{nt}g^{-1}\pi_F^{(n'-n)t},\ e=e(\Lambda|F), \]
 is an element of~$g\mf{a}_tg^{-1}$. If~$t$ is big enough then~$g\mf{a}_tg^{-1}$ is a subset of~$\mf{a}'_1$. Thus, the minimal polynomial of~$\bar{\mf{y}}(\Delta')$ is a power of~$X$.
 Its characteristic polynomial divides a power of the minimal polynomial and is therefore a power of~$X$. Thus~$\Delta'$ is non-fundamental. A contradiction.
\end{enumerate}
\end{proof}

\begin{corollary}\label{corSameLevelSemisimpleStrata}
 Let~$\Delta=[\Lambda,n,r,\beta]$ and~$\Delta'=[\Lambda',n',r',\beta]$ be two intertwining semisimple strata.
 \begin{enumerate} 
  \item\label{corSameLevelSemisimpleStrata.i} 
  If~$e(\Lambda|F)=e(\Lambda'|F)$ and~$r=r'$ then either both strata are null or both strata are non-null. 
  \item\label{corSameLevelSemisimpleStrata.ii} If both strata are non-null then~$l(\Delta)=l(\Delta')$.
  \item \label{corSameLevelSemisimpleStrata.iii} If both strata null,~$e(\Lambda|F)=e(\Lambda'|F)$ and~$r=r'$
 then~$l(\Delta)=l(\Delta')$.
 \end{enumerate}
\end{corollary}

\begin{proof}
\begin{enumerate}
 \item Suppose~$e(\Lambda|F)=e(\Lambda'|F)$ and~$r=r'$. Assume that~$\Delta$ is non-null 
 and~$\Delta'$ is null, in particular~$n>r=r'$. Thus~$[\Lambda,n,n-1,\beta]$ and~$[\Lambda',n',n',0]$ ($n'=r'$) intertwine and Proposition~\ref{propIntertwiningAndLevels} implies that~$[\Lambda,n.n-1,\beta]$ 
 is not fundamental. A contradiction, because~$\Delta$ is non-null and semisimple.
 \item If both strata are non-null then~$n>r$ and~$n'>r'$. Then~$[\Lambda,n,n-1,\beta]$ and~$[\Lambda',n',n'-1,\beta']$ are intertwining fundamental strata. So they have the same level by Proposition~\ref{propIntertwiningAndLevels} again. 
 \item Both strata are null. Therefore~$n=r$ and~$n'=r'$. It follows that both strata have the same level. 
  \end{enumerate}
\end{proof}

 \subsection{Simple strata}\label{secSimpleStrata}

In \cite[\S 3.2]{secherreI:04} the author studies quasi-simple characters. One goal of this subsection is to prove that the underlying strata of those characters are semisimple, see Proposition~\ref{propQuasiSimpleStrataAreSemisimple}. Further we prove two diagonalization theorems for simple strata, and we recall results about intertwining. 
So let~$\ti{F}|F$ be a finite unramified field extension such that the degree of~$D$ divides~$[\ti{F}:F]$. Without loss of generality let us assume 
that~$L|F$ is a sub-extension of~$\ti{F}|F$. 
In this subsection let~$\Delta$ be a semi-pure stratum. Then~$\Delta\otimes {\ti{F}}$ is again a semi-pure stratum.

\begin{proposition}[{{\cite[Theorem 2.23]{secherreI:04},\cite[(2.4.1)]{bushnellKutzko:93}}}]\label{propSimplePure}
 Let~$\Delta$ be a pure stratum then for every simple
 stratum~$\Delta'$ equivalent to~$\Delta$ we have that~$e(E'|F)$ divides~$e(E|F)$ and~$f(E'|F)$ divides~$f(E|F)$.
Further, the following assertions are equivalent: \begin{enumerate}
  \item $\Delta$ is simple.
  \item $\Res_F(\Delta)$ is simple. 
 \end{enumerate}
\end{proposition}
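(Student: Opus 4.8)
The plan is to reduce both assertions to the already-known split case $D=F$ via restriction of scalars, using the compatibility of all the relevant invariants with $\Res_F$. First I would record the divisibility statement: if $\Delta'$ is a simple stratum equivalent to the pure stratum $\Delta$, then $\Res_F(\Delta')$ is a pure stratum (because $E'|F$ is a field and $\Lambda$ is an $o_{E'}$-lattice sequence, hence an $o_F$-lattice sequence), so by Theorem~\ref{thmPureIsEquivToSimple} it is equivalent to a simple stratum $\Delta''$ whose unramified sub-extension sits inside $E'|F$. Applying the split-case version of the present proposition (Bushnell--Kutzko~\cite[2.4.1]{bushnellKutzko:93}) to the pure stratum $\Res_F(\Delta)$ and its equivalent simple stratum coming from $\Res_F(\Delta')$, we get that $e(E''|F)\mid e(E|F)$ and $f(E''|F)\mid f(E|F)$; since $\Res_F(\Delta')$ and $\Res_F(\Delta)$ are equivalent pure strata with $E'$ a field, the invariants $e(E'|F), f(E'|F)$ agree with those of $\Delta''$ (the unramified part is shared and the ramification index is read off from $\nu_\Lambda(\beta')=-n$), giving the claimed divisibilities $e(E'|F)\mid e(E|F)$ and $f(E'|F)\mid f(E|F)$.

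For the equivalence of (i) and (ii), the key point is the behaviour of the critical exponent under $\Res_F$. Recall $\Delta$ simple means $k_0(\beta,\Lambda)<-r$. By the normalisation formula~\eqref{eqCriticalExponent}, $k_0(\beta,\Lambda_F)=e(\Lambda|E)\,k_0(\beta,\mf{p}_E^{\bbZ})$, and the same relation (applied over $F$ rather than $D$) controls $k_0$ for $\Res_F(\Delta)$; since the inequality defining simplicity is insensitive to multiplying the critical exponent by the positive integer $e(\Lambda|E)$ when comparing against $-r$ scaled correspondingly, one direction is immediate: if $\Res_F(\Delta)$ is simple then so is $\Delta$. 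The more delicate direction, that $\Delta$ simple implies $\Res_F(\Delta)$ simple, I would handle by strata induction as in~\cite[7.2]{skodlerackStevens:15-1}: in the base case $n=r+1$ this is exactly Corollary~\ref{corEquivToMinSimpleStratum} combined with Corollary~\ref{corEquivToMinSemisimpleStratum}, and for the inductive step one observes that if $\Delta(1+)$ is simple with last entry $\gamma$ then by induction $\Res_F(\Delta(1+))$ is simple, and $\Res_F$ commutes with forming the derived stratum $\partial_\gamma$ (the tame corestriction over $A$ restricts to a tame corestriction over $\End_F(V)$ by the very definition, and $j_{F[\gamma]}(\Lambda)$ is the same lattice function whether computed over $D$ or over $F$ up to the doubling already built into the definition), so that $\Res_F(\partial_\gamma(\Delta))=\partial_\gamma(\Res_F(\Delta))$ up to equivalence; then Theorem~\ref{thmDerivedSimple} applied twice — once over $D$ to pass from $\Delta$ to $\partial_\gamma(\Delta)$, once over $F$ to pass back — closes the induction.

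The main obstacle I expect is the bookkeeping in that last step: making precise that restriction of scalars genuinely commutes with the passage to the derived stratum, including the lattice-function embedding $j_{F[\gamma]}$ and the choice of tame corestriction, and that the type-$(r+1,r)$ reduction is preserved. Everything else is a matter of transporting the split-case statements through $\Res_F$ together with the numerical identity~\eqref{eqCriticalExponent}; once the commutation $\Res_F\circ\,\partial_\gamma=\partial_\gamma\circ\Res_F$ is established up to equivalence, Theorem~\ref{thmDerivedSimple} does the rest and the proposition follows.
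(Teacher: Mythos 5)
The paper itself gives no proof of this proposition: it is imported from S\'echerre~\cite[2.23]{secherreI:04} and Bushnell--Kutzko~\cite[2.4.1]{bushnellKutzko:93}. Measured against what such a proof has to deliver, your proposal has genuine gaps. In the divisibility part you replace $\Res_F(\Delta')$ by an equivalent simple stratum $\Delta''$ and then assert that $e(E'|F),f(E'|F)$ agree with $e(E''|F),f(E''|F)$ ``because the unramified part is shared and the ramification index is read off from $\nu_\Lambda(\beta')=-n$''. Theorem~\ref{thmPureIsEquivToSimple} only gives that the unramified subextension of $E''|F$ is \emph{contained} in $E'|F$, and neither $e(E'|F)$ nor $f(E'|F)$ is determined by $n$ and $\Lambda$; a pure stratum is in general equivalent to a simple stratum of strictly smaller degree, which is exactly why the statement has content. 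So your argument only bounds the invariants of $E''$, not of $E'$. The correct logical order is the opposite one: once (i)$\Leftrightarrow$(ii) is known, $\Res_F(\Delta')$ is itself simple and \cite[2.4.1]{bushnellKutzko:93} applied to the pure stratum $\Res_F(\Delta)$ gives the divisibilities for $E'$ directly; the detour through $\Delta''$ is both unnecessary and, as written, unjustified.

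For the equivalence of (i) and (ii) the difficulties are not where you place them. The direction (ii)$\Rightarrow$(i) is not ``immediate'' from~\eqref{eqCriticalExponent}: that formula concerns only $k_0(\beta,\Lambda_F)$, and the companion formula $k_0(\beta,\Lambda)=e(\Lambda|E)k_F(\beta)$ for the $D$-stratum is Corollary~\ref{corEndoEquivalenceForSimpleStrata}\ref{corEndoEquivalenceForSimpleStrataAss.iii}, whose proof in this paper reduces to the split case \emph{via} Proposition~\ref{propSimplePure} --- so invoking it here is circular. Moreover there is no rescaling to perform: $\Delta$ and $\Res_F(\Delta)$ have the same $r$ and the same indexing of the filtrations, and what must be proved is a comparison of the two critical exponents themselves; already the inequality $k_0(\beta,\Lambda)\leq k_0(\beta,\Lambda_F)$ needs something like $(\mf{b}_{\Lambda_F,0}+\mf{a}_{\Lambda_F,1})\cap A=\mf{b}_{\Lambda,0}+\mf{a}_{\Lambda,1}$, and intersections do not commute with sums without an argument (projection onto $A$, or base change to $L$ and Galois descent). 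For (i)$\Rightarrow$(ii), strata induction with Theorem~\ref{thmDerivedSimple} controls the property ``\emph{equivalent to} a simple stratum'' --- that is precisely Proposition~\ref{propGeneralizedPureSimple}, whose proof in the paper in turn uses the present proposition --- whereas here you must show that the pure stratum $\Res_F(\Delta)$ \emph{is itself} simple, i.e.\ you must bound its critical exponent, and passing to derived strata gives no handle on that. Likewise Corollary~\ref{corSimpleStrataFieldCriteria}, which you would need to convert ``equivalent to simple'' into ``simple'', is deduced in the paper from Proposition~\ref{propSimplePure}. So both directions still require the genuinely new ingredient (S\'echerre's comparison of $k_0(\beta,\Lambda)$ with $k_0(\beta,\Lambda_F)$ at the level of the definition), which the proposal does not supply.
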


\begin{corollary}[{{see \cite[Definition 6.1, Proposition 6.4]{skodlerackStevens:18} for the split case}}]\label{corSimpleStrataFieldCriteria}
 Let~$\Delta$ be a pure stratum with~$n>r$. Then~$\Delta$ is simple if and only if there is no pure stratum~$\Delta'$ equivalent to~$\Delta$
 such that~$[E':F]$ is smaller than~$[E:F]$.
\end{corollary}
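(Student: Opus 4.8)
The plan is to reduce the statement to the split case $D=F$, where it is \cite[6.1, 6.4]{skodlerackStevens:15-1}, by restricting scalars and invoking Proposition~\ref{propSimplePure}. First I record three elementary facts about restriction of scalars. If $\Delta=[\Lambda,n,r,\beta]$ is pure then $\Res_F(\Delta)$ is again pure: the field $E=F[\beta]$, the value $\nu_\Lambda(\beta)=-n$, and the property that $\Lambda$ be an $o_E$-lattice sequence are all insensitive to whether $\beta$ is viewed in $A$ or in $\End_F(V)$. Next, $\Res_F$ preserves equivalence of strata on a fixed lattice sequence, because the square lattice function of $\Lambda$ on $A$ is contained in the one on $\End_F(V)$. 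Finally, by Proposition~\ref{propSimplePure} a pure stratum $\Delta$ is simple if and only if $\Res_F(\Delta)$ is simple. Note also that $[E:F]$ is the same whether computed in $A$ or in $\End_F(V)$, since the subfield $F[\beta]$ does not change.

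For a pure stratum $\Sigma$ (in some ambient $\End_{\ti D}(V)$) write $m(\Sigma)$ for the smallest value of $[F[\gamma]:F]$ as $[\Lambda,n,r,\gamma]$ runs over the pure strata equivalent to $\Sigma$. Since ``equivalent'' is an equivalence relation on strata with a fixed lattice sequence, $m(\Sigma)$ depends only on the equivalence class of $\Sigma$, and $m(\Sigma)\le[F[\beta_\Sigma]:F]$ with equality if and only if no equivalent pure stratum has strictly smaller field degree. The key step is the identity $m(\Delta)=m(\Res_F(\Delta))$ for a pure stratum $\Delta$ over $A$. The inequality $m(\Res_F(\Delta))\le m(\Delta)$ is immediate, since every pure stratum over $A$ equivalent to $\Delta$ restricts to a pure stratum over $\End_F(V)$ equivalent to $\Res_F(\Delta)$ of the same field degree. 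For the reverse, use Theorem~\ref{thmPureIsEquivToSimple} to choose a simple stratum $\Delta''=[\Lambda,n,r,\beta'']$ over $A$ equivalent to $\Delta$. Then $\Res_F(\Delta'')$ is a simple stratum over $\End_F(V)$ equivalent to $\Res_F(\Delta)$ by Proposition~\ref{propSimplePure}, so by the split case (in which any two equivalent simple strata have the same field degree, equal to $m$ of their common class, \cite[6.1,6.4]{skodlerackStevens:15-1}) one gets $[F[\beta'']:F]=m(\Res_F(\Delta''))=m(\Res_F(\Delta))$. On the other hand $\Delta''$ is a pure stratum over $A$ equivalent to $\Delta$, so $m(\Delta)\le[F[\beta'']:F]$. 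Combining, $m(\Delta)\le m(\Res_F(\Delta))$, hence the identity.

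Now assemble the equivalences
\[\Delta\text{ simple}\iff\Res_F(\Delta)\text{ simple}\iff[E:F]=m(\Res_F(\Delta))\iff[E:F]=m(\Delta),\]
where the first step is Proposition~\ref{propSimplePure}, the second is the split case \cite[6.1,6.4]{skodlerackStevens:15-1} applied to $\Res_F(\Delta)$ (using $n>r$), and the third is the identity of the previous paragraph. Finally $[E:F]=m(\Delta)$ says exactly that no pure stratum equivalent to $\Delta$ has $[E':F]<[E:F]$, which is the assertion of the corollary.

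The step I expect to be the main obstacle is the identity $m(\Delta)=m(\Res_F(\Delta))$: it is precisely where one must rule out the a priori possibility that passing to $\End_F(V)$ creates an equivalent pure stratum of strictly smaller field degree which cannot be realised $D$-rationally. Its resolution rests on the two inputs already available — existence of an equivalent simple stratum over $A$ (Theorem~\ref{thmPureIsEquivToSimple}) and stability of simplicity under $\Res_F$ (Proposition~\ref{propSimplePure}) — together with the split-case fact that a simple stratum attains the minimal field degree in its equivalence class. A minor point to keep straight is that equivalence of strata is taken on a fixed lattice sequence, so that it is transitive and $m(-)$ is a class invariant. (Alternatively, the ``only if'' direction can be obtained directly: if $\Delta'$ is pure and equivalent to the simple stratum $\Delta$, apply Theorem~\ref{thmPureIsEquivToSimple} to $\Delta'$ to get a simple stratum $\Delta''$ equivalent to both $\Delta$ and $\Delta'$; then Proposition~\ref{propSimplePure} gives $e(E|F)=e(E''|F)$, $f(E|F)=f(E''|F)$ together with $e(E''|F)\mid e(E'|F)$ and $f(E''|F)\mid f(E'|F)$, whence $[E:F]=[E'':F]\le[E':F]$.)
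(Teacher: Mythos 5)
Your proof is correct and takes essentially the same route as the paper: both reduce to the split case of \cite[6.1,6.4]{skodlerackStevens:15-1} by restricting scalars, using Proposition~\ref{propSimplePure} (stability of simplicity and the divisibility of degrees) together with the existence of an equivalent simple stratum from Theorem~\ref{thmPureIsEquivToSimple}. Your auxiliary invariant $m(\Sigma)$ and the direct chain of equivalences are just a more explicit packaging of the paper's short argument by contradiction.
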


\begin{proof}
 By Proposition~\ref{propSimplePure} we only have to prove the "if"-part. By Proposition~\ref{propPureIsEquivToSimple}~$\Delta$ is equivalent to a simple stratum~$\Delta'$. The ''if''-condition and Proposition~\ref{propSimplePure} imply~$[E:F]=[E':F]$ and that~$\Res_F(\Delta')$ is simple. Thus, by the known split case,~$\Res_F(\Delta)$ is simple and therefore~$\Delta$ is simple, again by Proposition~\ref{propSimplePure}.
\end{proof}

\begin{proposition}\label{propQuasiSimpleStrataAreSemisimple}
 If~$\Delta$ is simple. Then~$\Delta\otimes\tilde{F}$ is semisimple, and both strata have the same critical exponent. 
\end{proposition}

\begin{proof}
We write~$\tilde{\Delta}$ for~$\Delta\otimes\ti{F}$ which also defines~$\ti{\beta}$ 
and~$\ti{\Lambda}$. The statement holds if~$\beta$ is an element of~$F$ because the~$\Lambda$- and the~$\Lambda\otimes_{o_L}o_{\tilde{F}}$-valuation of~$\beta$ coincide. So let us suppose~$\beta\not\in F$.
In the non-null case define~$k'_0(\tilde{\Delta})$ to be the maximum of all integers~$k$ such that~$\mf{n}_k(\tilde{\beta},\ti{\Lambda})$
is not a subset of~$(\mf{b}_{0}\otimes_{o_L} o_{\ti{F}})+(\mf{a}_{1}\otimes_{o_L} o_{\ti{F}})$. This is equal to~$k_0(\Delta)$ by \cite[Corollary 2.10]{secherreI:04}. 
The stratum~$\ti{\Delta}$ is semi-pure with blocks~$\ti{\Delta}_j$, $j\in\ti{I}$.
The element~$\ti{\beta}_i$ has over~$F$ the same minimal polynomial as~$\beta$, but not over~$\ti{F}$. 
The definition of~$k_0(\ti{\Delta}_j)$ and~$k'_0(\ti{\Delta})$ imply~$k_0(\ti{\Delta}_j)\leq k'_0(\ti{\Delta})$ for every index~$j$.  Thus, by the equality of~$k'_0(\ti{\Delta})$
with~$k_0(\Delta)$ we obtain that the blocks~$\ti{\Delta}_i$ are simple. We have to prove that~$\ti{\Delta}_{j_1}\oplus\ti{\Delta}_{j_2}$ is not equivalent to a simple stratum for~$j_1\neq j_2$. 
If there would be two blocks of~$\ti{\Delta}$ whose sum is equivalent to a simple stratum then we could view these blocks as two simple strata~$\ti{\Delta}_{j_1}$ 
and~$\ti{\Delta}_{j_2}$ 
in one vector space over~$\ti{F}$ and~$\ti{\Delta}_{i_1}^\dag$ and~$\ti{\Delta}^\dag_{i_2}$ are conjugate up to equivalence, by~\cite[Theorem 8.1]{skodlerackStevens:18}, 
because of conjugate lattice sequences and intertwining, by~\cite[Theorem 6.16]{skodlerackStevens:18}. Thus, we can assume without loss of generality that~$\ti{\Delta}_{j_1}$ 
and~$\ti{\Delta}_{j_2}$ are equivalent. Further, we have that the elements~$\beta_{j_1}$ and~$\beta_{j_2}$ have the same minimal polynomial over~$F$.
Thus by Lemma \ref{lemConjFieldExtUnramDetCriteria}~$\beta_{j_1}$ and~$\beta_{j_2}$ have the same minimal polynomial over~$\ti{F}$. A contradiction. 

By the proven first assertion we get~$k_0(\tilde{\Delta})\leq k_0(\Delta)=k'_0(\ti{\Delta})$, and by~\cite[Lemma 3.7]{stevens:05} we 
obtain~$k'_0(\ti{\Delta})\leq k_0(\ti{\Delta})$. Thus the critical exponents of~$\Delta$ and~$\Delta\otimes\ti{F}$ coincide. 
\end{proof}

\begin{lemma}\label{lemConjFieldExtUnramDetCriteria}[For~$D=F$]
Let~$F'|F$ be an unramified field extension in~$\End_F(V)$ and assume that~$\Delta'_1$ and~$\Delta'_2$ are two equivalent pure strata of~$\End_{F'}(V)$ such 
that~$\beta'_1$ and~$\beta'_2$ have the same minimal polynomial over~$F$.
Then~$\beta'_1$ and~$\beta'_2$ have the same minimal polynomial over~${F'}$.
\end{lemma}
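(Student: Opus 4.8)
The plan is to exploit the fact that $F'|F$ is unramified, so that its Galois group acts on everything in sight, and to compare minimal polynomials through this action. First I would set $f:=[F':F]$ and note that $\Gal(F'|F)$ is cyclic, generated by some $\sigma$; since $\Delta'_1$ and $\Delta'_2$ are pure strata for $\End_{F'}(V)$, each $F'[\beta'_i]$ is a field, so the minimal polynomial $\mu_i$ of $\beta'_i$ over $F'$ is irreducible in $F'[X]$. Let $\mu\in F[X]$ be the common minimal polynomial of $\beta'_1$ and $\beta'_2$ over $F$, which by hypothesis exists. Then $\mu_1\mid\mu$ and $\mu_2\mid\mu$ in $F'[X]$, and each irreducible factor of $\mu$ over $F'$ is a $\Gal(F'|F)$-conjugate of either, so it suffices to show $\mu_1=\mu_2$, i.e. that the two strata ``select'' the same irreducible factor of $\mu$ over $F'$.

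The key step is to use the hypothesis that $\Delta'_1$ and $\Delta'_2$ are \emph{equivalent}: $r_1'=r_2'$, $\Lambda'$ is common, and $\beta'_1\equiv\beta'_2\bmod\mf{a}_{\Lambda',s}$ for all $s\le-r'$ (here $n'_1=n'_2=:n'$ since $\nu_{\Lambda'}(\beta'_i)=-n'_i$ and the leading cosets agree). In particular, reducing modulo $\mf{a}_{\Lambda',1-n'}$, the strata $[\Lambda',n',n'-1,\beta'_1]$ and $[\Lambda',n',n'-1,\beta'_2]$ are \emph{equal}, hence have the same characteristic and minimal polynomials $\chi_\Delta,\mu_\Delta$ over $\kappa_F$ in the sense recalled after Definition~\ref{defLevelsOfstrata}. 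Now I would run the argument of Proposition~\ref{propSimplePure}/Corollary~\ref{corSimpleStrataFieldCriteria}: by Theorem~\ref{thmPureIsEquivToSimple} we may replace each $\Delta'_i$ by an equivalent simple stratum $\Delta''_i$ with $F'$-field $E''_i$ whose unramified part lies in $F'[\beta'_i]$; for simple strata the relation between $\mu_\Delta$ and the data of the field extension $E''_i|F'$ (degree, ramification, inertia) is rigid. Concretely, the residue-degree and ramification of $F'[\beta'_i]|F'$ are read off from $\mu_\Delta$ and $l(\Delta)$ exactly as in Proposition~\ref{propCriteriaFundtoSimplStratum}, and these are the same for $i=1,2$.

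Once the degrees $[F'[\beta'_1]:F']=[F'[\beta'_2]:F']$ agree, I would finish by a counting/Galois-descent argument: the number of irreducible factors of $\mu$ over $F'$ is $[F:F']^{?}$—more precisely, writing $E=F[\beta'_1]$ (as an abstract field, the same for both since $\mu$ is common), the factorization of $\mu$ over $F'$ is governed by $E\otimes_F F'$, which since $F'|F$ is unramified splits according to $\gcd$-type data; the stratum $\Delta'_i$ picks out the factor corresponding to $F'[\beta'_i]\cong EF'$ as a subfield of the splitting, and $\deg$ of that factor is $[F'[\beta'_i]:F']$. Having shown these degrees are equal and that both divide $\mu$, and that $\mu$ can have at most one irreducible factor over $F'$ of that degree \emph{that moreover is compatible with the common reduction mod $\mf{a}_{\Lambda',1-n'}$} (this is where the equivalence of strata, not just equality of $\mu$ over $F$, is essential), I conclude $\mu_1=\mu_2$. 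The main obstacle I anticipate is precisely this last uniqueness point: a priori $E\otimes_F F'$ could have several isomorphic factors, and equality of minimal polynomials over $F$ plus equality of the mod-$\mf{a}_1$ reductions must be combined carefully—probably via the tame corestriction and the derived stratum of Theorem~\ref{thmDerivedSimple}, applied inductively on $n'-r'$—to pin down that the two strata sit over the \emph{same} factor rather than merely conjugate ones.
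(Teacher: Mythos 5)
Your plan stalls exactly at the point you flag yourself, and that point is the entire content of the lemma. What you actually establish is that the minimal polynomials $\mu_1,\mu_2$ of $\beta'_1,\beta'_2$ over $F'$ are irreducible factors of the common $F$-minimal polynomial $\mu$ of the same degree, hence $\Gal(F'|F)$-conjugate; you do not show they coincide. None of the invariants you propose to compare can see the difference: the equality of the level-$(n'-1)$ truncations, the polynomials $\chi_\Delta,\mu_\Delta\in\kappa_F[X]$ (which in any case are only defined for strata with $n=r+1$), and the ramification index and inertia degree of $F'[\beta'_i]|F'$ are all unchanged if $\beta'_1$ is replaced by a twist $h\beta'_1h^{-1}$ with $h\in\Aut_F(V)$ normalizing $F'$ and inducing a nontrivial automorphism of it, and when $F[\beta'_1]\otimes_FF'$ is not a field such a twist has a genuinely different minimal polynomial over $F'$. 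The only hypothesis that can rule this out is the full congruence $\beta'_1\equiv\beta'_2$ modulo $\mf{a}_{\Lambda',-r'}$ coming from equivalence of the strata, used in a way that sees the $F'$-structure (equivalently the $\kappa_{F'}$-structure on residues); your proposal only ever uses the leading coset modulo $\mf{a}_{1-n'}$ and $F$-rational data, and the closing suggestion (tame corestriction, Theorem~\ref{thmDerivedSimple}, induction) is a gesture, not an argument. So as written the proof has a genuine gap.

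For comparison, the paper never factors $\mu$ over $F'$. It first uses Skolem--Noether to find $g'\in\Aut_F(V)$ with $g'\beta'_1g'^{-1}=\beta'_2$, and a second application of Skolem--Noether over $E'_1:=F[\beta'_1]$, together with the uniqueness of unramified subextensions, corrects $g'$ to an element $g$ that conjugates $\beta'_1$ to $\beta'_2$ \emph{and normalizes} $F'$. After adjusting by an $E'_1F'$-automorphism one may assume $g\Lambda'=\Lambda'$; then, precisely because the two strata are equivalent, $g$ normalizes $\Delta'_1$ up to equivalence and therefore lies in $P(\Lambda'_{E'_1F'})(1+\mf{a}_{\Lambda',1})$ --- this is where the congruence modulo $\mf{a}_{-r'}$ is converted, via the normalizer/intertwining structure of a pure stratum, into residual information. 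Such an element induces the identity on the residue field of $E'_1F'$, and since $F'|F$ is unramified an automorphism of $F'$ acting trivially on its residue field is trivial; hence $g$ centralizes $F'$, so $\beta'_2=g\beta'_1g^{-1}$ with $g$ an $F'$-linear automorphism, and the two minimal polynomials over $F'$ agree at once. The missing idea in your write-up is exactly this mechanism: produce a conjugating element normalizing $F'$ and use the stratum equivalence to force it to act trivially on the residue field, which by unramifiedness forces it to centralize $F'$.
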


\begin{proof}
Step 1: We prove at first that we can conjugate~$\beta'_1$ to~$\beta'_2$ by an element~$g$ of the normalizer of~$F'$ in~$\End_F(V)$. 
We write~$E'_1$ for~$F[\beta'_1]$. By Skolem-Noether there is an element~$g'$ of~$\Aut_F(V)$ which conjugates~$\beta'_1$ to~$\beta'_2$. 
But then~$E'_1 F'$ and~$E'_1 g'^{-1}F'g'$ are 
fields which are isomorphic over~$E'_1$. We apply Skolem-Noether again, to obtain the existence of an~$E'_1$-automorphism~$g''$ of~$V$ which conjugates~$E'_1g'^{-1}F'g'$ to~$E'_1F'$.
The uniqueness of unramified sub-extensions implies that~$F'$ is normalized by~$g''g'^{-1}$ whose inverse is the desired element~$g$. This proves the assertion of Step 1.

Step 2: We take the element~$g$ of Step 1. Then there is an~$E'_1F'$ automorphism of~$V$ which conjugates~$g^{-1}\Lambda'$ to~$\Lambda'$ and we can assume that~$g\Lambda'$ is equal to~$\Lambda'$. Thus~$g$ normalizes the stratum~$\Delta'_1$ up to equivalence, 
and is therefore an element of~$P(\Lambda'_{E'_1F'})(1+\mf{a}_{\Lambda',1})$. In particular, the conjugation by~$g$ induces the identity on the residue field of~$E'_1F'$. Thus,~$g$ centralizes~$F'$ which finishes the proof. 
\end{proof}

The key theorem to start a study of semisimple strata is the diagonalization theorem. Let us recall that we define for a semisimple stratum~$\Delta$ with critical exponent~$k_0=k_0(\Delta)$ the 
set~$\mf{m}(\Delta)$ to be the set~$\mf{m}_{-(r+k_0)}(\beta,\Lambda)$, where~$\mf{m}_s(\beta,\Lambda)$ is the intersection of~$\mf{n}_{s+k_0}(\beta,\Lambda)$ with~$\mf{a}_{s}$ for all non-negative integers~$s$. We sometimes omit the argument~$(\beta,\Lambda)$ and write for semisimple strata~$\Delta$ and~$\Delta'$ just~$\mf{m}_s$ and~$\mf{m}'_s$.   

\begin{theorem}\label{thmDiagonalization}
 Let~$\Delta$ be a semi-pure stratum which splits under a decomposition~$V=\oplus_j V^j$ into a direct sum of 
 pure strata and suppose that~$\Delta$ is equivalent to a simple stratum~$\Delta'$. Then, there is an element~$u$ of~$1+\mf{m}(\Delta')$ such that
 $u.\Delta'$ is split by  the above decomposition.
\end{theorem}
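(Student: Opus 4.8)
The plan is to reduce the statement to the split case $D=F$, where the diagonalization theorem is already available (it is the case treated in \cite{skodlerackStevens:15-1}, and the ingredients for $D=F$ are Theorems \ref{thmDerivedSimple} and \ref{thmPureIsEquivToSimple} together with the splitting-lifting Lemma \ref{lemLiftingSplitting} in the trivial form), and then to transport the conjugating element back through restriction of scalars. First I would pass to $\Res_F(\Delta)$ and $\Res_F(\Delta')$: by Corollary \ref{corEquivToMinSimpleStratum} the stratum $\Res_F(\Delta)$ is again equivalent to the simple stratum $\Res_F(\Delta')$, and the decomposition $V=\oplus_j V^j$ still splits $\Res_F(\Delta)$ (it is the same decomposition, now viewed over $F$). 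Applying the known split diagonalization theorem over $F$ yields an element $u\in 1+\mf{m}(\Res_F(\Delta'))$ with $u.\Res_F(\Delta')$ split by the decomposition. The key point is that $\mf{m}(\Res_F(\Delta'))=\mf{m}(\Delta')$ as subsets of $\End_F(V)$ — this follows because $k_0$ and $\nu_\Lambda$ are insensitive to restriction of scalars up to the factor $e(\Lambda|E)$ which cancels in the defining set $\mf{n}_{-r}(\beta,\Lambda)\cap\mf{a}_{\Lambda,-(r+k_0)}$, using \eqref{eqCriticalExponent} and \cite[5.6]{stevens:01}. So already $u$ lies in $1+\mf{m}(\Delta')$ as an abstract set; the only thing missing is that $u$ be $D$-linear, i.e. actually an element of $1+\mf{m}(\Delta')$ inside $\End_D(V)$.

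To fix the $D$-linearity I would argue as in the proof of Proposition \ref{propCriteriaFundtoSemisimplStratum}: the idempotents $1^j$ cutting out the $V^j$ are already $D$-linear (they come with the hypothesis that $\Delta$ splits as a stratum over $D$), so the two split decompositions $V=\oplus V^j$ (original) and $V=\oplus u(V^j)$ (after conjugation) are both $D$-stable, and $u$ conjugates one family of $D$-linear idempotents to another. Then, exactly as in Lemma \ref{lemLiftingSplitting} / the Nakayama argument, one replaces $u$ by a genuinely $D$-linear element $u'$ of $1+\mf{m}(\Delta')$ inducing the same conjugation on idempotents modulo a deep enough congruence subgroup; more precisely, since $u.\Delta'$ and the $D$-split form of $\Delta'$ are both split by $D$-decompositions with the same image idempotents mod $\mf{a}_{\Lambda,r+k_0+1}$, an argument with $P(\Lambda_B)(1+\mf{a}_{\Lambda,1})$-type groups and Skolem--Noether (cf. Theorem \ref{thmbroussousSecherreStevens3p3u3p5}) produces the correction inside $C_A(E')$. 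One then checks $u'\in 1+\mf{m}(\Delta')$ and $u'.\Delta'$ is still split by the original $D$-decomposition.

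The main obstacle I expect is precisely this last descent: ensuring the conjugating element can be chosen $D$-linear \emph{and simultaneously} inside the small group $1+\mf{m}(\Delta')$, rather than merely in some $1+\mf{a}_{\Lambda,1}$. The delicate point is that $\mf{m}(\Delta')$ is a rather tight set (cut out by the critical exponent), so the Nakayama/idempotent-lifting correction must be controlled to lie in $\mf{m}(\Delta')$, which forces one to run the lifting not on all of $\mf{a}_{\Lambda,0}$ but on the corestriction image $\mf{b}$, using that the relevant idempotents are congruent modulo $\mf{a}_{\Lambda,-(r+k_0)+1}\cap C_A(E')$. I would handle this by following the split-case proof in \cite{skodlerackStevens:15-1} line by line and observing at each step that the $D$-linear structure is preserved because $\beta'$, hence $E'=F[\beta']$, and all the auxiliary idempotents are $D$-linear; the only genuinely new input is the passage $\Res_F$ and back, which is supplied by Corollary \ref{corEquivToMinSimpleStratum} and the scalar-invariance of $k_0$.
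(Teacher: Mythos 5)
Your reduction to the split case via $\Res_F$ has a genuine gap at exactly the point you flag, and the fix you sketch does not close it. First, the identity you assert, $\mf{m}(\Res_F(\Delta'))=\mf{m}(\Delta')$ ``as subsets of $\End_F(V)$'', is false: $\mf{m}(\Delta')$ lives in $A=\End_D(V)$, and the best one can say (after knowing the critical exponents agree) is $\mf{m}(\Delta')=\mf{m}(\Res_F(\Delta'))\cap A$. So the split theorem hands you only an $F$-linear $u_F$, and $u_F\beta'u_F^{-1}$ is a block-diagonal but merely $F$-linear element; nothing yet lives over $D$. The descent to a $D$-linear conjugator is the actual content of the theorem, and the tools you invoke cannot produce it inside the right group: Lemma~\ref{lemLiftingSplitting} and the Skolem--Noether/embedding-type results (Theorem~\ref{thmbroussousSecherreStevens3p3u3p5}, Proposition~\ref{propFieldEmbedingsConjugateUnderaUnit}) only control conjugating elements modulo $P(\Lambda)$ or $P_1(\Lambda)$, whereas here one must land in $1+\mf{m}(\Delta')\subseteq P_{-(r+k_0)}(\Lambda)$, a far smaller group cut out by the critical exponent. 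Your fallback, ``follow the split-case proof line by line over $D$'', is not a separate argument: run over $D$, that proof needs to know that the idempotents $1^j$ (which intertwine $\Delta$, hence the equivalent stratum $\Delta'$) are congruent modulo $\mf{m}(\Delta')$ to elements of $B'=C_A(\beta')$, and that is precisely the intertwining formula $I(\Delta')=(1+\mf{m}(\Delta'))\,C_{A^\times}(\beta')\,(1+\mf{m}(\Delta'))$ over $D$, which your proposal never establishes or cites. A minor additional slip: Corollary~\ref{corEquivToMinSimpleStratum} only treats $n=r+1$; the direction you need for general strata is supplied by Proposition~\ref{propSimplePure} together with Theorem~\ref{thmPureIsEquivToSimple}.

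For comparison, the paper's proof works over $D$ from the start: it first proves the intertwining formula for the simple stratum $\Delta'$ (Theorem~\ref{thmIntertwiningSimpleStratum}), obtained from the split case by extension of scalars to $L$ and a Galois/cohomology descent (not by $\Res_F$, which carries no usable descent mechanism for elements); it then deduces that each $1^j$ is congruent modulo $\mf{m}(\Delta')$ to some $\alpha_j\in B'$, lifts these to pairwise orthogonal idempotents $1'^j\in B'$ summing to $1$ via the idempotent-lifting lemma of the split paper, and sets $u:=\sum_j 1^j1'^j$, which lies in $1+\mf{m}(\Delta')$ since $u-1=\sum_j(1^j-1'^j)1'^j\in\mf{m}(\Delta')$, and conjugates $\Delta'$ to a stratum split by the given decomposition. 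If you want to salvage your route, the missing ingredient to prove first is exactly that intertwining formula over $D$.
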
 

For the proof we need the intertwining of a simple stratum. 

\begin{proposition}\label{propIntertwiningSimpleStratum}
 The intertwining of a simple stratum~$\Delta$ is given by the formula
 \begin{equation}\label{eqIntFormSimpleStratum}
 (1+\mf{m}(\Delta))C_{A^\times}(\beta)(1+\mf{m}(\Delta))
 \end{equation}
\end{proposition}

\begin{proof}
 We have this formula for the split case~$\Delta\otimes L$ by~\cite[Theorem 6.19]{skodlerackStevens:18}. The critical exponents of~$\Delta$ and~$\Delta\otimes L$
 coincide by Proposition~\ref{propQuasiSimpleStrataAreSemisimple}. Thus we only need to take the~$\Gal(L|F)$-fixed points. 
 Now apply~\cite[Lemma 2.35]{secherreI:04} using~\cite[\S 2.4.3, Lemma 3.5]{secherreI:04} to obtain that taking~$\Gal(L|F)$-fixed points at~\eqref{eqIntFormSimpleStratum}($\Delta\otimes L)$
 results in~\eqref{eqIntFormSimpleStratum}($\Delta$).
\end{proof}

\begin{proof}[Proof of Theorem~\ref{thmDiagonalization}]
 We write~$\Delta_j$ for~$\Delta|_{V^j}$ as usual. The $p$-adic closure of~$\prod_jI(\Delta_j)$ in~$A$ is a subset of
 the~$p$-adic closure of~$I(\Delta')$. Thus, for all~$j$, there is an element~$\alpha_j$ of the~$A$-centralizer~$B'$ of~$\beta'$ congruent to the projection~$1^j$ modulo~$\mf{m}(\Delta')$.  
 Now~\cite[Lemma 7.14]{skodlerackStevens:18} provides pairwise orthogonal idempotents~$1'^j$ in~$B'$ which are congruent to~$\alpha_j$ modulo~$\mf{m}(\Delta')$ and 
 which sum up to~$1$. The map~$u:=\oplus_j 1^j1'^j$ is an element of~$1+\mf{m}(\Delta')$ because
 \[u-1=\sum_j(1^j-1'^j)1'^j\in\mf{m}(\Delta'),\]
 and further~$u.\Delta'$ is split by the given splitting and it is equivalent to~$\Delta'$, ie. to~$\Delta$.   
\end{proof}

We need a modification of the second part of Proposition~\ref{propSimplePure} to equivalence to simple strata.

\begin{proposition}\label{propGeneralizedPureSimple}
 Let~$\Delta$ be a stratum. Then~$\Delta$ is equivalent to a simple stratum if and only if~$\Res_F(\Delta)$ is equivalent to a simple stratum. 
\end{proposition}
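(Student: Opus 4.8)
The plan is to imitate the proof of Corollary~\ref{corEquivToMinSimpleStratum} (and of Corollary~\ref{corEquivToMinSemisimpleStratum}), but now run it through an induction on $n-r$, the inductive engine being the simple strata induction Theorem~\ref{thmDerivedSimple} combined with restriction of scalars. Throughout I may replace $\Delta$ by an equivalent stratum, so I assume $n=-\nu_\Lambda(\beta)$, unless $\beta\in\mf{a}_{\Lambda,-r}$, in which case $\Delta$ and $\Res_F(\Delta)$ are both equivalent to a zero stratum and the statement is trivial (a zero stratum is simple). The ``only if'' direction needs no induction: if $\Delta$ is equivalent to a simple stratum $\Delta'$, then $\Res_F(\Delta)=\Res_F(\Delta')$, which is a zero stratum if $\Delta'$ is and otherwise is $\Res_F$ of a pure stratum, hence pure; in either case Proposition~\ref{propSimplePure} shows it is simple. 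For the converse I induct on $n-r$, the base case $n-r\le 1$ (the case $n=r$ being trivial) being exactly Corollary~\ref{corEquivToMinSimpleStratum}.

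For the inductive step assume $n\ge r+2$ and suppose $\Res_F(\Delta)$ is equivalent to a simple stratum; by the normalisation $\beta\notin\mf{a}_{\Lambda,-(r+1)}$, so every simple stratum equivalent to $\Res_F(\Delta)$ or to $\Delta(1+)$ is non-zero. First I produce a good entry for $\Delta(1+)$: writing $\Res_F(\Delta)\sim\Xi$ with $\Xi$ non-zero simple, we get $\Res_F(\Delta(1+))=\Res_F(\Delta)(1+)\sim\Xi(1+)$, which is a non-zero pure stratum, hence equivalent to a simple stratum by Theorem~\ref{thmPureIsEquivToSimple}; since $\Delta(1+)$ has strictly smaller value of $n-r$, the inductive hypothesis gives $\Delta(1+)\sim[\Lambda,n,r+1,\gamma]$, a non-zero simple stratum. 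Fix a tame corestriction $s\colon A\to C_A(F[\gamma])$ with respect to $\gamma$ and set $s_F:=s\otimes_F\id_{\End_A(V)}$, which is a tame corestriction for $(\End_F(V),F[\gamma]|F)$ by definition. Theorem~\ref{thmDerivedSimple} applied to $\Delta$ (entry $\gamma$, corestriction $s$) reduces ``$\Delta$ equivalent to a simple stratum'' to ``$\partial_\gamma(\Delta)$ equivalent to a simple stratum''. On the other side, $[\Lambda,n,r+1,\gamma]$ is simple, so by Proposition~\ref{propSimplePure} its restriction $\Res_F([\Lambda,n,r+1,\gamma])$ is simple; hence $\Res_F(\Delta)(1+)=\Res_F(\Delta(1+))$ is equivalent to a simple stratum with entry $\gamma$, and Theorem~\ref{thmDerivedSimple} applied to $\Res_F(\Delta)$ (entry $\gamma$, corestriction $s_F$) together with the hypothesis gives that $\partial_\gamma(\Res_F(\Delta))$ is equivalent to a simple stratum.

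It remains to transport ``equivalent to a simple stratum'' from $\partial_\gamma(\Res_F(\Delta))$ back to $\partial_\gamma(\Delta)$. Both are strata on $j_{F[\gamma]}(\Lambda)$ with last entries $s(\beta-\gamma)$ and $s_F(\beta-\gamma)=s(\beta-\gamma)\otimes 1$ under the natural isomorphism $C_{\End_F(V)}(F[\gamma])\cong C_A(F[\gamma])\otimes_F D$, and they share the same $n_\gamma=\min(n,-\nu_\Lambda(\beta-\gamma))$; since $\beta-\gamma\in\mf{a}_{\Lambda,-(r+1)}$ we have $n_\gamma\le r+1$. If $n_\gamma=r$ then $\partial_\gamma(\Delta)$ is a zero stratum, hence simple, and we are done. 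If $n_\gamma=r+1$, then the minimal polynomials of $\partial_\gamma(\Delta)$ and $\partial_\gamma(\Res_F(\Delta))$ coincide: the relevant $\mf{a}$-filtrations on $C_A(F[\gamma])$ are the traces of those on $C_A(F[\gamma])\otimes_F D$, so passing to the tensor algebra is a $\kappa_{F[\gamma]}$-algebra embedding $\mf{a}_0/\mf{a}_1\hookrightarrow(\mf{a}_0/\mf{a}_1)\otimes$ and $\bar{\mf{y}}(\partial_\gamma(\Res_F(\Delta)))$ is the image of $\bar{\mf{y}}(\partial_\gamma(\Delta))$, exactly in the spirit of Remark~\ref{remLevelDdagRes}; likewise the trivial-intersection condition for the multiplication maps $m_{t,n_\gamma,s(\beta-\gamma)}$ descends from $\partial_\gamma(\Res_F(\Delta))$ to $\partial_\gamma(\Delta)$ because the latter's multiplication maps are restrictions of the former's, just as in the proof of Corollary~\ref{corEquivToMinSimpleStratum}. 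Hence by Proposition~\ref{propCriteriaFundtoSimplStratum} and Lemma~\ref{lemCriteriaFundtoZerostratum}, $\partial_\gamma(\Delta)$ is equivalent to a simple stratum, and Theorem~\ref{thmDerivedSimple} then yields that $\Delta$ is equivalent to a simple stratum.

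The genuinely routine parts are clear; the step I expect to need the most care is this last transport across $C_A(F[\gamma])\subseteq C_A(F[\gamma])\otimes_F D$ --- verifying that it leaves minimal polynomials unchanged and can only shrink the kernels and images of the multiplication maps --- but this is of the same nature as, and can be deduced from, the compatibility statements of Remark~\ref{remLevelDdagRes} and the argument already used in Corollary~\ref{corEquivToMinSimpleStratum}. One should also double-check the bookkeeping of $n$ versus $-\nu_\Lambda(\gamma)$ and that $\Delta(1+)$ is genuinely non-zero in the inductive step (which is why the normalisation $n=-\nu_\Lambda(\beta)$ was imposed at the start), so that Theorem~\ref{thmDerivedSimple} is being applied with a non-trivial entry $\gamma$.
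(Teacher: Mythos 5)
Your proposal is correct and follows essentially the same route as the paper: strata induction on $n-r$, Theorem~\ref{thmDerivedSimple} applied on both sides with the compatible tame corestrictions $s$ and $s\otimes_F\id$, and the minimal-stratum criteria (Proposition~\ref{propCriteriaFundtoSimplStratum}, Lemma~\ref{lemCriteriaFundtoZerostratum}) to transport ``equivalent to a simple stratum'' between the derived strata. The only cosmetic difference is in the transport step, where you argue via the embedding $C_A(F[\gamma])\hookrightarrow C_{\End_F(V)}(F[\gamma])$ and descent of the minimal polynomial and intersection conditions ``by inclusion'', while the paper invokes the Broussous--Lemaire formula~\cite[3.1]{broussousLemaire:02} to view $\partial_\gamma(\Res_F(\Delta))$ as (equivalent to) a direct sum of copies of $\partial_\gamma(\Delta)$ --- the same underlying compatibility.
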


\begin{proof}[Proof of Proposition~\ref{propGeneralizedPureSimple}]
 We prove this statement by strata induction. For~$n=r$ both strata are null strata. Suppose now that~$n>r$. If~$\Delta$ is equivalent to a simple stratum 
 then~$\Res_F(\Delta)$ is too, by Proposition~\ref{propSimplePure}. If~$\Res_F(\Delta)$ is equivalent to a simple stratum then~$\Res_F(\Delta(1+))$ is equivalent to a 
 simple stratum and the induction hypothesis states that~$\Delta(1+)$ is equivalent to a simple stratum, say with entry~$\gamma$, and let~$s$ be a tame corestriction on~$A$ relative to~$F[\gamma]|F$. 
 The stratum~$\partial_{\gamma,s\otimes \id_D}(\Res_F(\Delta))$ is equivalent to a direct sum of translates of copies of~$\Res_{F[\gamma]}(\partial_{\gamma,s}(\Delta))$, by formula~\cite[part II, Lemma 3.1]{broussousLemaire:02}.
 Thus~$\partial_{\gamma,s}(\Delta)$ and~$\partial_{\gamma,s\otimes \id_D}(\Res_F(\Delta))$ have the same minimal polynomial which is irreducible by Proposition~\ref{propCriteriaFundtoSimplStratum}
 because~$\partial_{\gamma,s\otimes \id_D}(\Res_F(\Delta))$ is equivalent to a simple stratum by Lemma~\ref{propDerivedSimple}. If the minimal polynomial is different from~$X$ 
 then~$\partial_{\gamma,s}(\Delta)$ is equivalent to a non-null simple stratum by  Proposition~\ref{propCriteriaFundtoSimplStratum}. If the minimal polynomial is~$X$ then 
 ~$\partial_{\gamma,s\otimes \id_D}(\Res_F(\Delta))$ is equivalent to a null stratum by Proposition~\ref{propCriteriaFundtoSimplStratum}, and in this case~$\partial_{\gamma,s}(\Delta)$ is
 equivalent to a null stratum. Now Lemma~\ref{propDerivedSimple} states that~$\Delta$ is equivalent to a simple stratum.
\end{proof}

Let us recall the intertwining implies conjugacy result for simple strata:

\begin{proposition}[{{\cite[Proposition 1.9]{broussousSecherreStevens:12},\cite[(4.1.3)]{broussousGrabitz:00}}}]\label{propINtImplConSimpleD}
Let~$\Delta$ and~$\Delta'$ be simple with~$n=n'$ and~$r=r'$. Suppose~$(E,\Lambda)$ and~$(E',\Lambda')$ have the same embedding type. 
Then, granted~$I(\Delta,\Delta')\neq \emptyset$, there is an element~$g\in G$ such that~$g\Delta$ is equivalent to~$\Delta'$. 
Moreover,~$g$ can be taken, such that it conjugates the maximal unramified extension of~$F$ in~$E$ to the one in~$E'$.
\end{proposition}

\black{We say that two simple strata~$\Delta$ and~$\Delta'$ have the same embedding type if the embeddings~$(F[\beta],\Lambda)$ and~$(F[\beta'],\Lambda')$ have the same embedding type.}

Proposition~\ref{propGeneralizedPureSimple} and the Theorem~\ref{thmDiagonalization} imply a finer version of the latter theorem:

\begin{proposition}\label{propDiagonalizationFiner}
 Let~$\Delta$ be a semi-pure stratum which splits under a decomposition~$V=\oplus_{j\in J} V^j$ into a direct sum of 
 pure strata and suppose that~$\Res_F(\Delta)$ is equivalent to a simple stratum~$\Delta_F$. Then, there is an element~$u$ of~$1+\mf{m}(\Delta_F)$ such that
 $u.\Delta_F$ is equivalent to~$\Res_F(\Delta)$, split by the above decomposition and such that~$u\beta_F u^{-1}$ is~$D$-linear. 
\end{proposition}

\begin{proof}
 By Theorem~\ref{thmDiagonalization} we can assume that~$\Delta_F$ is split by~$(V^j)_{j\in J}$ without loss of generality. Thus we can reduce to~$|J|=1$ because~$\mf{m}(\Delta_{F,j})$ is a subset of~$1^j\mf{m}(\Delta_F)1^j$ by~$k_0(\Delta_{F,j})\leq k_0(\Delta_F)$. 
  We can assume that~$\Delta$ is simple by Proposition~\ref{propPureIsEquivToSimple} and thus~$\Res_F(\Delta)$ is simple by 
 Proposition~\ref{propSimplePure}. The strata~$\Delta_F$ and~$\Res_F(\Delta)$ are equivalent and thus Proposition~\ref{propSimplePure} implies that~$E|F$ 
 and~$F[\beta_F]|F$ have the same ramification index and the same inertia degree. Now 
 Corollary~\ref{corconsructingEmbeddingsFromNumericalData} provides an element~$\beta'\in A$ of the same minimal 
 polynomial as~$\beta_F$ such that~$\Lambda$ is normalized by~$F[\beta']^\times$ and such that the unramified sub-extensions of~$F[\beta']|F$ and~$E|F$ coincide. 
 The stratum~$\Delta':=[\Lambda,n,r,\beta']$ is simple by Proposition~\ref{propSimplePure} because~$\Res_F(\Delta')$ is simple. Indeed Proposition~\ref{propPureIsEquivToSimple} and 
 Proposition~\ref{propINtImplConSimpleD}, both applied over~$F$, imply that~$\Res_F(\Delta)$ is conjugate to~$\Delta_F$ up to equivalence and Corollary~\ref{corSimpleStrataFieldCriteria} then provides the simplicity of~$\Res_F(\Delta)$. 
 The direct sum~$\Delta'\oplus\Delta$ is equivalent to a simple stratum by Proposition~\ref{propGeneralizedPureSimple}
 because~$\Res_F(\Delta\oplus\Delta')$ is equivalent to a simple stratum. Now we can use Theorem~\ref{thmDiagonalization} to obtain a simple stratum~$\Delta''$ equivalent 
 to~$\Delta\oplus\Delta'$ split by~$V\oplus V$. Thus~$\Delta$ and~$\Delta'$ intertwine and are conjugate up to equivalence by Proposition~\ref{propINtImplConSimpleD}.
 (Note that both have the same embedding type.) Thus we can assume that~$\Delta$ and~$\Delta'$ are equivalent. 
 We have to show that there is an element~$u\in 1+\mf{m}(\Delta_F)$ which conjugates~$\beta_F$ to~$\beta'$. There is an element~$g\in P(\Lambda_F)$ which conjugates
 $\beta_F$ to~$\beta'$. But this element~$g$ is an element of~$I(\Delta_F)\cap \mf{a}_{\Lambda_F,0}^\times=(1+\mf{m}(\Delta_F))\mf{b}_{\Lambda_F,0}^\times$, i.e.~$g=ub$. Thus~$u$
 conjugates~$\beta_F$ to~$\beta'$.
\end{proof}

\begin{corollary}\label{corEndoEquivalenceForSimpleStrata}
Let~$\Delta$,~$\Delta'$ and~$\Delta''$ be three strata which are equivalent to simple strata such that~$r=r'=r''$ and~$n=n'=n''$ and of the same period.
\begin{enumerate}
 \item\label{corEndoEquivalenceForSimpleStrataAss.i}  Suppose~$V=V'$ and suppose that~$\Res_F(\Delta)$ and~$\Res_F(\Delta')$ intertwine. Suppose that~$(V^j)_{j\in J}$ splits~$\Delta$ and~$(V'^{j'})_{j'\in J'}$ splits~$\Delta'$.
 Then~$\Delta\oplus\Delta'$ is equivalent to a simple stratum which is split by~$\bigoplus_jV^j\oplus\bigoplus_jV^{j'}$. 
 \item\label{corEndoEquivalenceForSimpleStrataAss.ii}  Suppose that~$\Delta\oplus\Delta'$ and~$\Delta''\oplus\Delta'$
 are equivalent to a simple strata. Then~$\Delta\oplus\Delta''$ and~$\Delta\oplus\Delta'\oplus\Delta''$ are equivalent to a simple stratum. 
 \item\label{corEndoEquivalenceForSimpleStrataAss.iii}  Suppose that~$\Delta\oplus\Delta'$ is a pure stratum. Then~$\Delta$,~$\Delta'$ and their direct sum have the same critical exponent which is equal 
 to~$e(\Lambda|E)k_F(\beta)$ where~$k_F(\beta)$ is the critical exponent~$k(\beta,\mf{p}_E^\bbZ)$. 
 \item\label{corEndoEquivalenceForSimpleStrataAss.iv}  Two equivalent simple strata have the same critical exponent, coinciding inertia degrees and coinciding ramification indexes. 
\end{enumerate}
\end{corollary}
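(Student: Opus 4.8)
The plan is to treat the four assertions in turn, in each case pushing the problem down to the split situation $D=F$ --- through Proposition~\ref{propGeneralizedPureSimple} for~\ref{corEndoEquivalenceForSimpleStrataAss.i} and~\ref{corEndoEquivalenceForSimpleStrataAss.ii}, and through Proposition~\ref{propSimplePure} for~\ref{corEndoEquivalenceForSimpleStrataAss.iv} --- where the analogous statements are available from~\cite{skodlerackStevens:15-1} and, classically, from~\cite{bushnellKutzko:93,secherreI:04}, and then carrying the conclusion back to $\GL_D(V)$ with the diagonalization Theorem~\ref{thmDiagonalization} and the critical-exponent formula~\eqref{eqCriticalExponent}. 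For~\ref{corEndoEquivalenceForSimpleStrataAss.i} I would first apply Proposition~\ref{propGeneralizedPureSimple} to $\Delta$ and to $\Delta'$, so that $\Res_F(\Delta)$ and $\Res_F(\Delta')$ are equivalent to simple strata over $F$, split into pure blocks by $(V^j)_j$ and $((V')^{j'})_{j'}$ respectively; they intertwine by hypothesis. The split case of the assertion (\cite{skodlerackStevens:15-1}; alternatively it can be re-derived from the split ``intertwining implies conjugacy'' of~\cite[8.1]{skodlerackStevens:15-1}, after using the construction $\Delta\mapsto\Delta^\ddag$ of Definition~\ref{defFirstDdagForStrata} to reduce the lattice sequences to conjugate lattice chains) then shows that $\Res_F(\Delta\oplus\Delta')=\Res_F(\Delta)\oplus\Res_F(\Delta')$ is equivalent to a simple stratum over $F$; hence $\Delta\oplus\Delta'$ is equivalent to a simple stratum $\Sigma$ by Proposition~\ref{propGeneralizedPureSimple}, and since $\Delta\oplus\Delta'$ is split under $\bigoplus_jV^j\oplus\bigoplus_{j'}(V')^{j'}$ into pure strata, Theorem~\ref{thmDiagonalization} provides $u\in 1+\mf{m}(\Sigma)$ with $u.\Sigma$ split by that decomposition, as required.

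For~\ref{corEndoEquivalenceForSimpleStrataAss.ii} I would descend to $F$ once more: by Proposition~\ref{propGeneralizedPureSimple}, $\Res_F(\Delta)\oplus\Res_F(\Delta')$ and $\Res_F(\Delta'')\oplus\Res_F(\Delta')$ are equivalent to simple strata over $F$, so the split-case analogue of the present assertion in~\cite{skodlerackStevens:15-1} shows that $\Res_F(\Delta)\oplus\Res_F(\Delta'')$ and $\Res_F(\Delta)\oplus\Res_F(\Delta')\oplus\Res_F(\Delta'')$ are equivalent to simple strata over $F$; applying Proposition~\ref{propGeneralizedPureSimple} once more returns both statements over $D$.

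For~\ref{corEndoEquivalenceForSimpleStrataAss.iii} and~\ref{corEndoEquivalenceForSimpleStrataAss.iv} the arguments are computational. If $\Delta\oplus\Delta'$ is pure then $E:=F[\beta\oplus\beta']$ is a field, and restricting its action to $V$ and to $V'$ shows that $\Delta$ and $\Delta'$ are pure with $F[\beta]\cong F[\beta']\cong E$ and with $\Lambda,\Lambda'$ both $o_E$-lattice sequences (in the degenerate case $n=r$ all three strata are zero, with $k_0=-\infty$). Since $\Delta$, $\Delta'$ and $\Delta\oplus\Delta'$ share the $o_D$-period, hence the $o_F$-period, and carry the same field $E$, they share the value of $e(\,\cdot\,|E)$, that is $e(\Lambda|E)=e(\Lambda'|E)=e(\Lambda\oplus\Lambda'|E)$ (Definition~\ref{defLatticeSequencOverProdOfFields}); formula~\eqref{eqCriticalExponent} then yields $k_0=e(\Lambda|E)\,k_0(\beta,\mf{p}_E^\bbZ)$ for each of the three, while $k_0(\beta,\mf{p}_E^\bbZ)=k_0(\beta',\mf{p}_E^\bbZ)=:k_F(\beta)$ depends only on $\beta$ viewed inside $E$. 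For~\ref{corEndoEquivalenceForSimpleStrataAss.iv}, two equivalent simple strata are in particular equivalent pure strata, so Proposition~\ref{propSimplePure} applied in both directions gives $e(E|F)=e(E'|F)$ and $f(E|F)=f(E'|F)$, while the equality of critical exponents is the classical statement for simple strata over a field (\cite{secherreI:04,bushnellKutzko:93}), to which one reduces by extending scalars so as to split $D$, using that equivalence and the critical exponent are preserved under $-\otimes_F\ti{F}$ (Proposition~\ref{propPotentiallySimpleStrataAreSemisimple}).

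I expect the main obstacle to lie in~\ref{corEndoEquivalenceForSimpleStrataAss.i} (and, for the same reason, in the split input needed for~\ref{corEndoEquivalenceForSimpleStrataAss.ii}): what requires care is the passage to the split case --- knowing that the blocks of a stratum equivalent to a simple stratum are themselves pure, so that Theorem~\ref{thmDiagonalization} and the intertwining formula Theorem~\ref{thmIntertwiningSimpleStratum} apply to them, and that $\Res_F$ together with the construction $\Delta\mapsto\Delta^\ddag$ genuinely reduces matters to conjugate lattice chains. This is bookkeeping rather than a new conceptual ingredient; granted~\ref{corEndoEquivalenceForSimpleStrataAss.i}, assertion~\ref{corEndoEquivalenceForSimpleStrataAss.ii} is formal, and assertions~\ref{corEndoEquivalenceForSimpleStrataAss.iii}--\ref{corEndoEquivalenceForSimpleStrataAss.iv} are direct.
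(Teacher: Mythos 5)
Your treatment of parts \ref{corEndoEquivalenceForSimpleStrataAss.i} and \ref{corEndoEquivalenceForSimpleStrataAss.ii} is correct and close to the paper's: for \ref{corEndoEquivalenceForSimpleStrataAss.i} the paper also feeds the intertwining of $\Res_F(\Delta)$ and $\Res_F(\Delta')$ into the split result \cite[7.1]{skodlerackStevens:15-1} and then lifts back, only it packages the lift as Theorem~\ref{thmDiagonalizationFiner} instead of your combination of Proposition~\ref{propGeneralizedPureSimple} with Theorem~\ref{thmDiagonalization} (the same ingredients). For \ref{corEndoEquivalenceForSimpleStrataAss.ii} the paper stays over $D$: it pads $\Delta,\Delta',\Delta''$ with multiplicities so that the two hypotheses produce two intertwining simple strata on one space, applies \ref{corEndoEquivalenceForSimpleStrataAss.i} and Theorem~\ref{thmPureIsEquivToSimple}; your route through a split-case transitivity statement is viable, but the statement you need is not really in \cite{skodlerackStevens:15-1} --- it is \cite[1.9]{bushnellHenniart:96}, stated for simple strata attached to lattice chains, so you would additionally need the $\ddag$-construction and Proposition~\ref{propGeneralizedPureSimple}, exactly as the paper does later in Proposition~\ref{propEndoEquivalenceOfStrataIsEquivalenceRelation}.

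The genuine gap is in \ref{corEndoEquivalenceForSimpleStrataAss.iii} (and it propagates into the critical-exponent claim of \ref{corEndoEquivalenceForSimpleStrataAss.iv}). You invoke formula~\eqref{eqCriticalExponent} to write $k_0=e(\Lambda|E)k_0(\beta,\mf{p}_E^{\bbZ})$ ``for each of the three'', but \eqref{eqCriticalExponent} is a statement about $k_0(\beta,\Lambda_F)$, i.e.\ about the restriction of scalars to $F$; the critical exponent appearing in \ref{corEndoEquivalenceForSimpleStrataAss.iii} is the one of the stratum over $D$, defined via $\mf{n}_k(\beta,\Lambda)\subseteq\End_D(V)$. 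The identity $k_0(\beta,\Lambda)=k_0(\beta,\Lambda_F)$ is not formal --- establishing the $D$-analogue of \eqref{eqCriticalExponent} is precisely the content of \ref{corEndoEquivalenceForSimpleStrataAss.iii}, so as written you are assuming what is to be proved. The paper bridges $D$ and $F$ in three steps: first it proves $k_0(\Delta)=k_0(\Delta\oplus\Delta')$ by a degree argument (Corollary~\ref{corSimpleStrataFieldCriteria} together with Theorem~\ref{thmDiagonalization}), which your formula-based argument also leaves unproved; then it uses this equality with $\Delta^\ddag$ to reduce to lattice chains; finally it reduces to $D=F$ via Proposition~\ref{propSimplePure} (the simpleness thresholds characterize $k_0$) and quotes \cite[1.4.13(ii)]{bushnellKutzko:93}. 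A similar problem occurs in your \ref{corEndoEquivalenceForSimpleStrataAss.iv}: after $\otimes_F\ti{F}$ the strata are only semisimple (Proposition~\ref{propPotentiallySimpleStrataAreSemisimple}), not simple, so the ``classical statement for simple strata over a field'' does not apply directly; one needs either the split statement for equivalent semisimple strata or, as the paper does, the detour through $\Res_F(\Delta^\ddag)$, \cite[2.4.1]{bushnellKutzko:93} and part \ref{corEndoEquivalenceForSimpleStrataAss.iii}. (Your divisibility-in-both-directions argument via Proposition~\ref{propSimplePure} for the equality of $e(E|F)$ and $f(E|F)$ in \ref{corEndoEquivalenceForSimpleStrataAss.iv} is fine, and is if anything shorter than the paper's.)
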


\begin{proof}
\begin{enumerate}
 \item The direct sum of~$\Res_F(\Delta)$ and~$\Res_F(\Delta')$ is equivalent to simple stratum by~\cite[ Proposition 7.1]{skodlerackStevens:18}, and 
Proposition~\ref{propDiagonalizationFiner} finishes the proof.
 \item Applying Theorem~\ref{thmDiagonalization} we obtain 
 that~$\ti{\Delta}=\Delta^{\oplus\dim_DV'}\oplus\Delta'^{\oplus\dim_DV''}$ and~$\ti{\ti{\Delta}}=\Delta'^{\oplus\dim_DV}\oplus\Delta''^{\oplus\dim_DV'}$ intertwine. Thus, 
 by~\ref{corEndoEquivalenceForSimpleStrataAss.i},~$\ti{\ti{\Delta}}\oplus\ti{\Delta}$ is equivalent to a simple stratum split by the whole splitting. 
 Thus,~$\Delta\oplus\Delta''$ and~$\Delta\oplus\Delta'\oplus\Delta''$ are equivalent to a pure strata, i.e. to a simple strata by Proposition~\ref{propPureIsEquivToSimple}. 
 \item We have~$k_0(\Delta)\leq k_0(\Delta\oplus\Delta')$ by definition and if~$\Delta$ would be simple and~$\Delta\oplus\Delta'$ not, then the latter is by 
 Corollary~\ref{corSimpleStrataFieldCriteria} and Theorem~\ref{thmDiagonalization} equivalent to a simple stratum split by~$V\oplus V'$ and of lower degree. 
 But then~$\Delta$ is equivalent to a simple stratum of lower degree which is not possible by Corollary~\ref{corSimpleStrataFieldCriteria}. 
 The stratum~$\Delta^\dag=\oplus_{i=0}^{e(\Lambda|F)-1} [\Lambda-i,n,r,\beta]$ is strict with~$k_0(\Delta)=k_0(\Delta^\dag)$ by the first part of this assertion. 
 Thus we only have to prove the formula for the critical exponent in the case where~$\Lambda$ is a lattice chain. By Proposition~\ref{propSimplePure}
 we can restrict to the split case, i.e.~$D=F$. This case can be found in~\cite[(1.4.13)(ii)]{bushnellKutzko:93}.
 \item Suppose~$\Delta$ and~$\Delta'$ are equivalent simple strata. Then~$\Res_F(\Delta^\dag)$ and~$\Res_F(\Delta'^\dag)$ are equivalent simple strata by Proposition~\ref{propSimplePure} and~\ref{corEndoEquivalenceForSimpleStrataAss.iii}, and they satisfy \ref{corEndoEquivalenceForSimpleStrataAss.iv} 
 by~\cite[(2.4.1)]{bushnellKutzko:93}. Thus~$\Delta$ and~$\Delta'$ satisfy~\ref{corEndoEquivalenceForSimpleStrataAss.iv} because~$\Delta$,~$\Delta^\dag$ and~$\Res_F(\Delta^\dag)$ have the same critical exponent by Proposition~\ref{propSimplePure} and~\ref{corEndoEquivalenceForSimpleStrataAss.iii}.
 \end{enumerate}
\end{proof}

\begin{corollary}\label{corEquivalentCriteriaForIntertwining}
Suppose~$\Delta$ and~$\Delta'$ are two simple strata such that~$V=V'$,~$r=r'$ and~$n=n'$. Then the following statements are equivalent:
\begin{enumerate}
 \item The stratum~$\Res_F(\Delta\oplus\Delta')$ intertwines with a pure stratum (or equivalently with a simple stratum). \label{corEquivalentCriteriaForIntertwiningAss1}
 \item The stratum~$\Delta\oplus\Delta'$ is equivalent to a simple stratum which is split under the decomposition~$V\oplus V$.\label{corEquivalentCriteriaForIntertwiningAss2}
 \item $I(\Delta,\Delta')\neq \emptyset$. \label{corEquivalentCriteriaForIntertwiningAss3}
 \item $I(\Res_F(\Delta),\Res_F(\Delta'))\neq \emptyset$. \label{corEquivalentCriteriaForIntertwiningAss4}
\end{enumerate}
 \end{corollary}

\begin{proof}
We leave this as an exercise for the reader. 
%
%
\end{proof}

\subsection{Semisimple strata}
In this section we apply~\S\ref{secSimpleStrata} to obtain similar results for semisimple strata.  We fix a finite unramified field extansion~$\tilde{F}|L$.

\begin{proposition}\label{propSemisimpleAndExtensionOfScalar}
Suppose~$\Delta$ is a semi-pure stratum. Then strata~$\Delta\otimes\ti{F}$ is semisimple if and only if~$\Delta$ is semisimple. 
\end{proposition}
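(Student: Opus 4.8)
The plan is to reduce the semisimplicity condition on a semi-pure stratum to the two criteria that already appear for the case $n=r+1$ in Proposition~\ref{propCriteriaFundtoSemisimplStratum}, namely the square-freeness of the minimal polynomial and the trivial-intersection condition on the multiplication maps, but now combined with the already-established behaviour of the individual blocks under unramified base change. First I would recall that $\Delta = \bigoplus_{i\in I}\Delta_i$ is semi-pure, so $\Delta\otimes\ti F = \bigoplus_{i\in I}\Delta_i\otimes\ti F$, and each $\Delta_i\otimes\ti F$ further decomposes into blocks indexed by the Galois orbit of the embedding of $E_i$. By Proposition~\ref{propPotentiallySimpleStrataAreSemisimple}, if $\Delta_i$ is simple then $\Delta_i\otimes\ti F$ is semisimple with the same critical exponent, and conversely; so the ``each block is simple'' part of Definition~\ref{defSemisimpleStratum} transfers in both directions block by block. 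Thus the whole content of the proposition is the pairwise condition: for $i_1\ne i_2$, $\Delta_{i_1}\oplus\Delta_{i_2}$ is not equivalent to a simple stratum, and one must show this is equivalent to the corresponding statement for all pairs of blocks of $\Delta_{i_1}\otimes\ti F$ and $\Delta_{i_2}\otimes\ti F$.

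For the ``only if'' direction I would argue contrapositively: if two blocks of $\Delta\otimes\ti F$, say coming from $\Delta_{i_1}\otimes\ti F$ and $\Delta_{i_2}\otimes\ti F$ with $i_1\ne i_2$, have a sum equivalent to a simple stratum, then (after using $\ti F$-rationality and that they sit over conjugate, hence after conjugation equal, lattice sequences, exactly as in the argument of Proposition~\ref{propPotentiallySimpleStrataAreSemisimple}) we may assume they are equivalent, so $\beta_{i_1}$ and $\beta_{i_2}$ have a common minimal polynomial over $\ti F$; restricting back along $\ti F|F$ this forces $\beta_{i_1}$ and $\beta_{i_2}$ to share their minimal polynomial over $F$ and, via Lemma~\ref{lemConjFieldExtUnramDetCriteria} together with Corollary~\ref{corEquivalentCriteriaForIntertwining}, that $\Delta_{i_1}\oplus\Delta_{i_2}$ already intertwines with a simple stratum, contradicting semisimplicity of $\Delta$. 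For the ``if'' direction I would use $\Res_F$ and extension of scalars in tandem: by Corollary~\ref{corEquivToMinSemisimpleStratum} and its proof, the relevant intersection/square-free conditions only weaken under $\Delta \rightsquigarrow \Delta\otimes L \rightsquigarrow \Res_F(\Delta)$, so a semisimple $\Delta$ has semisimple $\Delta\otimes L$, and then $\Delta\otimes\ti F = (\Delta\otimes L)\otimes_L\ti F$ is handled by the block-wise statement of Proposition~\ref{propPotentiallySimpleStrataAreSemisimple} applied to each simple block of $\Delta\otimes L$, noting that the pairwise non-equivalence is preserved because distinct blocks of $\Delta\otimes L$ have non-intertwining, hence non-equivalent-over-$\ti F$, parameters by Corollary~\ref{corEquivalentCriteriaForIntertwining}.

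Because the cleanest bookkeeping is via strata induction, I would in fact phrase the whole proof as an induction on $r$ (as in~\cite[7.2]{skodlerackStevens:15-1}): the base case $n=r+1$ is precisely Proposition~\ref{propCriteriaFundtoSemisimplStratum} combined with Remark~\ref{remLevelDdagRes} and the observation that $\mu_\Delta = \mu_{\Delta\otimes\ti F}$ and the multiplication-map kernels/images only change by inclusion under base change; for the inductive step one applies Theorem~\ref{thmDerived} to $\Delta$ and to $\Delta\otimes\ti F$, using that $\Delta(1+)\otimes\ti F = (\Delta\otimes\ti F)(1+)$ is equivalent to a semisimple stratum with last entry the image of $\gamma$ (by induction), and that forming the derived stratum commutes with $\otimes\ti F$ up to the harmless doubling built into the definition of $\partial_\gamma$, so $\partial_\gamma(\Delta)\otimes\ti F$ is a direct sum of copies of $\partial_{\gamma}(\Delta\otimes\ti F)$; then the equivalence of ``$\partial_\gamma(\Delta)$ semisimple'' and ``$\partial_\gamma(\Delta\otimes\ti F)$ semisimple'' is the inductive hypothesis applied to the derived multi-strata.

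I expect the main obstacle to be the ``if'' direction's pairwise condition: one must rule out that two blocks $\Delta_{i_1}\otimes\ti F$ and $\Delta_{i_2}\otimes\ti F$ arising from \emph{different} $F$-blocks can nonetheless have sub-blocks whose sum becomes equivalent to a simple stratum over $\ti F$ — i.e. that $\beta_{i_1}$ and $\beta_{i_2}$ cannot acquire a common minimal polynomial after the unramified base change unless they already had one over $F$. This is exactly where Lemma~\ref{lemConjFieldExtUnramDetCriteria} (in its $\Res_F$/extension-of-scalars incarnation used in Proposition~\ref{propPotentiallySimpleStrataAreSemisimple}) does the real work, and getting the reduction to equivalent (rather than merely intertwining) blocks — which requires conjugating the two lattice sequences together and invoking Corollary~\ref{corEquivalentCriteriaForIntertwining} — is the delicate point that I would write out carefully.
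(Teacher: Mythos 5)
The central step of your plan --- ruling out, for $i_1\neq i_2$, that a sub-block of $\Delta_{i_1}\otimes\ti{F}$ and a sub-block of $\Delta_{i_2}\otimes\ti{F}$ have a sum equivalent to a simple stratum --- does not go through as written. These two sub-blocks are not Galois conjugates of one another (unlike the situation inside the proof of Proposition~\ref{propPotentiallySimpleStrataAreSemisimple}), and they need not even have the same dimension, so you cannot conjugate their lattice sequences together and ``assume they are equivalent''. More seriously, equivalence of strata is only a congruence condition on the last entry: from ``$\ti{\Delta}_{1j_1}\oplus\ti{\Delta}_{2j_2}$ is equivalent to a simple stratum'' you cannot conclude that the relevant $\beta$'s share a minimal polynomial over $\ti{F}$, let alone over $F$; if $\beta_{i_1}$ and $\beta_{i_2}$ really had a common minimal polynomial, then $\Delta_{i_1}\oplus\Delta_{i_2}$ would already be pure and there would be nothing to prove. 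Lemma~\ref{lemConjFieldExtUnramDetCriteria} also runs in the opposite direction to the way you invoke it: it assumes equivalent pure strata whose entries have the same minimal polynomial over $F$ and concludes equality over the bigger field. The paper's argument avoids minimal polynomials here entirely: it restricts to $F$, uses transitivity of the relation ``direct sum equivalent to a simple stratum'' for simple strata over $F$ (Corollary~\ref{corEndoEquivalenceForSimpleStrata}\ref{corEndoEquivalenceForSimpleStrataAss.ii}) to see that $\Res_F(\ti{\Delta})$, hence --- being a sum of copies of $\Res_F(\Delta)$, via Theorem~\ref{thmDiagonalization} --- $\Res_F(\Delta)$ itself, is equivalent to a simple stratum, and then Proposition~\ref{propGeneralizedPureSimple} contradicts the semisimplicity of~$\Delta$.

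The converse direction ($\Delta\otimes\ti{F}$ semisimple $\Rightarrow$ $\Delta$ semisimple) is essentially not addressed: both of your first two paragraphs argue the forward implication. Proposition~\ref{propPotentiallySimpleStrataAreSemisimple} has no stated converse, so ``each block of $\Delta$ is simple'' has to be proved; the paper does this for a pure block by comparing the critical exponents of $\Delta$ and $\ti{\Delta}$. For the pairwise condition the paper uses a block-counting argument: if $\Delta_1\oplus\Delta_2$ were equivalent to a simple stratum $\Delta'$, then $\ti{\Delta}$ would have at least twice as many blocks as $\Delta'\otimes\ti{F}$, which is impossible for equivalent semisimple strata by~\cite[Proposition 7.1]{skodlerackStevens:15-1}; nothing in your sketch plays this role. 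Finally, your fallback of strata induction via Theorem~\ref{thmDerived} is circular inside this paper: the proof of Theorem~\ref{thmDerived} is postponed and relies, through Corollary~\ref{corDecompAreConjForEqSemisimpleStrata} and Proposition~\ref{propIntertwiningOfSemisimpleStratumOverD}, on Theorem~\ref{thmEquivSemisimpleSplittingAndRestriction}, whose proof uses the very proposition at hand; moreover the square-free and multiplication-map criteria (Proposition~\ref{propCriteriaFundtoSemisimplStratum}, Corollary~\ref{corEquivToMinSemisimpleStratum}) are only available for strata with $n=r+1$, so they cannot be applied to $\Delta$ directly outside that induction.
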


\begin{proof}
Write~$\ti{\Delta}$ for~$\Delta\otimes\ti{F}$.
1) Suppose~$\Delta$ is semisimple. The simple case is Proposition~\ref{propQuasiSimpleStrataAreSemisimple}.  
So let us assume without loss of generality that~$\Delta$ is semisimple with exactly two simple blocks, i.e.~$\Delta=\Delta_1\oplus\Delta_2$.
Write~$\ti{\Delta}_i:=\Delta_i\otimes {\ti{F}}$. It is semi-pure with blocks~$\ti{\Delta}_{i j}$. 
From~\emph{ibid.} follows that we only need to show that there is no pair of strata~$(\tilde{\Delta}_{1,j_1},\tilde{\Delta}_{2,j_2})$ such that~$\tilde{\Delta}_{1,j_1}\oplus\tilde{\Delta}_{2,j_2}$ is equivalent to a simple stratum.  
%
 Let us assume the contrary for some pair~$(j_1,j_2)$, i.e. the stratum~$\ti{\Delta}_{1 j_1}\oplus \ti{\Delta}_{2 j_2}$ is equivalent to a simple stratum.
Then~$\Res_F(\ti{\Delta})$ is equivalent to a simple stratum by Corollary~\ref{corEndoEquivalenceForSimpleStrata}\ref{corEndoEquivalenceForSimpleStrataAss.ii}. 
But~$\Res_F(\ti{\Delta})$ is a direct sum of copies of~$\Res_F(\Delta)$. Thus by Theorem~\ref{thmDiagonalization} the latter stratum is equivalent to a simple stratum, 
and~$\Delta$ is equivalent to a simple stratum by Proposition~\ref{propGeneralizedPureSimple}. This gives a contradiction.  

2) Let us now assume that~$\ti{\Delta}$ is semisimple and non-null. If~$\Delta$ is pure then multiply~$\beta$ with a negative power of~$\pi_F$ such 
that~$[\Lambda,n+ze(\Lambda|F),0,\pi_F^{-z}\beta]$ is simple. So we can assume without loss of generality that~$\Delta(r-)$ is simple. Then the critical exponents of~$\Delta(r-)$ and~$\ti{\Delta}(r-)$
coincide by Proposition~\ref{propQuasiSimpleStrataAreSemisimple}. Thus~$\Delta$ is simple. 
Suppose~$\Delta$ is semi-pure with two blocks~$\Delta=\Delta_1\oplus\Delta_2$. Then~$\Delta_1$ and~$\Delta_2$ are simple by the simple case. 
If~$\Delta$ is equivalent to a simple stratum~$\Delta'$, then it is equivalent to a simple stratum which is split by~$V^1\oplus V^2$. So~$\Delta_1$ and~$\Delta_2$ have the same
inertia degree by Corollary~\ref{corSimpleStrataFieldCriteria}. Therefore the number of 
blocks of~$\ti{\Delta}$ is at least twice the number of blocks of~$\Delta'\otimes \ti{F}$. Thus~$\ti{\Delta}$ cannot be semisimple by Proposition 
\cite[Proposition 7.1]{skodlerackStevens:18}. A contradiction. 
\end{proof}

\begin{lemma}\label{lemSemiPureDirectSummandsOfSemisimpleStrata}
 Suppose~$\Delta\oplus\Delta'$ is a semisimple stratum and~$\nu_{\Lambda}(\beta)=-n$ and~$\nu_{\Lambda'}(\beta')=-n'$. Then~$\Delta$ is semisimple.
\end{lemma}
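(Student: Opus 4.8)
The plan is to reduce to the split case $D=F$, which is available from \cite{skodlerackStevens:15-1}, by extending scalars to the maximal unramified subfield $L$ of $D$ and invoking Proposition~\ref{propSemisimpleAndExtensionOfScalar}. The first thing I would settle is that $\Delta$ itself is semi-pure. Write $\Delta\oplus\Delta'=\bigoplus_k\Delta''_k$ for its decomposition into pure blocks, with $\Delta''_k$ supported on a subspace $W^k$ of $V\oplus V'$. The primitive idempotents cutting out the $W^k$ lie in $F[\beta\oplus\beta']$, and $\beta\oplus\beta'$ stabilises both $V$ and $V'$; hence each such idempotent commutes with the projections onto $V$ and $V'$, so $W^k$ is split by $W^k=(W^k\cap V)\oplus(W^k\cap V')$, compatibly with $\Lambda\oplus\Lambda'$ and with $\beta\oplus\beta'$. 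Intersecting with $V$ exhibits $\Delta$ as the direct sum of the strata $\Delta''_k|_{W^k\cap V}$ over those $k$ for which $W^k\cap V\neq 0$. Each of these is a direct summand, along a decomposition that splits it, of the pure stratum $\Delta''_k$, hence is again pure — using that a quotient of a field is a field, that the intersection of an $o_E$-lattice sequence with an $E$-stable subspace is again an $o_E$-lattice sequence, and that $\nu_\Lambda(\beta)$ is the minimum of the valuations of the summands of $\beta$ — and $\beta$ generates over $F$ the product of the corresponding residue fields, namely the image of $F[\beta\oplus\beta']$ under restriction to $V$. So $\Delta$ is semi-pure.

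Next, since $\Delta\oplus\Delta'$ is semisimple (hence in particular semi-pure), Proposition~\ref{propSemisimpleAndExtensionOfScalar} applies to it and gives that $(\Delta\oplus\Delta')\otimes L=(\Delta\otimes L)\oplus(\Delta'\otimes L)$ is semisimple. As $L$ has degree over $F$ equal to the index of $D$ and embeds in $D$, it splits $D$, so $\End_D(V)\otimes_F L$ is a split matrix algebra over $L$ and $\Delta\otimes L$ is a stratum in the split-case setting. By the split-case version of this lemma (\cite{skodlerackStevens:15-1}) it follows that $\Delta\otimes L$ is semisimple, and then Proposition~\ref{propSemisimpleAndExtensionOfScalar} applied to the semi-pure stratum $\Delta$ yields that $\Delta$ is semisimple, as required.

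The step needing the most care is the reduction inside the first paragraph, specifically the claim that a direct summand of a pure stratum along a decomposition that splits it is pure; once that bookkeeping is done the rest is formal. If one preferred to avoid the passage to the split case — for instance if the cited analogue is not recorded in exactly this form — one can instead argue directly with the blocks $\Delta_i$ of $\Delta$ found above: each $\Delta_i$ is simple because, by Corollary~\ref{corEndoEquivalenceForSimpleStrata}\ref{corEndoEquivalenceForSimpleStrataAss.iii}, its critical exponent equals that of the simple block of $\Delta\oplus\Delta'$ in which it sits, hence is $<-r$; and were $\Delta_{i_1}\oplus\Delta_{i_2}$ equivalent to a simple stratum for some $i_1\neq i_2$, then — after replacing all the strata in play by equivalent ones with a common value of $n$, which changes nothing about being equivalent to a simple stratum, and treating the case of an empty companion summand as a trivial simplification — iterating Corollary~\ref{corEndoEquivalenceForSimpleStrata}\ref{corEndoEquivalenceForSimpleStrataAss.ii} (first linking the companion summand of $\Delta_{i_1}$ inside its block of $\Delta\oplus\Delta'$ with $\Delta_{i_1}$ and with $\Delta_{i_2}$, then feeding the result together with the block containing $\Delta_{i_2}$ back into the corollary) would make the direct sum of the two corresponding blocks of $\Delta\oplus\Delta'$ equivalent to a simple stratum, contradicting the semisimplicity of $\Delta\oplus\Delta'$.
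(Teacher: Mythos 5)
Your proposal is correct, and its two halves relate to the paper in different ways. Your fallback argument is in substance the paper's own proof: the paper observes that $\Delta$ is semi-pure because $\Delta\oplus\Delta'$ is, that each block $\Delta_i$ is a direct summand of a simple block of $\Delta\oplus\Delta'$ and hence simple by the equality of critical exponents in Corollary~\ref{corEndoEquivalenceForSimpleStrata}\ref{corEndoEquivalenceForSimpleStrataAss.iii}, and that $\Delta_{i_1}\oplus\Delta_{i_2}$ cannot be equivalent to a simple stratum because Corollary~\ref{corEndoEquivalenceForSimpleStrata}\ref{corEndoEquivalenceForSimpleStrataAss.ii} would then make the sum of the two corresponding simple blocks of $\Delta\oplus\Delta'$ equivalent to a simple stratum. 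Your two-step iteration of part \ref{corEndoEquivalenceForSimpleStrataAss.ii} through the companion summands, and your explicit verification that restrictions of pure strata to split subspaces are pure, are correct fillings-in of steps the paper compresses into single sentences. Your primary route --- extend scalars to $L$, apply Proposition~\ref{propSemisimpleAndExtensionOfScalar} to $\Delta\oplus\Delta'$ and to $\Delta$, and quote a split-case version of the lemma --- is genuinely different and logically sound as far as it goes, but it rests entirely on that split-case citation: the paper itself does not invoke such a statement from \cite{skodlerackStevens:15-1} and instead proves the lemma directly via Corollary~\ref{corEndoEquivalenceForSimpleStrata}, so you should not treat the split analogue as available off the shelf. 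Since your direct argument is complete, nothing is lost; the scalar-extension route would merely outsource the combinatorics of Corollary~\ref{corEndoEquivalenceForSimpleStrata} to the split case, while the direct route keeps the proof self-contained at the cost of the bookkeeping you carried out.
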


\begin{proof}
 The strata~$\Delta$ and~$\Delta'$ are semi-pure because~$\ti{\Delta}:=\Delta\oplus\Delta'$ is. We consider the associated splitting~$(V^i)_{i\in I}$ of~$\Delta$. The stratum~$\Delta_i$ is a direct summand of a simple block of~$\ti{\Delta}$. Thus it is simple by 
 Corollary~\ref{corEndoEquivalenceForSimpleStrata}\ref{corEndoEquivalenceForSimpleStrataAss.iii}. If~$\Delta_{i_1}\oplus\Delta_{i_2}$ is equivalent to a simple stratum 
 for different indexes~$i_1$ and~$i_2$, then the sum of the corresponding simple blocks of~$\tilde{\Delta}$ is also equivalent to a simple stratum by
 Corollary~\ref{corEndoEquivalenceForSimpleStrata}\ref{corEndoEquivalenceForSimpleStrataAss.ii}. A contradiction. 
\end{proof}

\begin{proposition}\label{propEquivSemisimpleSplittingAndRestriction}
Suppose~$\Delta$ is a semi-pure stratum.
The following assertions are equivalent:
\begin{enumerate}
 \item The stratum~$\Delta$ is semisimple.\label{thmEquivSemisimpleSplittingAndRestrictionAss1}
 \item The stratum~$\Delta\otimes \ti{F}$ is semisimple.\label{thmEquivSemisimpleSplittingAndRestrictionAss2}
 \item The stratum~$\Res_F(\Delta\otimes \ti{F})$ is semisimple.\label{thmEquivSemisimpleSplittingAndRestrictionAss3} 
 \item The stratum~$\Res_F(\Delta)$ is semisimple.\label{thmEquivSemisimpleSplittingAndRestrictionAss4}
\end{enumerate}
In particular, all mentioned strata have the same critical exponent. 
\end{proposition}

\begin{proof}
 The first two assertions are equivalent by Proposition \ref{propSemisimpleAndExtensionOfScalar}. 
 Assertion \ref{thmEquivSemisimpleSplittingAndRestrictionAss4} and \ref{thmEquivSemisimpleSplittingAndRestrictionAss1} are equivalent by 
 Proposition~\ref{propSimplePure} and Proposition~\ref{propGeneralizedPureSimple}. 
The equivalence of the third and the fourth assertion follows from Lemma~\ref{lemSemiPureDirectSummandsOfSemisimpleStrata}, because~$\Res_F(\Delta\otimes\ti{F})$ 
is a direct summand of~$\Res_F(\Delta)\otimes\ti{F}$ and~$\Res_F(\Delta)$ is a direct summand of~$\Res_F(\Delta\otimes\ti{F})$. 
The last remark of the theorem is now a direct consequence of the definition of the critical exponent. 
\end{proof}

Proposition \ref{propEquivSemisimpleSplittingAndRestriction} allows us to transfer many results from \cite{skodlerackStevens:18} about semisimple strata to the case of inner forms. 
S\'echerre's and Stevens' Cohomology argument, see \cite{secherreI:04} and \cite{stevens:01}  together using Hilbert 90 for the unramified extension~$L|F$ gives the formula 
for the intertwining for a semisimple stratum over~$D$. 
 
\begin{proposition}\label{propIntertwiningOfSemisimpleStratumOverD}
 Suppose~$\Delta$ and~$\Delta'$ with~$\beta=\beta'$ are two semisimple strata. Then the set of~$D$-automorphisms of~$V$ which intertwines the first
 with the second stratum is equal to 
 \[I(\Delta,\Delta')=(1+\mf{m}'_{-(r'+k_0(\Delta'))})C_{A^\times}(\beta) (1+\mf{m}_{-(r+k_0(\Delta))}).\] 
\end{proposition}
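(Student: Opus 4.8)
\textbf{Proof plan for Proposition~\ref{propIntertwiningOfSemisimpleStratumOverD}.}

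The strategy is to reduce to the split case, which is already known by~\cite[Theorem 6.21 et seq.]{skodlerackStevens:15-1} (generalizing the simple-stratum computation we reproduced as Theorem~\ref{thmIntertwiningSimpleStratum}), and then to descend the formula from $\End_L(V)$ to $\End_D(V)$ via a cohomological argument. Concretely, I would first pass to $\Delta\otimes L$ and $\Delta'\otimes L$. By Theorem~\ref{thmEquivSemisimpleSplittingAndRestriction} these are again semisimple strata, and their critical exponents $k_0(\Delta\otimes L)$ and $k_0(\Delta'\otimes L)$ agree with $k_0(\Delta)$ and $k_0(\Delta')$ respectively (this is part of the statement of Theorem~\ref{thmEquivSemisimpleSplittingAndRestriction}, and for simple blocks it is Proposition~\ref{propPotentiallySimpleStrataAreSemisimple}). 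Hence the sets $\mf{m}_{-(r+k_0(\Delta))}(\beta,\Lambda)$ and $\mf{m}'_{-(r'+k_0(\Delta'))}(\beta',\Lambda')$ are exactly the intersections with $\End_D(V)$ of the corresponding sets computed inside $\End_L(V)\cong C_{\End_F(V)}(L)$. Over $L$ the formula
\[
I_L(\Delta\otimes L,\Delta'\otimes L)=(1+\widetilde{\mf{m}}'_{-(r'+k_0)})\,C_{(A\otimes_F L)^\times}(\beta)\,(1+\widetilde{\mf{m}}_{-(r+k_0)})
\]
holds by the split case.

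Next I would take $\Gal(L|F)$-fixed points on both sides of this identity, where $\Gal(L|F)$ acts on $\End_L(V)=A\otimes_F L$ through the second factor; note that $A=\End_D(V)$ is precisely the fixed algebra, since $L$ is a maximal unramified subfield of $D$ and $D\otimes_F L\cong M_d(L)$ with $A\otimes_F L \cong \End_F(V)$ having $\Gamma$-fixed points $A$. The left-hand side intersected with $G=A^\times$ is visibly $I(\Delta,\Delta')$ (an element of $G$ intertwines the $D$-strata iff it intertwines them as $L$-strata, as the defining congruence condition is insensitive to the ground field). The right-hand side is where the real work lies: I must show that the $\Gamma$-fixed points of the product set $(1+\widetilde{\mf{m}}')\,C_{(A\otimes L)^\times}(\beta)\,(1+\widetilde{\mf{m}})$ equal the product $(1+\mf{m}')\,C_{A^\times}(\beta)\,(1+\mf{m})$ of the $\Gamma$-fixed points of the three factors. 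For the middle factor $C_{(A\otimes_F L)^\times}(\beta)$, fixed points are $C_{A^\times}(\beta)$ immediately (taking fixed points commutes with centralizers since $\beta$ is $\Gamma$-fixed and lies in $A$). The subtle point is pushing the fixed-point functor through the \emph{product} of the three subsets, which is not a group.

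The main obstacle, therefore, is exactly this non-multiplicativity, and the remedy is Hilbert~90 for $L|F$ combined with S\'echerre's and Stevens' cohomological machinery. The argument runs as in~\cite[2.35]{secherreI:04}, invoking~\cite[2.4.3, 3.5]{secherreI:04} (or the corresponding statements in~\cite{stevens:01}): given a $\Gamma$-fixed element $g=(1+\tilde n')\,c\,(1+\tilde n)$ with $\tilde n'\in\widetilde{\mf m}'$, $c\in C$, $\tilde n\in\widetilde{\mf m}$, one shows using the filtration structure and a cocycle computation that $c$ may be adjusted within its double coset so that all three factors become $\Gamma$-fixed, the vanishing of $H^1(\Gal(L|F),\,\cdot\,)$ on the relevant unipotent pieces (which are successive extensions of $L$-vector spaces, hence have trivial cohomology by the normal basis theorem / additive Hilbert~90) being what makes the adjustment possible. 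I would spell out that the sets $1+\widetilde{\mf m}$ and $1+\widetilde{\mf m}'$ carry $\Gamma$-stable filtrations by normal subgroups with $L$-vector-space graded pieces, so the descent of a factorization applies degree by degree. Combining: $I(\Delta,\Delta')^{} = I_L(\Delta\otimes L,\Delta'\otimes L)^\Gamma\cap G = \bigl((1+\widetilde{\mf m}')C(1+\widetilde{\mf m})\bigr)^\Gamma = (1+\mf m')\,C_{A^\times}(\beta)\,(1+\mf m)$, which is the claimed formula. A final routine check confirms that $\mf m = \mf m_{-(r+k_0(\Delta))}(\beta,\Lambda)$ and $\mf m' = \mf m'_{-(r'+k_0(\Delta'))}(\beta',\Lambda')$ as the $\Gamma$-fixed versions of $\widetilde{\mf m}$, $\widetilde{\mf m}'$, using the equality of critical exponents noted above.
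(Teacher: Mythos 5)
Your proposal follows essentially the same route as the paper: pass to $\Delta\otimes L$, quote the split-case intertwining formula from~\cite[6.22]{skodlerackStevens:15-1}, use Theorem~\ref{thmEquivSemisimpleSplittingAndRestriction} to identify the critical exponents, and descend by taking $\Gal(L|F)$-fixed points, with~\cite[2.35]{secherreI:04} and Hilbert~90 guaranteeing that the fixed points of the triple product are the product of the fixed points of the factors. Your extra remarks on the filtration of $1+\widetilde{\mf m}$ only spell out what that citation already provides, so the argument is correct and matches the paper's proof.
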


\begin{proof}
 By \cite[Theorem 6.22]{skodlerackStevens:18} we have a similar formula for~$I(\Delta\otimes L,\Delta'\otimes L)$:
 \[(1+\mf{m}'_{-(r'+k_0(\Delta'\otimes L))})C_{(A\otimes L)^\times }(\beta\otimes 1) (1+\mf{m}_{-(r+k_0(\Delta\otimes L))}).\]
 From Proposition~\ref{propEquivSemisimpleSplittingAndRestriction} follows that~$\Delta$ and~$\Delta\otimes L$ have the same critical exponent, and similar for $\Delta'$.  We intersect the latter formula with~$\Aut_D(V)$ and by \cite[Lemma 2.35]{secherreI:04} using Hilbert 90 this intersection is the product of the intersections of the three factors with~$\Aut_D(V)$. This finishes the proof. 
\end{proof}

\begin{corollary}\label{corDecompAreConjForEqSemisimpleStrata}
 Let~$\Delta$ and~$\Delta'$ be two equivalent semisimple strata. Then there is a bijection~$\zeta: I\ra I'$ such that~$1^i$ is congruent 
 to~$1^{\zeta(i)}$ modulo~$\mf{a}_{1}$. And there is an element~$u\in 1+\mf{m}_{-r-k_0}$ such that~$uV^i$ is equal to~$V'^{\zeta(i)}$ for all indexes~$i\in I$. 
\end{corollary}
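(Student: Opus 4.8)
The plan is to reduce to the split case $D=F$, where the corresponding statement is available from \cite{skodlerackStevens:15-1}, and then to descend to $G=\GL_D(V)$. Since $\Delta$ and $\Delta'$ are equivalent we may assume $\Lambda=\Lambda'$, and $r=r'$. If $\Delta$ is a zero stratum then $\beta=0$, so $\beta'\in\beta+\mf{a}_{-r}=\mf{a}_{-r}$ forces $\nu_\Lambda(\beta')\ge-r$ and hence $\Delta'$ is a zero stratum as well (and conversely); in that case $I=I'$ is a singleton, $V^1=V=V'^1$, and $u=1$ works. So assume both strata are non-zero; comparing $\nu_\Lambda(\beta)=\nu_\Lambda(\beta')$ through $\beta-\beta'\in\mf{a}_{-r}$ gives $n=n'$. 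Because $\Delta$ is semisimple we have $k_0(\Delta)<-r$, so $\mf{m}(\Delta)=\mf{n}_{-r}(\beta,\Lambda)\cap\mf{a}_{-(r+k_0)}\subseteq\mf{a}_1$; hence any $u\in1+\mf{m}(\Delta)$ satisfies $u\equiv1\pmod{\mf{a}_1}$, and the congruence $1^i\equiv1'^{\zeta(i)}\pmod{\mf{a}_1}$ will follow automatically from $1'^{\zeta(i)}=u1^iu^{-1}$ once a conjugating element $u\in1+\mf{m}(\Delta)$ with $uV^i=V'^{\zeta(i)}$ has been produced. Note finally that, $\Lambda$ being an $o_E$-lattice sequence, each idempotent $1^i$ lies in $\mf{a}_0\cap C_A(\beta)$, and likewise $1'^{i'}\in\mf{a}_0\cap C_A(\beta')$.

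First I would produce the bijection $\zeta\colon I\to I'$ together with $\bar{1}^i=\bar{1}'^{\zeta(i)}$ in $\mf{a}_0/\mf{a}_1$. By Theorem~\ref{thmEquivSemisimpleSplittingAndRestriction} the strata $\Res_F(\Delta)$ and $\Res_F(\Delta')$ are equivalent semisimple strata for $\End_F(V)$ with the same critical exponent, and by Proposition~\ref{propSimplePure} each block $\Res_F(\Delta_i)$ is still simple, so these strata again have index sets $I$ and $I'$; replacing $\Delta,\Delta'$ by $\Delta^\ddag,(\Delta')^\ddag$ beforehand one may moreover take $\Lambda$ strict if the split-case input requires it. That split case (\cite{skodlerackStevens:15-1}) furnishes $\zeta\colon I\to I'$ with $1^i\equiv1'^{\zeta(i)}$ modulo $\mf{a}_{\Lambda_F,1}$. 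Because $\mf{a}_{\Lambda,1}=\mf{a}_{\Lambda_F,1}\cap A$ and the $1^i$, $1'^{\zeta(i)}$ are $D$-linear, this congruence already holds modulo $\mf{a}_{\Lambda,1}$; in particular $\bar{1}^i$ and $\bar{1}'^{\zeta(i)}$ have equal rank, so $\dim_DV^i=\dim_DV'^{\zeta(i)}$. (Alternatively one can obtain $\zeta$ directly by strata induction: for $n=r+1$ the splitting is read off from the factorisation of the common minimal polynomial $\mu_\Delta=\mu_{\Delta'}$, and $\bar{1}^i$, $\bar{1}'^{\zeta(i)}$ are the images of the same polynomial evaluated at $\bar{\mf{y}}(\Delta)=\bar{\mf{y}}(\Delta')$, as in the proof of Proposition~\ref{propCriteriaFundtoSemisimplStratum}; the general case follows by the usual reduction through $\partial_\gamma$.)

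Now set $u_0:=\sum_{i\in I}1'^{\zeta(i)}1^i$. Using $\bar{1}'^{j}\bar{1}^i=\bar{1}'^{j}\bar{1}'^{\zeta(i)}=0$ for $j\neq\zeta(i)$ one gets $u_0-1=-\sum_{i}\sum_{j\neq\zeta(i)}1'^{j}1^i\in\mf{a}_1$, so $u_0\in P_1(\Lambda)$; since $u_0$ is invertible and $u_0|_{V^i}=1'^{\zeta(i)}|_{V^i}$ maps $V^i$ into $V'^{\zeta(i)}$, the equality of dimensions forces $u_0V^i=V'^{\zeta(i)}$. Moreover $[\beta,u_0-1]=\sum_i[\beta,1'^{\zeta(i)}]1^i=\sum_i[\beta-\beta',1'^{\zeta(i)}]1^i\in\mf{a}_{-r}$, using $[\beta,1^i]=0=[\beta',1'^{\zeta(i)}]$ and $\beta-\beta'\in\mf{a}_{-r}$, so $u_0-1\in\mf{n}_{-r}(\beta,\Lambda)$. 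It remains to adjust $u_0$ into $1+\mf{m}(\Delta)$ without disturbing $u_0V^i=V'^{\zeta(i)}$. Let $s\colon A\to C_A(\beta)$ be a tame corestriction; then $s(\mf{a}_k)\subseteq\mf{a}_k\cap C_A(\beta)$, and since $-r>k_0(\Delta)$ one has $x-s(x)\in\mf{m}(\Delta)$ for every $x\in\mf{n}_{-r}(\beta,\Lambda)$ (the simple case being essentially the content behind Theorem~\ref{thmIntertwiningSimpleStratum}, the semisimple case following blockwise, \cf~\cite{skodlerackStevens:15-1}). Put $b:=s(u_0-1)\in C_A(\beta)\cap\mf{a}_1$ and $u:=u_0(1+b)^{-1}$. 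Then $u-1=\sum_{\ell\ge0}(-1)^{\ell}\,m\,b^{\ell}$ with $m:=(u_0-1)-b\in\mf{m}(\Delta)$; since $b\in C_A(\beta)\cap\mf{a}_1$, each term $m\,b^{\ell}$ again lies in $\mf{n}_{-r}(\beta,\Lambda)\cap\mf{a}_{-(r+k_0)}=\mf{m}(\Delta)$, and as $\mf{m}(\Delta)$ is closed we get $u\in1+\mf{m}(\Delta)$. Right multiplication by $(1+b)^{-1}\in C_{A^\times}(\beta)$ preserves the decomposition $(V^i)_{i\in I}$, so $uV^i=u_0V^i=V'^{\zeta(i)}$, and the idempotent congruence follows as noted. (All of $u_0$, $b$, $u$ are $D$-linear, since $s$ maps $A$ into $C_A(\beta)\subseteq A$.)

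The main obstacle is the structural input used at the end, namely that the tame corestriction recovers $\mf{n}_{-r}(\beta,\Lambda)$ modulo $\mf{m}(\Delta)$ in the semisimple setting and over a division algebra; that is where the genuine work sits, the remainder being bookkeeping with the two reductions. A secondary point that needs care is checking that the split-case result of \cite{skodlerackStevens:15-1} applies with matching index sets and critical exponent, which is precisely why Theorem~\ref{thmEquivSemisimpleSplittingAndRestriction}, Proposition~\ref{propSimplePure} and, if necessary, the $\ddag$-construction are invoked in the second step.
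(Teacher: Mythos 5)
Your construction of $\zeta$ and of $u_0=\sum_i 1'^{\zeta(i)}1^i\in P_1(\Lambda)$ with $u_0V^i=V'^{\zeta(i)}$ and $u_0-1\in\mf{n}_{-r}(\beta,\Lambda)$ is fine, but the step that carries the real content of the corollary --- upgrading $u_0$ from $1+(\mf{n}_{-r}\cap\mf{a}_1)$ to $1+\mf{m}(\Delta)$ --- is incorrect. You claim that $x-s(x)\in\mf{m}(\Delta)$ for every $x\in\mf{n}_{-r}(\beta,\Lambda)$, where $s$ is a tame corestriction. A tame corestriction is not a projection of $A$ onto $B=C_A(\beta)$: restricted to $B$ it is a $(B,B)$-bimodule endomorphism, hence multiplication by an element $c\in E$, and comparing the level-one character $\psi_E$ entering the definition of $s$ with $\psi_F\circ\Tr_{E|F}$ gives $\nu_E(c)=d(E|F)-e(E|F)+1$, which is strictly positive whenever $E|F$ is wildly ramified (the case the whole theory is about) and in any case need not vanish, so $c\neq 1$ in general. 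Hence for $b\in\mf{b}_0\setminus\mf{a}_1$ one has $b-s(b)=(1-c)b$, a unit-sized element of $\mf{a}_0$ and certainly not an element of $\mf{m}(\Delta)\subseteq\mf{a}_1$; taking $x=b+m$ with such a $b$ and $m\in\mf{m}(\Delta)$ shows the claimed inclusion fails, and with it $u:=u_0\bigl(1+s(u_0-1)\bigr)^{-1}$ need not lie in $1+\mf{m}(\Delta)$. What Theorem~\ref{thmIntertwiningSimpleStratum} really encodes is only the existence of a decomposition $\mf{n}_{-r}(\beta,\Lambda)\subseteq\mf{b}_0+\mf{m}(\Delta)$; the tame corestriction does not implement that splitting.

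The gap can be closed in two ways. Keeping your $u_0$, use the decomposition just mentioned (valid over $D$, e.g.\ by taking $\Gal(L|F)$-fixed points of the split statement): write $u_0-1=b+m$ with $b\in\mf{b}_0$ and $m\in\mf{m}(\Delta)$; since $u_0-1\in\mf{a}_1$ and $\mf{m}(\Delta)\subseteq\mf{a}_1$ you get $b\in\mf{b}_1$, so $1+b\in B^\times$ preserves every $V^i$ and $u:=u_0(1+b)^{-1}=1+m(1+b)^{-1}$ lies in $1+\mf{m}(\Delta)$ and still maps $V^i$ to $V'^{\zeta(i)}$. The paper argues differently and avoids the split-case detour altogether: by the intertwining formula (Proposition~\ref{propIntertwiningOfSemisimpleStratumOverD}) each $1^i$ is congruent modulo $\mf{m}_{-r-k_0}$ to an element of $C_A(\beta')\cap\mf{a}_0$, and the idempotent-lifting and centrality lemmas of \cite{skodlerackStevens:15-1} (7.13 and 7.16, applied in $C_A(\beta')$) upgrade this to a central idempotent of $C_A(\beta')$; this produces $\zeta$ together with the stronger congruence $1^i\equiv1'^{\zeta(i)}$ modulo $\mf{m}_{-r-k_0}$, so that $u=\sum_i1'^{\zeta(i)}1^i$ lies in $1+\mf{m}_{-r-k_0}$ at once, followed by a final factorization through $I(\Delta')\cap\mf{a}_0^\times=\mf{b}'^\times_0(1+\mf{m}'_{-r-k'_0})$ to place the conjugating element in the primed group.
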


 \begin{proof}
We consider the critical exponents~$k_0$ and~$k'_0$ of~$\Delta$ and~$\Delta'$, respectively. Suppose~$k_0\geq k'_0$, then~$\mf{m}_{-r-k'_0}$ is a subset of~$\mf{m}'_{-r-k_0}$ because~$\mf{n}_{-r}(\beta,\Lambda)$ is equal to~$\mf{n}_{-r}(\beta',\Lambda')$ by  equivalence of strata. 
By Proposition~\ref{propIntertwiningOfSemisimpleStratumOverD} we have the equality 
\[(1+\mf{m}_{-r-k_0})C_{A}(\beta) (1+\mf{m}_{-r-k_0})=(1+\mf{m}_{-r-k_0})C_{A}(\beta') (1+\mf{m}_{-r-k_0}),\]
which implies for every index~$i\in I$ the existence of an element~$e_i\in C_{A}(\beta')\cap\mf{a}_0$ which is congruent to~$1^i$ 
modulo~$\mf{m}_{-r-k_0}$. Now Lemma~\cite[Lemma 7.13]{skodlerackStevens:18} implies that there is an idempotent~$\tilde{e}_i$ in~$C_{A}(\beta')$
congruent to~$e_i$ and thus to~$1^i$ modulo~$\mf{m}_{-r-k_0}$. Lemma~\cite[Lemma 7.16]{skodlerackStevens:18} (applied to the algebra~$C_{A}(\beta')$)
shows that~$\tilde{e}_i$ is central in~$C_{A}(\beta')$. Symmetrically every primitive idempotent~$1^{i'}$ of~$F[\beta']$ is congruent to a central idempotent of~$C_{A}(\beta)$. 
Thus, there is a bijection~$\zeta$ from~$I$ to~$I'$ such that~$1^i$ is congruent to~$1^{\zeta(i)}$ modulo~$\mf{m}_{-r-k_0}$. 
The map~$u:=\sum_i1^{\zeta(i)}1^i$ is an element of~$1+\mf{m}_{-r-k_0}$ which satisfies~$uV^i=V'^{\zeta(i)}$,~$i\in I$. Therefore~$\Delta'_{\zeta(i)}$ is equivalent to~$u.\Delta_i$. 
Now~$u$ is an element of~$I(\Delta')\cap\mf{a}_0^\times$ which is~$\mf{b}'^\times_0(1+\mf{m}'_{-r-k'_0})$,
i.e.~$u$ is of the form~$u=b'u'$ with~$b'\in \mf{b}'^\times_0$ and~$u'\in1+\mf{m}'_{-r-k'_0}$. Thus~$u'$  has the desired properties.
\end{proof}

\subsection{Strata and Embedding types}
The aim of this subsection is to establish an equivalent description of embedding type, see Proposition~\ref{propequivalentToSameEmbedType}.

\begin{lemma}[{\cite[Lemma 5.2]{broussousGrabitz:00},\cite[Lemma 2.11]{secherreStevensVI:12}}]\label{lemEquivSimpleGiveSameEmbeddingType}
 If two simple strata~$\Delta$ and~$\Delta'$ with~$\Lambda=\Lambda'$ are equivalent then they have the same embedding type, and the embeddings~$(E_D,\Lambda)$ and~$(E'_D,\Lambda)$ are conjugate 
 by an element of~$P_1(\Lambda)$.
\end{lemma}

\begin{proposition}\label{propFieldEmbedingsConjugateUnderaUnit}
Let~$\phi_i$,~$i=1,2$, be to~$F$-algebra embeddings of a field extension~$E|F$ into~$A$, and let~$\Lambda$ be a lattice sequence such that~$\phi_i(E)^\times$ normalizes
$\Lambda$ for both indexes~$i$. Suppose further that 
there is an element~$g$ of~$P(\Lambda)$ such that the conjugation by~$g$ seen as an endomorphism of~$\mf{a}_{\Lambda,0}/\mf{a}_{\Lambda,1}$ 
restricts on the residue field of~$\phi_1(E_D)$ to the map induced by~$\phi_2\circ\phi_1^{-1}$. Then there is an 
element~$u\in P(\Lambda)$ such that~$u\phi_1(x)u^{-1}$ is equal to~$\phi_2(x)$, for all~$x\in E$. If~$E=E_D$ then we can choose~$u$ in~$g P_1(\Lambda)$. 
\end{proposition}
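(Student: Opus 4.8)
The plan is to reduce step by step to the case $E=E_D$, which is the heart of the matter, and then invoke the results already available for embeddings of unramified extensions. First I would split the field extension $E|F$ into its unramified part $E_D|F$ and the totally ramified part $E|E_D$. For the unramified part, the hypothesis is designed precisely so that Theorem~\ref{thmbroussousSecherreStevens3p3u3p5} (or rather its refinement, together with Theorem~\ref{thmEquivSimpleGiveSameEmbeddingType}) applies: since the conjugation by $g$ already realizes on the residue field of $\phi_1(E_D)$ the map induced by $\phi_2\circ\phi_1^{-1}$, the two embeddings $(\phi_1(E_D),\Lambda)$ and $(\phi_2(E_D),\Lambda)$ have the same embedding type, so there is a unit $u_1$ in the normalizer of $\Lambda$ with $u_1\phi_1(x)u_1^{-1}=\phi_2(x)$ for all $x\in E_D$; moreover, the last sentence of Theorem~\ref{thmbroussousSecherreStevens3p3u3p5} lets us take $u_1$ in $gP_1(\Lambda)$ once we have matched up the residual action, because $g^{-1}u_1$ then induces the identity on the residue field of $\phi_1(E_D)$.

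Having conjugated so that $\phi_1$ and $\phi_2$ agree on $E_D$, I would replace $A$ by the centralizer $C_A(\phi_1(E_D))$ (which contains both $\phi_1(E)$ and $\phi_2(E)$) and $\Lambda$ by its image under the building embedding $j_{E_D}$ from Theorem~\ref{thmBroussousLemaire1punkt1}, using that $P(\Lambda)\cap C_A(E_D) = P(j_{E_D}(\Lambda))$ and similarly for $P_1$. This reduces us to the situation where $E_D=F$, i.e.\ $E|F$ is totally ramified; equivalently we may assume $E=E_D$ from the start in a smaller algebra. In that totally ramified case the residue field of $E$ is just $\kappa_F$, the hypothesis on $g$ is automatic, and one must produce $u\in P(\Lambda)$ conjugating $\phi_1$ to $\phi_2$. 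Here I would form the stratum $[\Lambda,n,n-1,\phi_1(\pi_E)]$ for a uniformizer $\pi_E$ of $E$ with $n$ chosen via $\nu_\Lambda$; since $E|F$ is totally ramified, $\phi_i(\pi_E)$ generates $\phi_i(E)$ and (after multiplying by a power of $\pi_F$) one gets simple strata with $F[\phi_i(\pi_E)]=\phi_i(E)$. These two simple strata are equivalent — they have the same embedding type by Theorem~\ref{thmGeometricInterpretationOfEquivalenceOfEmbeddings}, the embeddings $(\phi_1(E),\Lambda)$ and $(\phi_2(E),\Lambda)$ being equivalent by the hypothesis (which, with $E=E_D$, simply says the embedding types coincide) — and they intertwine, so Theorem~\ref{thmINtImplConSimpleD} together with Theorem~\ref{thmDiagonalization}, or more directly the intertwining formula of Theorem~\ref{thmIntertwiningSimpleStratum}, produces $u\in P(\Lambda)$ with $u\phi_1(\pi_E)u^{-1}=\phi_2(\pi_E)$. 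Finally, because $\pi_E$ generates $E$ over $F$ in the totally ramified case, this single conjugation gives $u\phi_1(x)u^{-1}=\phi_2(x)$ for all $x\in E$.

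The main obstacle is bookkeeping in the descent from the full $E$ to $E=E_D$ and back: one must check that the unit $u_1$ handling the unramified part can genuinely be taken in $gP_1(\Lambda)$ (so that the residual constraint is preserved when passing into the centralizer), and that after moving into $C_A(\phi_1(E_D))$ and applying $j_{E_D}$ the hypothesis and the groups $P(\Lambda)$, $P_1(\Lambda)$ transfer correctly. The other delicate point is ensuring, in the totally ramified step, that the simple strata built from $\pi_E$ really are equivalent — i.e.\ that the equivalence of embeddings $(\phi_i(E),\Lambda)$ is exactly what Theorem~\ref{thmGeometricInterpretationOfEquivalenceOfEmbeddings} needs (same barycentric coordinates up to cyclic permutation), and that the element $u$ obtained lands in $P(\Lambda)$ rather than merely in $\mf{n}(\Lambda)$; this is where Theorem~\ref{thmIntertwiningSimpleStratum} and the fact that $E|F$ totally ramified forces the relevant reduced-norm valuations to match are used. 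Once $u=u_2 u_1$ is assembled (first $u_1\in gP_1(\Lambda)$ for $E_D$, then $u_2\in P(j_{E_D}(\Lambda))\subseteq P(\Lambda)$ for the ramified part), the product lies in $gP_1(\Lambda)$ in the case $E=E_D$ and in $P(\Lambda)$ in general, as claimed.
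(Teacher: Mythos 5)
Your overall reduction (treat $E_D$ first, then use the second sentence of Theorem~\ref{thmbroussousSecherreStevens3p3u3p5} to deal with the rest of $E$) is the right skeleton, but the step that carries all the weight is missing. For the unramified part you obtain, via embedding types, an element $u_1$ of the \emph{normalizer} of $\Lambda$ conjugating $\phi_1|_{E_D}$ to $\phi_2|_{E_D}$, and then assert that ``the last sentence of Theorem~\ref{thmbroussousSecherreStevens3p3u3p5} lets us take $u_1\in gP_1(\Lambda)$ because $g^{-1}u_1$ induces the identity on the residue field.'' That sentence does not say this: it only asserts that if $\phi_1$ and $\phi_2$ already \emph{coincide} on $E_D$ then the conjugating element may be chosen in $P(\Lambda)$; it gives no way to upgrade residual agreement (conjugation by $g\in P(\Lambda)$ inducing $\phi_2\circ\phi_1^{-1}$ on $\kappa_{E_D}$) into membership of the conjugator in $P(\Lambda)$, let alone $gP_1(\Lambda)$. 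That upgrade is precisely the new content of the proposition, and it is exactly where the paper does real work: after normalizing $g=1$, one picks $\beta_1\in\phi_1(E_D)$ whose residue class generates $\kappa_{E_D}$ and $\beta_2\in\phi_2(E_D)$ with the \emph{same residue class and the same minimal polynomial}; the simple strata $[\Lambda,e(\Lambda|F),e(\Lambda|F)-1,\beta_i\pi_F^{-1}]$ are then equivalent, Theorem~\ref{thmEquivSimpleGiveSameEmbeddingType} gives equal embedding types, Theorem~\ref{thmbroussousSecherreStevens3p3u3p5} gives a normalizer element $t$ with $t\beta_1t^{-1}=\beta_2$, and finally the intertwining formula (Proposition~\ref{propIntertwiningOfSemisimpleStratumOverD}) writes $t$ as a product of a $1$-unit and an element of $C_A(\beta_1)$, so the centralizer factor can be absorbed and $t$ taken in $P_1(\Lambda)$. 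Without some argument of this kind your final $u$ only lies in $\mf{n}(\Lambda)$, which is weaker than the statement.

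Your treatment of the totally ramified part also contains an unjustified claim: you build strata from uniformizers $\phi_i(\pi_E)$ and declare them equivalent because the embeddings have the same embedding type. Equality of embedding types says nothing about the congruence $\phi_1(\pi_E)\equiv\phi_2(\pi_E)\pmod{\mf{a}_{1-n}}$, so these strata need not be equivalent, and intertwining-implies-conjugacy would in any case only conjugate them up to equivalence, not carry $\phi_1(\pi_E)$ to $\phi_2(\pi_E)$ on the nose. This detour is unnecessary: once $\phi_1$ and $\phi_2$ agree on $E_D$, the embeddings $(\phi_1(E),\Lambda)$ and $(\phi_2(E),\Lambda)$ are equivalent and the second sentence of Theorem~\ref{thmbroussousSecherreStevens3p3u3p5} already produces a conjugator in $P(\Lambda)$ for all of $E$, which is how the paper disposes of the ramified part in one line. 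Also beware that asserting ``same embedding type'' for $(\phi_1(E_D),\Lambda)$ and $(\phi_2(E_D),\Lambda)$ directly from the residual hypothesis is essentially Theorem~\ref{thmequivalentToSameEmbedType}, whose proof in the paper relies on the very proposition you are proving, so you cannot lean on it here without circularity.
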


\begin{proof}
 We can restrict to the case where~$E$ is equal to~$E_D$, by Proposition \ref{propbroussousSecherreStevens3p3u3p5}. Further by conjugating with~$g$, we can assume that~$g$ is the identity. We take an element~$\beta_1$ of~$\phi_1(o_{E_D})$ whose residue class generates the residue field extension and an element 
~$\beta_2$ of~$\phi_2(o_{E_D})$ which has the same residue class as~$\beta_1$ in~$\mf{a}_{\Lambda,0}/\mf{a}_{\Lambda,1}$ and the same minimal polynomial over~$F$ as~$\beta_1$.
 The simple strata~$[\Lambda,e(\Lambda|F),e(\Lambda|F)-1,\beta_i\pi_F^{-1}]$, $i=1,2$, are equivalent and by Lemma~\ref{lemEquivSimpleGiveSameEmbeddingType} they have the same embedding type. 
 We therefore find an element~$t$ of the normalizer of~$\Lambda$ which conjugates~$\beta_1$ to~$\beta_2$, see Proposition \ref{propbroussousSecherreStevens3p3u3p5}. By Proposition~\ref{propIntertwiningOfSemisimpleStratumOverD} the element~$t$ can be written as a product of an element of~$P_1(\Lambda)$ and of~$C_A(\beta_1)$. Thus, we can choose~$t$ to be an element of~$P_1(\Lambda)$.
\end{proof}
%
We can now give an equivalent description of equivalence of embeddings. 

\begin{proposition}\label{propequivalentToSameEmbedType}
 Two embeddings~$(E,\Lambda)$ and~$(E',\Lambda')$ in~$A$ are equivalent if and only if there is a~$D$-automorphism~$g$ of~$V$ which maps~$\Lambda$ to a translate of 
~$\Lambda'$ such that the conjugation with~$g$ induces a field isomorphism between the residue fields of~$E_D$ and~$E'_D$. Given such a latter element~$g$ there is an element~$t$ of~$A^\times$ which 
 conjugates~$(E_D,\Lambda)$ to~$(E'_D,\Lambda')$ (up to translation of~$\Lambda'$) and such that~$gt^{-1}$ is an element of~$P_1(\Lambda')$. 
\end{proposition}

\begin{proof}
We can assume~$E=E_D$ and~$E'=E'_D$, in particular~$E$ and~$E'$ are isomorphic. We only have to prove the if-part.
 Applying~$g$ we can assume that both lattice sequences coincide and the residue fields coincide in~$\mf{a}_{\Lambda,0}/\mf{a}_{\Lambda,1}$. We take two injective~$F$-algebra homomorphisms~$\phi$ and~$\phi'$ from~$E$ into~$A$ such that the image of~$\phi$ is~$E$ (e.g.~$\phi=\id$) and  the image of~$\phi'$
 is equal to~$E'$, such that the maps coincide on the residue field of~$E$. Then we obtain from Proposition \ref{propFieldEmbedingsConjugateUnderaUnit} that both embeddings are equivalent and the conjugating element can be taken from~$P_1(\Lambda)$. 
\end{proof}

\subsection{Intertwining and Conjugacy for Semisimple strata}
In this subsection we prove an intertwining and conjugacy result for semisimple strata. This generalize the result for the split case, see \cite[Theorem 8.3]{skodlerackStevens:18}. For that we fix in the whole subsection two semisimple strata~$\Delta$ and~$\Delta'$ such that 
$n=n',\ r=r'$ and~$e(\Lambda|D)$ is equal to~$e(\Lambda'|D)$ and we assume that~$I(\Delta,\Delta')$ is non-empty.

%

The first important step is the statement of the existence of a matching for two intertwining semisimple strata over~$D$. 

\begin{proposition}[{{see \cite[Theorem 7.1]{skodlerackStevens:18} for the split case}}]\label{propMatchingGLDStrata}
 There is a unique bijection 
~$\zeta$ from the index set~$I$ for~$\beta$ to the index set~$I'$ for~$\beta'$ such that 
$\Delta_i\oplus\Delta'_{\zeta(i)}$ is equivalent to a simple stratum, for all~$i\in I$.
Furthermore the dimensions of~$V^i$ and~$V'^{\zeta(i)}$ coincide. 
\end{proposition}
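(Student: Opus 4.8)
The plan is to deduce the statement over $D$ from the already-known split case over $L$ by Galois descent, exactly in the spirit of the proofs of Proposition~\ref{propIntertwiningOfSemisimpleStratumOverD} and Proposition~\ref{propPotentiallySimpleStrataAreSemisimple}. First I would pass to $\Delta\otimes L$ and $\Delta'\otimes L$. By Theorem~\ref{thmEquivSemisimpleSplittingAndRestriction} these are semisimple strata (with the same critical exponents as $\Delta,\Delta'$), and since $I(\Delta,\Delta')\neq\emptyset$ we get $I(\Delta\otimes L,\Delta'\otimes L)\neq\emptyset$ as well. The split-case matching theorem~\cite[Theorem 7.1]{skodlerackStevens:15-1} then produces a unique bijection $\tilde\zeta\colon\tilde I\to\tilde I'$ between the index sets of $\beta\otimes 1$ and $\beta'\otimes 1$ with $\tilde\Delta_{\tilde i}\oplus\tilde\Delta'_{\tilde\zeta(\tilde i)}$ equivalent to a simple stratum and $\dim_L\tilde V^{\tilde i}=\dim_L\tilde V'^{\tilde\zeta(\tilde i)}$ for all $\tilde i$.

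Next I would exploit $\Gal(L|F)$-equivariance. The group $\Aut(L|F)$ acts on the block sets $\tilde I,\tilde I'$ (via the action on $\End_D(V)\otimes L$ on the second factor, as recalled before Definition~\ref{defFirstDdagForStrata}), and the orbits are precisely the blocks of $\Delta$, resp.\ $\Delta'$: the primitive idempotent $1^i$ of $E$ decomposes after $\otimes L$ into the $\Gal(L|F)$-orbit of idempotents summing to it, and $V^i\otimes L=\bigoplus_{\tilde i\in O_i}\tilde V^{\tilde i}$ for the corresponding orbit $O_i$. The key point is that $\tilde\zeta$ is $\Gal(L|F)$-equivariant: for $\sigma\in\Gal(L|F)$, applying $\sigma$ to an equivalence $\tilde\Delta_{\tilde i}\oplus\tilde\Delta'_{\tilde\zeta(\tilde i)}\sim$ simple gives $\tilde\Delta_{\sigma\tilde i}\oplus\tilde\Delta'_{\sigma\tilde\zeta(\tilde i)}\sim$ simple (a Galois automorphism preserves semisimplicity and the splitting structure, by Theorem~\ref{thmEquivSemisimpleSplittingAndRestriction} applied after restriction of scalars), so by the uniqueness clause of the split matching $\tilde\zeta(\sigma\tilde i)=\sigma\tilde\zeta(\tilde i)$. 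Hence $\tilde\zeta$ carries the $\Gal(L|F)$-orbit $O_i$ bijectively onto an orbit $O'_{\zeta(i)}$, which defines the desired bijection $\zeta\colon I\to I'$; the dimension equality $\dim_D V^i=\dim_D V'^{\zeta(i)}$ follows by summing $\dim_L\tilde V^{\tilde i}=\dim_L\tilde V'^{\tilde\zeta(\tilde i)}$ over the orbit and dividing by $d=[L:F]$.

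It then remains to check that this $\zeta$ is characterised over $D$ by the stated property, i.e.\ that $\Delta_i\oplus\Delta'_{\zeta(i)}$ is equivalent to a simple stratum and that $\zeta$ is the unique such bijection. For existence: $\Res_F(\Delta_i\oplus\Delta'_{\zeta(i)})\otimes L = \Res_F\big((\Delta_i\oplus\Delta'_{\zeta(i)})\otimes L\big)$ is, up to the direct-sum-of-copies ambiguity of restriction of scalars, a sum over $\tilde i\in O_i$ of the strata $\tilde\Delta_{\tilde i}\oplus\tilde\Delta'_{\tilde\zeta(\tilde i)}$; each of these is equivalent to a simple stratum, and repeated application of Corollary~\ref{corEndoEquivalenceForSimpleStrata}\ref{corEndoEquivalenceForSimpleStrataAss.ii} shows their sum is too, hence $\Res_F(\Delta_i\oplus\Delta'_{\zeta(i)})$ is equivalent to a simple stratum by Theorem~\ref{thmDiagonalization}, and then $\Delta_i\oplus\Delta'_{\zeta(i)}$ is equivalent to a simple stratum by Proposition~\ref{propGeneralizedPureSimple}. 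For uniqueness: if $\zeta_1,\zeta_2$ both work and $\zeta_1(i)=j_1\neq j_2=\zeta_2(i)$, then $\Delta_i\oplus\Delta'_{j_1}$ and $\Delta_i\oplus\Delta'_{j_2}$ are both equivalent to simple strata, so by Corollary~\ref{corEndoEquivalenceForSimpleStrata}\ref{corEndoEquivalenceForSimpleStrataAss.ii} $\Delta'_{j_1}\oplus\Delta'_{j_2}$ is equivalent to a simple stratum, contradicting semisimplicity of $\Delta'$; so $\zeta_1=\zeta_2$.

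The main obstacle I anticipate is the bookkeeping around restriction of scalars ``up to doubling / direct sum of copies'': $\Res_F$ of a stratum genuinely is a sum of copies of a single stratum, and one must make sure that every invocation of Corollary~\ref{corEndoEquivalenceForSimpleStrata} and Proposition~\ref{propGeneralizedPureSimple} is applied to the right object and that the multiplicities match up so that the orbit-counting gives the clean equality $\dim_D V^i=\dim_D V'^{\zeta(i)}$ rather than just proportionality. The conceptual content — Galois equivariance of the split matching plus uniqueness — is straightforward; the care is entirely in keeping the index sets, multiplicities, and the $\Gal(L|F)$-action lined up.
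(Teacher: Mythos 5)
Your Galois-descent strategy (extend scalars to $L$, match there, descend by equivariance) is not the paper's route and, as written, its existence step contains a genuine error. The paper instead restricts scalars to $F$: since $E=F[\beta]$ and its primitive idempotents are unchanged when $\beta$ is viewed as an $F$-linear map, the index sets of $\Res_F(\Delta)$ and $\Delta$ coincide, so the split matching of \cite[Theorem 7.1]{skodlerackStevens:15-1} applied to $\Res_F(\Delta)$ and $\Res_F(\Delta')$ already \emph{is} the desired bijection, and the property ``$\Delta_i\oplus\Delta'_{\zeta(i)}$ equivalent to a simple stratum'' descends to $D$ by Theorem~\ref{thmEquivSemisimpleSplittingAndRestriction} (via Proposition~\ref{propGeneralizedPureSimple}). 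Going through $\otimes L$ forces you to recombine the $\Gal(L|F)$-orbits of blocks, and that is exactly where your argument breaks.

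Concretely, your claim that the orbit sum $\bigoplus_{\tilde i\in O_i}\bigl(\tilde\Delta_{\tilde i}\oplus\tilde\Delta'_{\tilde\zeta(\tilde i)}\bigr)$ is equivalent to a simple stratum is false whenever $|O_i|>1$: for $\tilde i_1\neq\tilde i_2$ in $O_i$ the strata $\tilde\Delta_{\tilde i_1}$ and $\tilde\Delta_{\tilde i_2}$ are distinct blocks of the semisimple stratum $\Delta\otimes L$, so $\tilde\Delta_{\tilde i_1}\oplus\tilde\Delta_{\tilde i_2}$ is by Definition~\ref{defSemisimpleStratum} not equivalent to a pure stratum, and hence (diagonalize with Theorem~\ref{thmDiagonalization} and restrict to $\tilde V^{\tilde i_1}\oplus\tilde V^{\tilde i_2}$) the orbit sum cannot be equivalent to a simple stratum either; this reflects the fact that $(\Delta_i\oplus\Delta'_{\zeta(i)})\otimes L$ is only equivalent to a \emph{semisimple} stratum with $|O_i|$ blocks, never to a simple one. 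Moreover Corollary~\ref{corEndoEquivalenceForSimpleStrata}\ref{corEndoEquivalenceForSimpleStrataAss.ii} does not apply to combine $\tilde\Delta_{\tilde i_1}\oplus\tilde\Delta'_{\tilde\zeta(\tilde i_1)}$ with $\tilde\Delta_{\tilde i_2}\oplus\tilde\Delta'_{\tilde\zeta(\tilde i_2)}$, since these two sums share no common summand (your use of that corollary in the uniqueness step, with common summand $\Delta_i$, is fine, as is the equivariance argument). The descent can be repaired without the false claim: if $\Delta_i\oplus\Delta'_{\zeta(i)}$ were not equivalent to a simple stratum it would be semisimple (it is pure or semi-pure with simple blocks), so by Proposition~\ref{propSemisimpleAndExtensionOfScalar} its extension to $L$ would be semisimple, contradicting the existence of a cross pair $\tilde\Delta_{\tilde i}\oplus\tilde\Delta'_{\tilde\zeta(\tilde i)}$ equivalent to a simple stratum. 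With that substitution your proof works, but the restriction-of-scalars argument of the paper avoids the orbit bookkeeping entirely.
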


We call the map~$\zeta$ a matching. 

\begin{proof}
If~$I(\Delta,\Delta')$ is non-empty then~$I(\Res_F(\Delta),\Res_F(\Delta'))$ is non-empty and by~\cite[Theorem 7.1]{skodlerackStevens:18}  there is a unique matching~$\zeta$ for~$\Res_F(\Delta)$ and~$\Res_F(\Delta')$. This is also a matching for~$\Delta$ and~$\Delta'$ which follows 
 from Proposition \ref{propEquivSemisimpleSplittingAndRestriction}. 
\end{proof}

\begin{corollary}\label{corEqParametersForIntertwiningSemisimpleStrata}
 Under the assumptions of this subsection we have 
 \begin{enumerate}
  \item\label{corEqParametersForIntertwiningSemisimpleStrata.i} $e(\Lambda|E)=e(\Lambda'|E')$ and~$k_0(\Delta)=k_0(\Delta')$.
  \item\label{corEqParametersForIntertwiningSemisimpleStrata.ii} $e(\Lambda^i|E_i)=e(\Lambda'^{\zeta(i)}|E'_{\zeta(i)})$ and~$e(E_i|F)=e(E'_{\zeta(i)}|F)$ and~$f(E_i|F)=f(E'_{\zeta(i)}|F)$ and~$k_0(\Delta_i)=k_0(\Delta'_{\zeta(i)})$ for all~$i\in I$.
 \end{enumerate}
\end{corollary}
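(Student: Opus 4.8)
The plan is to reduce everything, via restriction of scalars, to the already-established split-case facts, and then to propagate the numerical invariants through the matching~$\zeta$ of Proposition~\ref{propMatchingGLDStrata}. First I would recall that by Theorem~\ref{thmEquivSemisimpleSplittingAndRestriction} the strata~$\Delta$, $\Delta'$, $\Res_F(\Delta)$, $\Res_F(\Delta')$ are all semisimple with the same critical exponents as their $D$-counterparts, and that by Proposition~\ref{propMatchingGLDStrata} the matching~$\zeta$ for~$\Delta,\Delta'$ agrees with the matching for~$\Res_F(\Delta),\Res_F(\Delta')$ supplied by~\cite[Theorem 7.1]{skodlerackStevens:15-1}. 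Since~$I(\Delta,\Delta')\neq\emptyset$ forces~$I(\Res_F(\Delta),\Res_F(\Delta'))\neq\emptyset$, the split-case analogue of this corollary (the companion statement to~\cite[Theorem 7.1]{skodlerackStevens:15-1}, giving $e(\Lambda_F|E_F)=e(\Lambda'_F|E'_{F})$, $k_0(\Res_F(\Delta))=k_0(\Res_F(\Delta'))$ and the blockwise equalities) applies directly. The task is then to transfer these equalities back across $\Res_F$.

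For part~\ref{corEqParametersForIntertwiningSemisimpleStrata.i}: the equality $k_0(\Delta)=k_0(\Delta')$ is immediate, since $k_0(\Delta)=k_0(\Res_F(\Delta))=k_0(\Res_F(\Delta'))=k_0(\Delta')$ by Theorem~\ref{thmEquivSemisimpleSplittingAndRestriction} and the split case. For $e(\Lambda|E)=e(\Lambda'|E')$ I would argue blockwise: on each simple block one has $e(\Lambda^i|E_i)=\frac{e(\Lambda^i|F)}{e(E_i|F)}$ and the restriction-of-scalars relation between $e(\Lambda^i|F)$ and $e((\Lambda^i)_F|F)$ involves only the degree~$d$ of~$D$ (it is unchanged), while $E_i$ and~$E_{i,F}$ have the same ramification index; so $e(\Lambda^i|E_i)$ is recovered from the split datum, and taking greatest common divisors over~$I$ (using $e(\Lambda|E)=\gcd_i e(\Lambda^i|E_i)$ from Definition~\ref{defLatticeSequencOverProdOfFields}) gives the claim once part~\ref{corEqParametersForIntertwiningSemisimpleStrata.ii} is known.

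For part~\ref{corEqParametersForIntertwiningSemisimpleStrata.ii}: by Proposition~\ref{propMatchingGLDStrata} the stratum $\Delta_i\oplus\Delta'_{\zeta(i)}$ is equivalent to a simple stratum, so $\Res_F(\Delta_i\oplus\Delta'_{\zeta(i)})=\Res_F(\Delta_i)\oplus\Res_F(\Delta'_{\zeta(i)})$ is equivalent to a simple stratum by Proposition~\ref{propGeneralizedPureSimple}. Then Corollary~\ref{corEndoEquivalenceForSimpleStrata}\ref{corEndoEquivalenceForSimpleStrataAss.iv} (coinciding critical exponents, inertia degrees and ramification indexes for equivalent simple strata, applied to the two simple summands of a diagonalizing simple stratum as in Theorem~\ref{thmDiagonalization}/Corollary~\ref{corSimpleStrataFieldCriteria}) yields $e(E_i|F)=e(E'_{\zeta(i)}|F)$, $f(E_i|F)=f(E'_{\zeta(i)}|F)$ and $k_0(\Delta_i)=k_0(\Delta'_{\zeta(i)})$; the dimension equality $\dim_D V^i=\dim_D V'^{\zeta(i)}$ is part of Proposition~\ref{propMatchingGLDStrata}. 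Finally $e(\Lambda^i|E_i)=\frac{e(\Lambda^i|F)}{e(E_i|F)}$, and $e(\Lambda^i|F)$ is determined by $\dim_D V^i$ together with the common $F$-period of $\Lambda$ and $\Lambda'$ (which agree by hypothesis $e(\Lambda|D)=e(\Lambda'|D)$, noting $n=n'$), so $e(\Lambda^i|E_i)=e(\Lambda'^{\zeta(i)}|E'_{\zeta(i)})$.

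The main obstacle I expect is bookkeeping rather than conceptual: one must be careful that "same $F$-period" for the whole lattice sequences does pass to the matched blocks, which requires knowing that the $D$-period of $\Lambda^i$ is controlled by $\dim_D V^i$ and the global period — this is exactly where the dimension statement of Proposition~\ref{propMatchingGLDStrata} is indispensable — and that the $\Res_F$ operation multiplies periods and degrees uniformly by~$d$, so no hidden divisibility discrepancy arises. Everything else is assembling already-proven facts.
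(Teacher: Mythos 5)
Your treatment of part~\ref{corEqParametersForIntertwiningSemisimpleStrata.ii} and of the equality~$e(\Lambda|E)=e(\Lambda'|E')$ is essentially the paper's own argument: the matching of Proposition~\ref{propMatchingGLDStrata} makes~$\Delta_i\oplus\Delta'_{\zeta(i)}$ equivalent to a simple stratum, and Theorem~\ref{thmDiagonalization} together with Corollary~\ref{corEndoEquivalenceForSimpleStrata}\ref{corEndoEquivalenceForSimpleStrataAss.iii}\ref{corEndoEquivalenceForSimpleStrataAss.iv} then yields the blockwise equalities, from which~$e(\Lambda|E)=e(\Lambda'|E')$ follows by the relation~$e(\Lambda^i|E_i)=e(\Lambda^i|F)/e(E_i|F)$ exactly as you say (your detour through~$\Res_F$ and Proposition~\ref{propGeneralizedPureSimple} is harmless but unnecessary, since the matching already gives the statement over~$D$, and the dimension bookkeeping you worry about is not actually needed for the period computation).

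The one place where you genuinely diverge, and where your argument is not solid as written, is the equality~$k_0(\Delta)=k_0(\Delta')$. You reduce it via~$k_0(\Delta)=k_0(\Res_F(\Delta))$ (Theorem~\ref{thmEquivSemisimpleSplittingAndRestriction}) to a purported ``companion statement'' of~\cite[Theorem 7.1]{skodlerackStevens:15-1} asserting~$k_0$-equality for intertwining split semisimple strata. The reduction steps themselves are fine (intertwining and the hypotheses pass to~$\Res_F$), but the cited split-case statement is not something the paper uses or annotates, and it is not clearly available in that reference: Theorem 7.1 there provides the matching and the dimension equalities, not the equality of critical exponents. The paper instead proves this part directly, by contradiction, using what you already have from part~\ref{corEqParametersForIntertwiningSemisimpleStrata.ii}: if~$k_0(\Delta)>k_0(\Delta')$, choose~$j$ with~$r+j=-k_0(\Delta)$; each block~$\Delta_i(j+)$ is still simple because~$k_0(\Delta_i)=k_0(\Delta'_{\zeta(i)})$, so the failure of semisimplicity of~$\Delta(j+)$ forces two blocks~$\Delta_{i_1}(j+)\oplus\Delta_{i_2}(j+)$ to be equivalent to a simple stratum; but then~$\Delta'_{\zeta(i_1)}(j+)\oplus\Delta'_{\zeta(i_2)}(j+)$ intertwines a simple stratum, contradicting the matching of Proposition~\ref{propMatchingGLDStrata} (two blocks against one). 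You should either supply this argument (it works directly over~$D$, making the~$\Res_F$ reduction superfluous) or give a precise reference for the split-case equality you invoke; as it stands, that step is a gap.
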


\begin{proof}
 \ref{corEqParametersForIntertwiningSemisimpleStrata.ii} follows directly from Proposition~\ref{propMatchingGLDStrata}, 
 Corollary~\ref{corEquivalentCriteriaForIntertwining}(\ref{corEquivalentCriteriaForIntertwiningAss1}$\Rightarrow$\ref{corEquivalentCriteriaForIntertwiningAss2}) and 
 Corollary~\ref{corEndoEquivalenceForSimpleStrata}\ref{corEndoEquivalenceForSimpleStrataAss.iii}\ref{corEndoEquivalenceForSimpleStrataAss.iv}.
 We now prove~\ref{corEqParametersForIntertwiningSemisimpleStrata.i}. The equality~$e(\Lambda|E)=e(\Lambda'|E')$ follows directly from the second assertion. Thus we are left with the equation for the critical exponents. Assume~$k_0(\Delta)>k_0(\Delta')$ (``$\Delta'$ is semisimpler'').
 Take a positive integer~$j$ such that $r+j=-k_0(\Delta)$. Then~$\Delta(j+)$ is not semisimple but~$\Delta'(j+)$ is still semisimple and both strata are equivalent. 
 Moreover the stratum~$\Delta_i(j+)$ is still simple, because~$k_0(\Delta_i)$ is equal to~$k_0(\Delta'_{\zeta(i)})$ by the second assertion. Thus for~$\Delta(j+)$ not being semisimple there exists two indexes~$i_1,i_2\in I$
 such that~$\Delta_{i_1}(j+)\oplus\Delta_{i_2}(j+)$ is equivalent to a simple stratum. But then~$\Delta'_{\zeta(i_1)}(j+)\oplus\Delta'_{\zeta(i_2)}(j+)$, which intertwines
 with~$\Delta_{i_1}(j+)\oplus\Delta_{i_2}(j+)$, by Theorem~\ref{thmDiagonalization}, must intertwine with a simple stratum. This leads to a contradiction to Proposition~\ref{propMatchingGLDStrata}, 
 because~$\Delta'_{\zeta(i_1)}(j+)\oplus\Delta'_{\zeta(i_2)}(j+)$ has two blocks and the intertwining simple stratum only one block.  
\end{proof}

\begin{definition}\label{defGroupLevel}
 We define the~\emph{ group level} of a non-null semi-pure stratum~$\Delta$ as~$\lfloor\frac{r}{e(\Lambda|E)}\rfloor$,
 and we put the group level of a null-stratum to be infinity. 
 Caution: Equivalent semi-pure strata can have different group levels.
 The~\emph{degree} of a semi-pure stratum~$\Delta$ is defined as~$\dim_FE$.
\end{definition}
Corollary~\ref{corEqParametersForIntertwiningSemisimpleStrata} implies the first part of:  

 \begin{corollary}\label{corSameGrouplevelFromIntertwiningIfSamePeriodandSamer}
 \begin{enumerate}
 \item\label{corSameGrouplevelFromIntertwiningIfSamePeriodandSamerAssi} Granted~$e(\Lambda|F)=e(\Lambda'|F)$ and~$r=r'$, then two intertwining semisimple strata~$\Delta$ and~$\Delta'$ have the same group level if they intertwine.  
 \item\label{corSameGrouplevelFromIntertwiningIfSamePeriodandSamerAssii}  Suppose~$\Delta$ is non-null and~$e(\Lambda|E)$ does not divide~$r+1$ , i.e.~$\Delta(1+)$ and~$\Delta$ have the same group level. Then~$\Delta(1+)$ is semisimple. 
 \end{enumerate}
 \end{corollary}

\begin{proof}
 We only need to prove the second assertion. The critical exponent of a 
 non-null simple block of~$\Delta$ is a multiple of~$e(\Lambda|E)$ by Corollary~\ref{corEndoEquivalenceForSimpleStrata}\ref{corEndoEquivalenceForSimpleStrataAss.iii} and therefore all non-null pure blocks of~$\Delta$ are still simple. Thus if~$\Delta(1+)$ is not semisimple then there are two different simple blocks~$\Delta_{1}$ and~$\Delta_{2}$ of~$\Delta$ such that~$(\Delta_{1}\oplus\Delta_{2})(1+)$ is equivalent to a simple stratum~$\Delta''$ split by~$V^{1}\oplus V^{2}$, see Theorem~\ref{thmDiagonalization}. Then~$\Res_F(\Delta_{i}^\dag)$ is conjugate to~$\Res_F(\Delta''(-1)|_{V^i}^\dag),\ i=1,2,$ by~\cite[(1.9) and (1.8)]{bushnellHenniart:96}.
 Therefore~$\Delta_{1}\oplus\Delta_2$ is equivalent to a simple stratum by Corollary~\ref{corEquivalentCriteriaForIntertwining}(\ref{corEquivalentCriteriaForIntertwiningAss1}$\Rightarrow$\ref{corEquivalentCriteriaForIntertwiningAss2}). A contradiction. 
\end{proof}

We need to refine the intertwining and conjugacy theorem for simple strata, Proposition~\ref{propINtImplConSimpleD}, more precisely we need to control the valuation of the conjugating element to generalize Proposition~\ref{propINtImplConSimpleD} to intertwining semisimple strata. The solution is motivated by Proposition~\ref{propequivalentToSameEmbedType}. 
For a product of field extensions~$E=\prod_iE_i$ of~$F$ we define the residue-algebra to be the product of the residue fields 
\[\kappa_E:=\prod_i\kappa_{E_i}.\]

\begin{proposition}[{{see~\cite[(2.4.12)]{bushnellKutzko:93} for the strict simple split case}}]\label{propequivalentstratahaveequalResiduefields}
 Suppose that~$\Delta$ and~$\Delta'$ are equivalent and that~$\Lambda$ is equal to~$\Lambda'$. Then the residue algebras of~$F[\beta]$ and~$F[\beta']$ coincide in 
~$\mf{a}_{\Lambda,0}/\mf{a}_{\Lambda,1}$. 
\end{proposition}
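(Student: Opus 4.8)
Throughout, write $\overline{\mathfrak{o}}_E$ for the image of $\mathfrak{o}_E:=E\cap\mathfrak{a}_{\Lambda,0}$ under reduction modulo $\mathfrak{a}_{\Lambda,1}$, and call it the residue algebra of $F[\beta]$; since $E\cap\mathfrak{a}_{\Lambda,1}=\prod_i\mathfrak{p}_{E_i}$, this is a copy of $\kappa_E=\prod_i\kappa_{E_i}$ inside $\mathfrak{a}_{\Lambda,0}/\mathfrak{a}_{\Lambda,1}$, and the idempotents $1^i$ reduce to its block idempotents. The plan is to reduce the claim in three standard moves to the case proved in \cite[2.4.12]{bushnellKutzko:93}: first split off the simple blocks, then make the lattice sequence strict by the $\ddag$-construction, and finally restrict scalars from $D$ to $F$; the only thing to watch is how $\overline{\mathfrak{o}}_E$ is carried along.

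\emph{Reduction to the simple case.} By Corollary~\ref{corDecompAreConjForEqSemisimpleStrata} there are a bijection $\zeta\colon I\to I'$ and an element $u\in 1+\mathfrak{m}_{-r-k_0}\subseteq 1+\mathfrak{a}_{\Lambda,1}$ such that $u(V^i)$ is the $\zeta(i)$-th block space of $\Delta'$ for every $i$. As $u$ normalises $\Lambda$ and $[\beta',u-1]=[\beta,u-1]+[\beta'-\beta,u-1]\in\mathfrak{a}_{\Lambda,s}$ for all $s\leq -r$ (use $u-1\in\mathfrak{n}_{-r}(\beta,\Lambda)$ and $\beta-\beta'\in\mathfrak{a}_{\Lambda,s}$), the stratum $u^{-1}.\Delta'$ is semisimple, equivalent to $\Delta$, and split by the same decomposition $V=\bigoplus_iV^i$ as $\Delta$, hence has the same block idempotents $1^i$; moreover conjugation by $u$ acts trivially on $\mathfrak{a}_{\Lambda,0}/\mathfrak{a}_{\Lambda,1}$, so it leaves the residue algebra of $F[\beta']$ unchanged. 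We may therefore replace $\Delta'$ by $u^{-1}.\Delta'$; then cutting $\beta\equiv\beta'\pmod{\mathfrak{a}_{\Lambda,s}}$ by $1^i$ shows that the blocks $\Delta_i$ and $\Delta'_i$ are equivalent simple strata on $(V^i,\Lambda^i)$, while $\overline{\mathfrak{o}}_E=\bigoplus_i\overline{\mathfrak{o}}_{E_i}$ and $\overline{\mathfrak{o}}_{E'}=\bigoplus_i\overline{\mathfrak{o}}_{E'_i}$ decompose along the common orthogonal idempotents $\overline{1^i}$, with $\overline{\mathfrak{o}}_{E_i},\overline{\mathfrak{o}}_{E'_i}\subseteq\mathfrak{a}_{\Lambda^i,0}/\mathfrak{a}_{\Lambda^i,1}$. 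So it is enough to prove the statement for equivalent simple strata $\Delta,\Delta'$ with $\Lambda=\Lambda'$.

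\emph{Reduction to a strict split stratum and conclusion.} Set $\widehat{\Delta}:=\Res_F(\Delta^\ddag)$ and $\widehat{\Delta}':=\Res_F((\Delta')^\ddag)$. These are strict, split over $F$, and equivalent (equivalence survives translating, forming $\ddag$, and $\Res_F$, the last because $\mathfrak{a}_{\Lambda,s}^{\End_DV}\subseteq\mathfrak{a}_{\Lambda,s}^{\End_FV}$), and they are simple by Proposition~\ref{propSimplePure} and Corollary~\ref{corEndoEquivalenceForSimpleStrata}\ref{corEndoEquivalenceForSimpleStrataAss.iii}, since neither $\ddag$ nor $\Res_F$ changes the critical exponent. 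Hence \cite[2.4.12]{bushnellKutzko:93} gives $\overline{\mathfrak{o}}_{F[\widehat{\beta}]}=\overline{\mathfrak{o}}_{F[\widehat{\beta}']}$ inside $\mathfrak{a}_{\widehat{\Lambda},0}/\mathfrak{a}_{\widehat{\Lambda},1}$. Now let $p$ be the idempotent projecting $\widehat{V}=V^{\oplus e(\Lambda|F)}$ onto its first summand; then $p\in\mathfrak{a}_{\widehat{\Lambda},0}$ commutes with $F[\widehat{\beta}]$ and with $F[\widehat{\beta}']$, satisfies $p\widehat{\beta}p=\beta$, $p\,\mathfrak{o}_{F[\widehat{\beta}]}\,p=\mathfrak{o}_{F[\beta]}$, and $p\,\mathfrak{a}_{\widehat{\Lambda},k}\,p=\mathfrak{a}_{\Lambda_F,k}$, so the assignment $\overline{x}\mapsto\overline{pxp}$ is a homomorphism on the corner algebra containing both residue algebras and transports the above equality to $\overline{\mathfrak{o}}_{F[\beta]}=\overline{\mathfrak{o}}_{F[\beta']}$ inside $\mathfrak{a}_{\Lambda_F,0}/\mathfrak{a}_{\Lambda_F,1}$. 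Finally, $\End_DV\hookrightarrow\End_FV$ induces an injection $\mathfrak{a}_{\Lambda,0}/\mathfrak{a}_{\Lambda,1}\hookrightarrow\mathfrak{a}_{\Lambda_F,0}/\mathfrak{a}_{\Lambda_F,1}$ carrying the two residue algebras computed over $D$ to the two sides just shown equal, so they coincide already in $\mathfrak{a}_{\Lambda,0}/\mathfrak{a}_{\Lambda,1}$, as claimed.

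The only non-formal input is \cite[2.4.12]{bushnellKutzko:93}; the remaining work is bookkeeping, the most delicate point being the compression step, where one has to check that $p$ lies in $\mathfrak{a}_{\widehat{\Lambda},0}$, centralises $F[\widehat{\beta}]$ and $F[\widehat{\beta}']$, and realises $\mathfrak{a}_{\Lambda_F,\bullet}$ as its corner, so that $\overline{x}\mapsto\overline{pxp}$ is well defined and multiplicative on the subalgebra spanned by the two residue algebras.
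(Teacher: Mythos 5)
Your proof is correct and follows essentially the same route as the paper's: reduce to the simple case via Corollary~\ref{corDecompAreConjForEqSemisimpleStrata}, pass to the strict split case via the $\ddag$-construction and $\Res_F$, and quote \cite[2.4.12]{bushnellKutzko:93}. The only differences are cosmetic: you perform the block-splitting reduction before rather than after the strict/split reduction, and you spell out (via the corner idempotent $p$ and the injection $\mathfrak{a}_{\Lambda,0}/\mathfrak{a}_{\Lambda,1}\hookrightarrow\mathfrak{a}_{\Lambda_F,0}/\mathfrak{a}_{\Lambda_F,1}$) the transport of the residue algebras that the paper leaves implicit in ``we can therefore restrict to the strict split case''.
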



 \begin{proof}
If~$\Delta$ and~$\Delta'$ are equivalent then~$\Res_F(\Delta)^\dag$ and~$\Res_F(\Delta')^\dag$ are equivalent, and we can therefore restrict to the strict split case, i.e.~$\Lambda$ is a lattice chain and~$D=F$.  
 By Corollary~\ref{corDecompAreConjForEqSemisimpleStrata} there is an element of~$1+\mf{m}_{-(k_0+m)}$ which conjugates the splitting of~$\beta$ to the one of~$\beta'$, and we are reduced to the simple case which is done 
 in \cite[(2.4.12)(i)]{bushnellKutzko:93}
 .
\end{proof}

\begin{lemma}\label{lemMatchingOfResidueAlg}
The conjugation by~$g\in I(\Delta,\Delta')$ induces an isomorphism form the residue algebra of~$E$ to the one of~$E'$, which  does not depend on the choice of the intertwining element. 
\end{lemma}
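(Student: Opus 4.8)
The plan is to reduce to the case $\Lambda=\Lambda'$ and then invoke the description of the intertwining set together with Proposition~\ref{propequivalentstratahaveequalResiduefields}. First I would observe that since $g\in I(\Delta,\Delta')$, the stratum $g.\Delta$ is equivalent to $\Delta'$; replacing $\Delta$ by $g.\Delta$ (and $\beta$ by $g\beta g^{-1}$, $\Lambda$ by $g\Lambda$) we may assume $\Lambda=\Lambda'$ up to translation, and after a further translation that $\Lambda=\Lambda'$ exactly. Now $g$ has become an element of $I(\Delta',\Delta')$ with $g.\Delta'$ equivalent to $\Delta'$, i.e. $g$ normalizes $\Lambda$ and lies in the self-intertwining of $\Delta'$. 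Conjugation by $g$ is then an automorphism of $\mf{a}_{\Lambda,0}/\mf{a}_{\Lambda,1}$, and by Proposition~\ref{propequivalentstratahaveequalResiduefields} the residue algebra $\kappa_{E}$ of $F[\beta]=F[g^{-1}\beta' g]$ and the residue algebra of $F[\beta']$ both sit inside $\mf{a}_{\Lambda,0}/\mf{a}_{\Lambda,1}$; tracing the two identifications through the conjugation gives a well-defined isomorphism $\kappa_E\to\kappa_{E'}$. Concretely: the conjugation by the original $g$ carries $\kappa_E\subseteq\mf{a}_{\Lambda,0}/\mf{a}_{\Lambda,1}$ to a subalgebra of $\mf{a}_{\Lambda',0}/\mf{a}_{\Lambda',1}$, which by the equivalence of $g.\Delta$ and $\Delta'$ and Proposition~\ref{propequivalentstratahaveequalResiduefields} equals $\kappa_{E'}$.

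The second assertion — independence of the choice of $g$ — is where the real content lies, and I expect it to be the main obstacle. Suppose $g_1,g_2\in I(\Delta,\Delta')$; then $h:=g_2^{-1}g_1$ lies in the self-intertwining $I(\Delta,\Delta)$, and I must show that conjugation by $h$ induces the identity on $\kappa_E$. After the reduction above one may as well take $\Delta=\Delta'$. The tool is Proposition~\ref{propIntertwiningOfSemisimpleStratumOverD}: applied to $\Delta=\Delta'$ it gives
\[
I(\Delta,\Delta)=(1+\mf{m}_{-(r+k_0(\Delta))})\,C_{A^\times}(\beta)\,(1+\mf{m}_{-(r+k_0(\Delta))}),
\]
so write $h=u_1 c u_2$ with $u_i\in 1+\mf{m}_{-(r+k_0)}$ and $c\in C_{A^\times}(\beta)$. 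The centralizer factor $c$ commutes with $\beta$, hence with all of $F[\beta]$, so conjugation by $c$ fixes $\kappa_E\subseteq\mf{a}_{\Lambda,0}/\mf{a}_{\Lambda,1}$ pointwise. It remains to see that each $u_i$ acts trivially on $\kappa_E$; since $\mf{m}_{-(r+k_0)}=\mf{n}_{-r}(\beta,\Lambda)\cap\mf{a}_{-(r+k_0)}$ and $r\ge 0$, the elements $u_i-1$ lie in $\mf{a}_{\Lambda,0}$, and one checks that conjugation by $1+x$ with $x\in\mf{m}_{-(r+k_0)}$ preserves each $\mf{a}_{\Lambda,k}$ and acts trivially modulo $\mf{a}_{\Lambda,1}$ on the relevant subalgebra — here one uses that $u_i$ actually normalizes the decomposition of $V$ coming from $E$ and induces the identity on the residue algebra, exactly as in the simple-stratum computation underlying Proposition~\ref{propequivalentstratahaveequalResiduefields} (and as in~\cite[2.4.12]{bushnellKutzko:93}). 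Combining the three factors, conjugation by $h$ is the identity on $\kappa_E$, so the isomorphism $\bar\zeta:\kappa_E\to\kappa_{E'}$ built from $g_1$ and from $g_2$ agree.

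Finally I would note canonicity and naturality: the isomorphism is visibly compatible with the matching $\zeta$ of Proposition~\ref{propMatchingGLDStrata}, in the sense that it restricts to an isomorphism $\kappa_{E_i}\to\kappa_{E'_{\zeta(i)}}$ for each $i\in I$ — this follows because $g$ carries the idempotent $1^i$ to an idempotent congruent to $1'^{\zeta(i)}$ modulo $\mf{m}'$, by Corollary~\ref{corDecompAreConjForEqSemisimpleStrata} applied to the equivalent strata $g.\Delta$ and $\Delta'$. The main obstacle, to repeat, is the careful bookkeeping in the independence argument: one must verify that the two $(1+\mf{m})$-factors in the intertwining formula really do act trivially on the residue algebra, which is the inner-form analogue of the split-case fact and ultimately rests on Proposition~\ref{propequivalentstratahaveequalResiduefields} together with the Hilbert 90 descent already used to prove Proposition~\ref{propIntertwiningOfSemisimpleStratumOverD}.
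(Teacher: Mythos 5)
Your argument breaks down at the very first reduction. From $g\in I(\Delta,\Delta')$ you conclude that $g.\Delta$ is equivalent to $\Delta'$, and hence that you may assume $\Lambda=\Lambda'$ and that $g$ normalizes $\Lambda$. This confuses intertwining with conjugacy up to equivalence: intertwining only says that $g(\beta+\mf{a}_{-r})g^{-1}$ \emph{meets} $\beta'+\mf{a}'_{-r}$, not that the cosets (or the lattice sequences) agree after conjugation --- for instance any $g\in C_{A^\times}(\beta)$ not normalizing $\Lambda$ intertwines $\Delta$ with itself, yet $g.\Delta$ is not equivalent to $\Delta$. If intertwining did imply $g.\Delta\sim\Delta'$, the intertwining-implies-conjugacy theorem~\ref{thmIntConjSesiTiG} would be nearly vacuous. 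Because $g\Lambda$ need not be a translate of $\Lambda'$, conjugation by $g$ does not carry $\mf{a}_{\Lambda,0}/\mf{a}_{\Lambda,1}$ into $\mf{a}_{\Lambda',0}/\mf{a}_{\Lambda',1}$ at all; the paper's definition of $\bar\zeta$ has to pass through the mixed quotient $(g\mf{a}_0g^{-1}+\mf{a}'_0)/(g\mf{a}_1g^{-1}+\mf{a}'_1)$, compare the $g$-conjugate of $\kappa_E$ with the image of $\kappa_{E'}$ there, and prove this comparison makes sense --- a step your reduction skips entirely. The actual reduction in the paper is different: using the matching of Proposition~\ref{propMatchingGLDStrata}, Proposition~\ref{propequivalentstratahaveequalResiduefields} and the diagonalization Theorem~\ref{thmDiagonalization} one arranges $\beta=\beta'$ (with $\Lambda$ and $\Lambda'$ still possibly distinct), and only then applies the intertwining formula of Proposition~\ref{propIntertwiningOfSemisimpleStratumOverD}.

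The independence step has a second gap of the same nature: you set $h:=g_2^{-1}g_1$ and claim $h\in I(\Delta,\Delta)$. Intertwining sets are not closed under this kind of composition --- from $g_1(\beta+\mf{a}_{-r})g_1^{-1}$ and $g_2(\beta+\mf{a}_{-r})g_2^{-1}$ each meeting $\beta'+\mf{a}'_{-r}$ (possibly at different points) one cannot conclude that $g_2^{-1}g_1$ intertwines $\Delta$ with itself, since transporting the discrepancy back by $g_2$ lands in $\mf{a}_{-r}+g_2^{-1}\mf{a}'_{-r}g_2$, not in $\mf{a}_{-r}$. The correct argument avoids comparing two intertwiners via their quotient: after the reduction to $\beta=\beta'$ one shows directly that \emph{every} element of $I(\Delta,\Delta')=(1+\mf{m}'_{-(r+k_0)})C_{A^\times}(\beta)(1+\mf{m}_{-(r+k_0)})$ induces the identity map on the residue algebra, because elements of $C_A(\beta)(1+\mf{m}_{-(r+k_0)})$ fix $\kappa_E$ (as you correctly observe) and conjugation by $1+\mf{m}'_{-(r+k_0)}$ acts trivially on $\mf{a}'_0/\mf{a}'_1$ inside the mixed quotient. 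Your computations with the three factors are the right local ingredients, but they must be carried out in this framework rather than after the invalid reductions to $\Lambda=\Lambda'$ and $\Delta=\Delta'$.
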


\begin{proof}
Let us at first define the map between the residue algebras: Take~$g\in I(\Delta,\Delta')$.
The residue algebra~$\kappa_E$ is canonically embedded into~$\mf{a}_0/\mf{a}_1$ and analogously for~$\kappa_{E'}$.
Conjugation by~$g$ maps~$\mf{a}_0/\mf{a}_1$ into~$(g\mf{a}_0g^{-1}+\mf{a}'_0)/(g\mf{a}_1g^{-1}+\mf{a}'_1)$.
There is a canonical map from~$\mf{a}'_0/\mf{a}'_1$ into~$(g\mf{a}_0g^{-1}+\mf{a}'_0)/(g\mf{a}_1g^{-1}+\mf{a}'_1)$.
We compose the inverse of the embedding of~$\kappa_{E'}$ with the~$g$-conjugation on~$\kappa_E$. We have to prove
\begin{enumerate}
\item \label{lemMatchingOfResidueAlg.1} for the existence that the~$g$-conjugate of~$\kappa_E$ is the image of~$\kappa_{E'}$ in~$(g\mf{a}_0g^{-1}+\mf{a}'_0)/(g\mf{a}_1g^{-1}+\mf{a}'_1)$, and
\item \label{lemMatchingOfResidueAlg.2} the independence of the map of the choice of the intertwining element~$g$. 
\end{enumerate}
At first we simplify the situation. Proposition \ref{propMatchingGLDStrata} allows to reduce to the case that the block decomposition of~$\beta$ and~$\beta'$ are the same 
and where the matching map~$\zeta$ does not permute the blocks. We identify the index sets, so that~$\zeta~$ is the identity map. 
By Proposition~\ref{propequivalentstratahaveequalResiduefields} we can replace~$\Delta$ and~$\Delta'$ by equivalent strata, and thus we can assume that~$\beta$ 
and~$\beta'$ have the same minimal polynomial by Theorem~\ref{thmDiagonalization}.
Thus, after conjugation, we can assume without loss of generality that~$\beta$ is equal to~$\beta'$. 

Let us prove Assertion~\ref{lemMatchingOfResidueAlg.1}: Consider the description of~$I(\Delta,\Delta')$ in Proposition \ref{propIntertwiningOfSemisimpleStratumOverD}. Property~\ref{lemMatchingOfResidueAlg.1}. is true for
 intertwining elements~$t\in C_A(\beta) (1+\mf{m}_{-(m+k-0)})$, in particular,~$t\kappa_E t^{-1}$ is in the image of~$\mf{a}'_0/\mf{a}'_1$ 
in~$(t\mf{a}_0t^{-1}+\mf{a}'_0)/(t\mf{a}_1t^{-1}+\mf{a}'_1)$. Conjugation with elements of~$1+\mf{m}'_{-(m+k_0)}$ acts as the identity on~$\mf{a}'_0/\mf{a}'_1$, and therefore we have Assertion~\ref{lemMatchingOfResidueAlg.1} for all elements of~$I(\Delta,\Delta')$.

We now prove the uniqueness: By direct calculation all elements in~$C_A(\beta) (1+\mf{m}_{-(m+k_0)})$ induce the identity on the residue algebra~$\kappa_E$. Further conjugation with elements of~$1+\mf{m}'_{-(m+k_0)}$ fixes all elements of~$\mf{a}'_0/\mf{a}'_1$. 
Thus all intertwining elements induce the identity on~$\kappa_E$.  
\end{proof}

\begin{notation}
We denote the map of Lemma \ref{lemMatchingOfResidueAlg} by~$\bar{\zeta}$ and call it the matching of the residue algebras of the 
intertwining semisimple strata in question. And we write~$(\zeta,\bar{\zeta})$ and call it the~\emph{matching pair}. 
\end{notation}

In the next theorem we introduce a new condition which together with intertwining implies conjugacy up to equivalence for semisimple strata.  We write~$E_D$ for the product of the~$(E_i)_D$.

\begin{theorem}\label{thmIntConjSesiTiG}
Let $(\zeta,\bar{\zeta})$ be the matching pair of~$\Delta$ and~$\Delta'$. Suppose there is a~$D$-automorphism~$t$ of~$V$ 
which maps~$\Lambda$ to a translate of~$\Lambda'$ such that the conjugation by~$t$ induces~$\bar{\zeta}|_{\kappa_{E_D}}$ and~$tV^i$ is equal to~$V^{\zeta(i)}$
for all indexes~$i\in I$. 
Then there is an element~$g$ of~$ G$ such that~$g\Delta$ is equivalent to~$\Delta'$,~$gV^i=V^{\zeta(i)}$ and~$gt^{-1}$ is an element of~$P(\Lambda')$.
\end{theorem}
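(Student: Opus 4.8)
The plan is to reduce to the known simple case (Theorem~\ref{thmINtImplConSimpleD}, made more precise via Theorem~\ref{thmequivalentToSameEmbedType} and Proposition~\ref{propFieldEmbedingsConjugateUnderaUnit}) by working block-by-block along the matching~$\zeta$, and then to reassemble the blockwise conjugators into a single element of~$G$ whose product with~$t^{-1}$ lies in~$P(\Lambda')$. First I would use the hypothesis on~$t$ to replace~$\Delta$ by~$t.\Delta$: after this conjugation we may assume~$\Lambda=\Lambda'$ (up to translation, which we absorb), that~$V^i=V'^{\zeta(i)}$ for all~$i$ after identifying the index sets via~$\zeta$ (so $\zeta=\id$), and that the conjugation induced on~$\kappa_{E_D}$ is the restriction of~$\bar\zeta$. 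The point of making~$t$ the identity is that we must then produce~$g\in P(\Lambda')$ — equivalently~$gt^{-1}\in P(\Lambda')$ in the original coordinates — so the whole problem becomes one about conjugating~$\Delta$ to something equivalent to~$\Delta'$ by an element of~$P(\Lambda)$ that moreover fixes the decomposition $V=\oplus_i V^i$.

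Next I would treat each block. For a fixed~$i$, the strata~$\Delta_i$ and~$\Delta'_{i}$ are simple, live on the same space~$V^i$ with the same lattice sequence~$\Lambda^i$, same~$n_i,r_i$ (by Corollary~\ref{corEqParametersForIntertwiningSemisimpleStrata}), and they intertwine because $\Delta_i\oplus\Delta'_i$ is equivalent to a simple stratum (Proposition~\ref{propMatchingGLDStrata} and Corollary~\ref{corEquivalentCriteriaForIntertwining}); by Corollary~\ref{corEqParametersForIntertwiningSemisimpleStrata}\ref{corEqParametersForIntertwiningSemisimpleStrata.ii} the extensions $E_i|F$ and $E'_i|F$ have equal ramification index and inertia degree, so the embeddings $((E_i)_D,\Lambda^i)$ and $((E'_i)_D,\Lambda^i)$ have the same embedding type. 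The extra datum is that conjugation by our normalized~$t$ (now the identity on each block, but inducing~$\bar\zeta$ on residue algebras) matches the residue field of~$(E_i)_D$ with that of~$(E'_i)_D$ inside~$\mf{a}_{\Lambda^i,0}/\mf{a}_{\Lambda^i,1}$ in the way prescribed by~$\bar\zeta$. Applying Proposition~\ref{propFieldEmbedingsConjugateUnderaUnit} (with $g$ the identity, so the hypothesis on~$g$ is exactly the~$\bar\zeta$-compatibility we have arranged) — or equivalently Theorem~\ref{thmequivalentToSameEmbedType} — I obtain~$u_i\in P(\Lambda^i)$, indeed~$u_i\in P_1(\Lambda^i)$ since we are in the $E=E_D$ situation after passing to the centralizer of the common residue field via Theorem~\ref{thmbroussousSecherreStevens3p3u3p5}, conjugating~$\beta_i$ (up to equivalence of~$\Delta_i$) to an element of~$F[\beta'_i]$ with the same minimal polynomial, and then using Theorem~\ref{thmINtImplConSimpleD} / Theorem~\ref{thmDiagonalizationFiner}-type diagonalization to finish the identification $u_i.\Delta_i\sim \Delta'_i$.

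Finally I would set~$g:=\oplus_i u_i$, which is block-diagonal for the splitting $V=\oplus_i V^i=\oplus_i V'^{\zeta(i)}$, hence lies in~$P(\Lambda')$ and satisfies~$gV^i=V'^{\zeta(i)}$; by construction~$g.\Delta_i$ is equivalent to~$\Delta'_{\zeta(i)}$ for every~$i$, so~$g.\Delta$ is equivalent to~$\Delta'$, and tracing back through the reduction~$g\,t^{-1}\in P(\Lambda')$. The main obstacle I expect is the passage between residue-algebra data and an honest conjugating unit in the correct parahoric: one must be careful that the blockwise units produced by Proposition~\ref{propFieldEmbedingsConjugateUnderaUnit} can simultaneously be chosen to respect the common decomposition (so that their direct sum is a single element of~$P(\Lambda')$) and that the $\bar\zeta$-normalization of~$t$ is exactly what lets the ``$g=\id$'' case of that proposition apply block by block — this is where the matching of residue algebras (Lemma~\ref{lemMatchingOfResidueAlg}) and its independence of the intertwiner does the real work, and where one has to check that no extra obstruction arises from gluing the simple-stratum conclusions (e.g. that diagonalizing $\oplus_i \Delta_i \oplus \oplus_i \Delta'_i$ via Theorem~\ref{thmDiagonalization} does not move us out of~$P(\Lambda')$).
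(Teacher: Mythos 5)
Your proposal is correct and follows essentially the same route as the paper's proof: conjugate by~$t$ to reduce to~$\Lambda=\Lambda'$, $V^i=V'^{\zeta(i)}$ and~$\bar{\zeta}$ induced by~$1$, reduce to the simple blocks, arrange via diagonalization (Corollary~\ref{corEquivalentCriteriaForIntertwining}, Theorem~\ref{thmDiagonalization}, Proposition~\ref{propequivalentstratahaveequalResiduefields}) that~$\beta_i$ and~$\beta'_i$ have the same minimal polynomial, and then use Theorem~\ref{thmequivalentToSameEmbedType} together with Proposition~\ref{propFieldEmbedingsConjugateUnderaUnit} to produce a conjugator in~$P(\Lambda^i)$, reassembled block-diagonally. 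One small caveat: equality of~$e(E_i|F)$ and~$f(E_i|F)$ alone does not yield equality of embedding types — it is exactly the residue-algebra hypothesis on~$t$, fed into Theorem~\ref{thmequivalentToSameEmbedType}, that supplies this, which is in fact how your argument (and the paper's) proceeds.
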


\begin{proof}
 Conjugating with~$t$ allows us to reduce to the case where~$\Lambda=\Lambda'$,~$V^i=V^{\zeta(i)}$ and~$\bar{\zeta}|_{\kappa_{E_D}}$ is induced by conjugation with~$1$. We are now reduced to the simple case, and let us therefore assume that both strata are simple. By Corollary~\ref{corEquivalentCriteriaForIntertwining} and Theorem~\ref{thmDiagonalization}
 and Proposition~\ref{propequivalentstratahaveequalResiduefields} we can assume that~$\beta$ and~$\beta'$ have the same minimal polynomial. 
 Now, take an element of~$P_1(\Lambda)$ which conjugates~$E_D$ to~$E'_D$, see Proposition \ref{propequivalentToSameEmbedType}, and we can restrict to the case that~$E_D$ is equal to~$E'_D$. 
 We want to apply Proposition \ref{propFieldEmbedingsConjugateUnderaUnit}. For that we consider two embeddings of~$E$ with~$\phi_1(\beta)=\beta$ and~$\phi_2(\beta)=\beta'$. There is an element of~$ G$ which 
 conjugates~$\beta$ to~$\beta'$,
  i.e. which is on~$E$ the map~$\phi_2$. This conjugation induces on residue fields the identity because~$\bar{\zeta}$ is the identity. Thus,~$\phi_2\circ \phi_1^{-1}$
  is on~$E_D$ the identity, and Proposition~\ref{propFieldEmbedingsConjugateUnderaUnit} (take in ibid.~$g=1$) provides an element of~$P(\Lambda)$ which conjugates~$\beta$ to~$\beta'$. This finishes the proof. 
\end{proof}

\subsection{Proof of the main theorem of strata induction}\label{secMainTheoremOfStrataInduction}

For the proof of Proposition~\ref{propDerived} we need a lot of preparation.

\begin{lemma}\label{lemStratumPreparebeta}
 Let~$\Delta$ be a stratum such that~$\Delta(1+)$ is equivalent to a semisimple stratum with entry~$\gamma$. Let~$1^j$ be the idempotents for the 
 associated decomposition of~$V$ for~$\gamma$. Then there is an element~$u$ of~$1+\mf{m}_{-(r+1+k_0(\gamma,\Lambda))}$ such that~$u.\Delta$ is equivalent to a stratum 
 which is split by the associated decomposition of~$V$ for~$\gamma$. 
\end{lemma}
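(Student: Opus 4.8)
The plan is to remove the off-block-diagonal part of $\beta-\gamma$ by a single conjugation, exploiting that $\gamma$ is the entry of a semisimple stratum.

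\textbf{Reduction.} Since $\Delta(1+)=[\Lambda,n,r+1,\beta]$ is equivalent to a semisimple, hence semi-pure, stratum with entry $\gamma$, the lattice sequence $\Lambda$ is split by the associated decomposition $V=\bigoplus_jV^j$; thus $\mf{a}_{\Lambda,s}=\bigoplus_{j,k}1^j\mf{a}_{\Lambda,s}1^k$ for every integer $s$, the element $\gamma=\bigoplus_j\gamma_j$ is block-diagonal, and the equivalence gives $b:=\beta-\gamma\in\mf{a}_{\Lambda,-(r+1)}$. Write $b=b_d+b_o$ with $b_d=\sum_j1^jb1^j$ (block-diagonal) and $b_o=\sum_{j\neq k}1^jb1^k$ (off-block-diagonal); then $\gamma+b_d$ is split by $(V^j)_j$, so it is enough to produce $u\in1+\mf{m}_{-(r+1+k_0(\gamma,\Lambda))}(\gamma,\Lambda)$ with $u\beta u^{-1}\equiv\gamma+b_d\pmod{\mf{a}_{\Lambda,-r}}$. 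If $\gamma=0$ the decomposition is trivial and $u=1$ works, so assume $\gamma\neq0$; then the semisimplicity of $[\Lambda,n,r+1,\gamma]$ forces $k_0(\gamma,\Lambda)\leq-(r+2)$, hence $-(r+1)-k_0(\gamma,\Lambda)\geq1$.

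\textbf{Solving the off-block equation.} Write $a_\gamma(x)=\gamma x-x\gamma$; this map preserves block type, acting on $1^j\mf{a}_{\Lambda,s}1^k$ by $x\mapsto\gamma_jx-x\gamma_k$. The essential input is the standard surjectivity attached to the critical exponent of a semisimple stratum: for $j\neq k$ and every integer $h$,
\[
1^j\mf{a}_{\Lambda,h}1^k\;\subseteq\;a_\gamma\!\bigl(1^j\mf{a}_{\Lambda,h-k_0(\gamma,\Lambda)}1^k\bigr)+1^j\mf{a}_{\Lambda,h+1}1^k .
\]
This is the off-diagonal form of the inclusion $\mf{n}_k(\gamma,\Lambda)\subseteq\mf{a}_{j_{F[\gamma]}(\Lambda),0}+\mf{a}_{\Lambda,1}$ valid for $k>k_0(\gamma,\Lambda)$, combined with the self-adjointness of $a_\gamma$ for the reduced-trace pairing on the graded pieces of $\mf{a}_{\Lambda,\bullet}$; over $D$ it follows from the split case of \cite{skodlerackStevens:15-1} by passing to $\Delta\otimes L$, which is again semisimple with the same critical exponent by Theorem~\ref{thmEquivSemisimpleSplittingAndRestriction}, and descending via Hilbert~90 for $L|F$ as in the proof of Proposition~\ref{propIntertwiningOfSemisimpleStratumOverD}. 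Applying it with $h=-(r+1)$, so $h+1=-r$, to each component $1^jb1^k$ of $b_o$, we obtain $m^{jk}\in1^j\mf{a}_{\Lambda,-(r+1)-k_0(\gamma,\Lambda)}1^k$ with $1^jb1^k-a_\gamma(m^{jk})\in1^j\mf{a}_{\Lambda,-r}1^k$. Set $m_o:=\sum_{j\neq k}m^{jk}$; it is off-block-diagonal, lies in $\mf{a}_{\Lambda,-(r+1+k_0(\gamma,\Lambda))}\subseteq\mf{a}_{\Lambda,1}$, satisfies $a_\gamma(m_o)\in\mf{a}_{\Lambda,-(r+1)}$ (so $m_o\in\mf{n}_{-(r+1)}(\gamma,\Lambda)$, whence $m_o\in\mf{m}_{-(r+1+k_0(\gamma,\Lambda))}(\gamma,\Lambda)$), and has $b_o-a_\gamma(m_o)\in\mf{a}_{\Lambda,-r}$.

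\textbf{Conclusion and the hard point.} Put $u:=1+m_o\in1+\mf{m}_{-(r+1+k_0(\gamma,\Lambda))}(\gamma,\Lambda)\subseteq P_1(\Lambda)$, so $u.\Delta=[\Lambda,n,r,u\beta u^{-1}]$. From $u\beta u^{-1}-\beta=[m_o,\beta]u^{-1}$, $[m_o,\gamma]=-a_\gamma(m_o)\in\mf{a}_{\Lambda,-(r+1)}$, and the fact that $[m_o,b]$ and $[m_o,\beta](u^{-1}-1)$ lie in $\mf{a}_{\Lambda,1}\cdot\mf{a}_{\Lambda,-(r+1)}=\mf{a}_{\Lambda,-r}$, a short bookkeeping with the $\mf{a}_{\Lambda,\bullet}$-filtration gives
\[
u\beta u^{-1}\;\equiv\;\gamma-a_\gamma(m_o)+b\;=\;(\gamma+b_d)+\bigl(b_o-a_\gamma(m_o)\bigr)\;\equiv\;\gamma+b_d\pmod{\mf{a}_{\Lambda,-r}} .
\]
Since $\gamma+b_d$ is split by $V=\bigoplus_jV^j$, the stratum $u.\Delta$ is equivalent to the split stratum $[\Lambda,n,r,\gamma+b_d]$, as required. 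The only non-formal ingredient is the displayed off-block surjectivity of $a_\gamma$ with defect $k_0(\gamma,\Lambda)$; establishing it cleanly over $D$ --- through the reduction to the split case and the Galois descent sketched above --- is the real obstacle, everything else being manipulation of the filtration.
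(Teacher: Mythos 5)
Your proof is correct and follows essentially the same route as the paper: both decompose $\beta-\gamma$ into block-diagonal and off-block parts via the idempotents of $\gamma$ and remove the off-block part by conjugating with $1+x$, where $x\in\mf{m}_{-(r+1+k_0(\gamma,\Lambda))}$ solves $a_\gamma(x)\equiv b_o$ using the critical-exponent control of $a_\gamma$ on off-diagonal blocks (the paper quotes \cite[6.21]{skodlerackStevens:15-1} to solve exactly, while you solve modulo $\mf{a}_{-r}$ via the exactness statement descended from the split case by Hilbert~90, which is precisely the paper's Lemma~\ref{lemExSeqDelta}). The filtration bookkeeping at the end matches the computation the paper delegates to \cite[7.6]{skodlerackStevens:15-1}.
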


For the proof we need the map~$a_\gamma: A\ra A$ defined via~$a_\gamma(x)=\gamma x-x\gamma$.

\begin{proof}
 We have the decomposition~$A=\oplus_{j j'}A^{j j'}$ where~$A^{j j'}=1^jA1^{j'}$. We use this notation also for elements of~$A$. 
 By~\cite[Lemma 6.21]{skodlerackStevens:18} we have for non-equal indexes~$j$ and~$j'$ that the restriction of~$a_\gamma$ to~$A^{j j'}$ is a bijection and 
 that for all integers~$s\geq k_0(\gamma,\Lambda)$ the pre-image of~$\mf{a}_s$  under~$a_\gamma$ is equal to~$\mf{n}_s(\gamma,\Lambda)$. In particular 
 for~$j\neq j'$ there is an element~$a^{j j'}\in \mf{n}_{-(r+1)}(\gamma,\Lambda)^{j j'}$ such that~$a_\gamma(a^{j j'})=\beta^{j j'}$. Thus for~$x=\sum_{j\neq j'}a^{j j'}$
 we have 
 \[\beta=\sum_j \beta^{j j}+a_\gamma(x).\]
 The last equation implies that~$\sum_j\beta^{j j}$ is congruent to~$(1+x)\beta (1+x)^{-1}$  modulo~$\mf{a}_{-r}$, see~\cite[Proposition 7.6]{skodlerackStevens:18} for the calculations.
 This finishes the proof. 
\end{proof}

For a non-null semisimple stratum~$\Delta$ and a tame corestriction~$s$  on~$A$ relative to~$F[\beta]|F$ we have the following sequence 
\begin{equation}\label{eqExactSeqDelta}
\mf{n}_{-r}\cap\mf{a}_{-r-k_0}\stackrel{\a_\beta}{\ra}  \mf{a}_{-r}/\mf{a}_{-r+1} \stackrel{s}{\ra}\ (\mf{a}_{-r}\cap C_A(\beta))/(\mf{a}_{-r+1}\cap C_A(\beta)) 
\ra 0
\end{equation}

\begin{lemma}[{see~\cite[Proposition 7.6]{skodlerackStevens:18} together with Lemma~\ref{lemStratumPreparebeta} for the split case}]\label{lemExSeqDelta}
 The sequence (\ref{eqExactSeqDelta}) is exact. 
\end{lemma}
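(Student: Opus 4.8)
The plan is to reduce the general semisimple case to the split simple case, which is known by \cite[7.6]{skodlerackStevens:15-1}, in two reduction steps corresponding to the two new phenomena in the present setting: the division algebra $D$, and the semisimplicity (as opposed to simplicity) of $\Delta$. First, recall from the construction that the tame corestriction $s$ for $A$ relative to $\beta$ is exactly the map for which $s\otimes_F\id_{\End_A(V)}$ is a tame corestriction for $(\End_F(V),E|F)$; moreover $\mf{a}_{\Lambda}$ on $A$ sits inside $\mf{a}_{\Lambda_F}$ on $\End_F(V)$, and the centralizer satisfies $C_{\End_F(V)}(E)\cong C_A(E)\otimes_F D$. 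So the three terms of the sequence \eqref{eqExactSeqDelta} over $A$ are obtained from the corresponding sequence over $\End_F(V)$ (for the stratum $\Res_F(\Delta)$, which is semisimple by Theorem \ref{thmEquivSemisimpleSplittingAndRestriction} and has the same critical exponent) by applying $\cdot\otimes_F D$, or equivalently by intersecting with the $D$-fixed vectors. Since $D$ is separable over $F$ (it is central simple), tensoring with $D$ is exact, so exactness over $A$ follows from exactness over $\End_F(V)$. This settles the passage from $D$ to $F$.

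Next I would treat the split semisimple case $D=F$. Here the block decomposition $V=\bigoplus_{i\in I}V^i$ associated to $\beta$ splits everything: $A=\bigoplus_{i,i'}A^{ii'}$ with $A^{ii'}=1^iA1^{i'}$, and likewise all the filtration pieces $\mf{a}_s$, the space $C_A(\beta)=\bigoplus_i A^{ii}\cap C_A(\beta_i)$, and the map $a_\beta$ respect this decomposition. On the diagonal blocks $A^{ii}$ the sequence \eqref{eqExactSeqDelta} becomes exactly the simple-stratum sequence \cite[7.6]{skodlerackStevens:15-1} for the simple stratum $\Delta_i$ (together with Lemma \ref{lemStratumPreparebeta}, which is what guarantees there is no obstruction from moving $\beta$ into block-diagonal form), using $k_0(\Delta_i)\le k_0(\Delta)$ so that $\mf{m}(\Delta_i)\subseteq 1^i\mf{m}(\Delta)1^i$. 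For the off-diagonal blocks $A^{ii'}$ with $i\ne i'$ one uses \cite[6.21]{skodlerackStevens:15-1} (as recalled in the proof of Lemma \ref{lemStratumPreparebeta}): the restriction of $a_\beta$ to $A^{ii'}$ is bijective, and for $s\ge k_0$ the preimage of $\mf{a}_s$ under $a_\beta$ equals $\mf{n}_s(\beta,\Lambda)\cap A^{ii'}$; hence $\a_\beta$ is surjective onto $\mf{a}^{ii'}_{-r}/\mf{a}^{ii'}_{-r+1}$, which is consistent with $s$ vanishing there since $C_A(\beta)$ has no off-diagonal part. Assembling the $i=i'$ and $i\ne i'$ contributions gives exactness of \eqref{eqExactSeqDelta} in the split case.

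The one point that needs genuine care — and which I expect to be the main obstacle — is the exactness \emph{at the middle term} $\mf{a}_{-r}/\mf{a}_{-r+1}$, i.e. the equality $\ker s=\operatorname{im}\a_\beta$: surjectivity of $s$ and the inclusion $\operatorname{im}\a_\beta\subseteq\ker s$ are formal (the latter since $s$ is a $C_A(\beta)$-bimodule projection and so kills commutators with $\beta$), but the reverse inclusion is where the arithmetic input of \cite[7.6]{skodlerackStevens:15-1} and Lemma \ref{lemStratumPreparebeta} is really used. In the split semisimple setting the subtlety is that a class in $\ker s$ need not be supported on a single block, so one must argue that its various components can be simultaneously realised as $\a_\beta$ of something in $\mf{n}_{-r}\cap\mf{a}_{-r-k_0}$; this is exactly handled block-by-block by the simple case on the diagonal and by the bijectivity statement of \cite[6.21]{skodlerackStevens:15-1} off the diagonal, so no new idea beyond careful bookkeeping is needed. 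I would present the proof in the order: (1) reduce $D\to F$ via $\cdot\otimes_F D$ and Theorem \ref{thmEquivSemisimpleSplittingAndRestriction}; (2) decompose into blocks; (3) invoke \cite[7.6]{skodlerackStevens:15-1} with Lemma \ref{lemStratumPreparebeta} on diagonal blocks and \cite[6.21]{skodlerackStevens:15-1} on off-diagonal blocks; (4) reassemble.
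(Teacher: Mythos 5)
Your step (1) — the descent from the split case to $\End_D(V)$ — has a genuine gap, and it is precisely the point where the real work lies. You claim the terms of \eqref{eqExactSeqDelta} over $A=\End_D(V)$ are obtained from those over $\End_F(V)$ "by applying $\cdot\otimes_F D$, or equivalently by intersecting with the $D$-fixed vectors", and then conclude by exactness of $\otimes_F D$. These two operations are not equivalent and go in opposite directions: since $\End_F(V)\cong A\otimes_F\End_A(V)$, it is the sequence over $\End_F(V)$ that would be (at best) a tensor of the sequence over $A$, while the sequence over $A$ is recovered from a bigger one by taking a centralizer (fixed points), which is only left exact — so the surjectivity of $s$ and the exactness at the middle term, which are exactly what must be proved, do not descend for free. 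Moreover, even the term-by-term identification $\Seq(\Res_F(\Delta))\cong\Seq(\Delta)\otimes_F D$ fails: because $D|F$ is ramified, $\mf{a}_{\Lambda_F,t}$ is not $\mf{a}_{\Lambda,t}\otimes_{o_F}o_D$, and the graded pieces $\mf{a}_{\Lambda_F,-r}/\mf{a}_{\Lambda_F,-r+1}$ interleave the $D$-filtration with that of $D$ itself, so no flatness argument applies in the direction you need.

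The paper's proof avoids $\Res_F$ altogether and descends along the \emph{unramified} extension $L|F$ inside $D$: for unramified base change one does have $\mf{a}_{\Lambda\otimes o_L,t}=\mf{a}_{\Lambda,t}\otimes_{o_F}o_L$, so $\Seq(\Delta\otimes L)$ is exact by the split case (\cite[7.6]{skodlerackStevens:15-1} with Lemma~\ref{lemStratumPreparebeta}, i.e. the content of your steps (2)--(4), which you need not re-prove), and $\Seq(\Delta)$ is the complex of $\Gal(L|F)$-fixed points of $\Seq(\Delta\otimes L)$. Taking fixed points is only left exact, and the missing ingredient — absent from your proposal — is the cohomological input: additive Hilbert 90, i.e. the vanishing of $H^1(\Gal(L|F),\kappa_L)$ (surjectivity of the trace), which guarantees that exactness, in particular the two surjectivity statements, survives passage to fixed points. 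If you want to keep your architecture, replace step (1) by this Galois descent through $L$; as written, the reduction from $\End_F(V)$ to $\End_D(V)$ does not go through.
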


\begin{proof}
We denote the sequence (\ref{eqExactSeqDelta}) as $\Seq(\Delta)$. The sequence~$\Seq(\Delta\otimes L)$ is exact by the split case. $\Seq(\Delta)$ is obtained
from~$\Seq(\Delta\otimes L)$ by taking the~$\Gal(L|F)$-fixed points. Hilbert 90 for the trace states that the cohomology group $H^1(\Gal(L|F),\kappa_L)$ is trivial, 
which forces the exactness of~$\Seq(\Delta)$.
\end{proof}

Let~$\Lambda$ be a lattice sequence and~$1=\sum_j 1^j$ a decomposition into pairwise orthogonal idempotents~$1^j\in \mf{a}_0$. Given an integer~$r$, we say that an 
element~$a\in\mf{a}_{-r}$ is~\emph{split via~$1=\sum_j 1^j$ modulo~$\mf{a}_{1-r}$} if~$a$ is congruent to~$\sum_j1^ja1^j$ modulo~$\mf{a}_{1-r}$. 

\begin{lemma}\label{lemSimpletameCorConjSplitting}
 Let~$\Delta$ be a simple stratum with a corestriction~$s_\beta$,~$1=\sum_j1^j$ a decomposition in~$\mf{a}_0\cap C_A(\beta) $ and let~$a$ be an element of~$\mf{a}_{-r}$ 
 such that~$s_\beta(a)$ is split by~$1=\sum_j1^j$ modulo~$C_A(\beta)\cap\mf{a}_{-r+1}$. Then there is an element~$u=1+x$ of~$1+\mf{m}_{-(r+k_0)}$ such that~$u(\beta+a)u^{-1}$ 
 is split by the~$1^j$ modulo~$\mf{a}_{-r+1}$.
\end{lemma}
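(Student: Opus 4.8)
The plan is to mimic the split-case argument of \cite[7.6]{skodlerackStevens:15-1} combined with the preparation technique of Lemma \ref{lemStratumPreparebeta}, but now using the exact sequence \eqref{eqExactSeqDelta} from Lemma \ref{lemExSeqDelta} as the main tool. First I would decompose $a$ into its block components $a = \sum_{j,j'} a^{jj'}$ with respect to the decomposition $1 = \sum_j 1^j$, writing $a^{jj'} = 1^j a 1^{j'}$; the goal is to kill the off-diagonal part $\sum_{j\neq j'} a^{jj'}$ by a conjugation from $1+\mf{m}_{-(r+k_0)}$, up to equivalence of strata (i.e.\ modulo $\mf{a}_{-r+1}$).

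The key observation is that the hypothesis $s_\beta(a)$ is split via $1=\sum_j 1^j$ modulo $C_A(\beta)\cap\mf{a}_{-r+1}$ means precisely that $s_\beta\bigl(\sum_{j\neq j'}a^{jj'}\bigr)$ lies in $C_A(\beta)\cap\mf{a}_{-r+1}$, because $s_\beta$ commutes with the idempotents $1^j$ (they lie in $C_A(\beta)$) and hence $s_\beta(a^{jj'}) = 1^j s_\beta(a) 1^{j'}$. Thus the image of $\sum_{j\neq j'}a^{jj'}$ in $\mf{a}_{-r}/\mf{a}_{-r+1}$ is killed by the map $s$ in the sequence \eqref{eqExactSeqDelta}, so by exactness of that sequence (Lemma \ref{lemExSeqDelta}) there is an element $y \in \mf{n}_{-r}(\beta,\Lambda)\cap\mf{a}_{-r-k_0}$ with $a_\beta(y) = \beta y - y\beta$ congruent to $\sum_{j\neq j'}a^{jj'}$ modulo $\mf{a}_{-r+1}$. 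One should moreover arrange, by replacing $y$ with $\sum_{j\neq j'} 1^j y 1^{j'}$ (which is legitimate since $a_\beta$ respects the block grading because $\beta$ commutes with the $1^j$), that $y$ itself is purely off-diagonal; this ensures $y \in \mf{m}_{-(r+k_0)}$ in the appropriate sense. Then set $u = 1 + x$ with $x = -y$ (or $x$ chosen so that $(1+x)^{-1}\beta(1+x)$ absorbs the term $a_\beta(y)$), exactly as in the computation \cite[7.6]{skodlerackStevens:15-1}: one gets $u(\beta+a)u^{-1} \equiv \beta + \sum_j 1^j a 1^j \pmod{\mf{a}_{-r+1}}$, using that $u$ centralizes the $1^j$ only up to the order we need and that conjugation by $1+\mf{m}_{-(r+k_0)}$ fixes $\beta$ modulo $\mf{a}_{-r}$ while introducing the correction $-a_\beta(y)$ at the relevant level.

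The main obstacle I expect is bookkeeping the filtration levels carefully: one must check that the element $y$ produced by exactness of \eqref{eqExactSeqDelta} really lands in $\mf{m}_{-(r+k_0)}$ (not merely in $\mf{n}_{-r}\cap\mf{a}_{-r-k_0}$ intersected with the off-diagonal blocks, which is what $\mf{m}_{-(r+k_0)}$ unwound actually is), and that the higher-order terms in the expansion of $u(\beta+a)u^{-1} = \beta + a + a_\beta(x) + (\text{terms in }\mf{a}_{-r+1})$ genuinely lie in $\mf{a}_{-r+1}$ — this uses $x \in \mf{a}_{-k_0}$ with $k_0 = k_0(\beta,\Lambda) < 0$ and $a \in \mf{a}_{-r}$, so products like $xa$, $ax$, $x\beta x$ sit in $\mf{a}_{-r-k_0}\cdot\mf{a}_{\ge 0}$-type estimates; the delicate point is that $x\beta x$ a priori lands in $\mf{a}_{-n-2k_0}$, and one needs $-n-2k_0 \ge -r+1$, i.e.\ $k_0 \le \tfrac{r-1-n}{2}$, which need not hold directly and forces one to iterate the procedure (replacing $a$ by the new off-diagonal error, which now lives at a strictly higher level, and repeating), exactly as in the split-case induction. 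So the real content is setting up this iteration and showing it terminates, with the infinite product of the $u$'s converging in $1+\mf{m}_{-(r+k_0)}$.
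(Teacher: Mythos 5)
Your first two paragraphs reproduce the paper's own argument: since the idempotents~$1^j$ lie in~$C_A(\beta)$ and~$s_\beta$ is a~$(B,B)$-bimodule map, the splitting hypothesis says exactly that~$s_\beta(a)$ and~$s_\beta(\tilde a)$, with~$\tilde a:=\sum_j1^ja1^j$, agree modulo~$C_A(\beta)\cap\mf{a}_{-r+1}$; exactness of~\eqref{eqExactSeqDelta} then produces~$x\in\mf{n}_{-r}\cap\mf{a}_{-(r+k_0)}=\mf{m}_{-(r+k_0)}$ with~$a_\beta(x)\equiv a-\tilde a$ modulo~$\mf{a}_{-r+1}$, and one conjugates by~$u=1+x$. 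Note that in this paper~$\mf{m}_{-(r+k_0)}$ is by definition~$\mf{n}_{-r}\cap\mf{a}_{-(r+k_0)}$, so the element coming from exactness lies there automatically; there is no off-diagonal condition to verify, and passing to the off-diagonal part of~$x$ is possible but unnecessary.

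The only real defect is your last paragraph, which manufactures an obstacle and then defers the ``real content'' to an iteration you do not carry out. The obstacle is not there, and the iteration is not needed. You estimate with~$x\in\mf{a}_{-k_0}$, but in fact~$x\in\mf{a}_{-(r+k_0)}$ and, crucially,~$x\in\mf{n}_{-r}$, so~$a_\beta(x)=\beta x-x\beta\in\mf{a}_{-r}$. Since~$\Delta$ is simple,~$k_0\leq -r-1$, hence~$-(r+k_0)\geq 1$ and~$\mf{a}_{-(r+k_0)}\cdot\mf{a}_{-r}\subseteq\mf{a}_{-2r-k_0}\subseteq\mf{a}_{-r+1}$. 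Now write
\[u(\beta+a)u^{-1}-(\beta+\tilde a)=\bigl((a-\tilde a)-a_\beta(x)+xa-\tilde a x\bigr)u^{-1}:\]
the first difference lies in~$\mf{a}_{-r+1}$ by the choice of~$x$, the products~$xa$ and~$\tilde a x$ lie in~$\mf{a}_{-2r-k_0}\subseteq\mf{a}_{-r+1}$ by the estimate above, and~$u^{-1}\in\mf{a}_0^\times$ preserves~$\mf{a}_{-r+1}$. If one insists on expanding~$u^{-1}$, the dangerous-looking quadratic terms group as~$a_\beta(x)\cdot(\text{elements of }\mf{a}_{-(r+k_0)})\subseteq\mf{a}_{-r+1}$, never as a bare~$x\beta x$. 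Since the conclusion is only required modulo~$\mf{a}_{-r+1}$, a single conjugation suffices; no iteration, convergence argument or induction is needed. With the bookkeeping corrected, your argument is complete and coincides with the paper's proof.
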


\begin{proof}
 This follows directly from the exactness of the sequence~(\ref{eqExactSeqDelta}), in the following way: $\ti{a}=\sum_j1^ja1^j$ and~$a$ have the same image under~$s_\beta$ in~\eqref{eqExactSeqDelta}, and thus by the exactness of sequence~(\ref{eqExactSeqDelta}) there is an element~$x$ in~$\mf{n}_{-r}\cap \mf{a}_{-(r+k_0)}$ such that~$a_\beta(x)$ is congruent 
 to~$a-\tilde{a}$ modulo~$\mf{a}_{-r+1}$ which is equivalent to the conclusion of the lemma. 
\end{proof}

\begin{lemma}\label{lemTamcorestrictionForthAndBack}
 Let~$\Delta$ be a semisimple stratum and let~$s$ be a tame corestriction on~$A$ relative to~$F[\beta]|F$. 
Given $a,a'\in\mf{a}_{-r}$, the element $s(a)$ is congruent  to~$s(a')$ modulo~$\mf{a}_{-r+1}$ if and only if there is an element~$u\in 1+\mf{m}_{-r-k_0}$ such that~$u(\beta+a)u^{-1}$ is congruent to~$\beta+a'$ modulo~$\mf{a}_{-r+1}$.       
\end{lemma}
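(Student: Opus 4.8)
The plan is to rewrite both halves of the claimed equivalence as the single condition ``$a-a'$ lies in $a_\beta(\mf{m}_{-r-k_0})+\mf{a}_{-r+1}$'', and then to quote the exactness of the sequence~\eqref{eqExactSeqDelta}. We may assume $\Delta$ is non-zero; for a zero stratum $a_\beta=0$, $s$ may be taken to be the identity, and both conditions reduce to $a\equiv a'\pmod{\mf{a}_{-r+1}}$. Since $[\Lambda,n,r,\beta]$ is semisimple we have $k_0:=k_0(\Delta)<-r$, hence $-r-k_0\geq 1$ and $\mf{m}_{-r-k_0}=\mf{n}_{-r}(\beta,\Lambda)\cap\mf{a}_{-r-k_0}\subseteq\mf{a}_1$; in particular every $u=1+x$ with $x\in\mf{m}_{-r-k_0}$ lies in $P_1(\Lambda)$ and satisfies $u^{-1}-1\in\mf{a}_1$.

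First I would run the routine conjugation estimate. For $u=1+x$ with $x\in\mf{m}_{-r-k_0}$ one has
\[
u(\beta+a)u^{-1}=(\beta+a)+\bigl([x,\beta]+[x,a]\bigr)u^{-1}.
\]
Here $[x,\beta]=-a_\beta(x)\in\mf{a}_{-r}$ because $x\in\mf{n}_{-r}(\beta,\Lambda)$, while $[x,a]\in\mf{a}_{-2r-k_0}\subseteq\mf{a}_{-r+1}$ because $-r-k_0\geq 1$; and since $u^{-1}-1\in\mf{a}_1$, replacing $u^{-1}$ by $1$ only changes things inside $\mf{a}_{-r}\mf{a}_1+\mf{a}_{-r+1}=\mf{a}_{-r+1}$. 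Hence
\[
u(\beta+a)u^{-1}\equiv\beta+a-a_\beta(x)\pmod{\mf{a}_{-r+1}},
\]
so that some $u\in 1+\mf{m}_{-r-k_0}$ satisfies $u(\beta+a)u^{-1}\equiv\beta+a'\pmod{\mf{a}_{-r+1}}$ precisely when $a-a'\in a_\beta(\mf{m}_{-r-k_0})+\mf{a}_{-r+1}$. This is the same computation already performed in the split case, see~\cite[7.6]{skodlerackStevens:15-1} and the proof of Lemma~\ref{lemStratumPreparebeta}.

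It then remains to identify $a_\beta(\mf{m}_{-r-k_0})+\mf{a}_{-r+1}$ with the kernel of $s$ modulo $\mf{a}_{-r+1}$. This is exactly exactness of~\eqref{eqExactSeqDelta} at its middle term, that is, Lemma~\ref{lemExSeqDelta}: the image of $a_\beta\colon\mf{m}_{-r-k_0}\to\mf{a}_{-r}/\mf{a}_{-r+1}$ equals the kernel of the map induced by $s$. Since a tame corestriction is additive and $s(a-a')$ lies automatically in $C_A(\beta)$, we get $a-a'\in a_\beta(\mf{m}_{-r-k_0})+\mf{a}_{-r+1}$ if and only if $s(a-a')\in\mf{a}_{-r+1}$, i.e.\ if and only if $s(a)\equiv s(a')\pmod{\mf{a}_{-r+1}}$. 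Combining the two paragraphs gives the lemma. I expect no genuine obstacle: the only point that needs care is the level bookkeeping in the conjugation step --- above all the inequality $-2r-k_0\geq -r+1$, which is what puts $[x,a]$ into $\mf{a}_{-r+1}$ --- while the exact sequence is already at our disposal.
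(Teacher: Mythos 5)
Your proposal is correct and follows essentially the paper's own route: the paper proves this lemma by exactly the combination you describe, namely the conjugation estimate ("standard calculations", as in the split case \cite[7.6]{skodlerackStevens:15-1}) reducing both sides to the condition $a-a'\in a_\beta(\mf{m}_{-r-k_0})+\mf{a}_{-r+1}$, together with the exactness of sequence~\eqref{eqExactSeqDelta} (Lemma~\ref{lemExSeqDelta}). Your write-up just makes the level bookkeeping explicit, which is consistent with the paper.
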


\begin{proof}
 This follows directly of the exactness of the sequence (\ref{eqExactSeqDelta}) and standard calculations.
\end{proof}

\begin{proof}[Proof of~\ref{propDerived}]
\textbf{Part 1: \ref{propDerived}\ref{propDerivedAss1} implies \ref{propDerived}\ref{propDerivedAss2}:} Here we can assume that~$\Delta$ is a semisimple stratum.
Then there is a semisimple stratum equivalent to~$\Delta(1+)$ which is split by the splitting of~$\beta$ and 
by Corollary~\ref{corDecompAreConjForEqSemisimpleStrata} and Lemma~\ref{lemTamcorestrictionForthAndBack} we  can assume that~$\gamma$ is split by the splitting of~$\beta$. By Lemma~\ref{propDerivedSimple} each~$\partial_{\gamma_j,s_j}(\Delta_j)$ is equivalent to a semisimple strata.\\
\textbf{Part 2: \ref{propDerived}\ref{propDerivedAss2} implies \ref{propDerived}\ref{propDerivedAss1}:} The stratum~$\Delta(1+)$ is equivalent to a semisimple stratum~$\tilde{\Delta}$ with splitting~$1=\sum_j\ti{1}^j$ and~$\tilde{\beta}=\gamma$. By Lemma~\ref{lemStratumPreparebeta} the stratum~$\Delta$ 
is conjugate  to a stratum split by~$(\ti{1}^j)_j$ modulo~$\mf{a}_{-r}$ by an element of~$1+\mf{m}_{-r-1-\tilde{k}_0}$. This conjugation does not change the equivalence class of the derived strata 
by Lemma~\ref{lemTamcorestrictionForthAndBack}. So, we can assume without loss of generality that~$\Delta$ is split by~$(\ti{1}^j)_j$. Thus we can restrict 
to the case where~$F[\gamma]$ is a field. We assume by~\ref{propDerivedAss2} that~$\partial_{\gamma,s}(\Delta)$
is equivalent to a semisimple stratum which has again its own associated splitting say~$1=\sum_i1^i$, and thus~$\Delta$ is conjugate to a stratum 
split by~$(1^i)_i$ modulo~$\mf{a}_{-r}$ by an element of~$1+\mf{m}_{-r-1-\tilde{k}_0}$, by Lemma~\ref{lemSimpletameCorConjSplitting}, which allows us to assume that~$\partial_{\gamma,s}(\Delta)$ is equivalent to a simple stratum. Lemma~\ref{propDerivedSimple} states in this case that~$\Delta$ is equivalent to a simple stratum. 
\end{proof}

As a corollary we obtain the subtle modification of Proposition~\ref{propEquivSemisimpleSplittingAndRestriction}.

\begin{proposition}\label{propEquivToSemisimpleStratum}
 Let~$\Delta$ be a stratum and let~$\ti{F}|L$ be a finite unramified field extension. Then the following assertions are equivalent:
 \begin{enumerate}
  \item The stratum~$\Delta$ is equivalent to a semisimple stratum.\label{thmEquivToSemisimpleStratumAss1}
  \item The stratum~$\Delta\otimes \ti{F}$ is equivalent to a semisimple stratum.\label{thmEquivToSemisimpleStratumAss2}
  \item The stratum~$\Res_F(\Delta\otimes \ti{F})$ is equivalent to a semisimple stratum.\label{thmEquivToSemisimpleStratumAss3}
  \item The stratum~$\Res_F(\Delta)$ is equivalent to a semisimple stratum.\label{thmEquivToSemisimpleStratumAss4} 
 \end{enumerate}
\end{proposition}

We need one lemma:

\begin{lemma}\label{lemSummandsOfEquivToSemiAreEquivToSemi}
 Given two strata~$\Delta$ and~$\Delta'$ with~$r=r'$ and~$e(\Lambda)=e(\Lambda')$ and suppose that~$\Delta\oplus\Delta'$ is equivalent to a semisimple stratum.  Then~$\Delta$ is equivalent to a semisimple stratum.
\end{lemma}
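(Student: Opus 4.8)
The plan is to argue by strata induction on $n-r$, using the inductive machinery behind Theorems~\ref{thmDerived} and~\ref{thmDerivedSimple}. After enlarging both $n$ and $n'$ to $\max(n,n')$, which changes neither stratum nor its equivalence class (equivalence only sees the cosets $\beta+\mf{a}_{\Lambda,s}$ for $s\le -r$), I may assume $n=n'$, so that $\Delta\oplus\Delta'=[\Lambda\oplus\Lambda',n,r,\beta\oplus\beta']$; I fix a semisimple stratum $\Theta$ equivalent to it, which then has the same $r$. If $n=r$ then $\beta\in\mf{a}_{\Lambda,-r}$ and $\Delta$ is already equivalent to the zero stratum $[\Lambda,r,r,0]$. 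For the base case $n=r+1$ I would invoke Proposition~\ref{propCriteriaFundtoSemisimplStratum}: here $\beta\oplus\beta'$ is block-diagonal for $V\oplus V'$, and the three lattice sequences involved have the same $F$-period (it equals $d$ times the common $D$-period, which agrees for $\Lambda$, $\Lambda'$ and $\Lambda\oplus\Lambda'$), so $\mf{y}(\Delta\oplus\Delta')=\mf{y}(\Delta)\oplus\mf{y}(\Delta')$; hence $\mu_{\Delta\oplus\Delta'}$ is the least common multiple of $\mu_\Delta$ and $\mu_{\Delta'}$, so square-freeness passes to $\mu_\Delta$, and the multiplication maps $m_{t,n,\beta\oplus\beta'}$ respect the block grading, so the trivial-intersection condition for $\beta\oplus\beta'$ restricts to the one for $\beta$. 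Thus $\Delta$ is equivalent to a semisimple stratum by Proposition~\ref{propCriteriaFundtoSemisimplStratum}.

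For the inductive step assume $n\ge r+2$. The first task is to see that $(\Delta\oplus\Delta')(1+)=\Delta(1+)\oplus\Delta'(1+)$ is again equivalent to a semisimple stratum; since it is equivalent to $\Theta(1+)$, it suffices to check that $\Theta(1+)$ is equivalent to a semisimple stratum. If $r+1<-k_0(\Theta)$ this is clear, as $\Theta(1+)$ is then itself semisimple. In the critical case $r+1=-k_0(\Theta)$, write $\Theta=\bigoplus_i\Theta_i$ with $\Theta_i$ simple; each $\Theta_i(1+)$ is still pure, hence equivalent to a simple stratum by Theorem~\ref{thmPureIsEquivToSimple}, and a direct sum of simple strata of matching parameters is equivalent to a semisimple stratum: one merges each sub-sum that is equivalent to a simple stratum, using Corollary~\ref{corEndoEquivalenceForSimpleStrata} for the transitivity needed to iterate, and, since two simple summands with a common (necessarily irreducible) minimal polynomial $\mu$ have direct sum equal to a pure stratum (its element still has minimal polynomial $\mu$), hence equivalent to a simple one by Theorem~\ref{thmPureIsEquivToSimple}, the surviving pairwise non-mergeable summands have pairwise coprime minimal polynomials and so form a semi-pure, in fact semisimple, stratum. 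Granting this, the inductive hypothesis applied to the pair $\Delta(1+),\Delta'(1+)$ — which has the smaller difference $n-(r+1)$ — shows that $\Delta(1+)$ is equivalent to a semisimple stratum, say with last entry $\gamma$; and by Corollary~\ref{corDecompAreConjForEqSemisimpleStrata} together with Lemmas~\ref{lemTamcorestrictionForthAndBack} and~\ref{lemStratumPreparebeta} I may arrange $\gamma$ to be split by the decomposition $V\oplus V'$.

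Now Theorem~\ref{thmDerived}, applied to $\Delta\oplus\Delta'$ (whose $(1+)$ is equivalent to a semisimple stratum with last entry $\gamma$), shows that $\partial_\gamma(\Delta\oplus\Delta')$ is equivalent to a semisimple multi-stratum. Because $\gamma$ is split by $V\oplus V'$, the centralizer $C_{\End_D(V\oplus V')}(F[\gamma])$ contains $C_{\End_D(V)}(F[\gamma])\oplus C_{\End_D(V')}(F[\gamma])$ compatibly with the building embeddings $j_{F[\gamma]}$ and the tame corestriction, so that, component by component over the block index of $F[\gamma]$, the multi-stratum $\partial_\gamma(\Delta\oplus\Delta')$ is the direct sum of the corresponding components of $\partial_\gamma(\Delta)$ and $\partial_\gamma(\Delta')$. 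Each such component is equivalent to a stratum of type $(r+1,r)$, so the base case — applied now inside the relevant centralizer algebra — shows that the corresponding component of $\partial_\gamma(\Delta)$ is equivalent to a semisimple stratum. Hence $\partial_\gamma(\Delta)$ is equivalent to a semisimple multi-stratum, and Theorem~\ref{thmDerived} in the converse direction yields that $\Delta$ is equivalent to a semisimple stratum, closing the induction.

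The step I expect to be the main obstacle is this last reduction: one must check carefully that the derived-stratum construction $\partial_\gamma$ — built from the building embedding $j_{F[\gamma]}$ and a tame corestriction with respect to $\gamma$ — genuinely commutes with the direct sum once $\gamma$ respects $V\oplus V'$, and that the resulting pieces inherit the hypotheses ($r=r'$ and equal $D$-periods, the latter via Theorem~\ref{thmBroussousLemaire1punkt1}) needed to re-apply the base case of the statement. The other delicate point is the critical-case claim that a semisimple stratum stays equivalent to a semisimple stratum after passing to $\Delta(1+)$, where the parameter bookkeeping for the merged simple strata has to be done with care.
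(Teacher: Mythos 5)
Your proposal is correct and follows essentially the same route as the paper's own proof: induction on $n-r$, the base cases $n=r$ and $n=r+1$ handled via Proposition~\ref{propCriteriaFundtoSemisimplStratum}, and the inductive step carried out by arranging the approximation $\gamma$ of $(\Delta\oplus\Delta')(1+)$ to be split by $V\oplus V'$, so that $\partial_\gamma(\Delta\oplus\Delta')$ decomposes as the direct sum of the derived strata, to which the induction start applies, before invoking Theorem~\ref{thmDerived} in both directions. The extra work you do (verifying that $(\Delta\oplus\Delta')(1+)$ admits a semisimple coarsening, and the block-diagonal analysis of $\mu$ and the multiplication maps in the base case) only makes explicit what the paper leaves implicit via defining sequences and Theorem~\ref{thmDiagonalization}.
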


\begin{proof}
Without loss of generality we can assume~$n\geq n'$. We prove by induction on~$n-r$ that~$\Delta$ and~$\Delta'$ are equivalent to semisimple strata. The case~$n=r$:
~$\Delta$ and~$\Delta'$ are equivalent to a null-stratum. The case~$n=r+1$ follows directly from Proposition~\ref{propCriteriaFundtoSemisimplStratum}. 
The case~$n>r+1$:~$\Delta\oplus\Delta'$ is equivalent to a semisimple stratum implying that by induction 
hypothesis~$\Delta(1+)$ and~$\Delta'(1+)$  are equivalent to semisimple strata~$\tilde{\Delta}$ and~$\tilde{\Delta'}$ with entries~$\gamma$ and~$\gamma'$, respectively.
By Theorem~\ref{thmDiagonalization} we can choose~$\tilde{\Delta}$ and~$\tilde{\Delta'}$ such that~$\ti{\Delta}\oplus\ti{\Delta}'$ is semisimple and split by the decomposition~$(\bigoplus_{j\in J}V^j)\oplus (\bigoplus_{j'\in J'}V'^{j'})$ where~$\bigoplus_{j\in J}V^j$ is the splitting of~$\gamma$  and~$\bigoplus_{j'\in J'}V'^{j'}$ is the splitting of~$\gamma'$. 
Let~$V\oplus V'=\bigoplus_{k\in K} (V\oplus V')^k$ be the splitting of~$\gamma+\gamma'$. We write~$V^k$ and~$V'^k$ for the intersections of~$(V\oplus V')^k$ with~$V$ and~$V'$, respectively. 

We fix a corestriction~$t$ on~$\End_D(V\oplus V')$ relative to~$F[\gamma+\gamma']|F$
\[t:\End_D(V+V')\rightarrow \End_{F[\gamma+\gamma']\otimes_FD}(V\oplus V').\]
Then the restrictions
\[s:\End_D(V)\rightarrow \End_{F[\gamma]\otimes_FD}(V).\]
and 
\[s':\End_D(V')\rightarrow \End_{F[\gamma']\otimes_FD}(V')\]
are tame corestrictions. 
By Proposition~\ref{propDerived} it is enough to prove that for all~$j\in J$ the derived stratum~$\partial_{\gamma_j,s_j}(\Delta_j)$ is equivalent to a semisimple stratum. For~$j\in J$ there exists a~$k\in K$ such that~$V^k=V^j$. If~$V'^k=0$ then we have
\[\partial_{\gamma_j,s_{j}}(\Delta_j)=\partial_{(\gamma+\gamma')_k,t_{k}}((\Delta\oplus\Delta')_k).\]
If~$V'^k\neq 0$ then
\[\partial_{\gamma_j,s_{j}}(\Delta_j)\oplus\partial_{\gamma'_{j'},s'_{j'}}(\Delta'_{j'})=\partial_{(\gamma+\gamma')_k,s_{k}}((\Delta\oplus\Delta')_k)\]
for some~$j'\in J'$. The derived stratum~$\partial_{(\gamma+\gamma')_k,s_{k}}((\Delta\oplus\Delta')_k)$ is equivalent to a semisimple stratum, by Proposition~\ref{propDerived}, because~$\Delta\oplus\Delta'$
is equivalent to a semisimple stratum. 
The base case implies that each~$\partial_{\gamma_j,s_{j}}(\Delta_j)$ is equivalent to a semisimple stratum. 
\end{proof}

\begin{proof}[Proof of Proposition~\ref{propEquivToSemisimpleStratum}]
 We have~\ref{thmEquivToSemisimpleStratumAss1}$\Rightarrow$\ref{thmEquivToSemisimpleStratumAss2}$\Rightarrow$\ref{thmEquivToSemisimpleStratumAss3} by 
 Proposition~\ref{propEquivSemisimpleSplittingAndRestriction}. The implication \ref{thmEquivToSemisimpleStratumAss3}$\Rightarrow$\ref{thmEquivToSemisimpleStratumAss4}
 follows directly from Lemma~\ref{lemSummandsOfEquivToSemiAreEquivToSemi}, because~$\Res_F(\Delta\otimes \ti{F})$ is equivalent to a direct sum of~$[\tilde{F}:L]$ many copies 
 of~$\Res_F(\Delta)$. We prove \ref{thmEquivToSemisimpleStratumAss4}$\Rightarrow$\ref{thmEquivToSemisimpleStratumAss1} by induction on~$n-r$. Case~$n=r$ is trivial and 
 Case~$n=r+1$ follows from Corollary~\ref{corEquivToMinSemisimpleStratum}. The stratum~$\Res_F(\Delta)$ is equivalent to a semisimple stratum and therefore~$\Res_F(\Delta(1+))$
 which is~$\Res_F(\Delta)(1+)$ is equivalent to a semisimple stratum too. Then~$\Delta(1+)$ is equivalent to a semisimple stratum~$\tilde{\Delta}$, 
 denote~$\gamma:=\tilde{\beta}$, and, by Lemma~\ref{lemStratumPreparebeta}, we can conjugate to the case where~$\Delta$ is equivalent to a stratum which is split by the 
 splitting of~$\gamma$, i.e. we can assume~$\tilde{\Delta}$ to be simple without loss of generality. Let~$s$ be a tame corestriction on~$A$ relative to~$F[\gamma]|F$. The stratum~$\partial_{\gamma,s\otimes\id_D}(\Res_F(\Delta))$ is equivalent to a 
 semisimple stratum by Proposition~\ref{propDerived} and it is equivalent to a direct sum of translates of copies of~$\Res_{F[\gamma]}(\partial_{\gamma,s}(\Delta))$ by 
 formula~\cite[part II, Lemma 3.1]{broussousLemaire:02}. Thus~$\Delta$ is equivalent to a semisimple stratum by Lemma~\ref{lemSummandsOfEquivToSemiAreEquivToSemi}, the base case and Proposition~\ref{propDerived}.
\end{proof}

In the next section we will need the defining sequence of a stratum:
\begin{definition}\label{defDefiningSequence}
 \begin{enumerate}
 \item A~\emph{defining sequence} for~$\Delta$ is a sequence of semisimple strata~$\Delta(j)=[\Lambda,n,r+j,\beta(j)]$, equivalent to~$\Delta(j+)$,
 ~$j=0,\ldots,n-r$, such that,~$\beta=\beta(0)$ and~$\Delta(j+1)$ is split by the associated splitting of~$\Delta(j)$. 
 Such a sequence always exists by coarsening and Theorem~\ref{thmDiagonalization}
 . And it is not unique. Thus if we write~$\Delta(j)$ there is 
 always a fixed choice behind it. 
 \item We call a defining sequence~\emph{$k_0$-controlled} if~$\beta(j)$ is equal to~$\beta(j+1)$ for the 
 indexes~$j$ satisfying~$k_0(\Delta(j))=k_0(\Delta(j+1))$.
 
 \item\label{defDefiningSequenceiii} Given a defining sequence of~$\Delta$ the strictly increasing finite sequence of integers $(r_l)=(r+j_l)_{l=0,..,s}$
 such that
 \begin{itemize}
  \item $r_0=r$ and~$r_s=n$ and
  \item ~$k_0(\Delta(j_l-1))>k_0(\Delta(j_l))$ for~$l=1,\ldots,s$
 \end{itemize} is
 called the~\emph{jump sequence} of~$\Delta$ and the integers~$r_l$,~$l=0,\ldots,s$ are called~\emph{jumps} of~$\Delta$.
 \item A~\emph{core approximation} of~$\Delta$ is a stratum~$\Delta(j)$ such that~$r+j$ is the smallest jump of~$\Delta$ which satisfies
 ~$r<\lfloor \frac{-k_0(\Delta(j))}{2}\rfloor$. Cf.~\cite[(3.5.4) and (3.5.5)]{bushnellKutzko:93}. 
 \end{enumerate}
\end{definition}

\section{Semisimple characters}\label{secSemisimpleCharacters}
In this section we generalize semisimple characters for~$\End_F(V)$, see~\cite{skodlerackStevens:18}, and simple characters for $\End_D(V)$, see~\cite{secherreI:04},
to semisimple characters for~$\End_D(V)$, and generalize the properties of semisimple strata from the last section to semisimple characters.
Initially the concept of these kind of characters is introduced in~\cite{bushnellKutzko:93} where they only considered the strict split simple case, but the generalization is
straightforward.

\subsection{Definitions}
Here we define semisimple characters for~$A=\End_D(V)$ as restrictions of semisimple characters from $\End_L(V)$ to~$A$. 
\begin{remark}
The following 
constructions depend on the choice of~$\psi_F$, see~\S\ref{secNot}. 
\end{remark}
Let~$\Delta$ be a semisimple stratum. We are going to attach to~$\Delta$ a compact open subgroup~$H(\Delta)$ of~$G$ and a set~$C(\Delta)$ of certain complex characters of~$H(\Delta)$. 

\subsubsection{Split case}
Let us recall the constructions of~$\C(\Delta)$ for the case~$\mathbf{D=F}$, see~\cite[\S 9]{skodlerackStevens:18} for details. 

\begin{definition}[{\cite{skodlerackStevens:18}~Definition~9.5}]\label{defSemisimpleCharSpliCase}
 Let~$\Delta$ be semisimple stratum. The definition of~$\C(\Delta)$ goes along an induction on~$k_0(\beta,\Lambda)$.
 \begin{enumerate}
  \item At first we define the relevant groups. If~$\Delta$ is a null-stratum then 
 one defines~$H(\beta,\Lambda)$ and~$J(\beta,\Lambda)$ as~$P(\Lambda)$. If~$\Delta$ is not a null stratum then we take a semisimple 
 stratum~$[\Lambda,n,-k_0(\beta,\Lambda),\gamma]$ which is equivalent to~$[\Lambda,n,-k_0(\beta,\Lambda),\beta]$ and one 
 defines~$H(\beta,\Lambda)$ as $P(j_E(\Lambda))(H(\gamma,\Lambda)\cap P_{\lfloor\frac{-k_0(\beta,\Lambda)}{2}\rfloor+1}(\Lambda))$ and~$J(\beta,\Lambda)$ as
 $P(j_E(\Lambda))(H(\gamma,\Lambda)\cap P_{\lfloor\frac{-k_0(\beta,\Lambda)+1}{2}\rfloor}(\Lambda))$.
 For non-negative integers~$i$ the intersections of~$H(\beta,\Lambda)$ with~$P_i(\Lambda)$ is denoted by~$H^{i}(\beta,\Lambda)$, and similar is~$J^i(\beta,\Lambda)$ 
 defined. 
 \item Here we define the set~$\C(\Delta)$, also denoted by~$\C(\Lambda,r,\beta)$, of semisimple characters for~$\Delta$. They are defined on the 
 group~$H^{r+1}(\beta,\Lambda)$ which we denote as~$H(\Delta)$. If~$\Delta$ is a null-stratum then~$\C(\Delta)$ is the singleton consisting of the trivial character 
 on~$H(\Delta)$. If~$\Delta$ is not a null stratum we take~$[\Lambda,n,-k_0(\beta,\Lambda),\gamma]$ as above. Then~$\C(\Delta)$ is defined to be the set of those~$complex$ characters~$\theta$ 
 which  satisfy the following properties:
 \begin{enumerate}
 \item~$\theta$ is normalized by~$\mf{n}(j_E(\Lambda))$.
 \item The restriction of~$\theta$ to~$P_{r+1}(j_E(\Lambda))$ factorizes through the determinant~$\det:\ C_{A^\times}(\beta)\ra E^\times$. 
 \item Put~$r_0$ to be the maximum of~$r$ and~$\lfloor\frac{-k_0(\beta,\Lambda)}{2}\rfloor$. The restriction of~$\theta$ to~$H^{1+r_0}(\gamma,\Lambda)$ coincides 
 with~$\psi_{\beta-\gamma}\theta_0$ for some element~$\theta_0$ of~$\C(\Lambda,r_0,\gamma)$. (See~\S\ref{secNot} for the definition of~$\psi_{\beta-\gamma}$.)
 \end{enumerate}
 \end{enumerate}
 \end{definition}
 
\begin{remark}
 Stevens gave a  different definition of a semisimple character for the split case, see~\cite[Definition 3.13]{stevens:05}, which is equivalent to 
 Definition~\ref{defSemisimpleCharSpliCase} by~\cite[Remark 9.6, Proposition 9.7]{skodlerackStevens:18}. 
\end{remark}

It is not surprising but a subtle statement that~$\C(\Delta)$ only depends on the equivalence class of~$\Delta$.  Following the definition one obtains that 
for~$r\geq \lfloor \frac{n}{2}\rfloor+1$ the set~$\C(\Delta)$ is a singleton consisting of the element~$\psi_\beta$.
We will later refer to this definition to give the analogue for the non-split case. A priori one can formulate Definition~\ref{defSemisimpleCharSpliCase} for this case if we replace the determinants by reduced norms. We are going to define~$\C(\Delta)$ in the non-split case differently and show that this coincides with 
 the modification of Definition~\ref{defSemisimpleCharSpliCase}. 

\subsubsection{General case}
We now generalize~$\C(\Delta)$ to an \textbf{arbitrary $D$}: We fix an additive character~$\psi_L$ of~$L$ of level~$1$ whose restriction to~$F$
is equal to~$\psi_F$. 

\begin{definition}[{see~\cite{secherreI:04} for the simple case}]
\begin{enumerate}
\item We define~$H(\Delta)$,~$H^i(\beta,\Lambda)$ and~$J^i(\beta,\Lambda)$ as the intersection of~$H(\Delta\otimes L)$,~$H^i(\beta\otimes_F1,\Lambda)$ and~$J^i(\beta\otimes_F1,\Lambda)$ with~$ G$,~$i\geq 0$. 
 The set~$\C(\Delta)$ is defined to be the set of those characters of~$H(\Delta)$ which can be extended to an element of~$\C(\Delta\otimes L)$.
 We call the elements of~$\C(\Delta)$~\emph{ semsimple characters for~$\Delta$}.
 A semisimple character~$\theta$ (for some semisimple stratum) is called~\emph{simple} if there is a simple stratum~$\Delta$ such that~$\theta\in\C(\Delta)$. 
\item We need the Lie algebra of~$H(\Delta)$: The group~$H(\Delta)$ can be written as~$1+\mf{h}(\Delta)$ with a bi-$\mf{b}_{\Lambda,0}$-order~$\mf{h}(\Delta)$ in~$A$. We also write~$\mf{h}^{j}(\beta,\Lambda)$ and~$\mf{j}^{j}(\beta,\Lambda)$ for~$H^j(\beta,\Lambda)-1$ and~$J^j(\beta,\Lambda)-1$ , for positive~$j$. 
\end{enumerate}
 \end{definition}

\begin{remark}\label{remDefSemisimpleCharacter}
  The set~$\C(\Delta)$ only depends on the equivalence class of~$\Delta$. 
\end{remark}

Given a semisimple stratum~$\Delta$ then, because of Remark~\ref{remDefSemisimpleCharacter}, we put~$\C(\Delta'):=\C(\Delta)$ for every not necessarily semisimple stratum~$\Delta'$ which is equivalent
to~$\Delta$. Analogously we use notation~$H(\Delta')$. For later  induction purposes we introduce the notation
$\theta(j+)$ to be the restriction of~$\theta$ into~$\C(\Delta(j+))$ for all non-negative integers~$j$ smaller than~$n-r+1$.

\begin{proposition}\label{propDefSemisimpleCharInductiveNonSplit}
  Let~$\Delta$ be a semisimple stratum then the inductive definition, see~\ref{defSemisimpleCharSpliCase}, of~$\C(\Delta\otimes L)$ restricts canonically to an inductive 
  definition of~$\C(\Delta)$. The same is true for the groups~$H(\beta,\Lambda)$ and~$J(\beta,\Lambda)$.  In particular,~$\C(\Delta)$ does not depend on the choice of
  the extension~$\psi_L$ of~$\psi_F$.
\end{proposition}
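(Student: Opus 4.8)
The plan is to argue by induction on the critical exponent $k_0(\beta,\Lambda)$, running the inductive definition of $\C(\Delta\otimes L)$ and of the groups $H(\beta\otimes 1,\Lambda\otimes o_L)$, $J(\beta\otimes 1,\Lambda\otimes o_L)$ in parallel, and then intersecting everything with $G=\GL_D(V)$, which we regard as the group of $\Gal(L|F)$-fixed points in $\Aut_L(V\otimes_F L)$ for the semilinear action coming from the second tensor factor. The engine of the descent is the vanishing $H^1(\Gal(L|F),P_1(\Lambda\otimes o_L))=1$: one devisses along the filtration whose successive quotients $P_i/P_{i+1}$ are $\kappa_L$-vector spaces and applies the additive Hilbert~90 for the unramified — hence cyclic — extension $\kappa_L|\kappa_F$, together with the multiplicative Hilbert~90 for $L|F$. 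This is exactly the mechanism already invoked above (e.g.\ in the proofs of Theorem~\ref{thmIntertwiningSimpleStratum} and Proposition~\ref{propIntertwiningOfSemisimpleStratumOverD}) via \cite[2.35]{secherreI:04}.

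First I would treat the groups. The crucial input is that the building embedding is compatible with base change: the point $j_{E\otimes L}(\Lambda\otimes o_L)$ restricts to $j_E(\Lambda)$, so $P_i(j_{E\otimes L}(\Lambda\otimes o_L))\cap G=P_i(j_E(\Lambda))$ and $\mf{n}(j_{E\otimes L}(\Lambda\otimes o_L))\supseteq\mf{n}(j_E(\Lambda))$, and likewise at the step $\gamma$. Intersecting with $G$ trivially commutes with intersections of groups, and it commutes with the product $P(j_E(\Lambda))\cdot\bigl(H(\gamma,\Lambda)\cap P_{\lfloor k_0(\beta,\Lambda)/2\rfloor+1}(\Lambda)\bigr)$ because the relevant groups share a $\Gal(L|F)$-stable pro-$p$ normal subgroup with vanishing $H^1$, so $(XY)^{\Gal(L|F)}=X^{\Gal(L|F)}Y^{\Gal(L|F)}$. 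By induction this produces the $D$-side recursion $H(\beta,\Lambda)=P(j_E(\Lambda))\bigl(H(\gamma,\Lambda)\cap P_{\lfloor k_0(\beta,\Lambda)/2\rfloor+1}(\Lambda)\bigr)$, the analogous formula for $J(\beta,\Lambda)$, and the base case $H(\beta,\Lambda)=J(\beta,\Lambda)=P(\Lambda)$ for a zero-stratum.

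Next, the characters. For the inclusion ``$\C(\Delta)$ is contained in the inductively defined set'', take $\theta\in\C(\Delta)$ and an extension $\ti{\theta}\in\C(\Delta\otimes L)$: restricting condition (a) of Definition~\ref{defSemisimpleCharSpliCase} and using $\mf{n}(j_E(\Lambda))\subseteq\mf{n}(j_{E\otimes L}(\Lambda\otimes o_L))$ gives that $\theta$ is normalized by $\mf{n}(j_E(\Lambda))$; restricting condition (b) and using that the determinant of $C_{(A\otimes L)^\times}(\beta\otimes 1)$ restricted to $C_{A^\times}(\beta)$ is the reduced norm $\operatorname{Nrd}\colon C_{A^\times}(\beta)\to E^\times$ up to a norm map that is surjective on the relevant finite quotients — the block-by-block computation of \cite{secherreI:04} — gives that $\theta|_{P_{r+1}(j_E(\Lambda))}$ factors through $\operatorname{Nrd}$; restricting condition (c) and using that $\trd_{A\otimes L|L}$ restricts to $\trd_{A|F}$ on $A$ and $\psi_L|_F=\psi_F$, so that $\psi_{\beta\otimes 1-\gamma}$ restricts to $\psi_{\beta-\gamma}$, together with the induction hypothesis applied to the stratum $(\Lambda,r_0,\gamma)$ with $r_0=\max(r,\lfloor k_0(\beta,\Lambda)/2\rfloor)$, gives $\theta|_{H^{1+r_0}(\gamma,\Lambda)}=\psi_{\beta-\gamma}\theta_0$ with $\theta_0\in\C(\Lambda,r_0,\gamma)$. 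For the reverse inclusion, given $\theta$ satisfying the $D$-side conditions, one lifts $\theta_0$ to $\ti{\theta}_0\in\C((\Lambda,r_0,\gamma)\otimes L)$ by induction, checks that $\psi_{\beta\otimes 1-\gamma}\ti{\theta}_0$ agrees with $\theta$ on the overlap of their domains, and extends the resulting character from $H^{r+1}(\beta,\Lambda)\cdot H^{1+r_0}(\gamma,\Lambda\otimes o_L)$ to $H^{r+1}(\beta\otimes 1,\Lambda\otimes o_L)$; the $H^1$-vanishing lets one choose this extension $\Gal(L|F)$-equivariantly, so it still restricts to $\theta$ on $G$, hence lies in $\C(\Delta\otimes L)$. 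Finally the independence of $\psi_L$ is a byproduct: the only way $\psi_L$ enters is through $\trd_{A\otimes L|L}$ restricted to $A$, which equals $\trd_{A|F}$ and therefore sees only $\psi_F$.

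The main obstacle is the reverse inclusion — surjectivity of restriction from $\C(\Delta\otimes L)$ onto the set cut out by the $D$-side inductive conditions — since it is a genuine character-extension problem rather than a formal intersection: one must simultaneously control the domain of definition, the compatibility of $\psi_{\beta\otimes 1-\gamma}\ti{\theta}_0$ with $\theta$ on the intersection, and the $\Gal(L|F)$-equivariance of the chosen extension. All three are handled by the same cohomology-of-pro-$p$-groups input, but assembling them uniformly across the semisimple blocks requires the careful bookkeeping already present in \cite[\S2]{secherreI:04} and \cite[\S9]{skodlerackStevens:15-1}.
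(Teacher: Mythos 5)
Your treatment of the groups $H(\beta,\Lambda)$ and $J(\beta,\Lambda)$ (Galois fixed points plus vanishing of the relevant non-abelian $H^1$, i.e.\ Lemma~\ref{lemCohom}) and of the easy direction (restricting the three conditions of Definition~\ref{defSemisimpleCharSpliCase} to the $D$-side) agrees with the paper. The gap is in the step you yourself identify as the main obstacle: showing that a character satisfying the $D$-side inductive conditions extends to an element of $\C(\Delta\otimes L)$. You propose to glue $\theta$ with $\psi_{\beta\otimes 1-\gamma}\ti{\theta}_0$ on the subgroup generated by $H(\Delta)$ and $H^{1+r_0}(\gamma\otimes 1,\Lambda\otimes o_L)$ and then invoke $H^1$-vanishing to pick a ``$\Gal(L|F)$-equivariant'' extension to $H(\Delta\otimes L)$, concluding that it ``hence lies in $\C(\Delta\otimes L)$''. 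That last inference is unjustified: membership in $\C(\Delta\otimes L)$ is not a Galois-equivariance property of an abstract character extension, but the conjunction of the inductive conditions on the $L$-side, above all condition (b), that the restriction to $P_{r+1}(j_{E\otimes L}(\Lambda\otimes o_L))$ factor through the determinant to $(E\otimes L)^\times$. The new part of $H(\Delta\otimes L)$ beyond your glued subgroup is precisely this $E\otimes L$-unit part, so the determinant condition is exactly what your extension mechanism does not control; moreover the cohomological input you cite governs descent of group decompositions, not extension of one-dimensional characters of non-abelian pro-$p$ groups subject to such constraints, and semisimple characters in $\C(\Delta\otimes L)$ extending $\theta$ need not be Galois-invariant in the first place.

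The paper closes this gap by a different and more explicit argument: it inducts on the critical exponent, splitting into the cases $r\geq\lfloor\frac{-k_0}{2}\rfloor$ (handled via a member $\Delta_\gamma$ of a defining sequence with $r_\gamma=-k_0$ and the induction hypothesis applied to $\psi_{\gamma-\beta}\theta$) and $r<\lfloor\frac{-k_0}{2}\rfloor$ (handled by a further induction on $r$). In the second case the determinant condition is arranged by hand: writing $E\otimes L\cong\prod_j E_j$, the restriction of $\theta(1+)_L$ to $P_{r+2}(\Lambda_{E\otimes L})$ factors through the reduced norm via a character $\otimes_j\chi_j$ of $\prod_j P_{\lfloor\frac{r+1}{e(\Lambda|E)}\rfloor+1}(o_{E_j})$, and one chooses extensions $\chi'_j$ to $\prod_j P_{\lfloor\frac{r}{e(\Lambda|E)}\rfloor+1}(o_{E_j})$ compatible with the character $\chi$ of $P_{\lfloor\frac{r}{e(\Lambda|E)}\rfloor+1}(o_E)$ coming from $\theta$, in the sense that $\prod_j\chi'_j(1^jx)=\chi(x)$; these data are what extend $\theta(1+)_L$ to an element of $\C(\Delta\otimes L)$ restricting to $\theta$. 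This compatible extension of abelian characters on the unit filtration of the factors $E_j$ is the genuinely non-cohomological ingredient your sketch is missing, so as written your proof of the surjectivity step does not go through.
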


\begin{proof}
 We have to prove two assertions:
 \begin{enumerate}
  \item \label{propDefSemisimpleCharInductiveNonSplit.i} The assertions for~$H(\beta,\Lambda)$ and~$J(\beta,\Lambda)$, and 
  \item \label{propDefSemisimpleCharInductiveNonSplit.ii} That every character given by the inductive definition extends to a semisimple character in~$\C(\Delta\otimes L)$. 
 \end{enumerate}
 (The remaining assertions for~$\C(\Delta)$ follow directly by restriction.) 
 We start with~\ref{propDefSemisimpleCharInductiveNonSplit.i}. 
 We only prove the assertion for~$H(\beta,\Lambda)$. The proof for~$J(\beta,\Lambda)$ is similar. We consider the inductive definition of~$H(\beta\otimes 1,\Lambda)$ and 
 take the~$\Gal(L|F)$-fixed points. The only subtlety is the equality
 \[(P(j_{F[\beta\otimes 1]}(\Lambda))H^{\lfloor\frac{-k_0}{2}\rfloor+1}(\gamma\otimes 1,\Lambda))^{\Gal(L|F)}
 =P(j_{F[\beta\otimes 1]}(\Lambda))^{\Gal(L|F)}H^{\lfloor\frac{-k_0}{2}\rfloor+1}(\gamma\otimes 1,\Lambda)^{\Gal(L|F)}.\]
  This equality follows from the next lemma. 

  We now prove~\ref{propDefSemisimpleCharInductiveNonSplit.ii} by induction on the critical exponent. 
  For~$k_0=-\infty$ the trivial character extends to a trivial character. Suppose for the induction step that~$k_0\geq -n$. Suppose at first 
  that~$r\geq\lfloor \frac{-k_0}{2}\rfloor$. In this case we take a semisimple stratum~$\Delta_\gamma$ in a defining sequence for~$\Delta$ with
  $r_\gamma=-k_0$, and an inductively defined character~$\theta$ with respect to~$\Delta$ can be extended into~$\C(\Delta\otimes L)$ because 
  we can extend~$\psi_{\gamma-\beta}\theta$ by induction hypothesis. The case of~$r<\lfloor \frac{-k_0}{2}\rfloor$ is proven by induction on~$r$, using the
  case~$r=\lfloor \frac{-k_0}{2}\rfloor$ as the base case. Let~$\theta$ be an inductively defined character with respect to~$\Delta$. Then~$\theta(1+)$ is extendible to a 
  semisimple character~$\theta(1+)_L\in\C(\Delta(1+)\otimes L)$. For the sake of simplicity let us assume that~$\Delta$ is a simple stratum, i.e.~$E\otimes L$ 
  is a product of fields~$E_j$ which are over~$L$ isomorphic to~$E\ti{L}$ where~$\ti{L}|F$ is an unramified field extension isomorphic to~$L|F$ in an algebraic closure 
  of~$E$. The restriction of~$\theta(1+)_L$ to~$P_{r+2}(\Lambda_{E\otimes L})$ factorizes through the reduced norm with character~$\otimes_j\chi_j$
  on~$\prod_j P_{\lfloor \frac{r+1}{e(\Lambda^j|E_j)}\rfloor+1}(o_{E_j})$. Let further~$\chi$ be the character on~$P_{\lfloor \frac{r}{e(\Lambda|E)}\rfloor+1}(o_{E})$
  which comes from the restriction of~$\theta$ to~$P_{r+1}(j_E(\Lambda))$. Then we can extend~$\otimes_j\chi_j$ to~$\prod_j P_{\lfloor \frac{r}{e(\Lambda^j|E_j)}\rfloor+1}(o_{E_j})$
  say to~$\otimes_j\chi'_j$ (using extensions of the~$\chi_j$) such that for every element~$x$ of~$P_{\lfloor \frac{r}{e(\Lambda|E)}\rfloor+1}(o_{E})$ the product
  of the~$\chi'_j(1^jx)$ is equal to~$\chi(x)$, and we use~$\otimes_j\chi'_j$ to extend~$\theta(1+)_L$ to an element~$\theta_L$ of~$\C(\Delta\otimes L)$.  
\end{proof}

\begin{lemma}\label{lemCohom}
Let~$Q_1$ and~$Q_2$ be two subgroups of an ambient group~$Q$ such that~$Q_2$ normalizes~$Q_1$. Assume further that a group~$\Gamma$ is acting on~$Q$ preserving~$Q_1$ and~$Q_2$.
Then~$(Q_1Q_2)^\Gamma$ is equal to~$Q_1^\Gamma Q_2^\Gamma$ if the (non-abelian group) cohomology group~$H^1(\Gamma,Q_1\cap Q_2)$ is trivial. 
\end{lemma}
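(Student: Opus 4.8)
The plan is a direct $1$-cocycle argument. Note first that $Q_1Q_2$ is a subgroup of $Q$ because $Q_1$ normalizes $Q_2$, and that the statement tacitly requires $Q_1$ and $Q_2$ — hence also $Q_1\cap Q_2$ and $Q_1Q_2$ — to be $\Gamma$-stable, so that $H^1(\Gamma,Q_1\cap Q_2)$ is defined. The inclusion $Q_1^\Gamma Q_2^\Gamma\subseteq(Q_1Q_2)^\Gamma$ is trivial, so the content lies in the reverse inclusion, which I would obtain by measuring the failure of a chosen factorization $x=q_1q_2$ to be $\Gamma$-equivariant by a cocycle valued in $Q_1\cap Q_2$.

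Concretely, take $x\in(Q_1Q_2)^\Gamma$ and write $x=q_1q_2$ with $q_1\in Q_1$ and $q_2\in Q_2$ (the factorization is not unique). For each $\gamma\in\Gamma$, from $\gamma(q_1)\gamma(q_2)=\gamma(x)=x=q_1q_2$ one gets $q_1^{-1}\gamma(q_1)=q_2\,\gamma(q_2)^{-1}$; the left-hand side lies in $Q_1$ and the right-hand side in $Q_2$ (using $\Gamma$-stability of each), so $c_\gamma:=q_1^{-1}\gamma(q_1)\in Q_1\cap Q_2$. A short computation, using $\gamma(\delta(q_1))=(\gamma\delta)(q_1)$, gives $c_{\gamma\delta}=c_\gamma\,\gamma(c_\delta)$, i.e. $(c_\gamma)_{\gamma\in\Gamma}\in Z^1(\Gamma,Q_1\cap Q_2)$.

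Now I would invoke the hypothesis $H^1(\Gamma,Q_1\cap Q_2)=1$: the cocycle $(c_\gamma)$ is a coboundary, so there is $a\in Q_1\cap Q_2$ with $c_\gamma=a^{-1}\gamma(a)$ for all $\gamma\in\Gamma$. Then
\[
\gamma(q_1a^{-1})=\gamma(q_1)\,\gamma(a)^{-1}=q_1c_\gamma\,\gamma(a)^{-1}=q_1a^{-1},
\]
so $q_1a^{-1}\in Q_1^\Gamma$; and $aq_2=(q_1a^{-1})^{-1}x$ is a product of two $\Gamma$-fixed elements which lies in $Q_2$, hence $aq_2\in Q_2^\Gamma$. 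Therefore $x=(q_1a^{-1})(aq_2)\in Q_1^\Gamma Q_2^\Gamma$, which gives the missing inclusion and completes the proof.

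I do not expect a genuine obstacle here: the argument is formal once the cocycle is identified. The only points needing attention are the bookkeeping of the left-action conventions — so that the cocycle identity $c_{\gamma\delta}=c_\gamma\gamma(c_\delta)$ and the coboundary form $c_\gamma=a^{-1}\gamma(a)$ are stated consistently with the chosen model of $H^1$ — and making explicit the (implicit) hypothesis that $Q_1$ and $Q_2$ are separately $\Gamma$-stable, since that is precisely what guarantees $\gamma(q_1)\in Q_1$ and $\gamma(q_2)\in Q_2$, keeping the whole computation inside the relevant subgroups.
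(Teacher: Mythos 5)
Your proof is correct and is essentially the paper's argument: the paper deduces the equality from the exact sequence $1\to(Q_1\cap Q_2)^\Gamma\to Q_2^\Gamma\to\left(Q_2/(Q_1\cap Q_2)\right)^\Gamma\to 1$ furnished by the vanishing of $H^1(\Gamma,Q_1\cap Q_2)$, and your explicit cocycle $c_\gamma=q_1^{-1}\gamma(q_1)=q_2\,\gamma(q_2)^{-1}$ together with the coboundary $a$ is exactly the unpacking of that exactness into the factorization $x=(q_1a^{-1})(aq_2)$. Your observation that $Q_1$ and $Q_2$ must tacitly be $\Gamma$-stable is also consistent with how the lemma is used in the paper, where $\Gamma=\Gal(L|F)$ acts on Galois-stable subgroups.
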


\begin{proof}
 The cohomology group being trivial is equivalent to the exactness of the sequence 
 \[1\ra  (Q_1\cap Q_2)^\Gamma \ra Q_2^\Gamma\ra (Q_2/(Q_1\cap Q_2))^\Gamma\ra 1.\]
 which implies the equation~$(Q_1Q_2)^\Gamma=Q_1^\Gamma Q_2^\Gamma$.
\end{proof}
Semisimple characters respect certain Iwahori decompositions. 
\begin{proposition}[\cite{stevens:08} Proposition~5.5]\label{propIwahoriSt05no5.5}
Let~$\bigoplus_{j\in J}V^j=V$ be a splitting of~$V$ such that the idempotents commute with~$\beta$. Let~$\tilde{M}$ be the Levi subgroup of~$G$ given as the stabilizer of the splitting and let~$\ti{U}_+$ and~$\ti{U}_-$ 
be the unipotent subgroups of~$G$ with respect to the splitting~$(V^j)_{j\in J}$ for a given total ordering on~$J$. Then~$H(\Delta)$ has an Iwahori decomposition with respect 
to~$\ti{U}_-\ti{M}\ti{U}_+$ and every element~$\theta$ of~$\C(\Delta)$ is trivial on the groups~$H(\Delta)\cap\ti{U}_+$ and~~$H(\Delta)\cap\ti{U}_-$.
\end{proposition}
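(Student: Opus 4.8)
The plan is to reduce to the known split case $D=F$ by extension of scalars to the maximal unramified subfield $L$ of $D$ and descent along $\Gal(L|F)$, in the same spirit as the proof of Proposition~\ref{propDefSemisimpleCharInductiveNonSplit}. Recall that $\Delta\otimes L$ is a semisimple stratum for the algebra $\End_D(V)\otimes_F L$, which is a matrix algebra over $L$ (as $L$ splits $D$), and that by definition $H(\Delta)=H(\Delta\otimes L)\cap G$; since $G$ is the group of $\Gal(L|F)$-fixed points of $\Aut_L$ of the ambient space and $H(\Delta\otimes L)$ is $\Gal(L|F)$-stable, we have $H(\Delta)=H(\Delta\otimes L)^{\Gal(L|F)}$, and every $\theta\in\C(\Delta)$ is the restriction to $H(\Delta)$ of some $\theta_L\in\C(\Delta\otimes L)$.

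First I would transport the splitting. The idempotents $1^j$ attached to $V=\bigoplus_jV^j$ are $D$-linear, hence $\Gal(L|F)$-fixed inside $\End_D(V)\otimes_F L$; they therefore define a $\Gal(L|F)$-stable splitting of the ambient $L$-vector space, with associated Levi subgroup $\tilde{M}_L$ and unipotent radicals $\tilde{U}_{L,\pm}$ satisfying $\tilde{M}_L^{\Gal(L|F)}=\tilde{M}$ and $\tilde{U}_{L,\pm}^{\Gal(L|F)}=\tilde{U}_\pm$ (the same computation that identifies $G$ with the fixed points of $\Aut_L$ of the ambient space, applied to each $V^j$ and to the relevant flags). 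As the $1^j$ commute with $\beta$ they commute with $\beta\otimes1$, so Proposition~\ref{propIwahoriSt05no5.5} in the split case, i.e. \cite{stevens:08}~5.5, applies to $\Delta\otimes L$ and this splitting: the product map
\[
(H(\Delta\otimes L)\cap\tilde{U}_{L,-})\times(H(\Delta\otimes L)\cap\tilde{M}_L)\times(H(\Delta\otimes L)\cap\tilde{U}_{L,+})\longrightarrow H(\Delta\otimes L)
\]
is a bijection, and every $\theta_L\in\C(\Delta\otimes L)$ is trivial on $H(\Delta\otimes L)\cap\tilde{U}_{L,+}$ and on $H(\Delta\otimes L)\cap\tilde{U}_{L,-}$.

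Now I would take $\Gal(L|F)$-fixed points. The displayed map is $\Gal(L|F)$-equivariant for the diagonal action on the source; a $\Gal(L|F)$-equivariant bijection restricts to a bijection on fixed points, and the fixed points of a product are the product of the fixed points, so, using $H(\Delta\otimes L)^{\Gal(L|F)}\cap\tilde{U}_{L,\pm}=H(\Delta)\cap\tilde{U}_\pm$ and $H(\Delta\otimes L)^{\Gal(L|F)}\cap\tilde{M}_L=H(\Delta)\cap\tilde{M}$, we obtain the Iwahori decomposition of $H(\Delta)$ with respect to $\tilde{U}_-\tilde{M}\tilde{U}_+$. Note that, unlike in Lemma~\ref{lemCohom}, no vanishing of cohomology is needed here: the three factors intersect pairwise trivially, so the factorization is unique and its descent to fixed points is purely formal. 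Finally, for $\theta\in\C(\Delta)$ pick $\theta_L\in\C(\Delta\otimes L)$ with $\theta=\theta_L|_{H(\Delta)}$; since $H(\Delta)\cap\tilde{U}_\pm\subseteq H(\Delta\otimes L)\cap\tilde{U}_{L,\pm}$ and $\theta_L$ is trivial on the latter, $\theta$ is trivial on $H(\Delta)\cap\tilde{U}_\pm$.

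The only delicate point is the bookkeeping of the first step: verifying that the split-case statement genuinely applies to $\Delta\otimes L$ with the transported splitting, and that $\tilde{M}$ and $\tilde{U}_\pm$ are exactly the $\Gal(L|F)$-fixed points of $\tilde{M}_L$ and $\tilde{U}_{L,\pm}$. If one is uneasy about invoking the split case as a black box (for instance because of strictness hypotheses on lattice sequences in \cite{stevens:08}), the same descent can instead be run directly on the inductive construction of $H(\beta,\Lambda)$ and of $\theta$ recalled in Proposition~\ref{propDefSemisimpleCharInductiveNonSplit}, by induction on $k_0(\beta,\Lambda)$, applying Proposition~\ref{propIwahoriSt05no5.5} to the simpler stratum $[\Lambda,n,-k_0(\beta,\Lambda),\gamma]$ at the inductive step; but the reduction above is the most economical route.
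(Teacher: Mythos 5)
Your proof is correct and is essentially the argument the paper intends: the proposition is simply cited from the split case (\cite{stevens:08}~5.5), and since $H(\Delta)$ and $\C(\Delta)$ are by definition obtained from $\Delta\otimes L$ by intersection with~$G$ and restriction of some~$\theta_L$, the Iwahori decomposition descends by uniqueness of the factorization under the $\Gal(L|F)$-action (no cohomology needed, as you note) and the triviality on $H(\Delta)\cap\ti{U}_\pm$ follows by restriction. Your closing caveat is the right one, but it concerns the statement rather than your argument: the split-case source also assumes the splitting is compatible with~$\Lambda$, and the transported splitting of $\Delta\otimes L$ satisfies this exactly when the original one does, so nothing is lost in the reduction.
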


Further, we have the following restriction maps
\[\C(\Delta)\ra \C(\Delta_i), \theta\mapsto \theta_i:=\theta|_{H^{r+1}(\beta_i,\Lambda^i)}\]
for all indexes~$i\in I$ and we call the~$\theta_i,\ i\in I$, the~\emph{block restrictions} of~$\theta$. 
Two semisimple characters~$\theta,\theta'\in\C(\Delta)$ coincide if block-wise the block restrictions coincide.



%

\subsection{Transfers, and Intertwining}
In this subsection we fix two semisimple strata~$\Delta$ and~$\Delta'$.
Take~$\theta\in\C(\Delta)$ and~$\theta'\in\C(\Delta')$. We denote by~$I(\theta,\theta')$ the set of elements~$g$ of~$A^\times$ which intertwine~$\theta$ with~$\theta'$, i.e.
$g.\theta$ and~$\theta'$ coincide on~$g.H(\Delta)\cap H(\Delta')$. We denote by~$\theta(1+)\in\C(\Delta(1+))$ the restriction of~$\theta$ to~$H(\Delta(1+))$. 
Let us recall that we denote by~$\Delta(1)$ a first member of an arbitrary fixed defining sequence for~$\Delta$ (There is a choice hidden.) The stratum~$\Delta(1)$ is 
equivalent to~$\Delta(1+)$ and~$\Delta(1)$ can be chosen to be~$\Delta(1+)$ if ~$\Delta(1+)$ is semisimple.

\begin{definition}\label{defTransfer}
Suppose that~$\Delta$ and~$\Delta'$ have the same group level and the same degree. We call~$\theta'$ a~\emph{transfer} of~$\theta$ from~$\Delta$ to~$\Delta'$ if~$I(\Delta,\Delta')$ 
is a non-empty subset of~$I(\theta,\theta')$. 
\end{definition}

\begin{notation}
 In the following of this subsection we assume that~$\Delta$ and~$\Delta'$ satisfy~$r=r'$ and~$e(\Lambda|F)=e(\Lambda'|F)$. We further assume that~$\Delta$ and~$\Delta'$ intertwine. The latter implies $n=n'$, see~\ref{corSameLevelSemisimpleStrata}, and that both strata have the same group level, see Corollary~\ref{corSameGrouplevelFromIntertwiningIfSamePeriodandSamer}\ref{corSameGrouplevelFromIntertwiningIfSamePeriodandSamerAssi}.
\end{notation}

\begin{remark}
 The following statements would all be true if we just would assume that~$\Delta$ and~$\Delta'$ have the same group level and the same degree, but many of the referred results are not 
 stated in terms of group levels and degree in the literature. 
\end{remark}

\begin{remark}\label{remDiagonalization}
 Granted~$I(\Delta,\Delta')\neq\emptyset$, then, by Theorem~\ref{thmDiagonalization}, there are semisimple strata~$\tilde{\Delta}$ and~$\tilde{\Delta}'$ 
 equivalent to~$\Delta$ and~$\Delta'$, respectively, and respecting the associated splittings, such that~$\tilde{\beta}$ and $\tilde{\beta}'$ are conjugate to each other. 
 Say, they are conjugate by a~$D$-automorphism~$u$ of~$V$. Then~$C_A(\tilde{\beta}')^\times u$ is contained in~$I(\Delta,\Delta')$. 
\end{remark}

\begin{remark}\label{remConstructionOfTransfer}
\begin{enumerate}
 \item There is a way to construct a transfer using the already understood split case~\cite[\S8.2 and Proposition A.10]{KSS:21}: Take~$\theta\in\C(\Delta)$ and an 
extension~$\theta_L\in\C(\Delta\otimes L)$. By the preceding remark we can suppose that~$\beta$ and~$\beta'$ are conjugate.
Let~$\theta'_L$ be the transfer of~$\theta_L$ from~$\Delta\otimes L$ to~$\Delta'\otimes L$. Then~$\theta'_L|_{H(\Delta')}$ is a transfer 
of~$\theta$ from~$\Delta$ to~$\Delta'$.
\item It is more subtle to show the existence in the general case of same group level and same degree. 
So let us here for this part of the remark relax to this case for~$\Delta$ and~$\Delta'$. Passing to affine translates we can assume without loss of generality that~$\Delta$ and~$\Delta'$ have the same period,~$n=n'$ and~$r'\geq r$. (The case~$r\geq r'$ is treated similarly.)
Then~$\Delta\otimes L((r'-r)+)$ is still semisimple by~\cite[Theorem 9.9(ii)(b)]{KSS:21} and by Remark~\ref{remDiagonalization} there are semisimple strata 
\[\Delta_L\approx \Delta\otimes L((r'-r)+),\ \Delta'_L\approx \Delta'\otimes L\]
respecting the associated splittings such that~$\beta_L$ and~$\beta'_L$ are conjugate in~$A\otimes_FL$, i.e.~$u\beta_Lu^{-1}=\beta'_L$ for some element~$u\in (A\otimes_FL)^\times$. 
Then by~\cite[(1.9) and (1.8)]{bushnellHenniart:96} it is possible to choose the pair of strata~$\Delta_L,\ \Delta'_{L}$ such that 
\[\Delta_L((r'-r)-)\approx \Delta\otimes L,\ \Delta'_L((r'-r)-)\approx \Delta'\otimes L((r'-r)-)\]
(in fact we use~$\dag$-construction and Theorem~\ref{thmDiagonalization} to reduce to the strict case to be able to apply~\cite[(1.9) and (1.8)]{bushnellHenniart:96}).
Now let~$\theta_L$ be an extension of~$\theta$ to~$H(\Delta_L((r'-r)-)))$ and~$\theta'_L$ be the transfer of~$\theta_L$ from~$\Delta_L((r'-r)-)$ to~$\Delta'_L((r'-r)-)$. 
Then~$uC_{(A\otimes_FL)^\times}(\beta_L)$ is a subset of~$I(\theta_L,\theta'_L)$ and therefore~$I(\Delta_L((r'-r)-),\Delta'_L)$ is a subset of~$I(\theta_L,\theta'_L|_{H(\Delta'_L)})$ by~\cite[Theorem 6.22 and Proposition 9.11]{skodlerackStevens:18}. 
Thus we have
\[I(\Delta,\Delta')\subseteq I(\theta,\theta')\]
for~$\theta':=\theta'_L|_{H(\Delta')}$ 
\end{enumerate}
\end{remark}

\begin{proposition}\label{propUniqueTransferEqualParameters}
 Suppose for~$\theta\in\C(\Delta)$ and two semisimple characters~$\theta'$ and~$\theta''$ in~$\C(\Delta')$ 
 that $I(\Delta,\Delta')\cap I(\theta,\theta')$ and~$I(\Delta,\Delta')\cap I(\theta,\theta'')$ are non-empty, then~$\theta'$ and~$\theta''$ coincide and are transfers 
 of~$\theta$ from~$\Delta$ to~$\Delta'$. In particular there is exactly one transfer of~$\theta$ from~$\Delta$ to~$\Delta'$.
\end{proposition}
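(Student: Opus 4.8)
The plan is to produce one transfer explicitly, to check that it has the \emph{full} intertwining property, and then to show that any character of $H(\Delta')$ satisfying the hypothesis must coincide with it; all three assertions (coincidence of $\theta'$ and $\theta''$, that they are transfers, and uniqueness of the transfer) then follow formally. For the first point I would use Remark~\ref{remConstructionOfTransfer}: fix an extension $\theta_L\in\C(\Delta\otimes L)$ of $\theta$, let $\theta'_L\in\C(\Delta'\otimes L)$ be the transfer of $\theta_L$ from $\Delta\otimes L$ to $\Delta'\otimes L$ of \cite[3.26]{stevens:05}, and set $\theta_0:=\theta'_L|_{H(\Delta')}\in\C(\Delta')$. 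Because the split-case transfer satisfies $I(\Delta\otimes L,\Delta'\otimes L)\subseteq I(\theta_L,\theta'_L)$, and because intertwining over $D$ is intertwining over $L$ under $A^\times\hookrightarrow(A\otimes_FL)^\times$ (tensoring preserves $g(\beta+\mf{a}_{-r})g^{-1}\cap(\beta'+\mf{a}'_{-r'})\neq\emptyset$), restriction along $H(\Delta)=H(\Delta\otimes L)\cap G$ and $H(\Delta')=H(\Delta'\otimes L)\cap G$ gives the stronger statement $I(\Delta,\Delta')\subseteq I(\theta,\theta_0)$; in particular $\theta_0$ is a transfer in the sense of Definition~\ref{defTransfer}.

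It then suffices to prove: if $\theta'\in\C(\Delta')$ and $g\in I(\Delta,\Delta')\cap I(\theta,\theta')$ for some $g$, then $\theta'=\theta_0$. This yields existence of a transfer ($\theta_0$), the coincidence of two characters $\theta',\theta''$ as in the statement (each equals $\theta_0$), and uniqueness of the transfer (any transfer $\vartheta$ has $I(\Delta,\Delta')\subseteq I(\theta,\vartheta)$, hence meets the hypothesis, hence equals $\theta_0$). So I would fix such a $g$; by the previous paragraph $g\in I(\theta,\theta_0)$ as well, so $g.\theta$ agrees both with $\theta'$ and with $\theta_0$ on $N:=gH(\Delta)g^{-1}\cap H(\Delta')$, whence $\theta'$ and $\theta_0$ agree on $N$. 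The task is now reduced to a rigidity statement: two semisimple characters in $\C(\Delta')$ that agree on $N$, for some $g\in I(\Delta,\Delta')$, are equal.

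To prove that, I would first apply Remark~\ref{remDiagonalization} to replace $\Delta$ and $\Delta'$ by equivalent strata — which by Remark~\ref{remDefSemisimpleCharacter} changes neither $\C(\Delta),\C(\Delta')$ nor the groups $H(\Delta),H(\Delta')$, and transports $g,\theta,\theta_0,\theta'$ — and then conjugate the $\Delta$-side by a $D$-automorphism of $V$ so that $\beta=\beta'$. Now Proposition~\ref{propIntertwiningOfSemisimpleStratumOverD} gives
\[
I(\Delta,\Delta')=(1+\mf{m}'_{-(r'+k_0(\Delta'))})\,C_{A^\times}(\beta)\,(1+\mf{m}_{-(r+k_0(\Delta))}),
\]
and the two outer factors can be absorbed: right multiplication of $g$ by $1+\mf{m}_{-(r+k_0(\Delta))}$ does not change $g.\theta$ restricted to the corresponding translate of $N$, and left multiplication by $1+\mf{m}'_{-(r'+k_0(\Delta'))}$ acts trivially there as well, so one reduces to $g\in C_{A^\times}(\beta)$, for which $N$ contains $H(\Delta)\cap H(\Delta')$ together with its part inside the groups attached to the point $j_E(\Lambda)$ in $\mf{B}(C_G(E))$. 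Finally Proposition~\ref{propIwahoriSt05no5.5} reduces equality of two elements of $\C(\Delta')$ to equality of their block restrictions in the $\C(\Delta'_i)$, and Proposition~\ref{propDefSemisimpleCharInductiveNonSplit} (the inductive description of $\C(\Delta')$ over $D$ obtained by $\Gal(L|F)$-descent) allows an induction on the critical exponent reducing to the simple case, where the required rigidity is the classical well-definedness of the transfer for split simple characters in~\cite{bushnellKutzko:93}, carried over to $\End_D(V)$ by the same $\Gal(L|F)$-cohomology argument as in~\cite{secherreI:04} and~\cite{skodlerackStevens:15-1}.

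The main obstacle is precisely this last step: controlling the subgroup $N=gH(\Delta)g^{-1}\cap H(\Delta')$ when $g$ runs over the whole intertwining set, and showing that agreement of two semisimple characters there forces their equality. Once $\beta=\beta'$ it is the $\GL_m(D)$-analogue of the split-case well-definedness of the transfer, and the substantive ingredients are the inductive structure of $\C(\Delta')$, the Iwahori decomposition, and the simple-case rigidity obtained from the split case by Galois descent; the novelty over the split case is only the $\Gal(L|F)$-bookkeeping, which is uniform with the rest of the paper.
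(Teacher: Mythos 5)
Your overall skeleton is the same as the paper's: construct the candidate transfer $\theta_0$ by restricting the split-case transfer of an extension $\theta_L$ (Remark~\ref{remConstructionOfTransfer}), use the strata intertwining formula of Proposition~\ref{propIntertwiningOfSemisimpleStratumOverD} together with the fact that $1+\mf{m}(\Delta)$ and $1+\mf{m}(\Delta')$ normalize all characters in $\C(\Delta)$ resp.\ $\C(\Delta')$ to absorb the outer factors and reduce to an intertwining element in $C_{A^\times}(\beta)$ (after making $\beta=\beta'$ via Remark~\ref{remDiagonalization}), and then reduce the semisimple case to the simple case through the Iwahori decomposition of Proposition~\ref{propIwahoriSt05no5.5}. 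Up to that point your argument is sound, and the first paragraph (that $I(\Delta,\Delta')\subseteq I(\theta,\theta_0)$, because intertwining of the strata over $D$ implies intertwining over $L$ and one may then restrict from $H(\Delta\otimes L)$, $H(\Delta'\otimes L)$ to their $G$-points) is correct.

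The gap is in your final step, the rigidity for the \emph{simple} case over $D$. You assert that the split-case well-definedness of transfer from \cite{bushnellKutzko:93} ``carries over to $\End_D(V)$ by the same $\Gal(L|F)$-cohomology argument''. This is not a formal descent: what you know is that two characters of the fixed-point group $H(\Delta')=H(\Delta'\otimes L)^{\Gal(L|F)}$ agree on (a conjugate-intersection inside) the $\Gal(L|F)$-fixed points, while the split-case input is a statement about characters on the full groups over $L$. Extensions of $\theta$ and of $\theta''$ to $\C(\Delta\otimes L)$ and $\C(\Delta'\otimes L)$ are far from unique (see the explicit choices of the characters $\chi'_j$ in the proof of Proposition~\ref{propDefSemisimpleCharInductiveNonSplit}), and agreement of the restrictions on the $D$-points of the intersection does not imply that any choice of extensions agrees on $H(\Delta\otimes L)\cap H(\Delta'\otimes L)$; no $H^1$-vanishing for $\Gal(L|F)$ produces that implication. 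Indeed, exactly this difficulty — passing from $I(\theta,\theta')$ to $I(\theta_L,\theta'_L)$ — is the delicate content of Theorem~\ref{thmTransferAndInt}, which is proved \emph{after} this proposition by a separate induction on the critical exponent using dual lattices, and which already presupposes that the transfer is well defined; so your route would be circular if you tried to borrow it. The paper closes this step by genuinely different machinery: interior lifting with respect to $E_D$ and base change along $L|E_D$ from \cite{broussousSecherreStevens:12} move the simple non-split rigidity to the split case, where a $\ddag$-construction reduces to block-wise principal lattice chains and \cite[3.6.1]{bushnellKutzko:93} applies. To repair your proof you should either invoke that interior-lifting/base-change argument (or the corresponding uniqueness statements for simple characters over $D$ in \cite{secherreI:04} and \cite{broussousSecherreStevens:12}), or supply an actual argument for the descent, which is not available by cohomological bookkeeping alone.
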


In the proof we need the process of interior lifting and the process of base change which are used in~\cite{broussousSecherreStevens:12}. 

\begin{proof}
We assume~$\beta=\beta'$ without loss of generality, by Remark~\ref{remDiagonalization}. The transfer exists by Remark~\ref{remConstructionOfTransfer}, and by transitivity
we can assume that~$\theta'$ is this transfer. 
Then~$I(\theta,\theta')$ contains~$C_{A^\times}(\beta)$. Further~$C_{A^\times}(\beta)\cap I(\theta.\theta'')$ is non-empty, because~$1+\mf{m}(\Delta)$ and~$1+\mf{m}(\Delta')$
normalize~$\theta$ and~$\theta''$, respectively. So we can assume that~$1$ intertwines~$\theta$ with~$\theta''$. 
Let us start with the simple case. Using an interior lifting with respect to~$E_D$ and the base change with respect to~$L|E_D$ moves the setting to the split case, where 
 the statement is already known. (For example apply a~$\dag$-construction to reduce to the case of block-wise regular lattice sequences and 
apply~\cite[(3.6.1)]{bushnellKutzko:93}.) The semisimple case follows from the simple case because the characters respect the Iwahori-decomposition given by~$\beta$
 and are trivial on the unipotent parts, see~\ref{propIwahoriSt05no5.5}. 
\end{proof}

We denote the transfer map by~$\tau_{\Delta,\Delta'}:\C(\Delta)\ra\C(\Delta')$, i.e.~$\tau_{\Delta,\Delta'}(\theta)$ is the transfer of~$\theta$ from~$\Delta$ 
to~$\Delta'$. The map is well defined by the last proposition. 

For the next theorem we need the following subset: 
\[S(\Delta):=(1+\mf{m}(\Delta))J^{\lfloor\frac{-k_0(\Delta)+1}{2}\rfloor}(\beta,\Lambda).\]
It is the intersection of~$S(\Delta\otimes L)$ with~$ G$ because~$1+\mf{m}(\Delta\otimes L)$ normalizes the second 
factor~$J^{\lfloor\frac{-k_0+1}{2}\rfloor}(\beta\otimes_F1,\Lambda)$ and we can apply Lemma~\ref{lemCohom}. We have further the equality 
\[S(\Delta)=J^{\lfloor\frac{-k_0(\Delta)+1}{2}\rfloor}(\beta,\Lambda),\]
if~$r\leq\lfloor\frac{-k_0(\Delta)}{2}\rfloor$, because~$1+\mf{m}(\Delta)$ is contained in~$J^{\lfloor\frac{-k_0(\Delta)+1}{2}\rfloor}(\beta,\Lambda)$ 
by~\cite[Lemma 3.10(i)]{stevens:05} for~$\Delta\otimes_FL$ and restriction to~$ G$.
Further~$S(\Delta)$ is a subset of the normalizer of~$\theta$ because it is a subset of the normalizer of every extension~$\theta_L\in\C(\Delta\otimes L)$ of~$\theta$ 
by~\cite[Lemma 3.16]{stevens:05}.

\begin{proposition}[\cite{KSS:21}~Proposition A.10(i) for the split case]\label{propTransferAndInt}
Granted that~$\Delta$ and~$\Delta'$ intertwine, suppose that~$\theta'=\tau_{\Delta,\Delta'}(\theta)$. Then the set of intertwining elements from~$\theta$ to~$\theta'$ is 
the sets
\[S(\Delta') I(\Delta,\Delta') S(\Delta).\]
  If~$\beta=\beta'$ the formula simplifies to
 \begin{equation}\label{eqIntFormula}
  I(\theta,\theta')=S(\Delta') C_{A^\times}(\beta) S(\Delta).
 \end{equation}
\end{proposition}

For a subset~$S$ of~$A$ we have the duality operation:
\[S^*:=\{a\in A| aS\subseteq\ker(\psi_A)\}.\] 
As an example consider~$S=\mf{h}(\Delta)$. It is a full~$o_F$-lattice in~$A$ and the set of~$\Gal(L|F)$-fixed points of 
the~$\Gal(L|F)$-invariant~$o_L$-module~$\mf{h}(\Delta\otimes L)$. Thus~$\mf{h}(\Delta\otimes L)$ is equal to~$\mf{h}(\Delta)\otimes_{o_F}o_L$. 
And therefore~$\mf{h}(\Delta)^{*,\psi_F}$ is a subset of~$\mf{h}(\Delta\otimes L)^{*,\psi_L}$.

\begin{proof}[Proof of Proposition~\ref{propTransferAndInt}]
 The last assertion is a consequence of Proposition~\ref{propIntertwiningOfSemisimpleStratumOverD}. Let~$\theta_L\in\C(\Delta\otimes L)$ 
 and~$\theta'_L\in\C(\Delta'\otimes L)$ be extensions of~$\theta$ and~$\theta'$ and suppose that~$\theta'_L$ is a transfer of~$\theta_L$, see
 Remark~\ref{remConstructionOfTransfer}. Then~$A^\times\cap I(\theta_L,\theta'_L)$ is a subset of~$I(\theta,\theta')$. 
 Without loss of generality we can assume that~$\beta=\beta'$ by Remark~\ref{remDiagonalization}. We have formula~\eqref{eqIntFormula} for the split case in~\loccit. Taking~$\Gal(L|F)$-fixed points, a cohomology argument implies that~$A^\times\cap I(\theta_L,\theta'_L)$ 
 is equal to~$S(\Delta') C_{A^\times}(\beta) S(\Delta)$. 
 
 We need to show that~$I(\theta,\theta')$ is a subset of~$I(\theta_L,\theta'_L)$. 
 This is done by an induction on the critical exponent~$k_0(\Delta)$ which is equal to~$k_0(\Delta')$ by 
 Corollary~\ref{corEqParametersForIntertwiningSemisimpleStrata}\ref{corEqParametersForIntertwiningSemisimpleStrata.i},
because the strata are semisimple, intertwine,~$r=r'$ and~$e(\Lambda|F)=e(\Lambda'|F)$. 
 The base case is the statement for null strata which is evident. There are two cases in the induction step. 

 Case 1:~$r\geq \lfloor \frac{-k_0}{2}\rfloor$. By the end of the proof of Lemma~\cite[(3.3.5)]{bushnellKutzko:93} we have the following:
If~$x$ is an element of~$I(\theta,\theta')$ then~$\psi_{x\beta x^{-1}}$ is equal to~$\psi_\beta$ on~$x.H(\Delta)\cap H(\Delta')$. The latter holds if and only 
if~$x\beta x^{-1}-\beta$ is an element of~$(x.\mf{h}(\Delta)\cap \mf{h}(\Delta'))^*$ which is~$x.\mf{h}(\Delta)^*+\mf{h}(\Delta')^*$. 
Thus~$x$ is an element of~$I^+(\Delta,\Delta')$, the set of elements of~$A^\times$ which intertwine~$\beta+\mf{h}(\Delta)^*$ with~$\beta+\mf{h}(\Delta')^*$.
In particular,~$x$ is an element of~$I^+(\Delta\otimes L,\Delta'\otimes L)$ because~$\mf{h}(\Delta)^{*,\psi_F}$ is a subset of~$\mf{h}(\Delta\otimes L)^{*,\psi_L}$. A~$\dag$-construction, intertwining implies conjugacy~\cite[Theorem 10.2]{skodlerackStevens:18}
and~\cite[(3.3.8)]{bushnellKutzko:93} (Analogous proof of~$I(\theta_L)=I^+(\Delta_L)$ for the split semisimple case.) imply that~$I^+(\Delta_L,\Delta'_L)$ is a subset
of~$I(\theta_L,\theta'_L)$, Thus~$x$ is an element of~$I(\theta_L,\theta'_L)$.

 Case 2:~$r<\lfloor \frac{-k_0}{2}\rfloor$. In this case~$\Delta(1+)$ and~$\Delta'(1+)$ are still semisimple. Then the proof follows by induction. We have that~$I(\theta,\theta')$ is contained 
 in~$I(\theta(1+),\theta'(1+))$ which is contained in~$I(\theta_L(1+),\theta'_L(1+))$ which is the same as~$I(\theta_L,\theta'_L)$, 
 by the formula~\eqref{eqIntFormula} in the split case, see \loccit,
 using that
 $S(\Delta\otimes L)$ and~$S((\Delta\otimes L)(1+))$ coincide with $J^{\lfloor\frac{-k_0+1}{2}\rfloor}(\beta\otimes_F1,\Lambda)$ for those~$r$. 
\end{proof}

The proof of the last Proposition is very similar to the proof of the split simple counterpart in~\cite[(3.3.2)]{bushnellKutzko:93}.

\begin{corollary}\label{corConjugateAssSplittingsOfStrataWithSameSemisimpleCharacters}
 Suppose that~$C(\Delta)$ and~$\C(\Delta')$ intersect non-trivially, then there is an element of~$S(\Delta)$ which conjugates the first associated splitting to the second.
\end{corollary}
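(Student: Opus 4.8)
The plan is to compute the self-intertwining of the common semisimple character from both sides using the intertwining formula, and then to read off conjugacy of the associated splittings after passing to the reductive quotient.

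\textbf{Step 1 (the double coset identity).} Fix $\theta\in\C(\Delta)\cap\C(\Delta')$. Since $\theta$ is simultaneously defined on $H(\Delta)$ and on $H(\Delta')$ we have $H(\Delta)=H(\Delta')$, so the self-intertwining $I(\theta):=I(\theta,\theta)$ is unambiguous. Applying Theorem~\ref{thmTransferAndInt} with the second stratum taken equal to the first, so that $\theta$ is its own transfer, together with Proposition~\ref{propIntertwiningOfSemisimpleStratumOverD} (applied with $\Delta=\Delta'$, $\beta=\beta'$) and the inclusion $1+\mf{m}(\Delta)\subseteq S(\Delta)$, gives $I(\theta)=S(\Delta)\,C_{A^\times}(\beta)\,S(\Delta)$. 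Running the same computation for $\Delta'$ gives $I(\theta)=S(\Delta')\,C_{A^\times}(\beta')\,S(\Delta')$, whence
\[S(\Delta)\,C_{A^\times}(\beta)\,S(\Delta)=S(\Delta')\,C_{A^\times}(\beta')\,S(\Delta').\]
(If $\Lambda\neq\Lambda'$ one first reduces to $\Lambda=\Lambda'$: $\Delta$ and $\Delta'$ intertwine, have the same $F$-period and the same $r$, so after a $\ddag$-construction and a translation this may be assumed; in any case $H(\Delta)=H(\Delta')$ already pins down the relevant hereditary orders.)

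\textbf{Step 2 (matching the splitting idempotents, preliminary conjugation).} Since $k_0(\Delta)<-r\le 0$ one has $\mf{m}(\Delta)\subseteq\mf{a}_1$ and $\lfloor\tfrac{-k_0(\Delta)+1}{2}\rfloor\ge 1$, hence $S(\Delta),S(\Delta')\subseteq P_1(\Lambda)$. Reducing the identity of Step~1 modulo $\mf{a}_1$ via $P(\Lambda)\to(\mf{a}_0/\mf{a}_1)^\times$ kills the outer factors, so the images of $\mf{b}_0=\mf{a}_0\cap C_A(\beta)$ and of $\mf{b}'_0=\mf{a}_0\cap C_A(\beta')$ in $\mf{a}_0/\mf{a}_1$ have the same unit group and therefore, units spanning these finite semisimple algebras, coincide as algebras. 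Using the matching $\zeta$ of Proposition~\ref{propMatchingGLDStrata} together with the equalities $e(\Lambda^i|E_i)=e(\Lambda'^{\zeta(i)}|E'_{\zeta(i)})$ of Corollary~\ref{corEqParametersForIntertwiningSemisimpleStrata} to see how the blocks of this common algebra are grouped by the central idempotents $1^i$ of $E$, respectively $1'^{\zeta(i)}$ of $E'$, one gets $1^i\equiv 1'^{\zeta(i)}\pmod{\mf{a}_1}$. Lifting orthogonal idempotents exactly as in the proof of Corollary~\ref{corDecompAreConjForEqSemisimpleStrata}, the element $u:=\sum_{i}1'^{\zeta(i)}1^i$ lies in $1+\mf{a}_1=P_1(\Lambda)$ and satisfies $uV^i=V'^{\zeta(i)}$ for all $i$; thus $u$ conjugates the first associated splitting onto the second.

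\textbf{Step 3 (the main obstacle).} The remaining point — and the technical heart of the corollary — is to replace the element $u\in P_1(\Lambda)$ produced in Step~2 by an element $s\in S(\Delta)$ having the same effect on the splittings. Here I would exploit the Iwahori decompositions of $S(\Delta)$ and of $S(\Delta')$ relative to their own associated splittings (Proposition~\ref{propIwahoriSt05no5.5} for $H$ and $J$, and the analogous statement for $1+\mf{m}$, all these groups being built from data commuting with $\beta$, resp. with $\beta'$), combined with the double coset identity of Step~1: for instance, choosing $z'\in\mf{o}_{E'}^\times$ generating $E'$ over $F$, one has $z'\in I(\theta)\cap P(\Lambda)=S(\Delta)\bigl(C_{A^\times}(\beta)\cap P(\Lambda)\bigr)S(\Delta)$ with $z'$ normalising $\theta$, and controlling the Iwahori components of such a decomposition $z'=s_1cs_2$ should force $c=s_1^{-1}z's_1$, hence $s_1^{-1}C_A(\beta')s_1\supseteq C_A(\beta)$, hence (by $\dim_FV^i=\dim_FV'^{\zeta(i)}$) $s_1\in S(\Delta)$ carrying $E$ to $E'$ and so the first splitting to the second. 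An alternative route that sidesteps this bookkeeping is to extend $\theta$ to a common semisimple character of $\Delta\otimes L$ and $\Delta'\otimes L$, apply the already known split-field case, and descend by taking $\Gal(L|F)$-fixed points; verifying the existence of such a common extension is then the delicate step instead.
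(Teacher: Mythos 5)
Your Step 1 matches the paper's starting point (the paper even takes $p$-adic closures of the two descriptions of $I(\theta)$ so that the whole algebras $C_A(\beta)$ and $C_A(\beta')$, in particular their non-invertible central idempotents, sit inside the closed double cosets). But there is a genuine gap exactly where you flag it: Step 3 is a plan, not a proof, and it is the entire content of the corollary. The statement is not that \emph{some} element of $P_1(\Lambda)$ conjugates the splittings --- that is what your Step 2 delivers, since you only extract congruences of the idempotents modulo $\mf{a}_1$ --- but that an element of $S(\Delta)$ does. This is strictly stronger: $S(\Delta)=(1+\mf{m}(\Delta))J^{\lfloor\frac{-k_0+1}{2}\rfloor}(\Delta)$ is in general a proper subgroup of $P_1(\Lambda)$, and the property used downstream (e.g.\ in Lemma~\ref{lemEqualResAlgebras}, Lemma~\ref{lemJumpseq} and Theorem~\ref{thmTranslation}) is precisely that $S(\Delta)$ normalizes every $\theta\in\C(\Delta)$, which an arbitrary element of $P_1(\Lambda)$ does not. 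Your proposed repair --- writing a unit generator $z'$ of $E'$ as $s_1cs_2$ and arguing that ``controlling the Iwahori components should force $c=s_1^{-1}z's_1$'' --- is not an argument as it stands (nothing forces that identity, and the subsequent inclusion $s_1^{-1}C_A(\beta')s_1\supseteq C_A(\beta)$ does not follow), while the ``alternative route'' requires a common extension of $\theta$ to $\C(\Delta\otimes L)\cap\C(\Delta'\otimes L)$, whose existence is itself the delicate point and is not established.

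The paper avoids this by never leaving the $S$-level: from the closure of the identity $S(\Delta)C_A(\beta)S(\Delta)=S(\Delta')C_A(\beta')S(\Delta')$ one reads off that each splitting idempotent $1^i$ of $\beta$ is congruent, \emph{modulo the lattices entering $S(\Delta')$} (not merely modulo $\mf{a}_1$), to an element of $C_A(\beta')$; one then lifts to idempotents of $C_A(\beta')$, shows they are central, and matches them to the $1'^{\zeta(i)}$ --- exactly the mechanism of Corollary~\ref{corDecompAreConjForEqSemisimpleStrata} and of \cite[9.9(iv)]{skodlerackStevens:15-1}, which the proof cites. The resulting element $u=\sum_i 1'^{\zeta(i)}1^i$ then differs from $1$ by an element of that finer lattice, so (after the same kind of $\mf{b}'^\times$-correction as in Corollary~\ref{corDecompAreConjForEqSemisimpleStrata}) it lies in $S(\Delta)$ and carries $V^i$ to $V'^{\zeta(i)}$. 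In short: keep your Step 1, but redo Step 2 with congruences at the level of the $S$-filtration rather than modulo $\mf{a}_1$; then Step 3 disappears.
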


\begin{proof}
 Consider the two descriptions of the intertwining of a semisimple character of~$\C(\Delta)$ and take limits in the-$p$-adic topology. We get
 \[S(\Delta)C_A(\beta)S(\Delta)=S(\Delta')C_A(\beta')S(\Delta').\]
 As in the proof of~\cite[Proposition 9.9(iv)]{skodlerackStevens:18} we use congruence between the central simple idempotents of~$\C_A(\beta)$ and~$\C_A(\beta')$ 
 to obtain an element of~$S(\Delta)$ which conjugates the associated splitting of~$\Delta$ to the one of~$\Delta'$.
\end{proof}

\begin{proposition}\label{propRestrictionMapIsBijective}
 Suppose~$\Delta$ is a semisimple stratum and~$j$ a positive integer such that 
 \[\lfloor\frac{r}{e(\Lambda|E)}\rfloor=\lfloor\frac{r+j}{e(\Lambda|E)}\rfloor,\] i.e. both strata~$\Delta$ and~$\Delta(j+)$ have the same group level.
 Then the restriction map~$res_{\Delta,\Delta(j+)}$ from~$\C(\Delta)$ to~$\C(\Delta(j+))$ 
 is bijective. 
\end{proposition}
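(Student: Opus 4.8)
The plan is to work with the inductive definition of semisimple characters, which by Proposition~\ref{propDefSemisimpleCharInductiveNonSplit} is available for arbitrary $D$ (with reduced norms in place of determinants), to reduce to the simple case via block restrictions, and then to run the recursion on $k_0(\beta,\Lambda)$ underlying Definition~\ref{defSemisimpleCharSpliCase}. A preliminary observation: since $\Delta$ and $\Delta(j+)$ have the same group level, $-k_0(\Delta)$ --- being a multiple of $e(\Lambda|E)$, cf.\ \eqref{eqCriticalExponent} and Corollary~\ref{corEndoEquivalenceForSimpleStrata}\ref{corEndoEquivalenceForSimpleStrataAss.iii}, and strictly larger than $r$ --- is in fact strictly larger than $r+j$; hence $\Delta(j+)$ is itself semisimple with the same associated splitting, and so is every $\Delta(i+)$ with $0\le i\le j$, all of the same group level. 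As restriction maps compose, $res_{\Delta,\Delta(j+)}$ factors as $res_{\Delta((j-1)+),\Delta(j+)}\circ\dots\circ res_{\Delta,\Delta(1+)}$, so it suffices to treat $j=1$. (One could also reduce at the outset to $D=F$ by extension of scalars to $L$, as elsewhere in the paper: $e(\Lambda|E)$, and hence the group level, is unchanged, $H(\Delta)=H(\Delta\otimes L)\cap G$, and the general statement would follow from the split one by a Hilbert~90 argument in the style of Lemma~\ref{lemCohom}.)

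Next I would reduce to the simple case. By Proposition~\ref{propIwahoriSt05no5.5}, $H(\Delta)$ has an Iwahori decomposition along the splitting $(V^i)$ of $\beta$, every $\theta\in\C(\Delta)$ is trivial on the off-block unipotent parts, and therefore $\theta$ is the product of its block restrictions $\theta_i\in\C(\Delta_i)$; the same holds for $\Delta(1+)$, whose splitting coincides with that of $\Delta$. Hence $res_{\Delta,\Delta(1+)}$ is identified with the product of the maps $res_{\Delta_i,\Delta_i(1+)}$, and we may assume $\Delta$ simple.

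For simple $\Delta$ the case $\beta=0$ is vacuous (then $n=r$ and $j=0$), so assume $\beta\ne0$ and choose $\Delta_\gamma=[\Lambda,n,-k_0(\beta,\Lambda),\gamma]$ equivalent to $[\Lambda,n,-k_0(\beta,\Lambda),\beta]$ as in Definition~\ref{defSemisimpleCharSpliCase}. The crux is the product decomposition
\[H^{r+1}(\beta,\Lambda)=P_{r+1}(j_E(\Lambda))\,H^{r+2}(\beta,\Lambda),\]
which I would prove by the recursion on $k_0(\beta,\Lambda)$: the hypothesis $\lfloor r/e(\Lambda|E)\rfloor=\lfloor(r+1)/e(\Lambda|E)\rfloor$ means precisely that $P_{r+1}(j_E(\Lambda))$ and $P_{r+2}(j_E(\Lambda))$ have the same image $P_{\lfloor r/e(\Lambda|E)\rfloor+1}(o_E)$ under the reduced norm $\operatorname{Nrd}_{B|E}$, while the analogous decomposition for $\Delta_\gamma$ is the inductive hypothesis. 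Granted this, injectivity is immediate: condition~(b) of Definition~\ref{defSemisimpleCharSpliCase} forces $\theta|_{P_{r+1}(j_E(\Lambda))}$ to factor through $\operatorname{Nrd}_{B|E}$ via a character of $P_{\lfloor r/e(\Lambda|E)\rfloor+1}(o_E)=P_{\lfloor(r+1)/e(\Lambda|E)\rfloor+1}(o_E)$, which is already determined by $\theta|_{P_{r+2}(j_E(\Lambda))}$, hence by $\theta(1+)$, and together with the decomposition this determines $\theta$. For surjectivity, given $\theta'\in\C(\Delta(1+))$ one takes the $\operatorname{Nrd}_{B|E}$-character it induces on $P_{\lfloor(r+1)/e(\Lambda|E)\rfloor+1}(o_E)=P_{\lfloor r/e(\Lambda|E)\rfloor+1}(o_E)$ (no enlargement of the group is needed), uses the decomposition to extend $\theta'$ to a character $\theta$ of $H^{r+1}(\beta,\Lambda)$, and verifies conditions (a)--(c) of Definition~\ref{defSemisimpleCharSpliCase} for $\theta$, with (c) supplied by the inductive hypothesis applied to $\Delta_\gamma$.

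The main obstacle is the decomposition $H^{r+1}(\beta,\Lambda)=P_{r+1}(j_E(\Lambda))H^{r+2}(\beta,\Lambda)$ (and, on the alternative route, the compatible choice of extensions to $\C(\cdot\otimes L)$): it amounts to controlling, through the $k_0$-recursion defining $H(\beta,\Lambda)$, how the filtration of $P(j_E(\Lambda))$ embeds into that of $P(\Lambda)$ --- the two being rescaled relative to one another by $e(\Lambda|E)$ --- and the equal-group-level hypothesis is exactly what keeps this picture unchanged when one passes from $\Delta$ to $\Delta(j+)$.
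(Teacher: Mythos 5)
Your argument coincides with the paper's in its easy range, but the step you yourself call the crux is where it breaks. The decomposition $H^{r+1}(\beta,\Lambda)=P_{r+1}(j_E(\Lambda))H^{r+2}(\beta,\Lambda)$ is false in general once $r\geq\lfloor\frac{-k_0(\Delta)}{2}\rfloor$, and the equal-group-level hypothesis cannot rescue it: that hypothesis says only that $e(\Lambda|E)\nmid(r+1)$, i.e.\ it controls images under the reduced norm, not the filtration of $H(\beta,\Lambda)$ — conflating the two is the gap. Concretely, for a minimal simple stratum ($\gamma=0$, $k_0=-n$) with $e(\Lambda|E)>1$ and $\lfloor \frac{n}{2}\rfloor\leq r<n$, one has $H^{r+1}(\beta,\Lambda)=P_{r+1}(\Lambda)$ and $H^{r+2}(\beta,\Lambda)=P_{r+2}(\Lambda)$, whereas the image of $P_{r+1}(j_E(\Lambda))H^{r+2}(\beta,\Lambda)$ in $\mf{a}_{r+1}/\mf{a}_{r+2}$ is only that of $\mf{b}_{r+1}$, a proper subgroup in general; and $r$ can be chosen with $e(\Lambda|E)\nmid(r+1)$. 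More generally, for $r\geq\lfloor\frac{-k_0}{2}\rfloor$ the quotient $H^{r+1}(\beta,\Lambda)/H^{r+2}(\beta,\Lambda)$ is governed by the centralizer of the coarser approximation $\gamma$ (ultimately of the core approximation $\tilde{\beta}$), not of $\beta$, so there is genuinely more of $H(\Delta)$ to extend over than your reduced-norm argument sees; what pins the extension down there is the prescribed restriction $\psi_{\beta-\gamma}\theta_0$ from condition (c) of Definition~\ref{defSemisimpleCharSpliCase}, not factorization through $\operatorname{Nrd}_{B|E}$. This is precisely why the paper first passes to a core approximation (Definition~\ref{defDefiningSequence} and the displayed identity before Lemma~\ref{lemBK3.5.7}) to reduce to $r<\lfloor\frac{-k_0(\Delta)}{2}\rfloor$; in that range your decomposition does hold and the reduced-norm/group-level argument finishes exactly as you describe. (The group-level hypothesis survives this replacement because $e(\Lambda|E)$ divides $e(\Lambda|\tilde{E})$, the ramification of approximations dividing that of $E$.) Your missing ingredient is this reduction.

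Two smaller points. First, your reduction of surjectivity to the simple blocks is not purely formal: knowing that each block character $\theta'_i$ of a given $\theta'\in\C(\Delta(1+))$ extends to $\C(\Delta_i)$ does not by itself produce a common extension in $\C(\Delta)$ restricting to $\theta'$; one needs a gluing statement such as Lemma~\ref{lemTriangleToSquareExtension} (this is how Theorem~\ref{thmSurjRestrictions} is proved), whereas the paper's proof never reduces to the simple case and invokes the blocks only through the reduced norms $\operatorname{Nrd}_{E_i}$ in the determinant condition. Second, your preliminary claim that equal group level forces $r+j<-k_0(\Delta)$, hence that every intermediate $\Delta(i+)$ is semisimple, uses that $-k_0(\Delta)$ is a multiple of $e(\Lambda|E)$; this is available for simple strata (Corollary~\ref{corEndoEquivalenceForSimpleStrata}\ref{corEndoEquivalenceForSimpleStrataAss.iii}) but is not established for semisimple ones, where the critical exponent can also be produced by two blocks merging at a level that need not be divisible by $e(\Lambda|E)$. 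This can be routed around (the intermediate sets $\C(\Delta(i+))$ are defined via equivalent semisimple strata, and the group-level hypothesis passes to them for the same divisibility reason as above), but as written it is an unjustified step.
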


\newcommand{\Nrd}{\text{Nrd}}
\begin{proof}
It is enough to show the statement for~$j=1$.
 For null-strata there is nothing to prove and in the non-null case we move to a core approximation to reduce to~$r<\lfloor\frac{-k_0(\Delta)}{2}\rfloor.$ Then~$H(\Delta)=P_{r+1}(j_E(\Lambda))H(\Delta(1+))$. 
 Take an element~$\theta\in\C(\Delta(1+))$. Then we only have to prove that there is a unique extension of~$\theta|_{P_{r+2}(j_E(\Lambda))}$ to~$P_{r+1}(j_E(\Lambda))$. This
 follows immediately form having the same group level because the image of~$\Nrd_{E_i}$ on~$P_{r}(j_{E_i}(\Lambda^i))$ is equal to~$P_{\lfloor\frac{r}{e(\Lambda^i|E_i)}\rfloor}(o_{E_i})$, so it 
 is the same for~$r$ and for~$r+1$ by both strata having the same group level. Thus there is exactly one extension of~$\theta$ into~$\C(\Delta)$.
\end{proof}

The last proposition allows to describe the transfer from~$\Delta$ to~$\Delta'$ for intertwining strata in the general case (same group level and same degree).without further base change to~$L$, just in reducing to having the same period and~$r'=r$. Indeed, say~$r\leq r'$ and that both strata have the same~$F$-period. 
We claim that the transfer from~$\Delta$ to~$\Delta'$ is given by the map
\[\tau_{\Delta,\Delta'}(\theta):=\tau_{\Delta((r'-r)+),\Delta'}(\theta((r'-r)+)), \theta\in\C(\Delta).\]
Note that this map is a bijection, being composed by two bijections. Given~$\theta\in\C(\Delta)$ and~$\theta'\in\C(\Delta')$ such that~$\theta'$ is a transfer of~$\theta$,
i.e.~$I(\Delta,\Delta')\subseteq I(\theta,\theta')$, then~$\tau_{\Delta((r'-r)+),\Delta'}(\theta((r'-r)+))=\theta'$, by Proposition~\ref{propUniqueTransferEqualParameters}. Thus for~$\theta\in\C(\Delta)$ there exists exactly one transfer from~$\Delta$ to~$\Delta'$ and for~$\theta'\in\C(\Delta')$
there exists exactly one transfer from~$\Delta'$ to~$\Delta$.

\subsection{The~$\dag$-construction}\label{subsecDag}
Suppose~$\theta\in\C(\Delta)$ is a semisimple character, and fix an integer~$l$ and integers~$s_k,\ k=1,\ldots l$. Then we define~$\Delta^{\dag,(s_k)_k}$ as the direct 
sum of the strata~$[\Lambda-s_k,n,r,\beta]$ where~$k$ passes from~$1$ to~$l$. Let~$U_-MU_+$ be the Iwahori decomposition with respect to the 
direct sum~$V^\dag=\bigoplus_k V$. Then~$H(\Delta^{\dag,(s_k)_k})$ respects this Iwahori decomposition and we define a map~$\theta^{\dag,(s_k)_k}$ on~$H(\Delta^{\dag,(s_k)_k})$
via 
\[\theta^{\dag,(s_k)_k}(u_- x u_+):=\prod_k\theta(x_k).\]

\begin{proposition}\label{propDdag}
 The map~$\theta^{\dag,(s_k)_k}$ is an element of~$\C(\Delta^{\dag,(s_k)_k})$.
\end{proposition}

\begin{proof}
 The~$\dag$-construction and this proposition are already known in the split case. See for example~\cite[Lemma 4.3]{kurinczukStevens:15}. 
 Thus, for an extension~$\theta_L\in\C(\Delta\otimes L)$ the character~$\theta_L^{\dag,(s_k)_k}$ is an element of~$\C(\Delta^{\dag,(s_k)_k}\otimes L)$, and its 
 restriction to~$H(\Delta^{\dag,(s_k)_k})$ is~$\theta^{\dag^{(s_k)_k}}$. 
\end{proof}

One important application is the construction of sound strata. A simple stratum~$\Delta$ is called~\emph{sound} if~$[\Gamma]$ and~$j_E([\Gamma])$ are mid points of the facets of
regular lattice sequences ($\Gamma$ being the lattice function attached to~$\Lambda$, see below Definition~\ref{defTranslationClasses}), i.e. equivalently~$\Lambda$ is a strict~$o_E$-$o_D$-lattice sequence such that~$\mf{b}_{0}$ is the order of a regular~$o_{D_E}$-lattice sequence and such that~$\mf{n}(\mf{b}_{\Lambda,0})$ coincides
with~$\mf{n}(\mf{a}_{\Lambda,0})\cap B^\times$, see~\cite[Corollary 1.4(ii)]{grabitz:99}.

\begin{definition}[\cite{broussousSecherreStevens:12}~Lemma~2.16,~Proposition~2.17]
Suppose we are given a simple stratum~$\Delta$. 
 Let~$e$ be the~$F$-period of~$\Lambda$. This coincides with the~$F$-period of~$j_E(\Lambda)$. Take the sequence~$s_k=k$ for~$k=0,\ldots,e-1$. 
 Then~$\Delta^\ddagger:=\Delta^{\dag,(s_k)_k}$ is a sound stratum. Analogously we define~$\theta^\ddagger$.
\end{definition}

Sound simple characters have been studied in~\cite{grabitz:07}.

%

\subsection{Derived characters}
Let~$\Delta$ and~$\Delta'$ be semisimple strata of the same period and with~$r=r'$. Let~$\theta\in\C(\Delta)$ and~$\theta'\in\C(\Delta')$ be given semisimple characters. 

\begin{definition}\label{defDerivedChar}
We need an analogue to the strata induction. For that we need the derived character: 
Let~$\theta_0\in\C(\Delta(1)(1-))$ be an extension of~$\theta(1+)$. 
Then there is a~$c\in\mf{a}_{-r-1}$ such that~$\theta$ is equal to~$\psi_{\beta-\beta(1)+c}\theta_0$.
Let~$s$ be a tame corestriction on~$A$ relative to~$E(1)=F[\beta(1)]|F$. We write~$\partial_{\beta(1),s,\theta_0}\theta$ for the character~$\psi_{s(\beta-\beta(1)+c)}$ defined
 on~$P_{r+1}(j_{E(1)}(\Lambda))$. 
 \end{definition}

 \begin{proposition}\label{lemDerivedCharacters}
Suppose~$\theta$ and~$\theta'$ intertwine and that we can choose~$\Delta(1)$ and~$\Delta'(1)$ such that~$\beta(1)=\beta'(1)=:\gamma$. 
Suppose further that~$\theta'(1+)$ is a transfer of~$\theta(1+)$ from~$\Delta(1)$ to~$\Delta'(1)$. Let~$\theta_0\in\C(\Delta(1)(1-))$ be an extension of~$\theta(1+)$ 
and let~$\theta'_0$ be the transfer~$\tau_{\Delta(1)(1-),\Delta'(1)(-1)}(\theta_0)$. Let~$s$ be a tame corestriction on~$A$ relative to~$F[\gamma]|F$.
Then we have the following assertions:
\begin{enumerate}
 \item If an element~$g=uxv\in I(\theta(1+),\theta'(1+))$ (with decomposition from equation (\ref{eqIntFormula})) intertwines~$\theta$ with~$\theta'$ then~$x$
 intertwines~$\partial_{\gamma,s,\theta_0}\theta$ with~$\partial_{\gamma,s,\theta'_0}\theta'$. \label{lemDerivedCharactersAss1}
 \item For every~$x\in I(\partial_{\gamma,s,\theta_0}\theta,\partial_{\gamma,s,\theta'_0}\theta')$, there are 
 elements~$u\in S(\Delta'(1))$ and~$v\in S(\Delta(1))$ such that~$uxv$ intertwines~$\theta$ with~$\theta'$. \label{lemDerivedCharactersAss2}
\end{enumerate}
\end{proposition}

For the proof we need the following generalization of Sequence~\eqref{eqExactSeqDelta}. 
\begin{lemma}\label{lemExSeqDeltaMixed}
 Let~$\Delta$ and~$\Delta'$ be semisimple strata of the same period with~$\beta=\beta'$ and~$r=r'$ and let~$s$ be a tame corestriction on~$A$ relative to~$F[\beta]|F$. 
 Then the sequence
 \[\mf{m}(\Delta)+\mf{m}(\Delta')\stackrel{\alpha_\beta}{\longrightarrow}(\mf{a}_{-r}+\mf{a}'_{-r})/(\mf{a}_{-r+1}+\mf{a}'_{-r+1})\stackrel{s}{\longrightarrow}(\mf{b}_{-r}+\mf{b}'_{-r})/(\mf{b}_{-r+1}+\mf{b}'_{-r+1})\ra 0\]
 is exact. 
\end{lemma}

\begin{proof}
 The proof is analogous to the proof of Lemma~\ref{lemExSeqDelta}. Because the exactness of this sequence is known in the split case by~\cite[Lemma 6.21, Proposition 7.6]{skodlerackStevens:18}. 
\end{proof}

\begin{proof}[Proof of Lemma~\ref{lemDerivedCharacters}]
 \begin{enumerate}
  \item The element~$x$ intertwines~$v.\theta$ with~$u^{-1}.\theta'$. 
  Thus~$x$ intertwines the restriction of~$v.\theta$ and~$u^{-1}.\theta'$ to~$P_{r+1}(j_{F[\gamma]})$. So we have to calculate these restrictions. 

  We write~$\theta$ and~$\theta'$ as in Definition~\ref{defDerivedChar}:~$\theta=\theta_0\psi_{\beta-\gamma+c}$ and~$\theta'=\theta_0\psi_{\beta'-\gamma+c'}$  for some elements~$c\in\mf{a}_{-1-r}$ and~$c'\in\mf{a}_{-1-r}$. 
  Claim:~$v.\theta$ coincides with~$\psi_{s(\beta-\gamma+c)} \theta_0$ on $P_{r+1}(j_{F[\gamma]})$.
  The claim and the analogous statement for~$\theta'$ would imply that~$x$ intertwines~$\psi_{s(\beta-\gamma+c)}$ with~$\psi_{s(\beta'-\gamma+c')}$, because~$x$ 
  intertwines~$\theta_0$ with~$\theta'_0$. Now let us prove the claim:
  \begin{eqnarray}
   v.\theta &=& v.\psi_{\beta-\gamma+c} v.\theta_0 \\
   &=& \psi_{\beta-\gamma+c} v.\theta_0\\
   &=& \psi_{\beta-\gamma+c} \theta_0 \psi_{v\gamma v^{-1}-\gamma} 
  \end{eqnarray}
  In general for an element~$a$ of~$\mf{a}_{-r-1}$ the restriction of~$\psi_a$ to~$P_{r+1}(j_{F[\gamma]})$ is equal to~$\psi_{s(a)}$.
  So we have to show that~$s(v\gamma v^{-1})$ congruent to~$s(\gamma)$ modulo~$\mf{a}_{-r}$. 
  We have
  \begin{eqnarray}
   v\gamma v^{-1} &=& (\gamma v+v\gamma-\gamma v)v^{-1} \\
   &\equiv & \gamma+(v\gamma-\gamma v) \text{ mod~$\mf{a}_{-r}$},\\
   \end{eqnarray}
  because~$v\gamma-\gamma v$ is an element of~$\mf{a}_{-1-r}$ and~$v$ is a~$1$-unit of~$\mf{a}_0$.
  We now apply~$s$ to obtain that~$s(v\gamma v^{-1})$ is congruent to~$s(\gamma)$ modulo~$\mf{a}_{-r}$, because~$v\gamma-\gamma v$ is in the kernel of~$s$.
  \item It is the reverse of the first part. Suppose~$x\in B^\times_\gamma$ intertwines~$\psi_{s(\beta-\gamma+c)}$ with~$\psi_{s(\beta'-\gamma+c')}$. 
  Then then~$s(x(\beta-\gamma+c)x^{-1})$ is congruent to~$s(\beta'-\gamma+c')$ modulo~$x\mf{a}_{-r}x^{-1}+\mf{a}'_{-r}$. By Lemma~\ref{lemExSeqDeltaMixed} 
  there are elements~$y_v\in\mf{m}(\Delta)$ and~$y_u\in\mf{m}(\Delta')$ such that
  \[x(\beta-\gamma+c)x^{-1}-(\beta'-\gamma+c')\equiv\alpha_\gamma(x y_v x^{-1})+\alpha_\gamma(y_u)\]
  modulo~$x\mf{a}_{-r}x^{-1}+\mf{a}'_{-r}$. Define~$u$ by~$1+y_u$ and~$v$ by~$1+y_v$. Then
  \[x v(\beta+c)v^{-1} x^{-1}\equiv u^{-1}(\beta'+c')u.\]
  Thus~$v.\psi_{\beta+c}$ and~$u^{-1}.\psi_{\beta'+c'}$ intertwine by~$x$ and 
  \[v.\psi_{\beta+c}=v.\psi_{\beta+c-\gamma}\psi_{v\gamma v^{-1}-\gamma}\psi_\gamma.\]
  The element~$x$ intertwines~$\psi_\gamma$ (on~$G$) and thus~$v.\psi_{\beta+c-\gamma}\psi_{v\gamma v^{-1}-\gamma}$
  with~$u^{-1}.\psi_{\beta'+c'-\gamma}\psi_{u^{-1}\gamma u-\gamma}$.  Further~$\theta_0$ and~$\theta'_0$ are intertwined by~$x$. 
  Thus~$x$ intertwines~$v.\theta$ with~$u^{-1}.\theta'$ and thus~$uxv\in I(\theta,\theta')$. 
 \end{enumerate}
\end{proof}

\subsection{Simple characters}
Let us recall that a semisimple characters~$\theta$ is called~\emph{simple} if there exists a simple stratum~$\Delta$ such that~$\theta\in\C(\Delta)$. 
In this section we fix two simple strata~$\Delta$ and~$\Delta'$ and simple characters~$\theta\in\C(\Delta)$ and~$\theta'\in\C(\Delta')$.  

Here we collect some facts about simple characters.

\begin{proposition}[{see \cite[Proposition 9.1, Proposition 9.9]{grabitz:07}, for sound strata}]\label{propEqualDegreesForIntersectingSetsOfSimpleCharacters}
 Suppose~$\Lambda=\Lambda'$,~$r=r'$ and~$\C(\Delta)\cap\C(\Delta')\neq\emptyset$. Then both strata have coinciding intertia degrees, coinciding ramification indexes and 
 coinciding critical exponents. 
\end{proposition}

\begin{proof}
By Proposition~\ref{propIntertwiningAndLevels} either both strata are null or both are non-null, because of intertwining,~$r=r'$ and~$\Lambda=\Lambda'$. 
In the first case there is nothing to prove. So we can assume that
both simple strata are non-null. 
The intertwining of an element of~$\theta\in\C(\Delta)\cap\C(\Delta')$ can be described with~$\Delta$ and~$\Delta'$:
\[I(\theta)=S(\Delta)B^\times S(\Delta)=S(\Delta')B'^\times S(\Delta').\]
We intersect both sides with~$\mf{a}^\times$ and we factorize by~$1+\mf{a}_1$ to obtain:
\[\prod_i\GL_{s_i}(\tilde{\kappa})=\prod_{i'}\GL_{s'_i}(\tilde{\kappa}').\]
Wedderburn  implies~$\sum_is_i=\sum_{i'}s_{i'}$ and we denote this sum as~$\tilde{m}$. We have~$B=M_{\tilde{m}}(D_\beta)$ where~$D_\beta$ is the skew-field which is 
Brauer equivalent to~$B$ and~$E\otimes_FD$. In particular~$\deg D_\beta=\frac{d}{\gcd(d,[E:F])}$ 
The double centralizer theorem states
\[\deg A =[E:F]\deg B=\tilde{m}\frac{d}{\gcd(d,[E:F])}[E:F].\]
Further~$[\ti{\kappa}:\kappa_F]=f(E|F)\frac{d}{\gcd(d,[E:F])}$. 
\newcommand{\im}{\text{im}}
We have the same equations for~$\Delta'$ and we get by the quotient of the first with the second equation that both strata have the same ramification index. 
For the intertia degrees we claim that the minimum of~$\im(\nu_F\circ\Nrd_{A|F}(I(\theta)))$ in~$\mathbb{R}^{>0}$ is equal to~$f(E|F)$ and~$f(E'|F)$.
This minimum is realized if one takes an element~$x$ of minimal~$\nu_E$-positive reduced norm~$\Nrd_{B|E}$ in~$B$. Such an element~$x$ satisfies~$\Nrd_{B|E}(x)\in o_E^\times\pi_E$.
Thus~$\Nrd_{A|F}(x)\in\pi_F^{f(E|F)}o_F^\times$.
Which finishes the proof of the equality of the inertia degrees. 
The equality for the critical exponents follows now directly from Corollary~\ref{corSimpleStrataFieldCriteria}, because of if for example~$\Delta(1+)$ is still simple,
in particular~$n>r+1$, but~$\Delta'(1+)$ not, then take~$\tilde{\Delta}$ is equivalent to~$\Delta'(1+)$ and simple. Then~$[E:F]=[\tilde{E}:F]<[E':F]=[E:F]$ where the equalities 
are from Part 1 of the proof.
A contradiction. 
\end{proof}

From that Proposition follows now:

\begin{proposition}\label{propEqualDegreeFromIntertwining}
 Suppose~$\Lambda$ and~$\Lambda'$ have the same~$F$-period,~$r=r'$ and~$\theta$ and~$\theta'$ intertwine then $e(E|F)=e(E'|F)$,~$f(E|F)=f(E'|F)$ and both strata have the same critical exponent. 
\end{proposition}

\begin{proof}
 We consider at first the case where~$\Lambda=\Lambda'$. 
 By~\cite[Proposition 4.5]{broussousSecherreStevens:12} we can assume without loss of generality that~$\Delta$ and~$\Delta'$ are sound with the same Fr\"ohlich invariant.
 So we want to apply~\cite[Theorem 10.3]{grabitz:07}. But this Theorem uses~\cite[Proposition 9.1]{grabitz:07} with a gap in the proof, which is filled by 
 Proposition~\ref{propEqualDegreesForIntersectingSetsOfSimpleCharacters}. Now~\cite[Theorem 10.3]{grabitz:07}, see also~\cite[Theorem 1.16]{broussousSecherreStevens:12}, 
 implies~$e(E|F)=e(E'|F)$ and~$f(E|F)=f(E'|F)$, and~$\theta_1$ is conjugate to~$\theta_2$ by an element of the normalizer of~$\Lambda$ 
 and Proposition~\ref{propEqualDegreesForIntersectingSetsOfSimpleCharacters} finishes the proof for the case~$\Lambda=\Lambda'$. 
 Suppose now that~$\Lambda$ is different from~$\Lambda'$. Then a~$\dag$-construction, see Proposition~\ref{propDdag}, reduces to the case of conjugate regular lattice sequences. 
 See Corollary~\ref{corEndoEquivalenceForSimpleStrata}\ref{corEndoEquivalenceForSimpleStrataAss.iii} for invariance of the critical exponent under this construction.
\end{proof}

Recall that~$\left\lfloor \frac{r}{e(\Lambda|E)}\right\rfloor$ is the group level of~$\Delta$, see Definition~\ref{defGroupLevel}. 

\begin{corollary}\label{corEqualDegreesFromIntertwininSameGrouplevelSameDegree}
 Suppose two~$\Delta$ and~$\Delta'$ have the same degree and the same group level. And suppose that~$\theta$ and~$\theta'$ intertwine. 
 Then~$f(E|F)=f(E'|F)$,~$e(E|F)=e(E'|F)$ and~$k_0(\Delta)=k_0(\Delta')$. 
\end{corollary}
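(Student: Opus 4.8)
The strategy is to bring the pair $(\theta,\theta')$ into the situation of Theorem~\ref{thmEqualDegreeFromIntertwining}, which yields all three equalities once $\Lambda$ and $\Lambda'$ have the same $F$-period and $r=r'$. If $\Delta$ is a zero-stratum then its group level is infinite, hence so is that of $\Delta'$, so $\Delta'$ is a zero-stratum as well and all assertions are trivial; thus I assume $\Delta$ and $\Delta'$ non-zero.

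First I would prove $f(E|F)=f(E'|F)$ --- with $\dim_FE=\dim_FE'$ this then forces $e(E|F)=e(E'|F)$ --- and for this step no hypothesis on periods or on $r$ is needed. By Theorem~\ref{thmTransferAndInt}, applied with $\Delta'=\Delta$ and $\theta'=\theta$, one has $I(\theta)=S(\Delta)C_{A^\times}(\beta)S(\Delta)$, and likewise $I(\theta')=S(\Delta')C_{A^\times}(\beta')S(\Delta')$. Fixing $g\in I(\theta,\theta')$ one checks $I(\theta,\theta')=gI(\theta)=I(\theta')g$, so applying $\nu_F\circ\operatorname{Nrd}_{A|F}$ and cancelling $\operatorname{Nrd}_{A|F}(g)$ gives $\nu_F(\operatorname{Nrd}_{A|F}(I(\theta)))=\nu_F(\operatorname{Nrd}_{A|F}(I(\theta')))$. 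Since $S(\Delta)\subseteq P(\Lambda)$ we have $\operatorname{Nrd}_{A|F}(S(\Delta))\subseteq o_F^\times$, while $\operatorname{Nrd}_{A|F}(C_{A^\times}(\beta))=N_{E|F}(\operatorname{Nrd}_{B|E}(B^\times))=N_{E|F}(E^\times)$ has $\nu_F$-image $f(E|F)\mathbb{Z}$, by the double centralizer theorem and the surjectivity of $\operatorname{Nrd}_{B|E}$ exactly as in the proof of Proposition~\ref{propEqualDegreesForIntersectingSetsOfSimpleCharacters}. Hence $\nu_F(\operatorname{Nrd}_{A|F}(I(\theta)))=f(E|F)\mathbb{Z}$, symmetrically for $\theta'$, and so $f(E|F)=f(E'|F)$.

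It remains to prove $k_0(\Delta)=k_0(\Delta')$, and here I would reduce to Theorem~\ref{thmEqualDegreeFromIntertwining}. By a $\ddag$-construction (Proposition~\ref{propDdag}) together with rescaling of the lattice sequences --- manipulations preserving the degree and the group level, keeping $\theta$ and $\theta'$ intertwining, and not altering the equality in question --- we may assume $\Lambda=\Lambda'$, a common principal lattice chain. Then $e(\Lambda|E)=e(\Lambda|F)/e(E|F)=e(\Lambda'|F)/e(E'|F)=e(\Lambda'|E')=:e_0$, so the equality of group levels reads $\lfloor r/e_0\rfloor=\lfloor r'/e_0\rfloor$. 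Assuming $r\le r'$, replace $\theta$ by its restriction $\theta((r'-r)+)\in\C([\Lambda,n,r',\beta])$: if the pure stratum $[\Lambda,n,r',\beta]$ were not simple it would be equivalent to a simple stratum of strictly smaller degree by Corollary~\ref{corSimpleStrataFieldCriteria}, and Theorem~\ref{thmEqualDegreeFromIntertwining} applied to $\theta((r'-r)+)$ and $\theta'$ (same lattice chain, same $r'$, intertwining) would force that degree to equal $\dim_FE'=\dim_FE$, which is absurd; hence $[\Lambda,n,r',\beta]$ is a simple stratum of degree $\dim_FE$, of critical exponent $k_0(\beta,\Lambda)=k_0(\Delta)$. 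Applying Theorem~\ref{thmEqualDegreeFromIntertwining} once more, now to this simple character and $\theta'$, gives $k_0(\Delta)=k_0(\Delta')$.

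The main obstacle is the last reduction. One must verify that the $\ddag$-construction and the rescaling genuinely preserve all the invariants involved and the intertwining of the characters, and --- this is the new ingredient over Theorem~\ref{thmEqualDegreeFromIntertwining} --- that raising $r$ to $r'$ keeps the stratum simple of unchanged degree; it is precisely here that the group-level hypothesis is used, in combination with Corollary~\ref{corSimpleStrataFieldCriteria} and Theorem~\ref{thmEqualDegreeFromIntertwining}. Everything else is the routine transfer of the split, resp.\ simple-stratum, statements along these standard manipulations.
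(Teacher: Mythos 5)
The identity~$I(\theta,\theta')=gI(\theta)=I(\theta')g$ that you "check" in your first step is not justified, and it is the step that would fail. Intertwining of characters does not compose formally: if~$x\in I(\theta)$ and~$g\in I(\theta,\theta')$, then~$(gx).\theta$ is only known to agree with~$\theta'$ on~$g\left(xH(\Delta)x^{-1}\cap H(\Delta)\right)g^{-1}\cap H(\Delta')$, which may be strictly smaller than the set~$gxH(\Delta)x^{-1}g^{-1}\cap H(\Delta')$ required for~$gx\in I(\theta,\theta')$ --- this is exactly why transitivity of intertwining (Proposition~\ref{propTranstivityOfIntertwiningSimpleCharacters}) is a theorem rather than a formality. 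Nor can Theorem~\ref{thmTransferAndInt} be invoked for~$I(\theta,\theta')$ here, since its standing hypotheses ($r=r'$, equal~$F$-periods,~$\theta'$ a transfer of~$\theta$) are precisely what you lack. Even in the transfer situation one has~$I(\theta,\theta')=S(\Delta')C_{A^\times}(\beta')\,u\,S(\Delta)$ with~$\beta'=u\beta u^{-1}$, while~$uI(\theta)=S(u.\Delta)C_{A^\times}(\beta')\,u\,S(\Delta)$; these have no reason to coincide when~$u\Lambda\neq\Lambda'$, and with~$\theta'=\theta$ your identity would force~$I(\theta)$ to be a group, which is neither known nor needed. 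Consequently the derivation of~$f(E|F)=f(E'|F)$ by "cancelling~$\operatorname{Nrd}(g)$" collapses.

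Fortunately that paragraph is dispensable, and the remainder of your argument is essentially the paper's proof. After reducing to equal~$F$-periods (the paper does this by doubling; your extra~$\ddag$/conjugation step is harmless), you restrict~$\theta$ to level~$r'$ and must show that the pure stratum~$[\Lambda,n,r',\beta]$ is still simple. The paper does this using the group-level hypothesis together with~$e(E|F)=e(E'|F)$ and the divisibility of~$k_0$ by~$e(\Lambda|E)$, obtaining~$-k_0(\Delta)>r'$; you instead obtain simplicity by the degree contradiction via Corollary~\ref{corSimpleStrataFieldCriteria} (with Theorem~\ref{thmPureIsEquivToSimple}/Proposition~\ref{propSimplePure}) and Theorem~\ref{thmEqualDegreeFromIntertwining}. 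Note that, contrary to your closing remark, this contradiction uses only the same-degree hypothesis, not the group level, and it does not use the preliminary equality~$e(E|F)=e(E'|F)$ either: the computation of~$e_0$ and the rereading of the group levels are never used afterwards. Since the final application of Theorem~\ref{thmEqualDegreeFromIntertwining} to~$\theta((r'-r)+)$ and~$\theta'$ returns~$e(E|F)=e(E'|F)$ and~$f(E|F)=f(E'|F)$ in addition to the equality of critical exponents, deleting your first paragraph and the unused group-level reinterpretation leaves a complete proof along the paper's lines (with the same tacit assumption as the paper that the restriction to level~$r'$ can be formed).
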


\begin{proof}
 By passing to affine translates of the strata we can assume that both lattice sequences have the same period. If~$r=r'$ then we can apply Proposition~\ref{propEqualDegreeFromIntertwining}. 
 Suppose now that~$r<r'$. Then there is a simple stratum~$\Delta(r'-r)$ equivalent to~$\Delta((r'-r)+)$ and~$\theta((r'-r)+)$ and~$\theta'$ still intertwine. 
 Thus~$E(r'-r)$ and~$E'$ have the same degrees and the same inertia degrees by Proposition~\ref{propEqualDegreeFromIntertwining}.
 Thus~$E|F$ and~$E(r'-r)|F$ have the same ramification index by Proposition~\ref{propSimplePure},i.e.~$E|F$ and~$E'|F$ have the same ramification index. Thus the 
 stratum~$\Delta((r'-r)+)$ is still simple because 
 \[-k_0(\Delta)\geq e(\Lambda|E)\left(1+\left\lfloor\frac{r}{e(\Lambda|E)}\right\rfloor\right)=e(\Lambda|E')\left(1+\left\lfloor\frac{r'}{e(\Lambda|E')}\right\rfloor\right)>r'.\]
 We can therefore apply Proposition~\ref{propEqualDegreeFromIntertwining} to~$\theta((r'-r)+)\in\C(\Delta((r'-r)+))$ and~$\theta'\in\C(\Delta')$. 
\end{proof}

\begin{proposition}[{\cite{broussousSecherreStevens:12}~Theorem~4.16}]
 Suppose~$\Lambda=\Lambda'$ and~$r=r'$. Then the sets~$\C(\Delta)$ and~$\C(\Delta')$ coincide if they have a non-empty intersection. 
\end{proposition}

%
%
%



\begin{proposition}[\cite{broussousSecherreStevens:12}~Theorem~1.12]\label{propIntImplConjSimpleChar}
 Suppose~$\Delta$ and~$\Delta'$ are simple with the same embedding type,~$r=r'$ and~$\Lambda=\Lambda'$. Let~$E_{ur}$ (resp.~$E'_{ur}$) be the maximal unramified field extension in~$E|F$ (resp.~$E'|F$).
 Let~$\theta\in\C(\Delta)$ and~$\theta'\in\C(\Delta')$  be two simple characters which intertwine. Then~$\theta$ is conjugate to~$\theta'$ by an element of 
 the normalizer of~$\Lambda$ which conjugates~$E_{ur}$ to~$E'_{ur}$. 
\end{proposition}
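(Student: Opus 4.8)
Proof proposal.

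The plan is to separate the claim into the bare conjugacy of $\theta$ and $\theta'$ and the refinement that the conjugating element can be taken to carry $E_{ur}$ onto $E'_{ur}$. First I would invoke Theorem~\ref{thmEqualDegreeFromIntertwining}: since $\theta$ and $\theta'$ intertwine with $\Lambda=\Lambda'$ and $r=r'$, it gives $e(E|F)=e(E'|F)$, $f(E|F)=f(E'|F)$ and $k_0(\Delta)=k_0(\Delta')$, and its proof moreover already produces an element $g_0\in\mf{n}(\Lambda)$ with $g_0.\theta=\theta'$. Replacing $\Delta$ by the still simple stratum $g_0.\Delta$, whose field is $g_0Eg_0^{-1}$ and which has the same embedding type as before, I may therefore assume $\theta=\theta'$; it then suffices to exhibit $h\in\mf{n}(\Lambda)$ which normalizes $\theta$ and satisfies $hE_{ur}h^{-1}=E'_{ur}$, because $hg_0$ is then the element required in the original statement. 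After this reduction $\C(\Delta)$ and $\C(\Delta')$ share the character $\theta$ and have $\Lambda=\Lambda'$, $r=r'$, so $\C(\Delta)=\C(\Delta')$ by \cite[4.16]{broussousSecherreStevens:12}.

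Next I would argue that $\Delta$ and $\Delta'$ intertwine as strata, i.e. $I(\Delta,\Delta')\neq\emptyset$; granting this, Theorem~\ref{thmINtImplConSimpleD} applied to the two simple strata $\Delta,\Delta'$ (which have the same embedding type by hypothesis) yields $h\in G$ with $h.\Delta$ equivalent to $\Delta'$ and $hE_{ur}h^{-1}=E'_{ur}$. It then remains to check that $h$ does what is wanted. Since $h.\Delta$ is equivalent to $\Delta'=\Delta$, the element $h$ normalizes $\Lambda$; since $\C(-)$ and $H(-)$ depend only on the equivalence class of a stratum, $H(h.\Delta)=H(\Delta')=H(\Delta)$, hence $hH(\Delta)h^{-1}=H(\Delta)$; and because $\C(\Delta)=\C(\Delta')$ the transfer $\tau_{\Delta,\Delta'}$ is the identity, so $I(\Delta,\Delta')\subseteq I(\theta,\theta)$ by the intertwining formula of Theorem~\ref{thmTransferAndInt}, whence $h.\theta$ and $\theta$ agree on $hH(\Delta)h^{-1}\cap H(\Delta)=H(\Delta)$, i.e. $h.\theta=\theta$. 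Combined with the previous paragraph this finishes the proof.

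The step I expect to be the real obstacle is deducing $I(\Delta,\Delta')\neq\emptyset$ from $\C(\Delta)=\C(\Delta')$; obtaining this cleanly is essentially the simple-character rigidity of \cite{broussousSecherreStevens:12} (it can also be read off by comparing the two descriptions $I(\theta)=S(\Delta)C_{A^\times}(\beta)S(\Delta)=S(\Delta')C_{A^\times}(\beta')S(\Delta')$ from Theorem~\ref{thmTransferAndInt} in the $p$-adic closure, as in Corollary~\ref{corConjugateAssSplittingsOfStrataWithSameSemisimpleCharacters}). If instead one wants a genuinely self-contained argument, I would replace the whole of the second paragraph by an induction on the jump sequence of $\Delta$: when $r$ is large enough that the intertwining of $\theta$ and $\theta'$ translates, via the duality operation on $\mf{h}(\Delta)$ as in Case~1 of the proof of Theorem~\ref{thmTransferAndInt}, into the intertwining of the strata $[\Lambda,n,r,\beta]$ and $[\Lambda,n,r,\beta']$, Theorem~\ref{thmINtImplConSimpleD} applies directly; the remaining cases are reduced to this by descending to $\Delta(1+)$ and then passing to the derived characters through Proposition~\ref{lemDerivedCharacters}. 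The subtlety there is that Proposition~\ref{lemDerivedCharacters} only returns a conjugator of shape $uxv$ with $u\in S(\Delta'(1))$, $v\in S(\Delta(1))$ and $x\in C_{A^\times}(\gamma)$, so one must verify that the $S$-factors act trivially on the residue field of $E_{ur}$ and hence do not disturb $hE_{ur}h^{-1}=E'_{ur}$; this is precisely the triviality of the residue-algebra matching isolated in Lemma~\ref{lemMatchingOfResidueAlg}, which I would use to close the induction.
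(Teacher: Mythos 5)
There is a genuine gap, and it sits at the very first step. Note first that the paper does not prove Proposition~\ref{propIntImplConjSimpleChar} at all: it is imported wholesale from~\cite[1.12]{broussousSecherreStevens:12}, so your argument has to stand on its own, assembled from the paper's other results. Your opening move is to extract from the \emph{proof} of Theorem~\ref{thmEqualDegreeFromIntertwining} an element $g_0\in\mf{n}(\Lambda)$ with $g_0.\theta=\theta'$. But the statement of that theorem contains no conjugacy clause, and inside its proof the conjugacy assertion (via Grabitz 10.3, cf.~\cite[1.16]{broussousSecherreStevens:12}) appears only \emph{after} the reduction ``by~\cite[4.5]{broussousSecherreStevens:12} we may assume $\Delta,\Delta'$ are sound with the same Fr\"ohlich invariant''. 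That ``without loss of generality'' is harmless for the numerical invariants $e,f,k_0$, which are preserved by the $\ddag$-type reduction, but a conjugacy statement does not descend through it: the conjugator produced lives in the group attached to the sound (enlarged) situation, and transporting it back to an element of $\mf{n}(\Lambda)$ for the original lattice sequence is precisely the content of~\cite[1.12]{broussousSecherreStevens:12}, i.e.\ of the proposition you are proving. So the bare ``intertwining implies conjugacy'' part is assumed rather than established, and the rest of your argument only supplies the $E_{ur}$-refinement.

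The later steps also lean on two facts nowhere proved in the paper: that $\C(\Delta)=\C(\Delta')$ forces $I(\Delta,\Delta')\neq\emptyset$, and that it forces $\tau_{\Delta,\Delta'}=\operatorname{id}$ (needed to conclude $h.\theta=\theta$ via Theorem~\ref{thmTransferAndInt}). You flag the first as ``the real obstacle'', correctly, but the suggested fixes do not close it: Corollary~\ref{corConjugateAssSplittingsOfStrataWithSameSemisimpleCharacters} only conjugates the associated splittings and does not produce an element of $I(\Delta,\Delta')$, and the fallback induction along the jump sequence via Proposition~\ref{lemDerivedCharacters} and Lemma~\ref{lemMatchingOfResidueAlg} is a sketch that hides exactly the hard bookkeeping (relating $I(\theta,\theta')$ to stratum intertwining, keeping the conjugator in $\mf{n}(\Lambda)$, and preserving the unramified part through the induction) --- this is in substance the proof carried out in~\cite{broussousSecherreStevens:12}, not something recoverable in a few lines from the results stated here. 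If you want an actual proof rather than the citation, you should follow that route directly (sound strata, Grabitz's intertwining-implies-conjugacy, and the descent from the $\ddag$-construction), since the downstream statements of this paper are not strong enough to reassemble it.
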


\begin{proposition}[\cite{broussousSecherreStevens:12}~Theorem~1.11,~Corollary~8.3]\label{propTranstivityOfIntertwiningSimpleCharacters}
 Intertwining is an equivalence relation on the set of all simple characters for~$G$ for simple strata of fixed group level and fixed degree.
\end{proposition}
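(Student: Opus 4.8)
**Proof plan for Proposition \ref{propTranstivityOfIntertwiningSimpleCharacters} (intertwining is an equivalence relation on simple characters of the same group level and degree).**

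The plan is to check reflexivity, symmetry, and transitivity separately, with transitivity being the substantive point. Reflexivity is immediate: every $g \in C_{A^\times}(\beta)$ (in particular $g = 1$) lies in $I(\Delta, \Delta)$, and by Theorem~\ref{thmTransferAndInt} (formula~\eqref{eqIntFormula}) such $g$ intertwines $\theta$ with itself; alternatively $S(\Delta)$ normalizes $\theta$. Symmetry is equally formal: if $g \in I(\theta, \theta')$ then $g^{-1}$ conjugates $\theta'$ to $\theta$ on the appropriate intersection, so $g^{-1} \in I(\theta', \theta)$; here one uses that ``same group level and same degree'' is a symmetric condition on the pair $(\Delta, \Delta')$.

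The core of the argument is transitivity. Suppose $\theta \in \C(\Delta)$, $\theta' \in \C(\Delta')$, $\theta'' \in \C(\Delta'')$, with all three strata of the same degree and group level, and suppose $\theta$ intertwines $\theta'$ and $\theta'$ intertwines $\theta''$. First I would reduce to a common lattice sequence and a convenient normalization of the first entries. By a $\ddag$-construction (Proposition~\ref{propDdag}) one may assume $\Lambda = \Lambda' = \Lambda''$ are equal principal lattice chains and, after passing to $\Delta(j+)$-type restrictions using Proposition~\ref{propRestrictionMapIsBijective} (which preserves the intertwining relation since restriction is compatible with $I(\theta,\theta')$ as noted after Proposition~\ref{propRestrictionMapIsBijective}), that $r = r' = r''$. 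Then by Theorem~\ref{thmEqualDegreeFromIntertwining} the strata have coinciding ramification indexes, inertia degrees, and critical exponents. Next, using Remark~\ref{remDiagonalization} / Theorem~\ref{thmDiagonalization} together with Theorem~\ref{thmINtImplConSimpleD}, I would conjugate so that $\beta$, $\beta'$, $\beta''$ all have the same minimal polynomial and in fact $\beta = \beta' = \beta''$; the point is that Corollary~\ref{corEqualDegreesFromIntertwininSameGrouplevelSameDegree} guarantees the numerical invariants match, so these diagonalization/conjugacy steps apply and one may replace each pair by a transfer (which preserves the intertwining relation by transitivity of transfers, and transfers are themselves intertwined by their defining strata, Definition~\ref{defTransfer}).

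Once $\beta = \beta' = \beta''$ and $\Lambda = \Lambda'' $, the intertwining sets are given explicitly by Theorem~\ref{thmTransferAndInt}: $I(\theta, \theta') = S(\Delta') C_{A^\times}(\beta) S(\Delta)$ and $I(\theta', \theta'') = S(\Delta'') C_{A^\times}(\beta) S(\Delta')$, and both are non-empty by hypothesis. Since $\C(\Delta')$ is determined by its stratum and $\theta'$ is simultaneously a transfer from $\theta$ and a transfer from $\theta''$, composing a witness $g_1 \in I(\theta, \theta')$ with a witness $g_2 \in I(\theta', \theta'')$ gives $g_2 g_1 \in S(\Delta'') C_{A^\times}(\beta) S(\Delta') S(\Delta') C_{A^\times}(\beta) S(\Delta)$; using that $S(\Delta')$ is a group normalizing $\theta'$ and that $S(\Delta')C_{A^\times}(\beta)S(\Delta')$ is closed under multiplication (by the same argument as in Corollary~\ref{corConjugateAssSplittingsOfStrataWithSameSemisimpleCharacters}, taking $p$-adic limits of the intertwining formula for $\theta' \in \C(\Delta')$), this product collapses into $S(\Delta'') C_{A^\times}(\beta) S(\Delta)$, which is $I(\theta, \theta'')$ by Theorem~\ref{thmTransferAndInt} applied to the pair $(\Delta, \Delta'')$. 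Hence $\theta$ intertwines $\theta''$. The main obstacle I anticipate is the bookkeeping in these reductions: making sure that every simplification (the $\ddag$-construction, the restriction to equal $r$, the conjugation to $\beta = \beta' = \beta''$, and the passage to transfers) both preserves the property ``same group level and same degree'' for all three pairs simultaneously and does not alter whether the relevant pairs intertwine — and then verifying the semigroup-closure of $S(\Delta')C_{A^\times}(\beta)S(\Delta')$ cleanly, which rests on the congruence-of-central-idempotents argument already used in Corollary~\ref{corConjugateAssSplittingsOfStrataWithSameSemisimpleCharacters} and in~\cite[9.9(iv)]{skodlerackStevens:15-1}.
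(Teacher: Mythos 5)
There is a genuine gap, and it sits exactly where the real content of this proposition lies. Your plan rests on applying the intertwining formula of Theorem~\ref{thmTransferAndInt}, but formula~\eqref{eqIntFormula} is only available under the hypothesis that~$\theta'$ is the \emph{transfer}~$\tau_{\Delta,\Delta'}(\theta)$; the hypothesis of the proposition is merely that~$\theta$ and~$\theta'$ intertwine. Your bridge between the two — ``one may replace each pair by a transfer, which preserves the intertwining relation by transitivity of transfers'' — is circular: the statement ``if~$\theta$ intertwines~$\theta'$ and~$\theta''$ is a transfer of~$\theta'$ (so in particular intertwines~$\theta'$), then~$\theta$ intertwines~$\theta''$'' is itself an instance of the transitivity you are trying to prove, and transitivity of the transfer \emph{maps} ($\tau_{\Delta',\Delta''}\circ\tau_{\Delta,\Delta'}=\tau_{\Delta,\Delta''}$) says nothing about it. To convert ``intertwine'' into ``transfer up to conjugation'' one needs an intertwining-implies-conjugacy input, namely Proposition~\ref{propIntImplConjSimpleChar} (same lattice sequence, same~$r$, same embedding type), and one must check that all the normalizations (equalizing~$r$ via restriction/extension, the~$\ddag$-construction, passing to a stratum with the same~$\Lambda$ and embedding type) preserve the two given intertwining relations simultaneously. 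This is precisely the auxiliary statement proved in the paper: for three simple strata with~$\beta'=\beta''$,~$\Lambda=\Lambda'$ and~$\Delta,\Delta'$ of the same embedding type, and with~$\theta',\theta''$ transfers of each other,~$\theta$ intertwines~$\theta'$ if and only if it intertwines~$\theta''$; its proof uses Proposition~\ref{propIntImplConjSimpleChar}, the unique extension of Proposition~\ref{propRestrictionMapIsBijective}, doubling and the~$\ddag$-construction, and then the same-lattice-sequence case~\cite[1.11]{broussousSecherreStevens:12} finishes the argument. Note also that your equalization of~$r$ is asserted too quickly: restricting preserves intertwining, but re-extending a character to the smaller~$r$ is only handled through conjugacy, not through intertwining alone.

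The final step is also not a valid mechanism: composing witnesses~$g_1\in I(\theta,\theta')$ and~$g_2\in I(\theta',\theta'')$ does not in general produce an element of~$I(\theta,\theta'')$ (the domains of agreement need not match up), and the set-theoretic collapse you invoke relies both on the unproven multiplicative closure of~$S(\Delta')C_{A^\times}(\beta)S(\Delta')$ (the argument of Corollary~\ref{corConjugateAssSplittingsOfStrataWithSameSemisimpleCharacters} gives an equality of two descriptions of one intertwining set, not closure under products) and, again, on knowing that~$S(\Delta'')C_{A^\times}(\beta)S(\Delta)$ equals~$I(\theta,\theta'')$, which presupposes that~$\theta''$ is the transfer of~$\theta$ — at which point they intertwine trivially and no composition is needed. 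So the composition step buys nothing, and the actual work is the reduction, which in your write-up is assumed rather than proved.
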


\begin{proof}
 \cite[Theorem~1.11]{broussousSecherreStevens:12} only implies this for simple characters for the same lattice sequence. Corollary~\ref{corEqualDegreesFromIntertwininSameGrouplevelSameDegree}
 provides that the simple characters in question have the same inertia degree and the same ramification index. So, by Corollary~\ref{corconsructingEmbeddingsFromNumericalData}, we can transfer to the case where all characters are attached to 
 the same lattice sequence and all \black{underlying strata} have the same embedding type, if we have proven the following statement:
 
 \emph{ Suppose~$\Delta,\Delta'$ and~$\Delta''$ are three simple strata of the same degree and the same group level.
 Suppose that~$\beta'=\beta''$,~$\Lambda=\Lambda'$ and that~$\Delta$ and~$\Delta'$ have the same embedding type.
 Suppose further that~$\theta\in\C(\Delta),\theta'\in\C(\Delta')$ and~$\theta''\in\C(\Delta'')$ are simple characters such that~$\theta'$ and~$\theta''$ are 
 transfers of each other. Then~$\theta$ and~$\theta'$ intertwine if and only if~$\theta$ and~$\theta''$ intertwine.}
 
 Proof of this statement: Suppose at first that~$\theta$ and~$\theta'$ intertwine, say~$r\geq r'$. Then~$\theta$ and~$\theta'((r-r')+)$ intertwine and thus they are 
 conjugate by Proposition~\ref{propIntImplConjSimpleChar}. Thus~$\theta$ and~$\theta''$ intertwine because~$\theta'((r-r')+)$ is the transfer of~$\theta''$. 
 If~$r<r'$ then we take the transfer~$\ti{\theta}'$ of~$\theta''$ from~$\Delta''$ to~$\Delta'((r-r')+)$. Then~$\ti{\theta}'$ is the unique extension of~$\theta'$. 
 Now~$\theta$ and~$\ti{\theta}'$ intertwine by~\cite[Theorem~1.11]{broussousSecherreStevens:12} and we proceed as in the case~$r\geq r'$. We prove now the other direction.
 Suppose now that~$\theta$ and~$\theta''$ intertwine. We pass to affine translates of the strata~$\Delta$ and~$\Delta''$ followed by a~$\dag$-construction to obtain strata with  conjugate regular lattice sequences and the obtained characters~$\theta^\dag$ 
 and~$\theta''^\dag$ still intertwine. Then by~\cite[Theorem~1.11]{broussousSecherreStevens:12}~$\theta$ and~$\theta'$ intertwine. 
\end{proof}

\subsection{Equality of sets of semisimple characters}
Here we are given two semisimple strata~$\Delta$ and~$\Delta'$ with~$\Lambda=\Lambda'$,~$r=r'$ and~$n=n'$.

%

\begin{lemma}\label{lemJumpseq}
 The strata~$\Delta$ and~$\Delta'$ have the same jump sequence if~$\C(\Delta)\cap \C(\Delta')$ is non-empty. In particular~$\Delta$ and~$\Delta'$ have the same critical 
 exponent. 
\end{lemma}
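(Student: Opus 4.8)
The plan is to deduce everything from the equality of one numerical invariant and then to reduce that, via the simple-stratum machinery already available, to known intertwining statements. First I would reduce to the following assertion: \emph{any} two semisimple strata $\Theta,\Theta'$ with the same lattice sequence $\Lambda$, the same values of $r$ and $n$, and $\C(\Theta)\cap\C(\Theta')\neq\emptyset$ have the same critical exponent. Granted this, I apply it to $\Delta(j+)$ and $\Delta'(j+)$ for every $0\le j\le n-r$: these are equivalent to semisimple strata with common lattice sequence $\Lambda$, common $n$, cut-off $r+j$, and common semisimple character $\theta(j+)$ obtained by restricting a chosen $\theta\in\C(\Delta)\cap\C(\Delta')$, so $k_0(\Delta(j+))=k_0(\Delta'(j+))$ for all $j$. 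By Definition~\ref{defDefiningSequence} the jump sequence of a semisimple stratum is the set $\{r\}\cup\{\,r+j:1\le j\le n-r,\ k_0(\Delta((j-1)+))>k_0(\Delta(j+))\,\}$, which is determined by the function $j\mapsto k_0(\Delta(j+))$; this function is independent of the chosen defining sequence because two equivalent semisimple strata with the same lattice sequence intertwine by the identity, hence have the same critical exponent by Corollary~\ref{corEqParametersForIntertwiningSemisimpleStrata}\ref{corEqParametersForIntertwiningSemisimpleStrata.i}. Thus the jump sequences of $\Delta$ and $\Delta'$ coincide, and so do their critical exponents (the case $j=0$).

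For the reduced assertion, fix $\theta\in\C(\Theta)\cap\C(\Theta')$, so in particular $H(\Theta)=H(\Theta')$. Applying Theorem~\ref{thmTransferAndInt} with the second stratum taken equal to the first (so that $\theta=\tau_{\Theta,\Theta}(\theta)$) identifies $I(\theta)=I(\theta,\theta)$ with $S(\Theta)C_{A^\times}(\beta)S(\Theta)$, and reading the same set off from the datum $\Theta'$ gives $S(\Theta')C_{A^\times}(\beta')S(\Theta')$; these two subsets of $A^\times$ are therefore equal. Taking $p$-adic closures in $A$ and comparing the primitive central idempotents of $C_A(\beta)$ with those of $C_A(\beta')$, exactly as in the proof of Corollary~\ref{corConjugateAssSplittingsOfStrataWithSameSemisimpleCharacters}, produces a bijection $\zeta$ of the block index sets with $\dim_D V^i=\dim_D V'^{\zeta(i)}$, and an element $s\in S(\Theta)\subseteq P(\Lambda)$ carrying the associated splitting of $\Theta$ onto that of $\Theta'$. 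Since $s$ normalizes $\theta$ and fixes $\Lambda$, conjugation by $s$ preserves the critical exponent, so after relabelling via $\zeta$ I may assume that $\Theta$ and $\Theta'$ share the associated splitting $(V^i)_{i\in I}$ and that both $\beta$ and $\beta'$ are block-diagonal for it. Using the Iwahori decompositions of Proposition~\ref{propIwahoriSt05no5.5} for $(V^i)_{i\in I}$ and for its coarsenings, the restrictions of $\theta$ yield common semisimple characters $\theta|_{\GL_D(V^i)}\in\C(\Theta_i)\cap\C(\Theta'_i)$ on each single block and $\theta|_{\GL_D(V^{i_1}\oplus V^{i_2})}\in\C(\Theta_{i_1}\oplus\Theta_{i_2})\cap\C(\Theta'_{i_1}\oplus\Theta'_{i_2})$ on each pair of blocks.

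It remains to show $k_0(\Theta)=k_0(\Theta')$ in this block-diagonal situation. Now $-k_0(\Theta)$ is the least $s>r$ at which $[\Lambda,n,s,\beta]$ fails to be semisimple, and by Definition~\ref{defSemisimpleStratum} this happens exactly when some single block ceases to be simple or some pair of blocks becomes equivalent to a simple stratum, so it is enough to see that these two families of thresholds agree for $\Theta$ and $\Theta'$. For single blocks this is Proposition~\ref{propEqualDegreesForIntersectingSetsOfSimpleCharacters} applied to $\Theta_i$ and $\Theta'_i$ (common simple character, same $\Lambda^i$, same $r$), which moreover gives $e(E_i|F)=e(E'_i|F)$ and $f(E_i|F)=f(E'_i|F)$. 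For a pair of blocks, after equalizing $\dim_D V^{i_1}$ and $\dim_D V^{i_2}$ by the $\ddag$-construction of Proposition~\ref{propDdag} (which preserves critical exponents and the property of being equivalent to a simple stratum), the amalgamation at a level $s$ is detected through Corollary~\ref{corEquivalentCriteriaForIntertwining} by the non-emptiness of an intertwining set of two simple strata, and this intertwining is transported between $\Theta$ and $\Theta'$ using the common simple characters on the two blocks together with the intertwining theory of simple characters of this section (\cite[4.16]{broussousSecherreStevens:12}, Proposition~\ref{propIntImplConjSimpleChar}, Proposition~\ref{propTranstivityOfIntertwiningSimpleCharacters}). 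This amalgamation comparison — the only content genuinely beyond the simple case — is the main obstacle; an alternative route is to reduce the whole lemma to the split case over $L$ by Theorem~\ref{thmEquivToSemisimpleStratum} (so that $\Delta$ and $\Delta\otimes L$ have the same jump sequence, and likewise for $\Delta'$) and then to quote the split-case form of the statement from~\cite{skodlerackStevens:15-1}.
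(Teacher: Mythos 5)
Your reduction of the lemma to the level-wise statement ``$k_0(\Delta(j))=k_0(\Delta'(j))$ for all $j$'', the passage to a common associated splitting via Theorem~\ref{thmTransferAndInt} and Corollary~\ref{corConjugateAssSplittingsOfStrataWithSameSemisimpleCharacters}, and the treatment of the single-block thresholds via Proposition~\ref{propEqualDegreesForIntersectingSetsOfSimpleCharacters} are all sound and close to the paper. The gap is exactly where you locate ``the main obstacle'': the comparison of the two-block thresholds is asserted, not proved. What you would need is a statement of the form: if $\C(\Theta_{i_1})$ and $\C(\Theta'_{i_1})$ share a character, likewise for the blocks $i_2$, and if at some level $s$ the sum of the $\Theta$-blocks becomes equivalent to a simple stratum, then the same holds for the $\Theta'$-blocks. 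None of the results you invoke delivers this: Corollary~\ref{corEquivalentCriteriaForIntertwining}, Proposition~\ref{propIntImplConjSimpleChar} and Proposition~\ref{propTranstivityOfIntertwiningSimpleCharacters} concern intertwining of \emph{characters}, whereas amalgamation of blocks is a statement about endo-equivalence of the underlying \emph{strata}; a common simple character on a block does not, by anything available at this point of the paper, transport the property ``$\Theta_{i_1}(s)\oplus\Theta_{i_2}(s)$ is equivalent to a simple stratum'' to the primed blocks (intertwining of two strata says nothing about given characters on them, and conversely). Your fallback via base change is also not a routine quotation: from $\theta\in\C(\Delta)\cap\C(\Delta')$ you only know that $\theta$ admits \emph{some} extension in $\C(\Delta\otimes L)$ and \emph{some} extension in $\C(\Delta'\otimes L)$ (Proposition~\ref{propExtensionFromDeltaToDeltaotimesL}); these extensions are not canonical and there is no reason they can be chosen equal, or even intertwining, so the hypothesis of the split-case lemma of~\cite{skodlerackStevens:15-1} is not verified. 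Indeed, relating character data over $D$ and over $L$ is delicate and is only achieved in the paper in the transfer situation, in the proof of Theorem~\ref{thmTransferAndInt}.

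The paper's proof sidesteps the amalgamation comparison altogether: after arranging a common associated splitting, it observes that a jump of $\Delta$ at $r+j_s$ is caused either by a jump of a simple block or by a drop in the \emph{number} of blocks from $\Delta(j_s-1)$ to $\Delta(j_s)$; the block jumps are matched by Proposition~\ref{propEqualDegreesForIntersectingSetsOfSimpleCharacters}, and the block counts of $\Delta(j)$ and $\Delta'(j)$ agree at every level because any common element of $\C(\Delta(j))\cap\C(\Delta'(j))$ forces the two associated splittings to be conjugate by Corollary~\ref{corConjugateAssSplittingsOfStrataWithSameSemisimpleCharacters}. Replacing your pairwise amalgamation argument by this level-wise count of blocks (applied to the members of the defining sequences, which carry the restrictions of $\theta$ as common characters) closes the gap and essentially recovers the paper's proof.
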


\begin{proof}
 The second assertion follows from the first one by the definition of the jump sequence, see~Definition~\ref{defDefiningSequence}\ref{defDefiningSequenceiii}. 
 Thus we only have to prove the first assertion. 
 At first we remark that we can assume that both strata have the same associated splitting by 
 Corollary~\ref{corConjugateAssSplittingsOfStrataWithSameSemisimpleCharacters}, and we take the same indexing for both strata, i.e.~$I=I'$.
 At a jump~$r+j_t$ in the defining sequence of~$\Delta$ the number of blocks decreases or for one stratum~$\Delta_i$ there is a jump at~$r+j_t$. 
 Now,~$\Delta'(j)$ has the same number of blocks as~$\Delta(j)$, because this number is determined by the intertwining of any element of~$\C(\Delta'(j))\cap \C(\Delta(j))$.
 Thus in the first case~$\Delta'(j_t)$ has less blocks than~$\Delta'(j_t-1)$. 
 In the second case there is also a jump for the stratum~$\Delta'_i$ at~$r+j_t$, by Lemma~\ref{propEqualDegreesForIntersectingSetsOfSimpleCharacters}.
\end{proof}

For the next Lemma we need the core approximation, see Definition~\ref{defDefiningSequence}. The idea of a core approximation~$\tilde{\Delta}$ of~$\Delta$ is that we have
\[H(\Delta)=H^{r+1}(\tilde{\beta},\Lambda)=P_{r+1}(j_{\tilde{E}}(\Lambda))H^{r+2}(\tilde{\beta},\Lambda).\]

\begin{lemma}[{{\cite[Proposition~(3.5.7)]{bushnellKutzko:93}}}]\label{lemBK3.5.7}
 Let~$\theta$ and~$\theta'$ be  elements of~$\C(\Delta)$. Suppose further that~$\tilde{\Delta}$ is a core approximation of $\Delta$. Then~$\theta/\theta'$ is intertwined by 
 $S(\tilde{\Delta})C_{A^\times}(\tilde{\beta})S(\tilde{\Delta})$.
\end{lemma}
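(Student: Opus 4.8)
The plan is to deduce this from the split case by extension of scalars to~$L$ and Galois descent, exactly as in the proofs of Proposition~\ref{propIntertwiningOfSemisimpleStratumOverD} and Theorem~\ref{thmTransferAndInt}. First I would fix extensions~$\theta_L,\theta'_L\in\C(\Delta\otimes L)$ of~$\theta$ and~$\theta'$. Since~$H(\Delta)=H(\Delta\otimes L)\cap G$ and~$\theta=\theta_L|_{H(\Delta)}$,~$\theta'=\theta'_L|_{H(\Delta)}$, the character~$\theta/\theta'$ is the restriction of~$\theta_L/\theta'_L$ to~$H(\Delta)$. Consequently any~$g\in A^\times$ intertwining~$\theta_L$ with~$\theta'_L$ also intertwines~$\theta$ with~$\theta'$, because~$g.H(\Delta\otimes L)\cap H(\Delta\otimes L)\cap G=g.H(\Delta)\cap H(\Delta)$ when~$g\in G$. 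So it suffices to control~$A^\times\cap I(\theta_L,\theta'_L)$ from below.

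Next I would check that the core approximation is compatible with~$\otimes L$. By Proposition~\ref{propSemisimpleAndExtensionOfScalar} every member~$\Delta(j)$ of a defining sequence for~$\Delta$ gives a semisimple stratum~$\Delta(j)\otimes L$ equivalent to~$(\Delta\otimes L)(j+)$, and by Theorem~\ref{thmEquivToSemisimpleStratum} together with Proposition~\ref{propPotentiallySimpleStrataAreSemisimple} one has~$k_0(\Delta(j)\otimes L)=k_0(\Delta(j))$; hence~$\Delta\otimes L$ and~$\Delta$ have the same jump sequence, so~$\tilde\Delta\otimes L$ is a core approximation of~$\Delta\otimes L$, with last entry~$\tilde\beta\otimes1$. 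Now the split case of the statement --- which is \cite[3.5.7]{bushnellKutzko:93} in the simple case, and its semisimple analogue in the theory of \cite{skodlerackStevens:15-1} (where one reduces to the simple blocks via the Iwahori decomposition of Proposition~\ref{propIwahoriSt05no5.5}) --- gives that~$\theta_L/\theta'_L$ is intertwined by~$S(\tilde\Delta\otimes L)\,C_{(A\otimes L)^\times}(\tilde\beta\otimes1)\,S(\tilde\Delta\otimes L)$.

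Finally I would intersect this set with~$A^\times$. Recall from the discussion preceding Theorem~\ref{thmTransferAndInt} that~$A^\times\cap S(\tilde\Delta\otimes L)=S(\tilde\Delta)$, and obviously~$A^\times\cap C_{(A\otimes L)^\times}(\tilde\beta\otimes1)=C_{A^\times}(\tilde\beta)$. Taking~$\Gal(L|F)$-fixed points of the product~$S(\tilde\Delta\otimes L)\,C_{(A\otimes L)^\times}(\tilde\beta\otimes1)\,S(\tilde\Delta\otimes L)$ and using the vanishing of the relevant (non-abelian) cohomology --- Hilbert~90 for the unramified extension~$L|F$, via \cite[2.35]{secherreI:04} and Lemma~\ref{lemCohom} --- expresses the fixed points as the product of the three intersections, so that~$A^\times\cap I(\theta_L,\theta'_L)\supseteq S(\tilde\Delta)\,C_{A^\times}(\tilde\beta)\,S(\tilde\Delta)$, which is the claim. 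The main obstacle is precisely this cohomological descent step: one must verify that the three factors in the split-case intertwining set are~$\Gal(L|F)$-stable and that their product is ``cohomologically transverse'' in the sense needed to apply Lemma~\ref{lemCohom} to the whole product, just as was done for the intertwining formula in Proposition~\ref{propIntertwiningOfSemisimpleStratumOverD} and Theorem~\ref{thmTransferAndInt}; in view of those precedents it is routine but not purely formal. If one wishes to avoid~$L$ altogether, the alternative is an induction on~$-k_0(\Delta)$ mimicking \cite[3.5]{bushnellKutzko:93}, fed by the exact sequence of Lemma~\ref{lemExSeqDelta} and the derived characters of Definition~\ref{defDerivedChar}.
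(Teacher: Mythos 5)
The paper itself gives no argument for this lemma---it is stated with a bare citation to~\cite[3.5.7]{bushnellKutzko:93}---so your plan to deduce it from the split case by passing to~$\Delta\otimes L$ and restricting is the natural route and matches the paper's general methodology. However, as written your reduction contains a genuine conflation: the lemma is about the intertwining of the \emph{quotient character}~$\theta/\theta'$ of~$H(\Delta)$, not about the set~$I(\theta,\theta')$ of elements intertwining~$\theta$ \emph{with}~$\theta'$. Your first paragraph ("it suffices to control~$A^\times\cap I(\theta_L,\theta'_L)$ from below") and your final conclusion ("$A^\times\cap I(\theta_L,\theta'_L)\supseteq S(\tilde\Delta)C_{A^\times}(\tilde\beta)S(\tilde\Delta)$, which is the claim") concern the wrong set, and the inclusion you assert is in fact false whenever~$\theta\neq\theta'$: the element~$g=1$ lies in~$S(\tilde\Delta)C_{A^\times}(\tilde\beta)S(\tilde\Delta)$ but intertwines~$\theta_L$ with~$\theta'_L$ only if the two characters coincide. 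The correct reduction, which your middle paragraph implicitly uses, is: every element of~$S(\tilde\Delta)C_{A^\times}(\tilde\beta)S(\tilde\Delta)$ lies in~$S(\tilde\Delta\otimes L)\,C_{(A\otimes L)^\times}(\tilde\beta\otimes 1)\,S(\tilde\Delta\otimes L)$ and hence, by the split (semisimple) analogue of~\cite[3.5.7]{bushnellKutzko:93}, intertwines the character~$\theta_L/\theta'_L$ of~$H(\Delta\otimes L)$; and an element~$g\in G$ intertwining~$\theta_L/\theta'_L$ intertwines its restriction~$\theta/\theta'$ to~$H(\Delta)=H(\Delta\otimes L)\cap G$, since~$g.H(\Delta)\cap H(\Delta)\subseteq g.H(\Delta\otimes L)\cap H(\Delta\otimes L)$.

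Once the statement is phrased this way, your "main obstacle" disappears: no Galois descent or Lemma~\ref{lemCohom} is needed at all, because only the trivial inclusions~$S(\tilde\Delta)\subseteq S(\tilde\Delta\otimes L)$ (by the definition~$S(\tilde\Delta)=S(\tilde\Delta\otimes L)\cap G$) and~$C_{A^\times}(\tilde\beta)\subseteq C_{(A\otimes L)^\times}(\tilde\beta\otimes 1)$ are required; the cohomological argument in Proposition~\ref{propIntertwiningOfSemisimpleStratumOverD} and Theorem~\ref{thmTransferAndInt} is what one needs for the \emph{reverse} inclusion, which this lemma does not claim. The one point that does deserve more care than you give it is the claim that~$\tilde\Delta\otimes L$ is a core approximation of~$\Delta\otimes L$: equality of critical exponents (Theorem~\ref{thmEquivSemisimpleSplittingAndRestriction}) is not quite enough, because a defining sequence tensored with~$L$ need not remain a defining sequence---the finer associated splitting of~$\beta(j)\otimes 1$ over~$L$ need not commute with~$\beta(j+1)\otimes 1$. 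Since you only need the displayed set over~$L$ (for \emph{some} core approximation of~$\Delta\otimes L$) to contain~$S(\tilde\Delta)C_{A^\times}(\tilde\beta)S(\tilde\Delta)$, this can be repaired by comparing jumps and replacing strata by equivalent ones split appropriately (Theorem~\ref{thmDiagonalization}, Corollary~\ref{corDecompAreConjForEqSemisimpleStrata}), but it should be argued, not asserted.
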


\begin{proposition}[{see~\cite[Theorem~(3.5.8)]{bushnellKutzko:93} and~\cite[Theorem 4.16]{broussousSecherreStevens:12} for the simple case}]\label{propEqualityByNonTrivalIntersection}
 $\C(\Delta)$ and~$\C(\Delta')$ coincide if they intersect non-trivially. 
\end{proposition}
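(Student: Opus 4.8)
The plan is to reduce this semisimple statement to its simple counterpart~\cite[4.16]{broussousSecherreStevens:12} (compare~\cite[3.5.8]{bushnellKutzko:93}) by passing to block restrictions. First I would fix a character $\theta\in\C(\Delta)\cap\C(\Delta')$. Since $H(\Delta)$, respectively $H(\Delta')$, is the common domain of all characters in $\C(\Delta)$, respectively $\C(\Delta')$, this already forces $H(\Delta)=H(\Delta')$. Next I would invoke Corollary~\ref{corConjugateAssSplittingsOfStrataWithSameSemisimpleCharacters} to obtain an element $g\in S(\Delta)$ conjugating the associated splitting of $\Delta$ onto that of $\Delta'$. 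As $S(\Delta)$ is contained in $P(\Lambda)$ and normalises every element of $\C(\Delta)$, replacing $\Delta$ by $g.\Delta$ changes neither $\Lambda$, nor $n$, nor $r$, nor the set $\C(\Delta)$ (using the equivariance $\C(g.\Delta)=g.\C(\Delta)=\C(\Delta)$), while $\theta=g.\theta$ still lies in $\C(g.\Delta)\cap\C(\Delta')$. After relabelling the index set of $\Delta'$ accordingly, I may thus assume that $\Delta=\bigoplus_{i\in I}\Delta_i$ and $\Delta'=\bigoplus_{i\in I}\Delta'_i$ are split by the \emph{same} decomposition $V=\bigoplus_{i\in I}V^i$, so that $\Lambda^i=\Lambda'^i$ and $r_i=r'_i=r$ for all $i$.

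For each $i\in I$ the block restriction $\theta_i:=\theta|_{H(\Delta_i)}$ then lies in $\C(\Delta_i)\cap\C(\Delta'_i)$, and $\Delta_i$, $\Delta'_i$ are simple strata on $V^i$ with the same lattice sequence and the same $r$; so~\cite[4.16]{broussousSecherreStevens:12} yields $\C(\Delta_i)=\C(\Delta'_i)$ for every $i$. It then remains to glue the blocks back together. By Proposition~\ref{propIwahoriSt05no5.5} the common group $H:=H(\Delta)=H(\Delta')$ admits an Iwahori decomposition with respect to the Levi subgroup $\ti{M}=\prod_{i\in I}\GL_D(V^i)$ and its unipotent radicals $\ti{U}_\pm$, with $H\cap\ti{M}=\prod_{i\in I}H(\Delta_i)$, and every semisimple character for $\Delta$ or for $\Delta'$ is trivial on $H\cap\ti{U}_+$ and on $H\cap\ti{U}_-$. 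Since a character of $H$ trivial on the two unipotent parts is determined by its tuple of block restrictions, and since (after choosing a $\gamma$ in a defining sequence that is split by $(V^i)_i$) any such tuple with $i$-th entry in $\C(\Delta_i)$ is realised by an element of $\C(\Delta)$, I would conclude that $\C(\Delta)$ and $\C(\Delta')$ are both equal to the set of characters of $H$ which are trivial on $H\cap\ti{U}_\pm$ and whose $i$-th block restriction lies in $\C(\Delta_i)=\C(\Delta'_i)$; hence $\C(\Delta)=\C(\Delta')$.

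The step I expect to be the main obstacle is this last gluing: the text records only that a semisimple character is \emph{determined} by its block restrictions, whereas here I additionally need the block restriction map $\C(\Delta)\to\prod_{i\in I}\C(\Delta_i)$ to be \emph{surjective}, which is precisely what makes the two block-wise descriptions of $\C(\Delta)$ and $\C(\Delta')$ genuinely coincide. I expect surjectivity to follow from the inductive definition of $\C(\Delta)$ once the approximating term $\gamma$ is chosen split by $(V^i)_i$ — so that $\beta-\gamma$, and hence the twist $\psi_{\beta-\gamma}$, is block-diagonal — together with the Iwahori-triviality of Proposition~\ref{propIwahoriSt05no5.5}. A minor point to check carefully along the way is the equivariance of the $\C(-)$ construction under $\Aut_D(V)$ that underlies the reduction to a common splitting.
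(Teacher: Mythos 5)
Your reductions up to the block level are fine: a common element forces $H(\Delta)=H(\Delta')$, Corollary~\ref{corConjugateAssSplittingsOfStrataWithSameSemisimpleCharacters} lets you assume a common associated splitting, and the simple case of~\cite[4.16]{broussousSecherreStevens:12} then gives $\C(\Delta_i)=\C(\Delta'_i)$ for each block. The genuine gap is exactly the gluing step you flagged, and your proposed fix does not repair it. It is false in general that $\C(\Delta)$ equals the set of characters of $H(\Delta)$ that are trivial on $H\cap\ti{U}_\pm$ and whose block restrictions lie in $\prod_i\C(\Delta_i)$: choosing the approximation $\gamma$ split by $(V^i)_i$ does not help, because the \emph{associated} splitting of $\gamma$ can be strictly coarser than that of $\beta$ (the number of blocks drops along a defining sequence; this is what the jump sequence records). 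Concretely, if for some $0<j\le n-r$ the stratum $\Delta(j+)$ is equivalent to a simple stratum whose field crosses two blocks $V^{i_1}\oplus V^{i_2}$, then for every $\theta\in\C(\Delta)$ the restriction $\theta(j+)$ is a \emph{simple} character for that stratum, and by Lemma~\ref{lemBSS2.17iiBijectiveRestrictionSimpleCase} its block restrictions are linked by transfer: $\theta_{i_2}(j+)=\tau(\theta_{i_1}(j+))$. Since the restriction maps $\C(\Delta_i)\to\C(\Delta_i(j+))$ have nontrivial fibres once the group level changes, an arbitrary pair $(\theta_{i_1},\theta'_{i_2})\in\C(\Delta_{i_1})\times\C(\Delta_{i_2})$ violates this compatibility, so the block-restriction map $\C(\Delta)\to\prod_i\C(\Delta_i)$ is injective but \emph{not} surjective. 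Hence your two sets are only embedded in the same product; whether their images (cut out by the deeper cross-block compatibilities coming from the two defining sequences) coincide is essentially the original statement, so nothing has been gained. Note also that the clean block-to-global statement you would need is Corollary~\ref{corDeltakNonTrivialIntersec}, which the paper deduces (via Lemma~\ref{lemCDeltaPsia}) \emph{from} Proposition~\ref{propEqualityByNonTrivalIntersection}, so invoking it here would be circular.

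For comparison, the paper's proof avoids the block decomposition altogether: it argues by induction on $n-r$ following~\cite[3.5.8]{bushnellKutzko:93}, writes an arbitrary $\theta\in\C(\Delta)$ as $\theta'\psi_b$ with $\theta'\in\C(\Delta')$ (surjectivity of restriction to $\C(\Delta'(1+))$), uses core approximations and Lemma~\ref{lemBK3.5.7} to show $\psi_b$ is intertwined by $C_{A^\times}(\tilde{\beta}')$, and then~\cite[2.10]{secherreStevensVI:10} to identify $s_{\tilde{\beta}'}(b)$ modulo $\mf{a}_{-r}$ with an element of $\tilde{E}'$, which is precisely what controls the cross-block coupling your argument loses. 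If you want to salvage your approach, you would have to prove the transfer-compatibility constraints for $\Delta$ and $\Delta'$ agree level by level, which amounts to redoing that induction.
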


The strategy of the proof is from~\cite[(3.5.8)]{bushnellKutzko:93}.

\begin{proof}
 The proof is done by induction on~$n-r$. Let us suppose that not both sets~$\C(\Delta)$ and~$\C(\Delta')$ are singletons, because otherwise the 
 statement is trivial, in particular we are in the case of 
 non-null strata and~$r<\lfloor \frac{n}{2}\rfloor$. 
 By induction hypothesis we can assume that~$\C(\Delta(1+))$ is equal to~$\C(\Delta'(1+))$. Let~$\tilde{\theta}$ be an element of~$\C(\Delta)\cap\C(\Delta')$, and take 
 core approximations~$\tilde{\Delta}$ and~$\tilde{\Delta}'$ of~$\Delta$ and~$\Delta'$, respectively. We have to show that~$\C(\Delta)$ is contained in~$\C(\Delta')$. 
 So let~$\theta$ be an element of~$\C(\Delta)$. Then there are elements~$b\in\mf{a}_{-r-1}$ and~$\theta'\in\C(\Delta')$ such that~$\theta$ is equal to~$\theta'\psi_b$.
 This is possible, because the restriction map from~$\C(\Delta')$ to~$\C(\Delta'(1+))$ is surjective. 
 Then the qoutients~$\theta/\tilde{\theta}$ and~$\theta'/\tilde{\theta}$ are intertwined by the intertwining of every element of the non-empty intersection~$C(\tilde{\Delta})\cap\C(\tilde{\Delta}')$
 by Lemma~\ref{lemBK3.5.7}. Thus the character
 \[\psi_b=\theta/\tilde{\theta} (\theta'/\tilde{\theta})^{-1}\]
 is intertwined by~$C_{A^\times}(\tilde{\beta}')$. And by~\cite[Lemma 2.10]{secherreStevensVI:12} we obtain that~$s_{\tilde{\beta}'}(b)$ is congruent to an element 
 of~$\tilde{E}'$  modulo~$\mf{a}_{-r}$ and thus the restriction of~$\psi_b$ to~$\P(j_{\tilde{E}'}(\Lambda))$ factorizes through the reduced norm. 
 Thus the restriction of~$\phi:=\theta\psi_{\tilde{\beta}'-\beta'}$ to~$\P(j_{\tilde{E}'}(\Lambda))$ factorizes through the reduced norm. 
 Secondly,~$\phi|_{H(\Delta(1+))}$ coincides with~$\theta'(1+)\psi_{\tilde{\beta}'-\beta'}$ and thus an element of~$\C(\tilde{\Delta}'((\tilde{r}'-r-1)-))$ 
 and thus~$\phi$ is an element of~$\C(\tilde{\Delta}'((\tilde{r}'-r)-))$ because~$r<\lfloor \frac{-k_0(\tilde{\Delta}')}{2}\rfloor$. 
 Thus~$\theta$ is an element of 
 \[\psi_{\beta'-\tilde{\beta}'}\C(\tilde{\Delta}'((\tilde{r}'-r)-))=\C(\Delta').\]
\end{proof}

\begin{lemma}[see \cite{broussousSecherreStevens:12}~Lemma~4.12 for the case of simple Strata, see also~\cite{bushnellKutzko:93}~(3.5.9) for the split simple case]\label{lemSemisimplecharacterDeterminesTheGroupOneLevelLower}
 Suppose that~$\C(\Delta)$ and~$\C(\Delta')$ coincide and~$r\geq 1$. Then~$H(\Delta(1-))$ is equal to~$H(\Delta'(1-))$.
\end{lemma}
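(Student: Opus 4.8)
The plan is to recover the group $H(\Delta(1-))=H^{r}(\beta,\Lambda)$ from the set $\C(\Delta)$, reducing the semisimple case to the simple case, which is \cite[4.12]{broussousSecherreStevens:12}. If $n=r$ both strata are zero‑strata and $H(\Delta(1-))=P_{n-1}(\Lambda)=H(\Delta'(1-))$, so assume $n>r$. Since the characters in $\C(\Delta)=\C(\Delta')$ are defined on $H(\Delta)$, respectively $H(\Delta')$, these two groups coincide, i.e. $H^{r+1}(\beta,\Lambda)=H^{r+1}(\beta',\Lambda)$; intersecting with $P_{j}(\Lambda)$ gives $H^{j}(\beta,\Lambda)=H^{j}(\beta',\Lambda)$ for every $j\geq r+1$, and restriction of characters gives $\C(\Delta(j+))=\C(\Delta'(j+))$ for all $j\geq 0$. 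Thus the whole content lies in going one step further down, from level $r+1$ to level $r$. By Corollary~\ref{corConjugateAssSplittingsOfStrataWithSameSemisimpleCharacters} I may conjugate $\Delta$ by an element of $S(\Delta)$ — which normalizes $\C(\Delta)$ as well as every group $H^{j}(\beta,\Lambda)$ — to arrange that $\Delta$ and $\Delta'$ have the same associated splitting $(V^{i})_{i\in I}$, so that $\Lambda^{i}:=\Lambda\cap V^{i}$ is the same for both; this conjugation does not affect the asserted equality.

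\textbf{Reducing to blocks.} Apply the Iwahori decomposition of Proposition~\ref{propIwahoriSt05no5.5} to $H^{r}(\beta,\Lambda)$ and to $H^{r}(\beta',\Lambda)$ with respect to $\ti{U}_{-}\ti{M}\ti{U}_{+}$ attached to the common splitting (whose idempotents commute with $\beta$ and with $\beta'$); it then suffices to match the $\ti{M}$‑components and the $\ti{U}_{\pm}$‑components. The $\ti{M}$‑component of $H^{r}(\beta,\Lambda)$ is $\prod_{i\in I}H^{r}(\beta_{i},\Lambda^{i})=\prod_{i\in I}H(\Delta_{i}(1-))$. The block restriction $\C(\Delta)\to\C(\Delta_{i})$ is surjective, and a semisimple character is determined by its block restrictions, so $\C(\Delta_{i})=\{\theta_{i}:\theta\in\C(\Delta)\}=\{\theta'_{i}:\theta'\in\C(\Delta')\}=\C(\Delta'_{i})$; since $\Delta_{i},\Delta'_{i}$ are simple strata on $V^{i}$ with $\Lambda^{i}=\Lambda'^{i}$ and $r=r'$, \cite[4.12]{broussousSecherreStevens:12} gives $H(\Delta_{i}(1-))=H(\Delta'_{i}(1-))$. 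Hence the $\ti{M}$‑components agree.

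\textbf{The off-diagonal part.} Write $H^{r}(\beta,\Lambda)=1+\mf{h}^{r}(\beta,\Lambda)$, where by Proposition~\ref{propIwahoriSt05no5.5} this lattice is split by the associated splitting. For $i\neq j$ the $(i,j)$‑block of $\mf{h}^{r}(\beta,\Lambda)$ agrees with the $(i,j)$‑block of $\mf{h}^{r}$ of the two‑block semisimple sub‑stratum $\Delta_{i}\oplus\Delta_{j}$, which is governed only by $\Lambda^{i}\oplus\Lambda^{j}$ and the critical exponent $k_{0}(\Delta_{i}\oplus\Delta_{j})$. By the Iwahori decomposition for the coarser splitting $V^{i}\oplus V^{j}$ and surjectivity of the ensuing restriction map, $\C(\Delta_{i}\oplus\Delta_{j})=\C(\Delta'_{i}\oplus\Delta'_{j})$, so $k_{0}(\Delta_{i}\oplus\Delta_{j})=k_{0}(\Delta'_{i}\oplus\Delta'_{j})$ by Lemma~\ref{lemJumpseq}. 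Therefore the off‑diagonal blocks of $\mf{h}^{r}(\beta,\Lambda)$ and of $\mf{h}^{r}(\beta',\Lambda)$ coincide, i.e. $H^{r}(\beta,\Lambda)\cap\ti{U}_{\pm}=H^{r}(\beta',\Lambda)\cap\ti{U}_{\pm}$. Assembling the $\ti{M}$‑ and $\ti{U}_{\pm}$‑components through the Iwahori decomposition yields $H^{r}(\beta,\Lambda)=H^{r}(\beta',\Lambda)$.

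\textbf{Main obstacle.} The delicate point is the identification of the off‑diagonal blocks of $\mf{h}^{r}(\beta,\Lambda)$: one must show, presumably by strata/character induction along a defining sequence split by the associated splitting and via the core‑approximation formulas underlying Lemma~\ref{lemBK3.5.7}, that these blocks depend only on $\Lambda^{i}\oplus\Lambda^{j}$ and on $k_{0}(\Delta_{i}\oplus\Delta_{j})$. Secondary checks are that conjugation by $S(\Delta)$ indeed preserves the lower group $H^{r}(\beta,\Lambda)$ and not merely $H^{r+1}(\beta,\Lambda)$, and that the hypotheses of \cite[4.12]{broussousSecherreStevens:12} hold verbatim in the present generality (in particular that $\Delta_i$ and $\Delta_i'$ have the same embedding type, which follows from $\C(\Delta_i)=\C(\Delta'_i)$).
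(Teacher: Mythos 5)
Your reduction of the Levi part is sound: with the splittings aligned via Corollary~\ref{corConjugateAssSplittingsOfStrataWithSameSemisimpleCharacters}, blockwise equality $\C(\Delta_i)=\C(\Delta'_i)$ follows from surjectivity of the block restrictions, and the simple case \cite[4.12]{broussousSecherreStevens:12} gives $H(\Delta_i(1-))=H(\Delta'_i(1-))$. Moreover, in the range $r\leq\lfloor\frac{-k_0}{2}\rfloor$ your whole strategy works, because there $H(\Delta(1-))=H(\Delta)\prod_i H(\Delta_i(1-))$, so the off-diagonal part at level $r$ is already contained in $H(\Delta)=H(\Delta')$. The genuine gap is exactly the step you yourself flag as the main obstacle: for $r>\lfloor\frac{-k_0}{2}\rfloor$ you assert that the $(i,j)$-block of $\mf{h}^{r}(\beta,\Lambda)$ is governed only by $\Lambda^i\oplus\Lambda^j$ and $k_0(\Delta_i\oplus\Delta_j)$, and you deduce equality of the off-diagonal blocks solely from equality of critical exponents (Lemma~\ref{lemJumpseq}). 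This is not true and cannot be repaired by those invariants alone: in that range the off-diagonal part of $\mf{h}^{r}$ is built recursively from a defining sequence and contains pieces of the centralizers $\mf{b}_{\gamma}$ of intermediate approximations $\gamma$ at intermediate levels, and these lattices depend on the whole defining sequence (degrees of the $\gamma$'s, the levels at which blocks merge), not merely on the lattice sequence and $k_0$. Two two-block semisimple strata with the same $\Lambda$ and the same critical exponent can have different $\mf{h}^{r}$; what forces equality is the coincidence of the character sets, which your argument discards at precisely this point. Invoking the lemma itself for the pairs $\Delta_i\oplus\Delta_j$, $\Delta'_i\oplus\Delta'_j$ (which do have equal character sets) would of course be circular.

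The paper closes this hole differently: it distinguishes the two ranges of $r$. For $r\leq\lfloor\frac{-k_0}{2}\rfloor$ it argues essentially as you do on the Levi part; for $r>\lfloor\frac{-k_0}{2}\rfloor$ it takes core approximations $\ti{\Delta},\ti{\Delta}'$ of $\Delta(1-),\Delta'(1-)$, notes $\C(\ti{\Delta})=\C(\ti{\Delta}')$ and $H(\Delta(1-))=H^{r}(\ti{\beta},\Lambda)$, and then recovers the group from the characters themselves: starting from the intertwining formula of a common character, taking closures, intersecting with $\mf{a}_{r}(\Lambda)$ and taking additive closures yields $\mf{a}_{r}(j_{\ti{E}}(\Lambda))+S(\ti{\Delta})-1=\mf{a}_{r}(j_{\ti{E}'}(\Lambda))+S(\ti{\Delta}')-1$, and adding $\mf{h}(\Delta)=\mf{h}(\Delta')$ gives $\mf{h}(\Delta(1-))=\mf{a}_{r}(j_{\ti{E}}(\Lambda))+\mf{h}(\Delta)=\mf{h}(\Delta'(1-))$, following \cite[3.5.9]{bushnellKutzko:93}. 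Some use of the characters (via their intertwining sets or an induction of this kind), not just of $k_0$ and the lattice, is indispensable for the off-diagonal step, so as written your proof has a gap in the case $r>\lfloor\frac{-k_0}{2}\rfloor$.
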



\begin{proof}
 The proof is done by induction on~$r$. We can assume that~$\Delta$ and~$\Delta'$ have the same associated splitting, by 
 Corollary~\ref{corConjugateAssSplittingsOfStrataWithSameSemisimpleCharacters}, and we take for both strata the same indexing. We also write~$k_0$ for both critical
 exponents, which agree by Lemma~\ref{propEqualDegreesForIntersectingSetsOfSimpleCharacters}. At first let us assume that~$r$ is not greater than~$\lfloor \frac{-k_0}{2}\rfloor$.
 Then we have the following equalities
 \[H(\Delta(1-))=H(\Delta)(\prod_iH(\Delta_i(1-)))\text{ and }H(\Delta'(1-))=H(\Delta')(\prod_iH(\Delta'_i(1-))).\]
 From the equality of~$\C(\Delta)$ with~$\C(\Delta')$ we obtain a block wise equality, i.e.~$\C(\Delta_i)$ and~$\C(\Delta'_i)$ coincide.
 The already proven simple case, see~{\it loc.cit.}, implies that~$H(\Delta_i(1-))$ is equal to~$H(\Delta'_i(1-))$ for all indexes~$i\in I$ and thus
 we obtain the desired equality of~$H(\Delta(1-))$ with~$H(\Delta'(1-))$. 
 Secondly, we need to consider the case~$r>\lfloor \frac{-k_0}{2}\rfloor$. The proof is the same as in~\cite[(3.5.9)]{bushnellKutzko:93}
 using the jump sequences, which agree by Lemma~\ref{lemJumpseq}, and core approximations. Let us repeat the argument.
 We take core approximations~$\tilde{\Delta}$ and~$\tilde{\Delta}'$ for~$\Delta(1-)$ and~$\Delta'(1-)$, respectively. We have
 that the group~$H(\Delta(1-))$ and~$H^{r}(\tilde{\beta},\Lambda)$ coincide, by~\cite[Proposition 9.1(ii)]{skodlerackStevens:18} and restriction
 to~$G$. Then~$\tilde{r}>r$ because~$\Delta$ is semisimple. Thus, the sets of characters~$\C(\tilde{\Delta})$ and~$\C(\tilde{\Delta}')$ coincide. 
 The~$p$-adic closure of the intertwining of a semisimple character in terms of~$\tilde{\Delta}$ and~$\tilde{\Delta}'$, respectively, followed by an intersection with~$\mf{a}_{r}(\Lambda)$ and an additive closure implies 
 \[\tilde{\mf{b}}_{r}+\mf{m}(\ti{\Delta})\tilde{\mf{b}}_r+\mf{j}^{\lfloor\frac{-k_0(\tilde{\Delta})+1}{2}\rfloor}(\tilde{\beta},\Lambda)\tilde{\mf{b}}_{r}=\tilde{\mf{b}}'_{r}+\mf{m}(\ti{\Delta}')\tilde{\mf{b}}'_r+\mf{j}^{\lfloor\frac{-k_0(\tilde{\Delta}')+1}{2}\rfloor}(\tilde{\beta}',\Lambda)\tilde{\mf{b}}'_{r}.\]
 We add on both sides~$\mf{h}(\Delta)$, which is equal to~$\mf{h}(\Delta')$, and we obtain
 \[\mf{h}(\Delta(1-))=\mf{h}^{r}(\tilde{\beta},\Lambda)=\tilde{\mf{b}}_{r}+\mf{h}(\Delta)=\tilde{\mf{b}}'_{r}+\mf{h}(\Delta')=\mf{h}(\Delta'(1-)).\]
 It is worth to mention why~$\mf{m}(\ti{\Delta})\tilde{\mf{b}}_r$ is contained in~$\mf{h}(\Delta)$: We apply~\cite[Lemma 9.3]{skodlerackStevens:18} using that
 \[\tilde{r}-r<\tilde{r}-\left\lfloor\frac{\tilde{r}}{2}\right\rfloor=\left\lfloor\frac{\tilde{r+1}}{2}\right\rfloor\leq\left\lfloor\frac{-\ti{k}_0+1}{2}\right\rfloor \]
 where the first inequality is imlied by~$r-1\geq \left\lfloor\frac{\ti{r}}{2}\right\rfloor$.
 This finishes the proof.
\end{proof}

\begin{lemma}\label{lemCDeltaPsia}
 Suppose that~$V=\bigoplus_kV^k$ is a splitting which refines the associated splittings of~$\Delta$ and~$\Delta'$, suppose that~$\C(\Delta(1+))$ is equal to~$\C(\Delta'(1+))$ and that~$\C(\Delta_k)$ is equal to~$\C(\Delta'_k)$ for all indexes~$k$. Let~$a\in\mf{a}_{-r-1}\cap\prod_k\End_D(V^k)$,~$\theta\in\C(\Delta)$
 and~$\theta'\in\C(\Delta')$ be given such that~$\theta_k$ coincides with~$\theta'_k\psi_{a_k}$ for all indexes~$k$. 
 Then~$\C(\Delta)=\C(\Delta')\psi_a$. 
\end{lemma}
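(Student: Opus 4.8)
The plan is to reduce the asserted equality of sets to a single character coincidence together with one application of Proposition~\ref{propEqualityByNonTrivalIntersection}. Throughout I may assume $r<n$, since for $r=n$ both $\Delta$ and $\Delta'$ are zero-strata and there is nothing to prove. First I would establish that the two sides live on a common group and that $\psi_a$ is a character of it: applying Lemma~\ref{lemSemisimplecharacterDeterminesTheGroupOneLevelLower} to the pair $\Delta(1+),\Delta'(1+)$, whose sets of semisimple characters agree by hypothesis and which have $r+1\ge 1$, gives $H(\Delta)=H(\Delta')$; moreover $H(\Delta(1+))=H(\Delta'(1+))$ and $H(\Delta_k)=H(\Delta'_k)$ are already forced by the hypotheses. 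Since $a\in\mf{a}_{-r-1}$ and $H(\Delta)\subseteq P_{r+1}(\Lambda)$, the cocycle term $a\,(x-1)(y-1)$ for $x,y\in H(\Delta)$ lies in $\mf{a}_{r+1}\subseteq\mf{a}_1\subseteq\ker\psi_A$, so $\psi_a$ is indeed a character of $H(\Delta)=H(\Delta')$. Because $a$ is block-diagonal for the splitting $(V^k)$, which refines the associated splittings of both strata, one has $\psi_a|_{H(\Delta_k)}=\psi_{a_k}$ for every $k$, $\psi_a$ is trivial on $H(\Delta(1+))$, and $\psi_a$ is trivial on the unipotent parts $H(\Delta)\cap\tU_\pm$ of the Iwahori decomposition attached to $(V^k)$.

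Next I would prove the single coincidence $\theta=\theta'\psi_a$. By Proposition~\ref{propIwahoriSt05no5.5} applied to $\Delta$ and to $\Delta'$ with the splitting $(V^k)$, both $\theta$ and $\theta'\psi_a$ are characters of $H(\Delta)=H(\Delta')$ that are trivial on $H(\Delta)\cap\tU_\pm$, hence each is determined by its restriction to $H(\Delta)\cap\tM=\prod_k H(\Delta_k)$; there they agree, since $(\theta'\psi_a)|_{H(\Delta_k)}=\theta'_k\psi_{a_k}=\theta_k=\theta|_{H(\Delta_k)}$ by hypothesis. Thus $\theta=\theta'\psi_a$, and restricting to $H(\Delta(1+))$ (where $\psi_a$ is trivial) also yields $\theta(1+)=\theta'(1+)$. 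In particular $\theta\in\C(\Delta)\cap\bigl(\C(\Delta')\psi_a\bigr)$, so the two sets $\C(\Delta)$ and $\C(\Delta')\psi_a$ have nonempty intersection.

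Finally I would upgrade this to equality of sets by recognising $\C(\Delta')\psi_a$ as the set of semisimple characters of a semisimple stratum $\tilde\Delta=[\Lambda,n,r,\tilde\beta]$ with the same $\Lambda,r,n$: once this is known, $\C(\Delta)$ and $\C(\tilde\Delta)=\C(\Delta')\psi_a$ share the element $\theta$, and Proposition~\ref{propEqualityByNonTrivalIntersection} forces $\C(\Delta)=\C(\Delta')\psi_a$. To build $\tilde\Delta$ I would lift the class of $a$ in $\mf{a}_{-r-1}/\mf{a}_{-r}$ to an element $a_0\in\mf{a}_{-r-1}$ and take $\tilde\beta$ to be a semisimple representative of $\beta'+a_0$; the mechanism $\theta'\mapsto\theta'\psi_{a_0}$ carrying $\C(\Delta')$ bijectively onto $\C(\tilde\Delta)$ is exactly the top-level step in the proof of Proposition~\ref{propEqualityByNonTrivalIntersection} (via the criterion of \cite{secherreStevensVI:10} governing which twists remain semisimple). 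The input one needs is that $s_{\beta'}(a)$ is congruent modulo $\mf{a}_{-r}$ to an element of $E'$; here the block hypotheses $\C(\Delta_k)=\C(\Delta'_k)$, together with $\theta=\theta'\psi_a\in\C(\Delta)$ and $\theta(1+)=\theta'(1+)$, are used to transport the corresponding statement for $E$ (valid because $\theta\in\C(\Delta)$) through the refinement $(V^k)$ to $E'$ — equivalently, to identify the top-level ``difference groups'' $X_0(\Delta)$ and $X_0(\Delta')$ inside the characters of $H(\Delta)/H(\Delta(1+))$.

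I expect this last identification to be the main obstacle: one must reconcile the associated splittings of $\Delta$ and $\Delta'$ (and the fields attached to their blocks) at the top level using only that $(V^k)$ refines both and that the block-wise and one-level-up character sets coincide; this is a Wedderburn- and residue-field bookkeeping argument in the spirit of Corollary~\ref{corConjugateAssSplittingsOfStrataWithSameSemisimpleCharacters} and Proposition~\ref{propEqualDegreesForIntersectingSetsOfSimpleCharacters}, with the subtlety that the two associated splittings need not coincide as partitions of the pieces $V^k$. Everything else is routine manipulation with Iwahori decompositions, the torsor structure of the sets $\C(-)$, and the cited results. An alternative for the last step is to extend $\theta,\theta'$ compatibly to $\theta_L\in\C(\Delta\otimes L)$ and $\theta'_L\in\C(\Delta'\otimes L)$, invoke the split-case analogue over $L$, and descend along $\Gal(L|F)$ by the usual cohomology-vanishing argument (Lemma~\ref{lemCohom}), provided that analogue is available in the required generality.
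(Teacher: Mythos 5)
Your first two steps are sound and agree with what the paper does implicitly: $H(\Delta)=H(\Delta')$ via Lemma~\ref{lemSemisimplecharacterDeterminesTheGroupOneLevelLower}, $\psi_a$ is a character of this group, and the Iwahori decomposition (Proposition~\ref{propIwahoriSt05no5.5}) together with the blockwise hypothesis gives $\theta=\theta'\psi_a$, so $\C(\Delta)\cap\C(\Delta')\psi_a\neq\emptyset$. The problem is the third step, which is the actual content of the lemma and which you yourself flag as "the main obstacle" without resolving it: you never prove that $\C(\Delta')\psi_a$ is of the form $\C(\tilde\Delta)$ for a stratum $\tilde\Delta=[\Lambda,n,r,\tilde\beta]$ equivalent to a \emph{semisimple} one, and without that Proposition~\ref{propEqualityByNonTrivalIntersection} cannot be applied. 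Your proposed input --- that $s_{\beta'}(a)$ be congruent to an element of $E'$ modulo $\mf{a}_{-r}$, to be obtained by transporting the analogous statement for $E$ through the refinement $(V^k)$ and reconciling the two associated splittings at the top level --- is only a sketch, and it is aimed at the wrong level: comparing $E$ and $E'$ (or their splittings) at the top is precisely the hard comparison that the hypotheses do not hand you directly.

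The paper avoids this entirely by working one step down the defining sequence of $\Delta'$ and invoking strata induction. It sets $\Delta''=[\Lambda,n,r,\beta'+a]$ and shows $\Delta''$ is equivalent to a semisimple stratum via Theorem~\ref{thmDerived}: taking a tame corestriction $s'$ with respect to $\gamma=\beta'(1)$, the derived stratum of $\Delta''$ has entry $s'(\beta'+a-\beta'(1))$, and $[j_{E'(1)}(\Lambda),r+1,r,s'(\beta'-\beta'(1))]$ is already equivalent to a semisimple multi-stratum because $\Delta'$ is semisimple; so one only needs $s'(a)$ to be congruent to an element of $\prod_k 1^kE'(1)$, and \emph{this} follows directly from the blockwise character hypothesis (the restrictions of $\theta'_k$ and of $\theta_k=\theta'_k\psi_{a_k}$, both lying in $\C(\Delta'_k)$, differ on the relevant subgroup by $\psi_{s'(a_k)}$, which therefore factorizes through data of $E'(1)_k$), with no comparison of the top-level splittings of $\Delta$ and $\Delta'$ required. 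Then $\C(\Delta'')=\C(\Delta')\psi_a$ meets $\C(\Delta)$, and Proposition~\ref{propEqualityByNonTrivalIntersection} finishes. Unless you carry out your "Wedderburn- and residue-field bookkeeping" in detail (which the paper's route shows is unnecessary), your argument has a genuine gap exactly at the point where the hypotheses $\C(\Delta_k)=\C(\Delta'_k)$ and $\C(\Delta(1+))=\C(\Delta'(1+))$ must be converted into semisimplicity of the twisted stratum.
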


\begin{proof}
The group $H(\Delta)$ is the same as~$H(\Delta')$ by Proposition~\ref{lemSemisimplecharacterDeterminesTheGroupOneLevelLower}.
We show that~$\Delta''=[\Lambda,n,r,\beta'+a]$ is equivalent to a semisimple stratum. Then~$\C(\Delta'')$ is equal to~$\C(\Delta')\psi_a$ and it has a non-trivial intersection 
with~$\C(\Delta)$ and Proposition~\ref{propEqualityByNonTrivalIntersection} would finish the proof. 
Now let~$s'$ be a tame corestiction on~$A$ relative to~$E'(1)=F[\beta'(1)]|F$. Then, for every~$k$, the stratum~$[j_{1^kE'(1)}(\Lambda^k),r+1,r,1^ks'(\beta'+a-\beta'(1))1^k]$ is equivalent to a 
semisimple stratum, because~$[j_{1^k E'(1)}(\Lambda^k),r+1,r,1^k s'(\beta'-\beta'(1))1^k]$ is and~$s'(a_k)$ is congruent to an element of~$1^k E'(1)$ 
because~$\psi_{\beta'(1)_k-\beta'_k}\theta'_k\psi_{a_k}\in\C(\Delta'(1)(1-)_k)$. Thus,~$\Delta''$ is equivalent to a semisimple stratum by Proposition~\ref{propDerived}.
\end{proof}

\begin{corollary}\label{corDeltakNonTrivialIntersec}
 Suppose that~$V=\bigoplus_kV^k$ is a splitting which refines the associated splittings of~$\Delta$ and~$\Delta'$, and suppose that there are semisimple characters~$\theta\in\C(\Delta)$ and~$\theta'\in\C(\Delta')$ such that for every index~$k$ the characters~$\theta_k$ and~$\theta'_k$ coincide. Then~$\C(\Delta)$ is equal 
 to~$\C(\Delta')$. 
\end{corollary}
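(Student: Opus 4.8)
The statement is the case $a=0$ of Lemma~\ref{lemCDeltaPsia}: with that choice all the characters $\psi_{a_k}$ are trivial, the hypothesis ``$\theta_k$ coincides with $\theta'_k\psi_{a_k}$'' reduces to the hypothesis ``$\theta_k=\theta'_k$'' of the corollary, and the conclusion ``$\C(\Delta)=\C(\Delta')\psi_a$'' reduces to ``$\C(\Delta)=\C(\Delta')$''. So the plan is simply to verify the two remaining hypotheses of Lemma~\ref{lemCDeltaPsia}, namely $\C(\Delta_k)=\C(\Delta'_k)$ for every index $k$, and $\C(\Delta(1+))=\C(\Delta'(1+))$.

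The first of these is immediate. Because $(V^k)_k$ refines the associated splittings of $\Delta$ and $\Delta'$, Proposition~\ref{propIwahoriSt05no5.5} gives $H(\Delta)$ and $H(\Delta')$ Iwahori decompositions with respect to the Levi attached to $(V^k)_k$; under these the block restrictions $\theta_k$ and $\theta'_k$ are characters of $H(\Delta_k)$ and $H(\Delta'_k)$ lying in $\C(\Delta_k)$ and $\C(\Delta'_k)$, where $\Delta_k=\Delta|_{V^k}$ and $\Delta'_k=\Delta'|_{V^k}$ are simple strata on $V^k$ (direct summands of the simple blocks of $\Delta$, $\Delta'$; cf. Corollary~\ref{corEndoEquivalenceForSimpleStrata}\ref{corEndoEquivalenceForSimpleStrataAss.iii}) with common lattice sequence $\Lambda^k$ and common $r=r'$. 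Since $\theta_k=\theta'_k$, the intersection $\C(\Delta_k)\cap\C(\Delta'_k)$ is non-empty, so $\C(\Delta_k)=\C(\Delta'_k)$ by Proposition~\ref{propEqualityByNonTrivalIntersection}.

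The second hypothesis I would obtain by induction on $n-r$. When $n=r$ both strata are zero-strata, $H(\Delta)=P_{r+1}(\Lambda)=H(\Delta')$, and $\C(\Delta)=\C(\Delta')$ is the singleton containing only the trivial character, so the corollary holds outright. When $n>r$: the associated splitting of $\Delta(1+)$ (that is, of a semisimple stratum $\Delta(1)$ in a defining sequence equivalent to $\Delta(1+)$) is a coarsening of the associated splitting of $\Delta$ by Definition~\ref{defDefiningSequence}, hence $(V^k)_k$ still refines it, and likewise for $\Delta'(1+)$; and restriction to a block commutes with restriction to the group one level higher, so $\theta(1+)_k=\theta_k(1+)=\theta'_k(1+)=\theta'(1+)_k$ for all $k$. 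Thus the pair $\Delta(1+),\Delta'(1+)$ (with invariants $n$ and $r+1$, so with strictly smaller $n-r$) meets the hypotheses of the corollary, and the inductive hypothesis yields $\C(\Delta(1+))=\C(\Delta'(1+))$. Feeding both hypotheses into Lemma~\ref{lemCDeltaPsia} with $a=0$ concludes.

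The only delicate point, and the one I would write out carefully, is the bookkeeping with refinements: checking that $(V^k)_k$ stays a refinement of the associated splitting — with idempotents still commuting with the relevant $\beta$ — after passing from $\Delta$ to $\Delta(1+)$, and that the identifications $H(\Delta)\cap\GL_D(V^k)=H(\Delta_k)$ and $\theta|_{H(\Delta_k)}=\theta_k$ are preserved along the way. Everything else is formal once Lemma~\ref{lemCDeltaPsia} is in place.
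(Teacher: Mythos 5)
Your proposal is correct and is exactly the paper's argument: the paper proves the corollary "inductively from Lemma~\ref{lemCDeltaPsia} with $a=0$", i.e. the same induction on $n-r$ you carry out, with the blockwise hypothesis $\C(\Delta_k)=\C(\Delta'_k)$ supplied (as you do) by the non-empty intersection $\theta_k=\theta'_k$ together with Proposition~\ref{propEqualityByNonTrivalIntersection}. Your extra care about the refinement surviving the passage to $\Delta(1+)$ and about compatibility of block restrictions is exactly the bookkeeping the paper leaves implicit.
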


\begin{proof}
 This follows inductively from Lemma~\ref{lemCDeltaPsia} with~$a=0$. 
\end{proof}
%
%
%

\subsection{Matching theorem}

In this section we are given two semisimple strata~$\Delta$ and~$\Delta'$ with~$r=r'$ and~$n=n'$ and the same period. 

\begin{proposition}\label{propDiagonalizationSemisimpleCharacters}
Suppose that~$\Delta$ and~$\Delta'$ have the same associated splitting and suppose~$\theta\in\C(\Delta)$ and~$\theta'\in\C(\Delta')$ intertwine by an element 
of~$(\prod_i A^{ii})^\times$.  
Then there is a stratum~$\Delta''$ with the same splitting as~$\Delta$ such that~$\Lambda''=\Lambda$,
 $n''=n$,~$r''=r$ and~$\C(\Delta)=\C(\Delta'')$ and~$\beta''_i$ has the same minimal polynomial as~$\beta'_i$, and~$\theta=\tau_{\Delta',\Delta''}(\theta')$.  
\end{proposition}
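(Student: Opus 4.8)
The plan is to pass to the associated splitting, reduce to the simple one-block case, and there combine the coincidence of numerical invariants for intertwining simple characters with the intertwining-implies-conjugacy theorem for simple characters.

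\emph{Step 1 (reduction to the blocks).} First I would write $V=\bigoplus_{i\in I}V^i$ for the splitting associated to $\beta$ and $\beta'$ and take $g=\bigoplus_i g_i\in(\prod_i A^{ii})^\times$ intertwining $\theta$ with $\theta'$. By Proposition~\ref{propIwahoriSt05no5.5} the groups $H(\Delta)$ and $H(\Delta')$ have Iwahori decompositions along $(V^i)$ on which $\theta,\theta'$ are trivial off the Levi, so each $g_i$ intertwines the block restriction $\theta_i\in\C(\Delta_i)$ with $\theta'_i\in\C(\Delta'_i)$; after a $\ddagger$-construction (Proposition~\ref{propDdag}) I may assume the $\Lambda^i$ and $\Lambda'^i$ have equal $F$-period and all $r_i=r'_i=r$. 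It then suffices to produce, for each $i$, a simple stratum $\Delta''_i=[\Lambda^i,n''_i,r,\beta''_i]$ on $\Lambda^i$ with $\beta''_i$ of the same minimal polynomial as $\beta'_i$, such that $\C(\Delta_i)=\C(\Delta''_i)$ and $\theta_i=\tau_{\Delta'_i,\Delta''_i}(\theta'_i)$. Granting this, I would set $\Delta'':=\bigoplus_i\Delta''_i$: the $\beta''_i$ inherit pairwise coprime minimal polynomials from the $\beta'_i$, so $\Delta''$ is semi-pure with associated splitting $(V^i)$; it is semisimple because block-wise $\C(\Delta''_i)=\C(\Delta_i)$ while the block restrictions of the honest semisimple character $\theta$ pairwise fail to intertwine, so $\Delta''_{i_1}\oplus\Delta''_{i_2}$ cannot be equivalent to a simple stratum; $\Lambda''=\Lambda$ and $r''=r$ are clear, $n''=\max_i n''_i=\max_i n'_i=n$ once $n''_i=n'_i$; then $\C(\Delta)=\C(\Delta'')$ follows from Corollary~\ref{corDeltakNonTrivialIntersec}, and $\theta=\tau_{\Delta',\Delta''}(\theta')$ since both sides have block restrictions $\theta_i$ and a semisimple character is determined by its block restrictions.

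\emph{Step 2 (the simple case: building $\Delta''_i$).} Dropping the subscript, $\theta\in\C(\Delta)$ and $\theta'\in\C(\Delta')$ are intertwining simple characters with $\Lambda,\Lambda'$ of equal $F$-period and $r=r'$. By Theorem~\ref{thmEqualDegreeFromIntertwining} one has $e(E|F)=e(E'|F)$, $f(E|F)=f(E'|F)$ and $k_0(\Delta)=k_0(\Delta')$, hence $[E':F]=[E:F]$. I would regard $E'=F[\beta']$ inside an algebraic closure of $E$; since the unramified parts $E_D$ and $E'_D$ then coincide, Corollary~\ref{corconsructingEmbeddingsFromNumericalData} provides an $F$-embedding $\phi'\colon E'\hookrightarrow\End_D(V^i)$ with $\phi'(E')^\times$ normalising $\Lambda^i$ and $(\phi'(E'),\Lambda^i)$ equivalent to $(E,\Lambda^i)$. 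Put $\beta'':=\phi'(\beta')$ and $\Delta'':=[\Lambda^i,n'',r,\beta'']$, $n'':=-\nu_{\Lambda^i}(\beta'')$. Then $\Delta''$ is pure, and simple because by~\eqref{eqCriticalExponent}, invariance of $k_0(\,\cdot\,,\mathfrak{p}^{\bbZ})$ under field isomorphism, and the equality of periods,
\[
k_0(\beta'',\Lambda^i)=e(\Lambda^i|F[\beta''])\,k_0(\beta',\mathfrak{p}_{E'}^{\bbZ})=e(\Lambda'^i|E')\,k_0(\beta',\mathfrak{p}_{E'}^{\bbZ})=k_0(\Delta')<-r,
\]
and similarly $n''=-\nu_{\Lambda^i}(\beta'')=-\nu_{\Lambda'^i}(\beta')=n'$; moreover $\Delta''$ has the same embedding type as $\Delta$, with lattice sequence $\Lambda^i$ and parameter $r$.

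\emph{Step 3 (pinning the transfer to $\theta$).} The transfer $\tau_{\Delta',\Delta''}$ is well defined ($\Delta',\Delta''$ have the same degree and group level, and the Skolem--Noether conjugator of $\beta'$ onto $\beta''$ lies in $I(\Delta',\Delta'')$), so set $\phi:=\tau_{\Delta',\Delta''}(\theta')\in\C(\Delta'')$. Since $\theta$ intertwines $\theta'$, $\theta'$ intertwines its transfer $\phi$, and all three have equal degree and group level, transitivity of intertwining for simple characters (Proposition~\ref{propTranstivityOfIntertwiningSimpleCharacters}) gives that $\theta$ and $\phi$ intertwine; as $\Delta$ and $\Delta''$ are simple strata on the same lattice sequence with the same $r$ and the same embedding type, Proposition~\ref{propIntImplConjSimpleChar} yields $u$ in the normaliser of $\Lambda^i$ conjugating $\phi$ to $\theta$. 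Replacing $\beta''$ by $u\beta''u^{-1}$ (still normalising $\Lambda^i$, same minimal polynomial), and using that the transfer commutes with conjugation of the target while a translation of a lattice sequence leaves the square lattice function, $H(\,\cdot\,)$, $\C(\,\cdot\,)$ and $\tau$ unchanged, I may arrange $u=1$, so $\theta=\tau_{\Delta',\Delta''}(\theta')$; then $\theta\in\C(\Delta)\cap\C(\Delta'')$ forces $\C(\Delta)=\C(\Delta'')$ by \cite[4.16]{broussousSecherreStevens:12}. Assembling over $i$ completes the proof. The hard part will be the bookkeeping that keeps all hypotheses simultaneously in force --- the associated splitting fixed, the lattice sequence equal to $\Lambda$, the $F$-periods matched across blocks --- so that Corollary~\ref{corconsructingEmbeddingsFromNumericalData} (to move $\beta'$ onto $\Lambda$ with the right embedding type) and Proposition~\ref{propIntImplConjSimpleChar} (to pin the transfer down to $\theta$ on the nose) are both applicable, together with the verification that the reassembled $\Delta''$ is genuinely semisimple.
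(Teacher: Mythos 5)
Your proposal is correct and follows essentially the same route as the paper's proof: block restriction via the Iwahori decomposition, equality of invariants from Theorem~\ref{thmEqualDegreeFromIntertwining}, construction of~$\beta''_i$ with the right minimal polynomial and embedding type via Corollary~\ref{corconsructingEmbeddingsFromNumericalData}, adjustment by a normalizer element using Proposition~\ref{propIntImplConjSimpleChar}, and the final identification through Corollary~\ref{corDeltakNonTrivialIntersec}. Your extra verifications (simplicity of~$\Delta''_i$ via the critical-exponent formula and semisimplicity of the reassembled~$\Delta''$) only make explicit what the paper leaves implicit, and the~$\ddagger$-step is unnecessary since the strata already share their period in this subsection.
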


\begin{proof}
 We underline at first that~$\Delta_i$ and~$\Delta'_i$ have the same interia degree and the same ramification index by Proposition~\ref{propEqualDegreeFromIntertwining}.
 By Corollary~\ref{corconsructingEmbeddingsFromNumericalData} there are simple strata~$\tilde{\Delta}_i$ with~$\Lambda^i=\tilde{\Lambda}^i$ and $\ti{r}_i=r$ such 
 that~$\ti{\beta}_i$ is a conjugate of~$\beta_i'$ and which has the same embedding type as~$\Delta_i$. Now, by Proposition~\ref{propTranstivityOfIntertwiningSimpleCharacters}, the characters~$\tau_{\Delta'_i,\ti{\Delta}_i}(\theta'_i)$ and~$\theta_i$ intertwine and, by Proposition~\ref{propIntImplConjSimpleChar}, there is an element~$g_i$ of the normalizer 
 of~$\Lambda$ which conjugates~$\tau_{\Delta'_i,\ti{\Delta}_i}(\theta'_i)$ to~$\theta_i$. Define~$\Delta''$ as the direct sum of the strata~$[\Lambda^i,n,r,g_i\ti{\beta}_ig_i^{-1}]$. 
 Then,~$\tau_{\Delta',\Delta''}(\theta')$ and~$\theta$ have the same block restrictions. And thus, by Corollary~\ref{corDeltakNonTrivialIntersec}, the 
 set of semisimple characters~$\C(\Delta)$ and~$\C(\Delta'')$ coincide and~$\theta$ is the transfer of~$\theta'$ from~$\Delta'$ to~$\Delta''$. 
\end{proof}

\begin{theorem}{\textbf{(Translation principle, see~\cite[Theorem 3.3]{secherreStevensVI:12} for simple strata)}}\label{thmTranslation}
 Suppose that~$\tilde{\Delta}$ is a non-null semisimple stratum which has the same associated splitting,~$V=\bigoplus_{j\in J}V^j$, and the same lattice sequence as~$\Delta(1)$, such that~$\tilde{r}=r-1$ and~$\C(\Delta(1))$ and~$\C(\tilde{\Delta})$ coincide.
 Then there is a semisimple stratum~$\Delta'$  and an element~$u\in (1+\mf{m}(\ti{\Delta}))\cap\prod_{\ti{j}}A^{\ti{j},\ti{j}}$ satisfying~$\Delta'(1)=u.\tilde{\Delta}$, such that~$\C(\Delta')$ is equal to~$\C(\Delta)$.  
\end{theorem}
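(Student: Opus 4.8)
The plan is to follow the strategy of the simple translation principle \cite[3.3]{secherreStevensVI:10} together with the mechanism used at the end of the proof of Lemma~\ref{lemCDeltaPsia}: given $\theta\in\C(\Delta)$ one uses the equality $\C(\Delta(1))=\C(\tilde\Delta)$ to carry the last entry $\tilde\beta$ of $\tilde\Delta$ one level down to an element $\beta'$, sets $\Delta':=[\Lambda,n,r,\beta']$, shows that $\Delta'$ is equivalent to a semisimple stratum, and concludes $\C(\Delta')=\C(\Delta)$ from Proposition~\ref{propEqualityByNonTrivalIntersection}. As usual the argument runs by induction on $n-r$: the zero-stratum case is trivial, and by passing to $\Delta(1+)$ and invoking the induction hypothesis one reduces to the case $r\ge\lfloor\frac{-k_0(\Delta)}{2}\rfloor$, so assume this and that $\Delta$, hence $\tilde\Delta$, is non-zero.

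First I would fix $\theta\in\C(\Delta)$ and note that $\theta(1+)\in\C(\Delta(1+))=\C(\Delta(1))=\C(\tilde\Delta)$. Applying Lemma~\ref{lemSemisimplecharacterDeterminesTheGroupOneLevelLower} one level lower to the semisimple strata $\Delta(1+)$ and $\tilde\Delta$, which share the lattice sequence $\Lambda$ and have coinciding sets of semisimple characters, yields $H(\Delta)=H(\tilde\Delta(1-))=:H$ and hence $H^i(\beta,\Lambda)=H^i(\tilde\beta,\Lambda)$ for all $i\ge r+1$. The restriction map $\C(\tilde\Delta(1-))\to\C(\tilde\Delta)$ is surjective by the inductive construction of semisimple characters (cf.\ Proposition~\ref{propDefSemisimpleCharInductiveNonSplit} and Proposition~\ref{propRestrictionMapIsBijective}), so I may pick $\tilde\theta_0\in\C(\tilde\Delta(1-))$ with $\tilde\theta_0|_{H(\Delta(1+))}=\theta(1+)$. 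Then $\theta$ and $\tilde\theta_0$ are characters of $H$ agreeing on $H(\Delta(1+))$, so $\theta/\tilde\theta_0=\psi_b$ for some $b\in\mf{a}_{-r-1}$, well-defined modulo $\mf{a}_{-r}$. Since semisimple characters are trivial on the unipotent parts of the Iwahori decomposition attached to the splitting $(V^{\tilde j})$ of $\tilde\beta$ (Proposition~\ref{propIwahoriSt05no5.5}), I may replace $b$ within its coset by its block-diagonal part, so that $b\in\prod_{\tilde j}A^{\tilde j\tilde j}$. Put $\beta':=\tilde\beta+b$ and $\Delta':=[\Lambda,n,r,\beta']$; then $\Delta'(1+)$ is equivalent to $\tilde\Delta$, which is semisimple with associated splitting $(V^{\tilde j})$.

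Next I would prove that $\Delta'$ is equivalent to a semisimple stratum, copying the last paragraph of the proof of Lemma~\ref{lemCDeltaPsia}. Take $\gamma:=\tilde\beta$ (simple on each block by the block-diagonality of $b$) and a tame corestriction $s$ with respect to $\gamma$; the derived multi-stratum $\partial_\gamma(\Delta')$ then has last entry $s(\beta'-\gamma)=s(b)$. Because $\psi_b=\theta/\tilde\theta_0$ is a quotient of two genuine semisimple characters attached to strata with the same associated one-level-up data, an argument with core approximations exactly as in the proof of Proposition~\ref{propEqualityByNonTrivalIntersection} (via Lemma~\ref{lemBK3.5.7}) shows that $\psi_b$ is intertwined by $C_{A^\times}(\gamma)$, and \cite[2.10]{secherreStevensVI:10} then forces each component $s(b)_{\tilde j}$ to be congruent modulo $\mf{a}_{-r}$ to an element of the field $F[\gamma_{\tilde j}]$, a central subfield of the $\tilde j$-th centralizer block. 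Hence each component of $\partial_\gamma(\Delta')$ is a pure stratum, equivalent to a simple or a zero stratum, so $\partial_\gamma(\Delta')$ is equivalent to a semisimple multi-stratum, and Theorem~\ref{thmDerived} gives that $\Delta'$ is equivalent to a semisimple stratum.

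It remains to read off the conclusion. From $\tilde\theta_0\in\C(\tilde\Delta(1-))$, $\theta=\psi_b\tilde\theta_0$ and $\beta'-\gamma=b$, the inductive definition of semisimple characters shows $\theta\in\C(\Delta')$ — this is precisely the reduction step and uses $r\ge\lfloor\frac{-k_0(\Delta)}{2}\rfloor$; in particular $\C(\Delta)\cap\C(\Delta')\ne\emptyset$, and since $\Delta$ and $\Delta'$ have the same $\Lambda$, $n$ and $r$, Proposition~\ref{propEqualityByNonTrivalIntersection} gives $\C(\Delta')=\C(\Delta)$. Finally, $\Delta'(1+)$ is equivalent to $\tilde\Delta$ and both are semisimple with the same associated splitting, so Corollary~\ref{corDecompAreConjForEqSemisimpleStrata} provides an element $u\in 1+\mf{m}(\tilde\Delta)$, which may be taken in $\prod_{\tilde j}A^{\tilde j\tilde j}$ since the two splittings already coincide, such that $u.\tilde\Delta$ is split by the associated splitting of $\Delta'$ and is equivalent to $\Delta'(1+)$; one then takes $\Delta'(1):=u.\tilde\Delta$. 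I expect the main obstacle to be the third paragraph, namely establishing that $\Delta'$ is equivalent to a semisimple stratum: this requires marrying Theorem~\ref{thmDerived} to the fine factorization property of semisimple characters as a genuine multi-stratum statement, and the attendant bookkeeping — the precise level shifts between $\Delta$, $\Delta(1)$ and $\tilde\Delta$, and the compatible block-diagonal choice of $b$ relative to the corestriction $s$ — is the remaining source of friction.
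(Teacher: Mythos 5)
Your overall shape (produce $\beta'=\tilde\beta+b$ with $b$ read off from a quotient of characters, show $[\Lambda,n,r,\beta']$ is equivalent to a semisimple stratum via Theorem~\ref{thmDerived}, then conclude with Proposition~\ref{propEqualityByNonTrivalIntersection}) is close in spirit to the argument in the paper, but the step you yourself flag as the main obstacle is, as written, a genuine gap, and the mechanism you propose for it does not work. You claim that $\psi_b=\theta/\tilde\theta_0$ is intertwined by $C_{A^\times}(\tilde\beta)$ ``by an argument with core approximations exactly as in the proof of Proposition~\ref{propEqualityByNonTrivalIntersection} (via Lemma~\ref{lemBK3.5.7})''. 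But Lemma~\ref{lemBK3.5.7} and the argument of Proposition~\ref{propEqualityByNonTrivalIntersection} apply to quotients of two characters lying in the \emph{same} set $\C(\Delta)$ (or in two sets with a common element $\tilde\theta$, which is the standing hypothesis there); here $\theta\in\C(\Delta)$ while $\tilde\theta_0\in\C(\tilde\Delta(1-))$, and $\C(\Delta)\cap\C(\tilde\Delta(1-))$ is in general empty --- producing a stratum one level below $\tilde\Delta$ whose character set meets (hence equals) $\C(\Delta)$ is precisely the content of the theorem, so invoking such a common core is circular. This is exactly the point where the paper brings in external input: it first applies the simple translation principle of S\'echerre--Stevens \cite[3.3]{secherreStevensVI:10} to arrange $\C(\Delta(1)(1-))=\C(\tilde\Delta(1-))$, and then uses the comparison of tame corestrictions for $\beta(1)$ and $\tilde\beta$ \cite[3.5(iii),(iv)]{secherreStevensVI:10} to transport the semisimplicity of the derived stratum $\partial_{\beta(1)}(\Delta)$ (known from Theorem~\ref{thmDerived}, with $c=\beta-\beta(1)$ a datum of strata, not of characters) into the statement that $[j_{\tilde E}(\Lambda),r+1,r,\tilde s(c)]$ is equivalent to a semisimple stratum; the correcting element $u$ then comes from Lemma~\ref{lemSimpletameCorConjSplitting}, and the equality of character sets is read off from $\C(\Delta)=\C(\Delta(1)(1-))\psi_c=\C(\tilde\Delta(1-))\psi_c=\C(\Delta')$. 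Without some substitute for these two inputs your third paragraph does not go through.

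There are also secondary points you pass over too quickly. The identification $H(\Delta)=H(\tilde\Delta(1-))$ needs more than Lemma~\ref{lemSemisimplecharacterDeterminesTheGroupOneLevelLower}: that lemma gives $H(\Delta(1)(1-))=H(\tilde\Delta(1-))$, and you still need $H^{r+1}(\beta,\Lambda)=H^{r+1}(\beta(1),\Lambda)$, which is only available in the range $r\ge\lfloor\frac{-k_0(\Delta)}{2}\rfloor$ and should be cited, while your reduction to that range (``pass to $\Delta(1+)$ and invoke the induction hypothesis'') is not a legitimate use of the inductive statement, since the given $\tilde\Delta$ is aligned with $\Delta(1)$ and not with a first member of a defining sequence of $\Delta(1+)$. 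Likewise, membership $\theta\in\C(\Delta')$ requires checking the normalizer and reduced-norm conditions relative to the new algebra $F[\beta']$ (and, in this paper's setting, extendability to $\C(\Delta'\otimes L)$), none of which is automatic from $\theta=\psi_b\tilde\theta_0$; the paper avoids this by working with the set-level identities above and, in the genuinely semisimple case, by running the simple case block-wise and gluing with Corollary~\ref{corConjugateAssSplittingsOfStrataWithSameSemisimpleCharacters} and Lemma~\ref{lemCDeltaPsia}.
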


Theorem~\ref{thmTranslation} has been proven for the case when~$\Delta$ and~$\Delta(1)$ are simple in~\cite[Theorem 3.3]{secherreStevensVI:12} and in the 
split case in~\cite[Theorem 9.16]{skodlerackStevens:18}.

\begin{proof}
 This proof follows the concept of~\cite[Theorem 9.16]{skodlerackStevens:18}. For the start let us firstly treat the case where~$\Delta(1)$ is simple. By 
 Theorem~\ref{thmDiagonalization} we can always replace~$\ti{\Delta}$ by an equivalent stratum. 
 By the simple version of the translation principle, see~{\it loc.cit.}, we can assume that~$\Delta(1)(1-)$ and~$\ti{\Delta}(1-)$ give the same set of simple characters. 
 Let us take tame corestrictions~$s$ and~$\tilde{s}$ on~$A$ relative to~$F[\beta(1)]|F$ and~$F[\beta]|F$, respectively, as in~\cite[Lemma 3.5]{secherreStevensVI:12}.
 The derived stratum~$\partial_{\beta(1),s}(\Delta)$ is equivalent to a semisimple stratum, by Proposition~\ref{propDerived}. Write~$c:=\beta-\beta(1)$. Thus, 
 by Proposition~\ref{propCriteriaFundtoSemisimplStratum} and~\cite[Lemma 3.5(iii)(iv)]{secherreStevensVI:12}, the stratum~$[j_{\tilde{E}}(\Lambda),r,r+1,\tilde{s}(c)]$
 is equivalent to a semisimple stratum, say with splitting,~$(1''^i)_i$, and therefore, by Proposition~\ref{propDerived}, the stratum~$[\Lambda,n,r,\tilde{\beta}+\sum_{i}1''^i c 1''^i ]$ is equivalent to a semisimple stratum~$\Delta''$ with the associated splitting~$(1''^i)_i$ and Lemma~\ref{lemSimpletameCorConjSplitting} provides an element~$v$ of~$1+\mf{m}(\ti{\Delta})$ which conjugates~$\beta''$ to~$\ti{\beta}+c$ modulo~$\mf{a}_{-r}$. We define~$\Delta'$ as~$v.\Delta''$ and we have
 \[\C(\Delta)=\C(\Delta(1)(1-))\psi_c=\C(\ti{\Delta}(1-))\psi_c=\C(\Delta').\]
  The existence of~$u$ follows now from Theorem~\ref{thmDiagonalization}.
 Suppose now that~$\ti{\Delta}$ is not simple. We apply the simple case block-wise to obtain a semisimple stratum~$\Delta'$ and a desired element~$u$ such that there is a defining sequence for~$\Delta'$ with~$u.\ti{\Delta}$ as~$\Delta'(1)$, and such that~$C(\Delta'_{\ti{j}})$ and~$C(\Delta_{\ti{j}})$ coincide. 
 We apply Corollary~\ref{corConjugateAssSplittingsOfStrataWithSameSemisimpleCharacters} block-wise for the simple blocks of~$\ti{\Delta}$ to conjugate the associated 
 splitting of~$\beta$ to the one of~$\beta'$ by an element of~$S(\Delta)\cap\prod_{\ti{j}}A^{\ti{j},\ti{j}}$. For a semisimple character~$\theta$ for~$\Delta$ and an
 extension~$\theta'\in\C(\Delta')$ of~$\theta(1+)$ there is an element~$a\in\mf{a}_{-r-1}\cap\prod_{i'}A^{i',i'}$, such that~$\theta\psi_a$ and~$\theta'$ coincide in~$\C(\Delta_{i'})$,~$i'\in I'$. Now Lemma~\ref{lemCDeltaPsia} finishes the proof. 
\end{proof}

We now state the matching theorem.  

\begin{theorem}\label{thmMatching}
 Suppose~$\theta\in\C(\Delta)$ and~$\theta'\in\C(\Delta')$  are intertwining semisimple characters. Then there is a unique bijection~$\zeta:I\ra I'$ such that 
 \begin{enumerate}
  \item $\dim_DV^i=\dim_DV^{\zeta(i)}$, and 
  \item there is a~$D$-linear isomorphism~$g_i$ from~$V^i$ to~$V^{\zeta(i)}$ such that~$g_i.\theta_i$ and~$\theta'_{\zeta(i)}$ intertwine by a~$D$-linear 
  automorphism of~$V^{\zeta(i)}$,\label{thmMatchingAss2}
 \end{enumerate}
 for all indexes~$i\in I$.
\end{theorem}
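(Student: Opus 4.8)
The plan is to reduce the statement about semisimple characters to the already-established matching theorem for semisimple strata (Proposition~\ref{propMatchingGLDStrata}) together with the intertwining formula (Theorem~\ref{thmTransferAndInt}). The key point is that whenever $\theta$ and $\theta'$ intertwine, the underlying strata $\Delta$ and $\Delta'$ must themselves intertwine, so that the matching $\zeta$ for the strata is available; one then has to check that this $\zeta$ also matches the block restrictions of the characters in the sense required, and that it is the unique such bijection.

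First I would reduce to a convenient normal form. By a $\ddag$-construction (Proposition~\ref{propDdag}) I may assume $\Lambda$ and $\Lambda'$ are principal lattice chains of the same $F$-period; by replacing $\Delta,\Delta'$ with equivalent strata (which does not change $\C(\Delta)$, $\C(\Delta')$, by Remark~\ref{remDefSemisimpleCharacter}) and conjugating, I reduce first to the case $r=r'$ and, using Proposition~\ref{propRestrictionMapIsBijective} to pass to a common group level if necessary, to the situation in the ``Notation'' block preceding Theorem~\ref{thmTransferAndInt}. Now I argue that $\theta \in \C(\Delta)$, $\theta' \in \C(\Delta')$ intertwining forces $\Delta$ and $\Delta'$ to intertwine: if $g \in I(\theta,\theta')$, then since $\psi_\beta \in \C(\Delta)$ up to the congruence subgroup on which the characters are determined by the stratum, the restriction of $g.\theta$ and $\theta'$ to the appropriate level forces $g(\beta+\mf{a}_{-r})g^{-1}$ to meet $\beta'+\mf{a}'_{-r'}$; more precisely this is exactly the ``$I^+$'' computation carried out inside the proof of Theorem~\ref{thmTransferAndInt}, which shows $I(\theta,\theta') \subseteq I^+(\Delta,\Delta')$ and hence $I(\Delta,\Delta') \supseteq$ the relevant coset is nonempty. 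Having this, Proposition~\ref{propMatchingGLDStrata} supplies a unique bijection $\zeta : I \to I'$ with $\Delta_i \oplus \Delta'_{\zeta(i)}$ equivalent to a simple stratum and $\dim_D V^i = \dim_D V'^{\zeta(i)}$, which is assertion~(i).

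Next I would establish assertion~\ref{thmMatchingAss2} for this $\zeta$. Since $\dim_D V^i = \dim_D V'^{\zeta(i)}$, pick any $D$-linear isomorphism $g_i : V^i \to V'^{\zeta(i)}$. By Proposition~\ref{propDiagonalizationSemisimpleCharacters}, applied after conjugating by a single element of $(\prod_i A^{ii})^\times$ realising the matching (available because an intertwining element decomposes via the Iwahori decomposition given by $\beta$, using Proposition~\ref{propIwahoriSt05no5.5} and the fact that semisimple characters are trivial on the unipotent parts), I may replace $\Delta'$ by a stratum $\Delta''$ with the same splitting as $\Delta$, $\C(\Delta') \leftrightarrow \C(\Delta'')$ by transfer, and $\beta''_i$ conjugate to $\beta'_i$. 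Then the intertwining of $\theta$ and $\theta'$ restricts block-wise: $\theta_i$ intertwines with $(\tau_{\Delta',\Delta''}(\theta'))_i$ inside $\GL_D(V^i)$, and via $g_i$ this says $g_i.\theta_i$ intertwines $\theta'_{\zeta(i)}$ by a $D$-linear automorphism of $V'^{\zeta(i)}$. To make the block-wise reading legitimate I would use that both $H(\Delta)$ and $H(\Delta'')$ respect the common Iwahori decomposition and that an intertwining element, after the diagonalization, may be taken in $\prod_i A^{ii}$; this is the standard argument (as in the split case, \cite[Theorem 7.1]{skodlerackStevens:15-1}) that intertwining of semisimple characters ``block-diagonalizes''.

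For uniqueness of $\zeta$: if $\zeta_1, \zeta_2$ both satisfied~(i) and~\ref{thmMatchingAss2}, then for each $i$ both $\Delta_i \oplus \Delta'_{\zeta_1(i)}$ and $\Delta_i \oplus \Delta'_{\zeta_2(i)}$ would be equivalent to simple strata (the character-level intertwining in~\ref{thmMatchingAss2} forces the corresponding block strata to intertwine, via Corollary~\ref{corConjugateAssSplittingsOfStrataWithSameSemisimpleCharacters} together with Theorem~\ref{thmEqualDegreeFromIntertwining} and Corollary~\ref{corEquivalentCriteriaForIntertwining}), whence $\zeta_1(i) = \zeta_2(i)$ by the uniqueness clause of Proposition~\ref{propMatchingGLDStrata}. \emph{The main obstacle} I expect is the careful verification that $\theta \sim \theta'$ genuinely forces $\Delta \sim \Delta'$ at the stratum level in the generality of arbitrary $r$ (not just $r$ small): the clean statement holds once the group levels agree, and one must invoke Proposition~\ref{propRestrictionMapIsBijective} and Corollary~\ref{corEqualDegreesFromIntertwininSameGrouplevelSameDegree} to descend/ascend $r$ without losing control of the blocks, exactly the bookkeeping that occupies the analogous split-case proof.
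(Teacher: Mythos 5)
The core reduction in your proposal has a genuine gap: you claim that the intertwining of the characters $\theta\in\C(\Delta)$ and $\theta'\in\C(\Delta')$ forces the strata $\Delta$ and $\Delta'$ themselves to intertwine, so that Proposition~\ref{propMatchingGLDStrata} can be applied directly. The ``$I^+$'' computation you cite from the proof of Theorem~\ref{thmTransferAndInt} does not give this: there $I^+(\Delta,\Delta')$ is the set of elements intertwining the cosets $\beta+\mf{h}(\Delta)^*$ and $\beta'+\mf{h}(\Delta')^*$, and since $\mf{h}(\Delta)\subseteq\mf{a}_{r+1}$ is much smaller than $\mf{a}_{r+1}$ in the directions transverse to the centralizer, its dual $\mf{h}(\Delta)^*$ is strictly larger than $\mf{a}_{-r}$; membership in $I^+$ is therefore weaker than intertwining of the strata. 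Moreover that computation is carried out under the standing hypotheses of that subsection (the strata already intertwine, and after reduction to $\beta=\beta'$ with $\theta'$ the transfer of $\theta$), so it cannot be turned around to deduce stratum intertwining from character intertwining. The same problem infects the later steps: Proposition~\ref{propDiagonalizationSemisimpleCharacters} assumes the two strata have the same associated splitting and that the characters intertwine by an element of $(\prod_iA^{ii})^\times$, and your justification that such a block-diagonal intertwiner exists (``an intertwining element decomposes via the Iwahori decomposition'') is essentially the assertion of the matching theorem itself; likewise your uniqueness argument feeds character-level intertwining into Corollary~\ref{corEquivalentCriteriaForIntertwining}, which is a statement about strata.

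This is precisely the difficulty the paper's proof is built to avoid. There, existence is proved by induction on $n-r$: the case $n=r+1$ is the stratum result (Proposition~\ref{propMatchingGLDStrata}); in the inductive step one matches $\theta(1+)$ and $\theta'(1+)$, uses Proposition~\ref{propDiagonalizationSemisimpleCharacters} only at that higher level to replace the approximation by one with conjugate $\beta(1)$, invokes the translation principle (Theorem~\ref{thmTranslation}) to arrange $\beta(1)=\beta'(1)$ without ever claiming $\Delta$ and $\Delta'$ intertwine, and then transports the problem to derived characters via Proposition~\ref{lemDerivedCharacters}, where the stratum matching applies. Uniqueness is then obtained not from uniqueness of the stratum matching but from transitivity of intertwining (Proposition~\ref{propTranstivityOfIntertwiningSimpleCharacters}) and the fact that intertwining semisimple characters have the same number of blocks. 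If you want to keep your outline, you would first have to prove (not assume) that intertwining of semisimple characters yields intertwining of suitable strata with the same character sets; that statement is exactly what the induction with the translation principle supplies.
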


The map~$\zeta$ is called a~\emph{matching} for~$\theta$ and~$\theta'$ from~$\Delta$ to~$\Delta'$. 

\begin{proof}
 We prove the existence by an induction along~$n-r$. If~$n=r$ there is nothing to prove, and the case~$n=r+1$ follows from the theory of strata, see 
 Proposition~\ref{propMatchingGLDStrata}. Suppose now that~$n$ is greater than~$r+1$. By induction hypothesis we have a matching for~$\theta(1+)$ 
  and~$\theta'(1+)$ from~$\Delta(1)$ to~$\Delta'(1)$ and by Proposition~\ref{propDiagonalizationSemisimpleCharacters} there is a semisimple stratum~$\ti{\Delta}$ with the same associated splitting as~$\Delta'(1)$ and the same set of semisimple characters such that~$\ti{\beta}$ is conjugate to~$\beta(1)$ by an element of~$G$ such that~$\theta'(1+)$ is the transfer of~$\theta(1+)$ from~$\Delta(1)$ to~$\ti{\Delta}$. We apply the translation theorem~\ref{thmTranslation} to~$(\Delta',\Delta'(1))$ and~$\ti{\Delta}$ to reduce to the case where~$\beta(1)$ and~$\beta'(1)$ are conjugates by an
  element of~$\prod_j A^{j,j}$. Thus we can assume without loss of generality that~$\beta(1)$ and~$\beta'(1)$ coincide. 
 We apply Lemma~\ref{lemDerivedCharacters}\ref{lemDerivedCharactersAss1} then Proposition~\ref{propMatchingGLDStrata} followed by Lemma~\ref{lemDerivedCharacters}\ref{lemDerivedCharactersAss2} to obtain a matching between~$\theta$ and~$\theta'$. 

 We now prove the uniqueness: Because of the existence of a matching~$\zeta$ we can assume by conjugation that~$\zeta$ is the identity and~$V^i=V'^i$ for all indexes~$i\in I$.
 If there is a second matching then we obtain by Proposition~\ref{propTranstivityOfIntertwiningSimpleCharacters} two indexes~$i_1,i_2\in I$ such that
 $\theta_{i_1}$ and~$\theta_{i_2}$ intertwine by an isomorphism from~$V^{i_1}$ to~$V^{i_2}$. And thus the semisimple characters~$\theta|_{V^{i_1}\oplus V^{i_2}}$
 and~$\theta_{i_1}^{\dag, (0,0)}$ (see Proposition~\ref{propDdag}) intertwine. This is a contradiction because two intertwining semisimple characters have the same number of 
 blocks by the existence part. 
\end{proof}

\begin{definition}
 We call the bijection~$\zeta_{\theta,\Delta,\theta',\Delta'}: I\ra I'$ the matching from~$\theta\in\C(\Delta)$ to~$\theta'\in\C(\Delta')$.
\end{definition}

\begin{corollary}\label{corMatchingForTransfers}
 Suppose that~$\theta\in\C(\Delta)$ and~$\theta'\in\C(\Delta')$ are transfers then~$\zeta_{\Delta,\Delta'}=\zeta_{\theta,\Delta,\theta',\Delta'}$.
\end{corollary}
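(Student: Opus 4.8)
The plan is to exploit the transfer hypothesis, which by Definition~\ref{defTransfer} says precisely that $I(\Delta,\Delta')$ is non-empty and contained in $I(\theta,\theta')$; this lets me evaluate both matchings of residue algebras on one and the same group element. First I would recall that $\bar{\zeta}_{\Delta,\Delta'}$ is, by Lemma~\ref{lemMatchingOfResidueAlg} and the notation following it, the isomorphism $\kappa_E\to\kappa_{E'}$ obtained by choosing \emph{any} $g\in I(\Delta,\Delta')$ and transporting along
\[\kappa_E\hookrightarrow\mf{a}_{\Lambda,0}/\mf{a}_{\Lambda,1}\xrightarrow{\ g(\cdot)g^{-1}\ }(g\mf{a}_{\Lambda,0}g^{-1}+\mf{a}_{\Lambda',0})/(g\mf{a}_{\Lambda,1}g^{-1}+\mf{a}_{\Lambda',1})\hookleftarrow\mf{a}_{\Lambda',0}/\mf{a}_{\Lambda',1}\hookleftarrow\kappa_{E'},\]
the content of Lemma~\ref{lemMatchingOfResidueAlg} being that the result is independent of $g$. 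The character matching $\bar{\zeta}_{\theta,\theta'}$ is defined by exactly the same recipe with $g$ ranging over $I(\theta,\theta')$ instead; its independence of the chosen intertwining element is the analogue of Lemma~\ref{lemMatchingOfResidueAlg} and rests on the intertwining formula of Theorem~\ref{thmTransferAndInt}, $I(\theta,\theta')=S(\Delta')\,I(\Delta,\Delta')\,S(\Delta)$, together with the fact that $S(\Delta)\subseteq 1+\mf{a}_{\Lambda,1}$ and $S(\Delta')\subseteq 1+\mf{a}_{\Lambda',1}$ act trivially on the relevant quotients of the form $\mf{a}_0/\mf{a}_1$, so that the induced map is already determined by the middle factor $I(\Delta,\Delta')$.

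Granting this, the corollary is immediate: by the transfer hypothesis $I(\Delta,\Delta')$ is non-empty, so I fix $g\in I(\Delta,\Delta')$; on the one hand $g$ computes $\bar{\zeta}_{\Delta,\Delta'}$ by Lemma~\ref{lemMatchingOfResidueAlg}, and on the other hand $g\in I(\theta,\theta')$, again by Definition~\ref{defTransfer}, so the very same $g$ computes $\bar{\zeta}_{\theta,\theta'}$. Since the two transport maps are literally the same formula in the same ambient quotient, $\bar{\zeta}_{\Delta,\Delta'}=\bar{\zeta}_{\theta,\theta'}$. Here $\kappa_E$ attached to $\theta\in\C(\Delta)$ means the residue algebra of $F[\beta]$ sitting inside $\mf{a}_{\Lambda,0}/\mf{a}_{\Lambda,1}$, so there is no clash of meaning between the stratum side and the character side; Proposition~\ref{propequivalentstratahaveequalResiduefields} shows this is insensitive to the representative of the equivalence class of $\Delta$.

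I do not anticipate a substantial obstacle: the statement is essentially the bookkeeping that makes the two $\bar{\zeta}$-constructions coincide once one observes that the transfer condition nests the intertwining sets. The one point requiring a line of verification is that $S(\Delta)$ and $S(\Delta')$ act trivially on $\mf{a}_{\Lambda,0}/\mf{a}_{\Lambda,1}$ and $\mf{a}_{\Lambda',0}/\mf{a}_{\Lambda',1}$ respectively; this follows from $\mf{m}(\Delta)=\mf{n}_{-r}(\beta,\Lambda)\cap\mf{a}_{\Lambda,-r-k_0(\Delta)}\subseteq\mf{a}_{\Lambda,1}$ and $J^{\lfloor(-k_0(\Delta)+1)/2\rfloor}(\Delta)\subseteq P_1(\Lambda)$, both consequences of $k_0(\Delta)<-r\le 0$ for a non-zero semisimple stratum (the zero-stratum case being trivial, as then $\kappa_E=\kappa_F$ and every $\bar{\zeta}$ is the identity). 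Should $\bar{\zeta}_{\theta,\theta'}$ instead have been set up via block restrictions and the simple-character intertwining-implies-conjugacy result (Proposition~\ref{propIntImplConjSimpleChar}), I would first reduce to the block-wise simple case using that the index matchings agree, $\zeta_{\theta,\theta'}=\zeta_{\Delta,\Delta'}$, by the Matching Theorem~\ref{thmMatching} and Proposition~\ref{propMatchingGLDStrata}, and there use that a transfer of simple characters is intertwined by an element of the normalizer of $\Lambda$ inducing the relevant residue-field isomorphism; the conclusion is unchanged.
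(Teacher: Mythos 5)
Your proposal is correct and follows essentially the same route as the paper: the transfer hypothesis gives a nonempty $I(\Delta,\Delta')\subseteq I(\theta,\theta')$, so a single element $g$ intertwining the strata also intertwines the characters, and since both $\bar{\zeta}_{\Delta,\Delta'}$ and $\bar{\zeta}_{\theta,\theta'}$ are induced by conjugation with any such element (Lemmas~\ref{lemMatchingOfResidueAlg} and~\ref{lemMatchingOfResidueAlgSemisimplechar}) and are independent of that choice, they coincide. The paper merely produces its $g$ via the strata matching (Proposition~\ref{propMatchingGLDStrata}) so that it respects the block decomposition, but this extra specificity is not needed for the conclusion.
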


\begin{proof}
 By Proposition~\ref{propMatchingGLDStrata} there is an element~$g\in\prod_i\End_D(V^i,V'^{\zeta_{\Delta,\Delta'}(i)})$ which intertwines~$\Delta$ with~$\Delta'$ and thus~$\theta$ with~$\theta'$, because they are transfers. 
\end{proof}

\begin{corollary}\label{corEqualDegreesFromIntertwiningSemisimpleCharactersSamer}
 Granted~$e(\Lambda|F)=e(\Lambda'|F)$ and~$r=r'$, if there are two intertwining characters~$\theta\in\C(\Delta)$ and~$\theta'\in\C(\Delta')$ then the strata~$\Delta$ and~$\Delta'$ have the same group level, the same degree
 and the same critical exponent.
 Moreover~$e(E_i|F)=e(E_{\zeta(i)}|F)$ and~$f(E_i|F)=f(E_{\zeta(i)}|F)$ and~$k_0(\Delta_i)=k_0(\Delta'_{\zeta(i)})$ for all indexes~$i$. 
\end{corollary}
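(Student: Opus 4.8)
The plan is to reduce the statement, via the matching theorem and Proposition~\ref{propDiagonalizationSemisimpleCharacters}, to the strata versions of these equalities already at hand — Corollary~\ref{corEqParametersForIntertwiningSemisimpleStrata} and Corollary~\ref{corSameGrouplevelFromIntertwiningIfSamePeriodandSamer} — together with the comparison of (semi)simple characters living on a fixed lattice sequence, namely Proposition~\ref{propEqualityByNonTrivalIntersection}, Proposition~\ref{propEqualDegreesForIntersectingSetsOfSimpleCharacters} and Lemma~\ref{lemJumpseq}. First I would invoke Theorem~\ref{thmMatching} to get the matching~$\zeta\colon I\to I'$ together with~$D$-linear isomorphisms~$g_i\colon V^i\to V'^{\zeta(i)}$ with~$\dim_DV^i=\dim_DV'^{\zeta(i)}$ such that~$g_i.\theta_i$ is intertwined with~$\theta'_{\zeta(i)}$ by a~$D$-automorphism of~$V'^{\zeta(i)}$. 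Put~$g:=\bigoplus_ig_i$ and replace~$(\Delta,\theta)$ by~$(g.\Delta,g.\theta)$; conjugation by~$g$ changes none of the quantities occurring in the statement, and~$g$ carries the associated splitting of~$\Delta$ onto that of~$\Delta'$, so after reindexing I may assume that~$\Delta$ and~$\Delta'$ have the same associated splitting~$(V^i)_{i\in I}$, that~$\zeta=\id$, and that each~$\theta_i$ is intertwined with~$\theta'_i$ by a~$D$-automorphism of~$V^i$.

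Next I would produce a single block-diagonal intertwiner: by Proposition~\ref{propIwahoriSt05no5.5} both~$\theta$ and~$\theta'$ are trivial on the unipotent parts of the Iwahori decompositions attached to the common splitting, so the product of the block-wise intertwiners lies in~$(\prod_iA^{ii})^\times$ and intertwines~$\theta$ with~$\theta'$. Proposition~\ref{propDiagonalizationSemisimpleCharacters} then yields a semisimple stratum~$\Delta''$ with the same splitting as~$\Delta$, with~$\Lambda''=\Lambda$,~$n''=n$,~$r''=r$, with~$\C(\Delta)=\C(\Delta'')$, with~$\theta=\tau_{\Delta',\Delta''}(\theta')$, and with~$\beta''_i$ having the same minimal polynomial over~$F$ as~$\beta'_i$ for every~$i$. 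This stratum is the bridge: on one side,~$\Delta$ versus~$\Delta''$ is a comparison of semisimple characters with the \emph{same} lattice sequence and equal sets of characters; on the other side,~$\Delta''$ versus~$\Delta'$ is a comparison of \emph{intertwining} semisimple strata.

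For the first comparison, Lemma~\ref{lemJumpseq} gives that~$\Delta$ and~$\Delta''$ have the same jump sequence, hence~$k_0(\Delta)=k_0(\Delta'')$; restricting~$\C(\Delta)=\C(\Delta'')$ block-wise and applying Proposition~\ref{propEqualityByNonTrivalIntersection} and Proposition~\ref{propEqualDegreesForIntersectingSetsOfSimpleCharacters} to the simple strata~$\Delta_i$ and~$\Delta''_i$, which share~$\Lambda^i$, gives~$e(E_i|F)=e(E''_i|F)$,~$f(E_i|F)=f(E''_i|F)$,~$k_0(\Delta_i)=k_0(\Delta''_i)$, and from~$e(\Lambda^i|F)=e(\Lambda^i|E_i)e(E_i|F)$ also~$e(\Lambda^i|E_i)=e(\Lambda^i|E''_i)$, hence~$e(\Lambda|E)=e(\Lambda''|E'')$ by Definition~\ref{defLatticeSequencOverProdOfFields}. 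For the second comparison,~$\Delta''$ and~$\Delta'$ are intertwining semisimple strata (the transfer relation forces~$I(\Delta',\Delta'')\neq\emptyset$) with~$r''=r'$ and, since~$\Lambda''=\Lambda$ and~$e(\Lambda|F)=e(\Lambda'|F)$, the same~$D$-period; Corollary~\ref{corEqParametersForIntertwiningSemisimpleStrata} and Corollary~\ref{corSameGrouplevelFromIntertwiningIfSamePeriodandSamer} then give~$k_0(\Delta'')=k_0(\Delta')$,~$e(\Lambda''|E'')=e(\Lambda'|E')$, equal group levels of~$\Delta''$ and~$\Delta'$, and~$e(E''_i|F)=e(E'_i|F)$,~$f(E''_i|F)=f(E'_i|F)$,~$k_0(\Delta''_i)=k_0(\Delta'_i)$ (the field equalities being in any case forced by~$\beta''_i$ and~$\beta'_i$ sharing a minimal polynomial). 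Chaining through~$\Delta''$ delivers~$k_0(\Delta)=k_0(\Delta')$,~$e(\Lambda|E)=e(\Lambda'|E')$ — hence, as~$r=r'$, equal group levels — and the block-wise equalities~$e(E_i|F)=e(E'_{\zeta(i)}|F)$,~$f(E_i|F)=f(E'_{\zeta(i)}|F)$,~$k_0(\Delta_i)=k_0(\Delta'_{\zeta(i)})$; the equality of degrees then follows from~$\deg\Delta=\dim_FE=\sum_ie(E_i|F)f(E_i|F)$ and the analogous formula for~$\Delta'$.

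The step I expect to be the real obstacle is not any single estimate but the careful handling of the reduction: passing from the purely block-wise intertwining supplied by the matching theorem to a genuine block-diagonal intertwiner of~$\theta$ and~$\theta'$ so that Proposition~\ref{propDiagonalizationSemisimpleCharacters} applies, and then tracking the \emph{group level} correctly — it is not an invariant of the equivalence class of a semi-pure stratum, see the caution in Definition~\ref{defGroupLevel}, so it must be transported through the stronger relation~$\C(\Delta)=\C(\Delta'')$ and through the strata-intertwining of~$\Delta''$ with~$\Delta'$, never through a mere equivalence of strata.
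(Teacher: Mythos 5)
Your argument is correct, but it takes a genuinely different and longer route than the paper. The paper's proof is a two-line reduction: Theorem~\ref{thmMatching} produces the matching~$\zeta$ and block-wise intertwining of the \emph{simple} characters~$g_i.\theta_i$ and~$\theta'_{\zeta(i)}$ on spaces of equal~$D$-dimension, with lattice sequences of equal~$F$-period and~$r=r'$, and then Theorem~\ref{thmEqualDegreeFromIntertwining} is applied block-wise to get~$e(E_i|F)=e(E'_{\zeta(i)}|F)$,~$f(E_i|F)=f(E'_{\zeta(i)}|F)$ and~$k_0(\Delta_i)=k_0(\Delta'_{\zeta(i)})$, from which degree and group level follow by summing~$e_if_i$ and comparing~$e(\Lambda|E)=\gcd_ie(\Lambda^i|E_i)$. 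You instead assemble the block-wise intertwiners into a block-diagonal intertwiner of~$\theta$ with~$\theta'$, feed this into Proposition~\ref{propDiagonalizationSemisimpleCharacters} to manufacture the intermediate stratum~$\Delta''$ with~$\C(\Delta)=\C(\Delta'')$ and~$\beta''_i$,~$\beta'_i$ sharing minimal polynomials, and then chain the invariants through~$\Delta''$ via Lemma~\ref{lemJumpseq} and Proposition~\ref{propEqualDegreesForIntersectingSetsOfSimpleCharacters} on the character side and Corollaries~\ref{corEqParametersForIntertwiningSemisimpleStrata} and~\ref{corSameGrouplevelFromIntertwiningIfSamePeriodandSamer} on the strata side. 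What your detour buys is a more explicit treatment of the equality of the \emph{total} critical exponents~$k_0(\Delta)=k_0(\Delta')$ (which involves the level at which two blocks merge, not just the block-wise exponents, and is the one point where the paper's ``follows directly'' is terse); what it costs is reliance on the unproved-but-standard step that block-wise intertwiners combine, via the Iwahori decomposition of Proposition~\ref{propIwahoriSt05no5.5} and the factorization of~$\theta$ on the Levi part through its block restrictions, into a single intertwiner in~$(\prod_iA^{ii})^\times$ — an argument pattern the paper uses elsewhere (e.g.\ in the uniqueness of transfer) but does not isolate as a lemma, so you should at least flag it. Your attention to the fact that group level is not invariant under equivalence of strata, and must be transported through~$\C(\Delta)=\C(\Delta'')$ and through genuine strata intertwining, is well placed.
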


\begin{proof}
 This follows directly from Theorem~\ref{thmMatching} and Proposition~\ref{propEqualDegreeFromIntertwining}.
\end{proof}

\subsection{Intertwining semisimple characters of same degree and same group level}

At first we generalize the matching to semisimple characters of the same group level and the same degree. 

\begin{proposition}\label{propMatchingSameDegreeAndSameGroupLevel}
 Suppose that~$\theta\in\C(\Delta)$ and~$\theta'\in\C(\Delta')$ are two intertwining semisimple characters and~$\Delta$,~$\Delta'$ share group level and the degree. 
 Then there is a unique  bijection~$\zeta: I\ra I'$ and a~$D$-linear isomorphism of~$V$ such that~$gV^i=V'^{\zeta(i)}$ and such that~$g.\theta_i$ and~$\theta'_{\zeta(i)}$ intertwine for all indexes~$i\in I$. 
 Further~$e(E_i|F)=e(E_{\zeta(i)}|F)$ and~$f(E_i|F)=f(E_{\zeta(i)}|F)$. If~$\Lambda$ and~$\Lambda'$ have the same period then~$k_0(\Delta)=k_0(\Delta')$ 
 and~$k_0(\Delta_i)=k_0(\Delta'_{\zeta(i)})$ for all~$i\in I$. 
\end{proposition}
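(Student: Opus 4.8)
The plan is to reduce the statement to the Matching Theorem~\ref{thmMatching}, whose hypotheses are $r=r'$, $n=n'$ and equal period, and then read off the arithmetic equalities from Corollary~\ref{corEqualDegreesFromIntertwiningSemisimpleCharactersSamer}.

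First I would normalize the lattice sequences. If $\Lambda$ and $\Lambda'$ do not already have the same $F$-period, I would double them (rescale their rational barycentric coordinates): this multiplies $r$, $n$, $e(\Lambda|F)$ and $e(\Lambda|E)$ by a common factor, hence leaves the group level and the degree unchanged, and a matching for the doubled characters transports back to one for the originals — the index sets $I,I'$ and the simple blocks are untouched, each $\dim_D V^i$ is multiplied by a fixed factor, and intertwining is preserved. So we may assume $\Lambda$ and $\Lambda'$ share an $F$-period. Next, say $r\le r'$. By Proposition~\ref{propRestrictionMapIsBijective} the restriction map $\C(\Delta)\to\C(\Delta((r'-r)+))$ is a bijection, since $\Delta$ and $\Delta((r'-r)+)$ have the same group level, and $\theta((r'-r)+)$ still intertwines $\theta'$ because $H(\Delta((r'-r)+))\subseteq H(\Delta)$. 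Replacing $\theta$ by $\theta((r'-r)+)$ arranges $r=r'$; one then checks $n=n'$ (for semisimple strata each block satisfies $\nu_{\Lambda^i}(\beta_i)=-n_i$, or use Corollary~\ref{corSameLevelSemisimpleStrata}\ref{corSameLevelSemisimpleStrata.ii} on the top strata of defining sequences of $\Delta$ and $\Delta'$). We are now exactly in the situation of Theorem~\ref{thmMatching}, which supplies the unique bijection $\zeta\colon I\to I'$ with $\dim_D V^i=\dim_D V'^{\zeta(i)}$ together with $D$-linear isomorphisms $g_i\colon V^i\to V'^{\zeta(i)}$ such that $g_i.\theta_i$ intertwines $\theta'_{\zeta(i)}$; setting $g=\bigoplus_i g_i$ (legitimate since the $D$-dimensions match blockwise and sum to $\dim_D V=\dim_D V'$) gives the asserted global $D$-isomorphism. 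The equalities $e(E_i|F)=e(E'_{\zeta(i)}|F)$ and $f(E_i|F)=f(E'_{\zeta(i)}|F)$, and — when $\Lambda$ and $\Lambda'$ had the same period, so that no doubling was performed — also $k_0(\Delta)=k_0(\Delta')$ and $k_0(\Delta_i)=k_0(\Delta'_{\zeta(i)})$, then follow from Corollary~\ref{corEqualDegreesFromIntertwiningSemisimpleCharactersSamer} applied in the reduced situation (the doubling step genuinely changes critical exponents, which is exactly why the $k_0$-conclusions are stated only under the equal-period hypothesis). Uniqueness of $\zeta$ is verbatim as in the proof of Theorem~\ref{thmMatching}: after conjugating so that $\zeta=\id$ and $V^i=V'^i$, a second matching would force two distinct blocks $\theta_{i_1},\theta_{i_2}$ of equal $D$-dimension to intertwine by a $D$-isomorphism (transitivity of intertwining of simple characters, Proposition~\ref{propTranstivityOfIntertwiningSimpleCharacters}), so $\theta_{i_1}\oplus\theta_{i_2}$ would intertwine $\theta_{i_1}\oplus\theta_{i_1}$, contradicting that intertwining semisimple characters have the same number of blocks (the existence part).

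The main obstacle is bookkeeping rather than new mathematics: one must ensure that the package ``same group level and same degree'' really survives the doubling of lattice sequences and the passage from $\Delta$ to $\Delta((r'-r)+)$, so that Proposition~\ref{propRestrictionMapIsBijective} is legitimately applicable. Concretely this comes down to verifying $e(\Lambda|E)=e(\Lambda'|E')$ in the reduced setting — this is what makes the two given group-level conditions compatible with the restriction step — which I would extract from the behaviour of the invariants along defining sequences together with the simple-stratum case Corollary~\ref{corEqualDegreesFromIntertwininSameGrouplevelSameDegree} (and, if a direct reduction proves awkward, by rerunning the $n-r$ induction of the proof of Theorem~\ref{thmMatching}, using Proposition~\ref{propMatchingGLDStrata} in the base case $n=r+1$). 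All the substantive content is already in Theorem~\ref{thmMatching} and Corollary~\ref{corEqualDegreesFromIntertwiningSemisimpleCharactersSamer}; the work here is purely organizing the reduction.
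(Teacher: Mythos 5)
Your overall strategy (double to a common period, shift the smaller~$r$ upwards, invoke Theorem~\ref{thmMatching}, upgrade by transitivity, read off the invariants from Corollary~\ref{corEqualDegreesFromIntertwiningSemisimpleCharactersSamer}) is the same as the paper's, but the step you yourself flag as ``bookkeeping'' is in fact the mathematical content of the proposition, and the way you propose to supply it is circular. To apply Proposition~\ref{propRestrictionMapIsBijective} — or, even if you drop it and merely restrict~$\theta$, to know that the shifted stratum keeps the same splitting, the same fields and the same group level as~$\Delta$, which is what lets you apply Proposition~\ref{propTranstivityOfIntertwiningSimpleCharacters} blockwise and transport the conclusion back to the blocks~$\theta_i$ — you need $\lfloor r'/e(\Lambda|E)\rfloor=\lfloor r/e(\Lambda|E)\rfloor$, in essence $e(\Lambda|E)=e(\Lambda'|E')$, whereas the hypothesis only gives $\lfloor r/e(\Lambda|E)\rfloor=\lfloor r'/e(\Lambda'|E')\rfloor$. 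You suggest extracting this from Corollary~\ref{corEqualDegreesFromIntertwininSameGrouplevelSameDegree} applied blockwise ``along defining sequences'', but that corollary concerns a pair of simple characters: which block of~$\theta$ is to be compared with which block of~$\theta'$ is exactly what the (not yet available) matching provides, and you also do not know at this stage that corresponding blocks have the same degree or group level, since only the total degree is assumed equal. So the reduction cannot be run in the order you propose, and the fallback ``rerun the $n-r$ induction of Theorem~\ref{thmMatching}'' is a hope rather than an argument.

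The paper resolves this by reversing the order of the two steps. With $r\geq r'$ it applies Theorem~\ref{thmMatching} directly to~$\theta$ and~$\theta'((r-r')+)$, the latter viewed as an element of~$\C(\Delta'(r-r'))$ for a defining-sequence member~$\Delta'(r-r')$; this requires no prior comparison of~$e(\Lambda|E)$ with~$e(\Lambda'|E')$ and no semisimplicity of~$\Delta'((r-r')+)$. Corollary~\ref{corEqualDegreesFromIntertwiningSemisimpleCharactersSamer} then gives $[E'(r-r'):F]=[E:F]=[E':F]$, so~$\Delta'(r-r')$ has the same degree as~$\Delta'$, hence the same number of blocks and, by Proposition~\ref{propSimplePure}, blockwise the same ramification indices and inertia degrees; from this one deduces $e(\Lambda|E)=e(\Lambda'|E')$ and that~$\Delta'((r-r')+)$ is still semisimple with the splitting of~$\Delta'$, and only then does Proposition~\ref{propTranstivityOfIntertwiningSimpleCharacters} upgrade the intertwining of~$g.\theta_i$ with~$\theta'_{\zeta(i)}((r-r')+)$ to intertwining with~$\theta'_{\zeta(i)}$. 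Your doubling step, your uniqueness argument and your deduction of the $e$, $f$ and $k_0$ equalities in the reduced situation are fine; what is missing is precisely this non-circular derivation of $e(\Lambda|E)=e(\Lambda'|E')$ and of the semisimplicity of the shifted stratum.
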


\begin{proof}
 We can replace the lattice sequences~$\Lambda$ and~$\Lambda'$ by affine translates and can therefore assume that both lattice sequences have the same period. If~$r\geq r'$, then~$\theta$ and~$\theta'((r-r')+)$ intertwine. 
 Thus there is a matching for~$\theta$ and~$\theta'((r-r')+)\in\C(\Delta'(r-r'))$ by Theorem~\ref{thmMatching}. Thus~$e(\Lambda|E)=e(\Lambda'|E(r-r'))$ and~$E'(r-r')$ and~$E$ have the same~$F$-dimension, by 
 Corollary~\ref{corEqualDegreesFromIntertwiningSemisimpleCharactersSamer}, and 
 therefore~$\Delta'(r-r')$ and~$\Delta'$ have the same degree. Thus~$\Delta'(r-r')$ has the same number of blocks as~$\Delta'$ and~$E'_{i'}|F$ has the
 same inertia degree and the same ramification index as~$E'(r-r')_{i'}|F$, by Proposition~\ref{propSimplePure}.
 Thus~$e(\Lambda|E)=e(\Lambda'|E'(r-r'))=e(\Lambda'|E')$ and thus~$\Delta'((r-r')+)$ is still semisimple, by Corollary~\ref{corSameGrouplevelFromIntertwiningIfSamePeriodandSamer}\ref{corSameGrouplevelFromIntertwiningIfSamePeriodandSamerAssii}, because~$\Delta$ and~$\Delta'$ have the same group level. 
 The matching theorem~\ref{thmMatching} provides a bijection~$\zeta:I\rightarrow I'$ and an invertible element~$g\in\prod_i\End_D(V^i,V^{\zeta(i)})$ such that~$g.\theta_i$ and~$\theta'_{\zeta(i)}((r-r')+)$ intertwine. 
 By Proposition~\ref{propTranstivityOfIntertwiningSimpleCharacters} the characters~$g.\theta_i$ and~$\theta'_{\zeta(i)}$ intertwine. 
 The uniqueness comes from the uniqueness of the matching for~$\theta$ and~$\theta'((r-r')+)$ and the respective strata. The remaining assertions follow from 
 Corollary~\ref{corEqualDegreesFromIntertwiningSemisimpleCharactersSamer} applied on~$\theta$ and~$\theta'((r-r')+)$.
\end{proof}

\begin{corollary}\label{corIntertwiningIsEquivalenceRelation}
 Intertwining is an equivalence relation among all semisimple characters given by semisimple strata of a fixed group level and fixed degree. 
\end{corollary}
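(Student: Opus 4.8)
The plan is to treat reflexivity, symmetry and transitivity separately. Reflexivity is trivial, since $1\in I(\theta,\theta)$ for every semisimple character $\theta$. Symmetry is immediate from the definition of the intertwining set: if $g.\theta$ and $\theta'$ agree on $g.H(\Delta)\cap H(\Delta')$, then conjugating this relation by $g^{-1}$ shows that $g^{-1}.\theta'$ and $\theta$ agree on $g^{-1}.H(\Delta')\cap H(\Delta)$, so $g^{-1}\in I(\theta',\theta)$. Thus all the content lies in transitivity.

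For transitivity, suppose $\theta\in\C(\Delta)$, $\theta'\in\C(\Delta')$ and $\theta''\in\C(\Delta'')$ have a common group level and a common degree, with $\theta$ intertwining $\theta'$ and $\theta'$ intertwining $\theta''$. First I would apply Proposition~\ref{propMatchingSameDegreeAndSameGroupLevel} to the pairs $(\theta,\theta')$ and $(\theta',\theta'')$, obtaining matchings $\zeta\colon I\to I'$ and $\zeta'\colon I'\to I''$, $D$-linear isomorphisms realising the block-wise intertwinings, and the numerical equalities $e(E_i|F)=e(E'_{\zeta(i)}|F)=e(E''_{\zeta'(\zeta(i))}|F)$ together with the analogous ones for inertia degrees. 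Setting $\zeta'':=\zeta'\circ\zeta$, this is a bijection $I\to I''$ with $\dim_D V^i=\dim_D V''^{\zeta''(i)}$ and $[E_i:F]=[E''_{\zeta''(i)}:F]$ for all $i$. Transporting the three block characters $\theta_i$, $\theta'_{\zeta(i)}$, $\theta''_{\zeta''(i)}$ onto a single vector space via the matching isomorphisms, they become simple characters on one group that intertwine in a chain. After normalising — by a $\ddag$-construction (Proposition~\ref{propDdag}) and the restriction bijections of Proposition~\ref{propRestrictionMapIsBijective}, exactly as in the proof of Proposition~\ref{propTranstivityOfIntertwiningSimpleCharacters} — so that all three blocks sit on a common lattice chain with a common value of $r$ (the block data being forced to agree by Proposition~\ref{propSimplePure} and Corollary~\ref{corEqualDegreesFromIntertwininSameGrouplevelSameDegree}), I would invoke Proposition~\ref{propTranstivityOfIntertwiningSimpleCharacters} to produce a $D$-linear isomorphism $g_i\in\Hom_D(V^i,V''^{\zeta''(i)})$ intertwining $\theta_i$ with $\theta''_{\zeta''(i)}$.

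Finally I would assemble these blocks into $g:=\bigoplus_i g_i$, which is a $D$-linear automorphism of $V$ because $\zeta''$ is a bijection and the block dimensions match; it is diagonal with respect to, and carries, the associated splitting of $\Delta$ onto that of $\Delta''$. By Proposition~\ref{propIwahoriSt05no5.5} the semisimple characters $\theta$ and $\theta''$ are trivial on the unipotent parts of the Iwahori decompositions attached to their own splittings and are determined by their block restrictions, so the block relations $g_i.\theta_i=\theta''_{\zeta''(i)}$ on the relevant congruence subgroups assemble to $g.\theta=\theta''$ on $g.H(\Delta)\cap H(\Delta'')$; this is the same bookkeeping as in the proof of Proposition~\ref{propMatchingGLDStrata}. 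Hence $g\in I(\theta,\theta'')$, and intertwining is transitive, which together with reflexivity and symmetry gives the corollary. The hard part will be the middle step: checking that equalising the lattice chains and the value of $r$ does not spoil semisimplicity of the blocks — it does not, since the common group level keeps $-k_0$ above the relevant $r$ — and that the block strata genuinely inherit a common group level, so that Proposition~\ref{propTranstivityOfIntertwiningSimpleCharacters} applies; the concluding assembly via the Iwahori decomposition is routine.
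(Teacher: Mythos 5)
Your argument is exactly the paper's: the proof there consists of citing Proposition~\ref{propMatchingSameDegreeAndSameGroupLevel} to reduce to block-wise intertwining of simple characters and Proposition~\ref{propTranstivityOfIntertwiningSimpleCharacters} for transitivity in the simple case, which is precisely your route. You merely write out the bookkeeping (composing the matchings, normalising lattice data, reassembling via the Iwahori decomposition) that the paper leaves implicit, and that bookkeeping is sound.
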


\begin{proof}
 This follows directly from Proposition~\ref{propTranstivityOfIntertwiningSimpleCharacters} and Proposition~\ref{propMatchingSameDegreeAndSameGroupLevel}.
\end{proof}

\subsection{Intertwining and Conjugacy for semisimple characters}

In this subsection we fix two semisimple characters~$\theta\in\C(\Delta)$ and~$\theta'\in\C(\Delta')$ which intertwine. We further assume that~$n=n'$ and~$r=r'$. 
The Theorem~\ref{thmMatching} provides a matching~$\zeta_{\theta,\Delta,\theta',\Delta'}:I\ra I'$ from~$(\theta,\Delta)$ to~$(\theta',\Delta')$. 

\begin{lemma}[{see~\cite[Lemma 3.5]{secherreStevensVI:12} for the simple case}]\label{lemEqualResAlgebras}
 Suppose that~$\C(\Delta)=\C(\Delta')$ with~$\Lambda=\Lambda'$. Then the residue algebras~$\kappa_E$ and~$\kappa_{E'}$ coincide.  
\end{lemma}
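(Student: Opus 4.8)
Throughout, $\C(\Delta)=\C(\Delta')$ is non-empty and $\Lambda=\Lambda'$, and the standing hypotheses of this section give $r=r'$, $n=n'$. The plan is to conjugate $\Delta$ by a sufficiently small element so that $\Delta$ and $\Delta'$ acquire the same associated splitting without moving the residue algebra inside $\mf{a}_{\Lambda,0}/\mf{a}_{\Lambda,1}$, then to pass to the simple blocks and invoke the simple case \cite[3.5]{secherreStevensVI:10}. To begin, Lemma~\ref{lemJumpseq} gives $k_0(\Delta)=k_0(\Delta')$; if this common value is $-\infty$ then $\Delta$ and $\Delta'$ are zero strata, so $E=E'=F$ and there is nothing to prove. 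Hence I may assume both strata are non-zero, so that $r<-k_0(\Delta)$, i.e. $-(r+k_0(\Delta))\ge 1$; then $\mf{m}(\Delta)\subseteq\mf{a}_{-(r+k_0(\Delta))}\subseteq\mf{a}_{\Lambda,1}$, and together with $J^{\lfloor(-k_0(\Delta)+1)/2\rfloor}(\Delta)\subseteq J^1(\Delta)\subseteq P_1(\Lambda)$ this yields $S(\Delta)\subseteq P_1(\Lambda)$.

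Now pick $\theta\in\C(\Delta)\cap\C(\Delta')$. By Corollary~\ref{corConjugateAssSplittingsOfStrataWithSameSemisimpleCharacters} there is $g\in S(\Delta)$ conjugating the associated splitting of $\Delta$ to that of $\Delta'$. Since $S(\Delta)$ normalizes every element of $\C(\Delta)$, one has $\C(g.\Delta)=g.\C(\Delta)=\C(\Delta)=\C(\Delta')$; and since $g\in S(\Delta)\subseteq P_1(\Lambda)$, the element $g$ normalizes $\Lambda$ and acts as the identity on $\mf{a}_{\Lambda,0}/\mf{a}_{\Lambda,1}$, so the image of $o_{gF[\beta]g^{-1}}$ in $\mf{a}_{\Lambda,0}/\mf{a}_{\Lambda,1}$ is again $\kappa_E$. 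Thus replacing $\Delta$ by $g.\Delta$ leaves the assertion to be proved unchanged, and I may assume from now on that $\Delta$ and $\Delta'$ have the same associated splitting $(V^i)_{i\in I}$, with $I=I'$ and the same idempotents $1^i$.

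With this reduction, $H(\Delta)=H(\Delta')$, and by Proposition~\ref{propIwahoriSt05no5.5} both groups respect the Iwahori decomposition attached to $(V^i)_i$; intersecting with the Levi subgroup $\prod_i\Aut_D(V^i)$ gives $H(\Delta_i)=H(\Delta)\cap\Aut_D(V^i)=H(\Delta')\cap\Aut_D(V^i)=H(\Delta'_i)$ for every $i\in I$. Hence for $\theta\in\C(\Delta)=\C(\Delta')$ the block restriction $\theta_i=\theta|_{H(\Delta_i)}$ lies both in $\C(\Delta_i)$ and in $\C(\Delta'_i)$, so these two sets of simple characters meet (and in fact coincide, by \cite[4.16]{broussousSecherreStevens:12}). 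Applying the simple case \cite[3.5]{secherreStevensVI:10} to the simple strata $\Delta_i,\Delta'_i$ on $\End_D(V^i)$, which share the lattice sequence $\Lambda^i$ and the integer $r$, gives $\kappa_{E_i}=\kappa_{E'_i}$ as subfields of $1^i(\mf{a}_{\Lambda,0}/\mf{a}_{\Lambda,1})1^i$ (for a block with $\beta_i=0$ one has $\beta'_i=0$ as well by~\ref{propEqualDegreesForIntersectingSetsOfSimpleCharacters}, and then $E_i=F=E'_i$). Since the canonical images of $\kappa_E=\prod_i\kappa_{E_i}$ and $\kappa_{E'}=\prod_i\kappa_{E'_i}$ in $\mf{a}_{\Lambda,0}/\mf{a}_{\Lambda,1}$ are block-diagonal with respect to the idempotents $1^i$, the equalities $\kappa_{E_i}=\kappa_{E'_i}$ for all $i$ yield $\kappa_E=\kappa_{E'}$.

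The main obstacle is exactly the conjugation step: one must guarantee that the element of Corollary~\ref{corConjugateAssSplittingsOfStrataWithSameSemisimpleCharacters} used to align the two splittings does not disturb the residue algebra, which is why it is essential to know $S(\Delta)\subseteq P_1(\Lambda)$ — equivalently $\mf{m}(\Delta)\subseteq\mf{a}_{\Lambda,1}$ — for non-zero semisimple strata. Everything else is the block bookkeeping above, together with the quoted simple-stratum statement.
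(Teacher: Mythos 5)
Your proof is correct and follows essentially the same route as the paper: conjugate the associated splittings by an element of~$S(\Delta)$ via Corollary~\ref{corConjugateAssSplittingsOfStrataWithSameSemisimpleCharacters}, then apply the simple case~\cite[3.5]{secherreStevensVI:10} block-wise. The only difference is that you spell out details the paper leaves implicit (the zero-stratum case, $S(\Delta)\subseteq P_1(\Lambda)$ so the conjugation does not move the residue algebra in~$\mf{a}_{\Lambda,0}/\mf{a}_{\Lambda,1}$, and the block restrictions landing in both $\C(\Delta_i)$ and $\C(\Delta'_i)$), which is a welcome but not essentially different argument.
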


\begin{proof}
 By Corollary~\ref{corConjugateAssSplittingsOfStrataWithSameSemisimpleCharacters} we can conjugate the associated splittings to each other by an element of~$S(\Delta)$ and thus we can assume that both associated splittings are the same. By~\cite[Lemma 3.5]{secherreStevensVI:12} the residue fields of~$E_i$ and~$E'_i$ coincide which implies 
 the assertion. 
\end{proof}

\begin{lemma}\label{lemMatchingOfResidueAlgSemisimplechar}
The conjugation by~$g\in I(\theta,\theta')$ induces a canonical isomorphism~$\bar{\zeta}$ form the residue algebra of~$E$ to the one of~$E'$, and the isomorphism does not depend on the choice of the intertwining element. 
\end{lemma}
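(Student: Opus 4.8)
The plan is to follow the proof of Lemma~\ref{lemMatchingOfResidueAlg} almost verbatim, reducing the statement for semisimple characters to the one for semisimple strata by means of the transfer formula of Theorem~\ref{thmTransferAndInt}. First I would define the map exactly as in Lemma~\ref{lemMatchingOfResidueAlg}: the residue algebra~$\kappa_E$ is canonically embedded into~$\mf{a}_{\Lambda,0}/\mf{a}_{\Lambda,1}$, and~$\kappa_{E'}$ into~$\mf{a}_{\Lambda',0}/\mf{a}_{\Lambda',1}$; for~$g\in I(\theta,\theta')$ conjugation by~$g$ carries~$\mf{a}_{\Lambda,0}/\mf{a}_{\Lambda,1}$ into the amalgam~$(g\mf{a}_{\Lambda,0}g^{-1}+\mf{a}_{\Lambda',0})/(g\mf{a}_{\Lambda,1}g^{-1}+\mf{a}_{\Lambda',1})$, into which~$\mf{a}_{\Lambda',0}/\mf{a}_{\Lambda',1}$ also maps canonically, and~$\bar{\zeta}$ is the composite of the~$g$-conjugation on~$\kappa_E$ with the inverse of the map induced by~$\kappa_{E'}\hookrightarrow\mf{a}_{\Lambda',0}/\mf{a}_{\Lambda',1}$. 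Two things remain: \emph{existence}, that~$g\kappa_Eg^{-1}$ lands in the image of~$\kappa_{E'}$, and \emph{independence} of~$\bar{\zeta}$ from the choice of~$g$.

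Next I would reduce to the case where~$\theta'$ is a transfer of~$\theta$. Let~$\zeta=\zeta_{\theta,\theta'}$ be the matching of Theorem~\ref{thmMatching}. After the standard doubling (a~$\ddag$-construction, Proposition~\ref{propDdag}, which leaves~$\beta$ and hence~$\kappa_E$ and its embedding essentially unchanged) and a conjugation I may assume that~$\Lambda=\Lambda'$, that~$\Delta$ and~$\Delta'$ have the same associated splitting, that~$\zeta$ is the identity, and --- arguing as in the existence part of the proof of Theorem~\ref{thmMatching} (using the translation principle~\ref{thmTranslation}) --- that~$\theta$ and~$\theta'$ intertwine by a block-diagonal element of~$(\prod_iA^{ii})^\times$; since a conjugation transforms~$\bar{\zeta}$ in the obvious functorial way this costs nothing. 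Now Proposition~\ref{propDiagonalizationSemisimpleCharacters} supplies a stratum~$\Delta''$ with the same splitting as~$\Delta$, with~$\Lambda''=\Lambda$, with~$\C(\Delta)=\C(\Delta'')$ and with~$\theta=\tau_{\Delta',\Delta''}(\theta')$. By Lemma~\ref{lemEqualResAlgebras} the residue algebra of~$\Delta''$ coincides with~$\kappa_E$ inside~$\mf{a}_{\Lambda,0}/\mf{a}_{\Lambda,1}$, so replacing the stratum~$\Delta$ carrying~$\theta$ by~$\Delta''$ changes nothing in the problem and brings us to the situation~$\theta'=\tau_{\Delta,\Delta'}(\theta)$.

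In this situation Theorem~\ref{thmTransferAndInt} gives~$I(\theta,\theta')=S(\Delta')\,I(\Delta,\Delta')\,S(\Delta)$. Because~$\Delta$ is a semisimple stratum one has~$-(r+k_0(\Delta))\geq1$, so~$\mf{m}(\Delta)\subseteq\mf{a}_{\Lambda,1}$ and~$J^{\lfloor\frac{-k_0(\Delta)+1}{2}\rfloor}(\Delta)\subseteq P_1(\Lambda)$, whence~$S(\Delta)\subseteq P_1(\Lambda)=1+\mf{a}_{\Lambda,1}$, and likewise~$S(\Delta')\subseteq1+\mf{a}_{\Lambda',1}$. These are~$1$-units normalising all the~$\mf{a}_{\Lambda,s}$, respectively~$\mf{a}_{\Lambda',s}$, so conjugation by an element of~$S(\Delta)$ is trivial on~$\mf{a}_{\Lambda,0}/\mf{a}_{\Lambda,1}$ and conjugation by an element of~$S(\Delta')$ is trivial on the image of~$\mf{a}_{\Lambda',0}/\mf{a}_{\Lambda',1}$ in the relevant amalgams; a routine computation then shows that the map on residue algebras induced by any~$g\in I(\theta,\theta')$ equals the one induced by its middle factor in~$I(\Delta,\Delta')$. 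By Lemma~\ref{lemMatchingOfResidueAlg} the latter is precisely the well-defined isomorphism~$\bar{\zeta}_{\Delta,\Delta'}$, which establishes both existence and independence.

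I expect the main obstacle to be the reduction step: one has to manufacture the hypotheses of Proposition~\ref{propDiagonalizationSemisimpleCharacters} (a common lattice sequence, a common associated splitting, a block-diagonal intertwining element) out of the matching theorem, the translation principle and a doubling, and --- more delicately --- one has to verify at each reduction that the candidate map~$\bar{\zeta}$ transforms functorially, so that well-definedness in general follows from well-definedness in the reduced case. Once one is in the transfer case the argument collapses to the short computation above together with Lemma~\ref{lemMatchingOfResidueAlg}.
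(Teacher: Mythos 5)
Your argument is correct and follows essentially the same route as the paper: reduce to the case where~$\theta$ and~$\theta'$ are transfers (via Theorem~\ref{thmMatching}, Proposition~\ref{propDiagonalizationSemisimpleCharacters} and Lemma~\ref{lemEqualResAlgebras}), observe that the outer factors~$S(\Delta)$ and~$S(\Delta')$ in the intertwining formula of Theorem~\ref{thmTransferAndInt} act trivially on the residue algebras, and reduce the middle factor to Lemma~\ref{lemMatchingOfResidueAlg}. Your extra bookkeeping (doubling, functoriality of~$\bar{\zeta}$ under conjugation) only spells out details the paper leaves implicit, so no essential difference or gap.
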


\begin{proof}
 By Theorem~\ref{thmMatching} and Proposition~\ref{propDiagonalizationSemisimpleCharacters} and Lemma~\ref{lemEqualResAlgebras} we can assume that~$\theta$ and~$\theta'$ are transfers. The conjugation with~$S(\Delta)$  fixes~$\kappa_E$ and similar fixes the conjugation with~$S(\Delta')$ the algebra~$\kappa_{E'}$. 
 Thus we only need to show that the conjugation with an element of~$C_{A^\times}(\beta)$ defines a map from~$\kappa_E$ to~$\kappa_{E'}$ which does not depend on the 
 chioce of an element of~$C_{A^\times}(\beta)$. This is done in Lemma~\ref{lemMatchingOfResidueAlg}. 
\end{proof}

\begin{notation}
We denote the map of Lemma \ref{lemMatchingOfResidueAlgSemisimplechar} by~$\bar{\zeta}$ or~$\bar{\zeta}_{\theta,\Delta,\theta',\Delta'}$ and call it the matching of the residue algebras from~$(\theta,\Delta)$ to~$(\theta',\Delta')$. And we write~$(\zeta,\bar{\zeta})$ and call it the matching pair.
\end{notation}

We are now able to formulate and prove the first main theorem of this article about intertwining and conjugacy of semisimple characters. 

\begin{theorem}[\textbf{1st Main Theorem}]\label{thmIntertwiningImplConjugacy}
 Let~$(\zeta,\bar{\zeta})$ be the matching pair from~$(\theta,\Delta)$ to~$(\theta',\Delta')$.
 If there is a element of~$t\in G$ and an integer~$s$ such that~$t\Lambda^i_j=\Lambda^{\zeta(i)}_{j+s}$ for all~$i\in I$ and all integers~$j$ such that the conjugation
 with~$t$ induces the map~$\bar{\zeta}|_{\kappa_{E_D}}$ then there is an element~$g$ of~$ G$ such that~$g.\theta_i=\theta'_{\zeta(i)}$ and~$g^{-1}t\in P(\Lambda)$.
 In particular~$g.\theta$ and~$\theta'$ coincide. 
\end{theorem}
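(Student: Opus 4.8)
The proof will go by strata induction on $n-r$, running in parallel with the transfer machinery and the ``derived character'' techniques developed in Lemma~\ref{lemDerivedCharacters}. The base case $n=r$ is trivial (both $\C(\Delta)$ and $\C(\Delta')$ are singletons of trivial characters, and $t$ itself is the required $g$); the case $n=r+1$ is governed by the matching for strata, Proposition~\ref{propMatchingGLDStrata}, together with the intertwining-and-conjugacy theorem for semisimple strata, Theorem~\ref{thmIntConjSesiTiG}. So I would first handle these starting cases, then turn to the inductive step $n>r+1$.

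\textbf{Inductive step.} First I would apply the induction hypothesis to $\theta(1+)\in\C(\Delta(1))$ and $\theta'(1+)\in\C(\Delta'(1))$: since $t$ restricts to the matching pair of $\theta(1+)$ and $\theta'(1+)$ as well (the residue-algebra matching $\bar\zeta$ only depends on $\Lambda,\Lambda'$ and the group level/degree, not on the last entry, by the Notation after Lemma~\ref{lemMatchingOfResidueAlgSemisimplechar}, and the block-matching of $\theta(1+)$ refines that of $\theta$), there is $g_1\in G$ with $g_1.\theta(1+)_i=\theta'(1+)_{\zeta(i)}$ and $g_1^{-1}t\in P(\Lambda)$. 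Conjugating by $g_1$, I reduce to the situation $\Lambda=\Lambda'$, $V^i=V'^{\zeta(i)}$, $\bar\zeta$ induced by $1$, $\C(\Delta(1))=\C(\Delta'(1))$, and $\theta(1+)=\theta'(1+)$. Next, using Proposition~\ref{propDiagonalizationSemisimpleCharacters} and the translation principle Theorem~\ref{thmTranslation} (exactly as in the proof of the matching theorem), I would replace $\Delta$ and $\Delta'$ by equivalent strata — without changing any set of semisimple characters — so that $\beta(1)=\beta'(1)=:\gamma$ and $\theta'(1+)$ is the transfer of $\theta(1+)$; then choose extensions $\theta_0\in\C(\Delta(1)(1-))$ of $\theta(1+)$ and set $\theta_0'=\tau_{\Delta,\Delta'}(\theta_0)$, and form the derived characters $\partial_{\gamma,\theta_0}\theta$ and $\partial_{\gamma,\theta_0'}\theta'$ on $P_{r+1}(j_{F[\gamma]}(\Lambda))$. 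By Lemma~\ref{lemDerivedCharacters}\ref{lemDerivedCharactersAss1} these intertwine (via the $C_{A^\times}(\gamma)$-part $x$ of any element of $I(\theta,\theta')$), and since the derived characters are governed by the semisimple multi-stratum $\partial_\gamma(\Delta)$ (Theorem~\ref{thmDerived}), the matching $\zeta$ and residue-algebra matching $\bar\zeta$ restrict to a matching pair for them inside $C_{A^\times}(\gamma)\cong\prod$ of smaller general linear groups over division algebras. Here $t$ (which is diagonal and $D$-linear and normalizes each $\Lambda^i$) lies in $C_{A^\times}(\gamma)$ after the reductions and induces $\bar\zeta$ there, so I can apply the induction hypothesis inside $C_{A^\times}(\gamma)$ to the derived characters to obtain $x\in C_{A^\times}(\gamma)$ with $x.(\partial_{\gamma,\theta_0}\theta)_i=(\partial_{\gamma,\theta_0'}\theta')_{\zeta(i)}$ and $x^{-1}t\in P(j_{F[\gamma]}(\Lambda))$.

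\textbf{Assembling $g$.} Given such an $x$, Lemma~\ref{lemDerivedCharacters}\ref{lemDerivedCharactersAss2} supplies $u\in S(\Delta'(1))$ and $v\in S(\Delta(1))$ with $g:=uxv\in I(\theta,\theta')$; moreover $x$ already conjugates each block splitting correctly and lies in $C_{A^\times}(\gamma)$, while $u,v$ lie in the respective $S$-groups, so $g.\theta_i$ and $\theta'_{\zeta(i)}$ intertwine block-by-block by diagonal $D$-automorphisms. I then promote ``intertwine'' to ``equal'' block-wise using the intertwining-implies-conjugacy statement for simple characters Proposition~\ref{propIntImplConjSimpleChar} (combined with Corollary~\ref{corDeltakNonTrivialIntersec} to pass from block-wise equality back to $\C(\Delta)=\C(\Delta')$), absorbing the correcting factors into $S(\Delta)$, which normalizes $\theta$. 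Finally I must check the normalizer condition $g^{-1}t\in P(\Lambda)$: this is where care is needed. The factors $u\in S(\Delta'(1))$, $v\in S(\Delta(1))$ and the simple-block corrections all lie in groups contained in $P(\Lambda)$ (they are $1$-units times $\mf b$-unit-type elements fixing $[\Lambda]$), and $x^{-1}t\in P(j_{F[\gamma]}(\Lambda))\subseteq P(\Lambda)$; tracking these through shows $g^{-1}t\in P(\Lambda)$, hence $g$ normalizes $\Lambda$.

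\textbf{Main obstacle.} The delicate point is the bookkeeping of \emph{which} normalizer each correcting element lives in — keeping every auxiliary conjugation ($g_1$, the translation-principle element $u$, the derived-character factors $u,v$, the simple-block conjugators from Proposition~\ref{propIntImplConjSimpleChar}) inside $P(\Lambda)$ (not merely $\mf n(\Lambda)$) so that the final relation $g^{-1}t\in P(\Lambda)$ survives, all while maintaining compatibility with the residue-algebra matching $\bar\zeta$ at each stage. Verifying that the induction hypothesis genuinely applies inside $C_{A^\times}(\gamma)$ — i.e.\ that $\partial_\gamma(\Delta)$ and $\partial_\gamma(\Delta')$ really are semisimple strata with the induced matching pair realized by (a suitable component of) $t$ — is the conceptual crux, and rests squarely on Theorem~\ref{thmDerived} and Lemma~\ref{lemDerivedCharacters}.
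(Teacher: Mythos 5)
Your strategy (redoing the strata induction with derived characters, as in the proof of the matching theorem, while simultaneously tracking the conjugator) is not the paper's route, and as written it has two genuine gaps. The decisive one is the final step: after assembling $g=uxv$ you only know that $g.\theta_i$ and $\theta'_{\zeta(i)}$ \emph{intertwine} block-wise, and to promote this to equality you invoke Proposition~\ref{propIntImplConjSimpleChar}. But that proposition only produces a conjugator in the \emph{normalizer} $\mf{n}(\Lambda)$ (which may act nontrivially on $\kappa_{E_D}$), not in $P(\Lambda)$; the whole content of the theorem is that the hypothesis ``$t$ induces $\bar{\zeta}|_{\kappa_{E_D}}$'' forces the correction to lie in $P(\Lambda)\,t$, and without feeding that hypothesis into the last step the statement is simply not obtainable (it is false if one drops the $\bar\zeta$ condition). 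The paper makes this step precise by first replacing $\theta'$ by a transfer of $\theta$ via Proposition~\ref{propDiagonalizationSemisimpleCharacters}, noting $\bar{\zeta}_{\Delta,\Delta'}=\bar{\zeta}=\id$ by Corollary~\ref{corMatchingForTransfers}, and then applying the intertwining-and-conjugacy theorem for strata, Theorem~\ref{thmIntConjSesiTiG} (whose proof is where Theorem~\ref{thmequivalentToSameEmbedType} and Proposition~\ref{propFieldEmbedingsConjugateUnderaUnit} convert the residue-algebra condition into a $P(\Lambda)$-conjugation); your sketch never makes this conversion, and the ``tracking these through shows $g^{-1}t\in P(\Lambda)$'' sentence is exactly the assertion that needs proof.

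The second gap is structural: in the inductive step you ``apply the induction hypothesis inside $C_{A^\times}(\gamma)$ to the derived characters.'' The derived objects $\partial_{\gamma,\theta_0}\theta=\psi_{s(\beta-\gamma+c)}$ are not semisimple characters of smaller $n-r$ for $G$ (nor even for the centralizer in the sense of $\C(\cdot)$); they are characters attached to derived multi-strata over the factors of $F[\gamma]$, so the statement being induced on does not apply to them, and your induction does not close. What is needed at that level is again a strata-level intertwining-implies-conjugacy statement in the centralizer, i.e.\ the content of Theorem~\ref{thmIntConjSesiTiG} (via Theorem~\ref{thmDerived}), not the theorem under proof. Note also that the matching for $\theta(1+),\theta'(1+)$ lives on the \emph{coarser} index sets of $\Delta(1),\Delta'(1)$ (the splitting of $\Delta$ refines that of $\Delta(1)$, not the other way around), so even the first appeal to the induction hypothesis needs an argument that $t$ satisfies the hypotheses relative to $\kappa_{E(1)_D}$. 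The paper avoids all of this: conjugating by $t$ reduces to $t=1$, Corollary~\ref{corDeltakNonTrivialIntersec} reduces to the simple case, and then the transfer trick plus Theorem~\ref{thmIntConjSesiTiG} finish in a few lines, the induction having already been absorbed into the matching theorem and the strata results.
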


\begin{proof}
 We can conjugate~$\theta$ and~$\Delta$ with~$t$ and obtain the case  where~$\bar{\zeta}|_{\kappa_{E_D}}$ is induced by~$1$. 
 Thus we can assume~$t=1$ and~$\bar{\zeta}|_{\kappa_{E_D}}=\id_{\kappa_{E_D}}$. 
 By Corollary~\ref{corDeltakNonTrivialIntersec} it is enough to prove the theorem for the simple case. In this case we have to show that~$\theta$ is conjugate 
 to~$\theta'$ by an element of~$P(\Lambda)$. By Proposition~\ref{propDiagonalizationSemisimpleCharacters} we can assume (by changing~$\beta'$) that~$\theta'$ is the transfer of~$\theta$ from~$\Delta$ to~$\Delta'$. Then~$\bar{\zeta}_{\Delta,\Delta'}=\bar{\zeta}=\id$ by Corollary~\ref{corMatchingForTransfers}. 
 Thus there is an element of~$P(\Lambda)$ which conjugates~$\Delta$ to~$\Delta'$ by Theorem~\ref{thmIntConjSesiTiG} and this element conjugates~$\theta$ to the transfer, i.e. to~$\theta'$.
 \end{proof}

\section{Endo-classes for GL}\label{secEndoClass}
In this section we generalize the theory of endo-classes to semisimple characters of general linear groups. 
The theory of endo-classes for simple characters can be found in~\cite{broussousSecherreStevens:12} and
for the split case semisimple endo-classes are fully studied in~\cite{KSS:21}.
The section is structured into four parts. 
At first we study restriction maps between sets of semisimple characters, secondly we 
define transfer between  strata on different vector spaces and then between different division algebras, then we define and study semisimple endo-classes. This will be used in~\S\ref{secEndoPar}
to classify intertwining classes of full semisimple characters using so-called endo-parameters which are finite sums of weighted simple endo-classes. 

%
%
%

\subsection{Restrictions of semisimple characters}

\begin{proposition}\label{propSurjRestrictions}
 Let~$\Delta$ be a semisimple stratum with associated splitting~$\oplus_{i\in I}V^i$ (as usual) and suppose further that~$\Delta$ is split by~$V^0 \oplus V^1$ with a non-zero vector space~$V^0$, say~$\Delta=\Delta_0\oplus \Delta_1$. Then
 \begin{enumerate}
  \item the restriction map~$\C(\Delta)\stackrel{res_{\Delta,\Delta_0}}{\longrightarrow}\C(\Delta_0)$ is surjective. \label{thmSurjRestrictions.i}
  \item \label{thmSurjRestrictions.ii} if for all indexes~$i\in I$ the intersection of~$V^i$ with~$V^0$ is non-zero then~$res_{\Delta,\Delta_0}$ 
  is bijective.
 \end{enumerate}
\end{proposition}

The class of restriction maps~$res_{\Delta,\Delta_0}$ in Proposition~\ref{propSurjRestrictions}
and the class of restriction maps given in Proposition~\ref{propRestrictionMapIsBijective} only intersect in the case where~$\Delta=\Delta_0$ and~$j=0$. In this case the restriction map is the identity.   

\begin{lemma}[{\cite{secherreStevensIV:08}~Theorem~2.17(ii),~Proposition~2.10}]\label{lemBSS2.17iiBijectiveRestrictionSimpleCase}
 Under the assumptions of Proposition~\ref{propSurjRestrictions} if~$\Delta$ is simple then~$res_{\Delta,\Delta_0}$ is bijective.
\end{lemma}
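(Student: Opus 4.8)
The plan is to prove the two directions of bijectivity separately, both by the induction underlying Definition~\ref{defSemisimpleCharSpliCase}, the two external inputs being the Iwahori-decomposition property of semisimple characters (Proposition~\ref{propIwahoriSt05no5.5}) and the gluing step in the construction of simple characters, which is available for $D=F$ in \cite{bushnellKutzko:93} and passes to general $D$ through the definition of $\C(\Delta)$ as the set of characters of $H(\Delta)$ extending to $\C(\Delta\otimes L)$.

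Harmless reductions come first. If $V^1=0$ there is nothing to prove, so assume $V^0$ and $V^1$ are both non-zero. Since $\Delta$ is simple, $E=F[\beta]$ is a field, $\Lambda$ is an $o_E$-lattice sequence, and the idempotents $1^0,1^1$ cutting out $V^0,V^1$ commute with $\beta$, so $V=V^0\oplus V^1$ is a decomposition by $E$-subspaces; hence $\Lambda^0=\Lambda\cap V^0$ and $\Lambda^1=\Lambda\cap V^1$ are again $o_E$-lattice sequences, and reading the periodicity $\Lambda_{t+e(\Lambda|E)}=\Lambda_t\pi_E$ on each summand gives $e(\Lambda^0|E_0)=e(\Lambda^1|E_1)=e(\Lambda|E)$. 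Moreover $\beta_0=1^0\beta$ acts on $V^0$ with the minimal polynomial of $\beta$ over $F$, and likewise for $\beta_1$, so $E_0$ and $E_1$ may be canonically identified with $E$. Consequently $\Delta_0$ and $\Delta_1$ are simple strata of the same degree $[E:F]$ and the same group level $\lfloor r/e(\Lambda|E)\rfloor$, and — crucially — the $o_E^\times$-level characters attached by Definition~\ref{defSemisimpleCharSpliCase}(b) to elements of $\C(\Delta)$, $\C(\Delta_0)$ and $\C(\Delta_1)$ all live on the same group $P_{\lfloor r/e(\Lambda|E)\rfloor+1}(o_E)$.

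Injectivity. Apply Proposition~\ref{propIwahoriSt05no5.5} to the splitting $V=V^0\oplus V^1$ with Levi $\tilde M=\GL_D(V^0)\times\GL_D(V^1)$: one gets $H(\Delta)=(H(\Delta)\cap\tilde U_-)\big(H(\Delta_0)\times H(\Delta_1)\big)(H(\Delta)\cap\tilde U_+)$, with every $\theta\in\C(\Delta)$ trivial on the two unipotent factors, so $\theta$ is determined by the pair $(\theta_0,\theta_1)$ of block restrictions. It then suffices to show that $\theta_0$ already determines $\theta_1$, i.e. (passing to ratios) that $\theta\in\C(\Delta)$ with $\theta_0=1$ is trivial. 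I would prove this by induction on $n-r$, reducing via core approximations (Definition~\ref{defDefiningSequence}) to the situation $H(\Delta)=P_{r+1}(j_E(\Lambda))\,H(\Delta(1+))$: the induction hypothesis applied to $\Delta(1+)$ — which is simple, split by $V^0\oplus V^1$, with restriction $\Delta_0(1+)$ — gives $\theta(1+)=1$, and then $\theta$ is determined by its restriction to $P_{r+1}(j_E(\Lambda))$, which by Definition~\ref{defSemisimpleCharSpliCase}(b) factors through $\operatorname{Nrd}\colon C_{A^\times}(\beta)\to E^\times$ and coincides on $P_{\lfloor r/e(\Lambda|E)\rfloor+1}(o_E)$ with the $o_E^\times$-level character of $\theta_0=1$; hence $\theta=1$.

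Surjectivity. Given $\theta_0\in\C(\Delta_0)$, I would build a preimage by the same induction: by inductive surjectivity applied to $\Delta(1+)$, the character $\theta_0(1+)$ is the $\Delta_0(1+)$-restriction of some $\phi\in\C(\Delta(1+))$, and one extends $\phi$ to $\C(\Delta)$ by prescribing on $P_{r+1}(j_E(\Lambda))$ the character that factors through $\operatorname{Nrd}$ and is given on $P_{\lfloor r/e(\Lambda|E)\rfloor+1}(o_E)$ by the $o_E^\times$-level character of $\theta_0$; this is legitimate precisely because $e(\Lambda^0|E_0)=e(\Lambda|E)$, so the two prescription groups coincide. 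That this datum assembles into a well-defined element of $\C(\Delta)$ is the simple-stratum construction lemma of \cite{bushnellKutzko:93} (see also \cite{broussousSecherreStevens:12}), which transfers from $D=F$ to general $D$ through restriction of $\C(\Delta\otimes L)$ to $G$; by construction its $\Delta_0$-restriction is $\theta_0$. The main obstacle is purely the inductive bookkeeping: keeping the $P_{r+1}(j_E(\Lambda))$-level datum and the $H(\Delta(1+))$-level datum consistent simultaneously under restriction to $V^0$ and under the passage $\Delta\mapsto\Delta(1+)$ — which is exactly where the equality of the $E$-periods and the core-approximation reduction are needed.
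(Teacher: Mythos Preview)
The paper does not prove this lemma itself; it is taken from \cite{broussousSecherreStevens:12}~2.17(ii),~2.10. As the paper remarks in the proof of Lemma~\ref{lemTriangleToSquareExtension} (and again in Remark~\ref{remEquivalenceOfDefintionsOfTransfer}), the cited argument goes through the theory of \emph{transfers} for simple characters on different vector spaces: one shows that the internal restriction $res_{\Delta,\Delta_0}$ coincides with a transfer map, and transfer maps are bijections essentially by construction.

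Your approach is genuinely different: a direct induction on the structure of simple characters, without invoking transfers. The crucial observation you isolate --- that $e(\Lambda^0|E_0)=e(\Lambda|E)$, so the $o_E^\times$-level groups $P_{\lfloor r/e(\Lambda|E)\rfloor+1}(o_E)$ for $\theta$ and for $\theta_0$ are literally the same --- is exactly what makes the direct approach go through, and your surjectivity step is fine. The transfer route is cleaner and more conceptual (and scales immediately to comparisons between strata on different vector spaces), while your route is more self-contained.

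One genuine wrinkle in your injectivity argument: the phrasing ``(passing to ratios) that $\theta\in\C(\Delta)$ with $\theta_0=1$ is trivial'' does not make sense as written. The set $\C(\Delta)$ is not a group, and the trivial character is not an element of $\C(\Delta)$ (nor is $1\in\C(\Delta_0)$) unless $\Delta$ is a zero stratum; so the statement you reduce to is vacuous, and your inductive step ``$\theta(1+)=1$'' inherits the same problem. The fix is immediate: carry two characters $\theta,\theta'\in\C(\Delta)$ with $\theta_0=\theta'_0$ through the induction. The hypothesis at level $\Delta(1+)$ gives $\theta(1+)=\theta'(1+)$; then on $P_{r+1}(j_E(\Lambda))$ both factor through $\mathrm{Nrd}$ by characters of $P_{\lfloor r/e(\Lambda|E)\rfloor+1}(o_E)$ which are read off from $\theta_0=\theta'_0$, hence agree; so $\theta=\theta'$. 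Alternatively, keep the ratio $\theta/\theta'$ but treat it only as a character of $H(\Delta)$, not as an element of $\C(\Delta)$. Either way your inductive skeleton is correct once rephrased.
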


\begin{lemma}[{\cite{KSS:21}~Lemma 12.4}]\label{lemTriangleToSquareExtension}
 Let~$\Delta$ be a semisimple stratum split by~$V^0\oplus V^1$ and let~$\ti{\theta}\in \C(\Delta(1+))$ and~$\theta_1\in\C(\Delta_0)$ be semisimple characters
 such that their restrictions to~$H(\Delta_0(1+))$ coincide. Then there is a semisimple character~$\theta\in\C(\Delta)$ such that~$\theta(1+)=\ti{\theta}$ and with restriction~$\theta_0$ on~$H(\Delta_0)$. 
\end{lemma}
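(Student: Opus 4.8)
The plan is to reduce to the case where $\Delta$ is a simple stratum, where the statement follows from the bijectivity of the restriction map for simple characters (Lemma~\ref{lemBSS2.17iiBijectiveRestrictionSimpleCase}) together with the diagonalization theorem. Throughout write $\Delta=\bigoplus_{i\in I}\Delta_i$ for the associated splitting $V=\bigoplus_{i\in I}V^i$, put $V^i_0:=V^i\cap V^0$ and $I_0:=\{i\in I: V^i_0\neq 0\}$, so that $\Delta_0=\bigoplus_{i\in I_0}\Delta_i|_{V^i_0}$; since $1^{V^0}$ lies in $B=C_{A}(\beta)$ one has $V^i=V^i_0\oplus(V^i\cap V^1)$ with both idempotents commuting with $\beta_i$, and each $\Delta_i|_{V^i_0}$ is a direct summand of the simple (hence pure) stratum $\Delta_i$, so it is again simple, with the same critical exponent, by Corollary~\ref{corEndoEquivalenceForSimpleStrata}\ref{corEndoEquivalenceForSimpleStrataAss.iii}.

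First I would pass to block data. For a semisimple stratum the block-restriction map $\C(\Delta)\to\prod_{i\in I}\C(\Delta_i)$ is a bijection: injectivity is the statement quoted right after Proposition~\ref{propIwahoriSt05no5.5}, and surjectivity is part of the construction of semisimple characters out of simple ones (\cite{skodlerackStevens:15-1}, together with Proposition~\ref{propDefSemisimpleCharInductiveNonSplit} for the passage to general $D$); the same holds for $\Delta(1+)=\bigoplus_i\Delta_i(1+)$, for $\Delta_0$ and for $\Delta_0(1+)$. Since the relevant groups $H(-)$ are block-diagonal with respect to $(V^i)_i$ — in particular $H(\Delta_0(1+))=H(\Delta_0)\cap H(\Delta(1+))$ — the hypothesis $\ti\theta|_{H(\Delta_0(1+))}=\theta_0(1+)$ decomposes blockwise as $\ti\theta_i|_{H(\Delta_i(1+)|_{V^i_0})}=(\theta_0)_i(1+)$ for all $i\in I_0$, where $\ti\theta=(\ti\theta_i)_i$ and $\theta_0=((\theta_0)_i)_{i\in I_0}$ are the block decompositions. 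It then suffices to produce, for each $i\in I$, a character $\theta_i\in\C(\Delta_i)$ with $\theta_i(1+)=\ti\theta_i$ and, when $i\in I_0$, with $\theta_i|_{H(\Delta_i|_{V^i_0})}=(\theta_0)_i$; reassembling gives $\theta\in\C(\Delta)$, and comparing block restrictions identifies $\theta(1+)$ with $\ti\theta$ and $\theta|_{H(\Delta_0)}$ with $\theta_0$.

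For $i\notin I_0$ one takes any preimage of $\ti\theta_i$ under the surjective one-step restriction $\C(\Delta_i)\to\C(\Delta_i(1+))$. For $i\in I_0$, Lemma~\ref{lemBSS2.17iiBijectiveRestrictionSimpleCase}, applied to the simple stratum $\Delta_i$ split by $V^i_0\oplus(V^i\cap V^1)$, shows that $\C(\Delta_i)\to\C(\Delta_i|_{V^i_0})$ is bijective; let $\theta_i$ be the unique preimage of $(\theta_0)_i$. It remains to check $\theta_i(1+)=\ti\theta_i$. Both lie in $\C(\Delta_i(1+))$, and since $H(\Delta_i(1+)|_{V^i_0})\subseteq H(\Delta_i|_{V^i_0})$, restricting $\theta_i$ once more gives $\theta_i(1+)|_{H(\Delta_i(1+)|_{V^i_0})}=(\theta_0)_i(1+)=\ti\theta_i|_{H(\Delta_i(1+)|_{V^i_0})}$; so it is enough that $\C(\Delta_i(1+))\to\C(\Delta_i(1+)|_{V^i_0})$ be injective. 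If $\Delta_i(1+)$ is itself simple (or a zero stratum) this is again Lemma~\ref{lemBSS2.17iiBijectiveRestrictionSimpleCase}; in general one first replaces $\Delta_i(1+)$, using the diagonalization theorem~\ref{thmDiagonalization}, by an equivalent simple stratum split by $V^i_0\oplus(V^i\cap V^1)$ — which changes neither $\C(\Delta_i(1+))$ nor $\C(\Delta_i(1+)|_{V^i_0})$ — and then applies the simple case.

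The main obstacle I anticipate is exactly this last point: handling $\C(\Delta_i(1+))$ when $\Delta(1+)$ is only \emph{equivalent} to, not equal to, a simple stratum, and choosing the diagonalizing simple stratum so that its restriction to $V^i_0$ represents the equivalence class of $\Delta_i(1+)|_{V^i_0}$, so that the two applications of Lemma~\ref{lemBSS2.17iiBijectiveRestrictionSimpleCase} (on $V^i$ and on $V^i_0$) refer to one and the same stratum. A secondary, routine but necessary, point is to verify the blockwise behaviour of the groups $H(-)$ and the commutativity of the two restriction maps out of $\C(\Delta)$ used above. An alternative route, avoiding the block-product bijection, would be a direct induction on $n-r$ following the inductive definition of $\C(\Delta)$ (Proposition~\ref{propDefSemisimpleCharInductiveNonSplit}): at the step extending $\ti\theta$ by one level one either extends a character factoring through the reduced norm $C_{A^\times}(\beta)\to\prod_i E_i^\times$, which splits over $i$ and can be matched to $\theta_0$ on the blocks in $I_0$ and chosen freely on the others, or one passes to the stratum $[\Lambda,n,-k_0(\Delta),\gamma]$ of strictly smaller degree — with $\gamma$ chosen split by $V^0\oplus V^1$ via Corollary~\ref{corDecompAreConjForEqSemisimpleStrata} — and invokes the inductive hypothesis.
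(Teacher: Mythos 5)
Your reduction stands or falls with the claim that the block-restriction map $\C(\Delta)\to\prod_{i\in I}\C(\Delta_i)$ is a \emph{bijection}. Injectivity is indeed the remark after Proposition~\ref{propIwahoriSt05no5.5}, but surjectivity is not ``part of the construction'' and is false in general. In the inductive Definition~\ref{defSemisimpleCharSpliCase}, a semisimple character is pinned down on the group $H^{1+r_0}(\gamma,\Lambda)$ attached to an approximation $\gamma$ at the critical-exponent level, and the associated splitting of $\gamma$ is in general coarser than that of $\beta$: blocks of $\beta$ may merge. When two blocks $i_1,i_2$ merge into a single simple block of $\gamma$, the condition $\theta|_{H^{1+r_0}(\gamma,\Lambda)}=\psi_{\beta-\gamma}\theta_0$ forces the level-$r_0$ ``shadows'' of $\theta_{i_1}$ and $\theta_{i_2}$ to be block restrictions of one simple character $\theta_0$ for $\gamma$, i.e.\ to be transfers of one another (this is exactly the bijectivity of Lemma~\ref{lemBSS2.17iiBijectiveRestrictionSimpleCase} applied to $\gamma$). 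So the image of the block-restriction map lies over the graph of a transfer bijection at level $r_0$ and is a proper subset of the product as soon as $\C(\Lambda^{i_2},r_0,\gamma)$ has more than one element. Consequently, after you choose the $\theta_i$ independently (in particular ``any preimage'' for $i\notin I_0$), the step ``reassembling gives $\theta\in\C(\Delta)$'' is unjustified; it is essentially the statement being proved. Note also that if the block map were bijective, Theorem~\ref{thmSurjRestrictions} and Proposition~\ref{propAddingSemisimpleCharacters} would be near-trivial, which they are not. Your construction moreover discards the off-diagonal information carried by $\ti{\theta}$ (you keep only its block restrictions), and that information is precisely the cross-block gluing data the lemma is about.

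The paper's proof is different: it proceeds by induction on the critical exponent exactly as in~\cite[13.4]{kurinczukSkodlerackStevens:16}, treating separately the range $r<\lfloor\frac{-k_0(\Delta)}{2}\rfloor$, where $H(\Delta)=P_{r+1}(j_E(\Lambda))H(\Delta(1+))$ and one extends $\ti{\theta}$ by a character on $P_{r+1}(j_E(\Lambda))$ matched to $\theta_0$ via Lemma~\ref{lemBSS2.17iiBijectiveRestrictionSimpleCase} (in place of transfers), and the range near the critical exponent, where the approximation $\gamma$ changes and the compatibility across blocks must be handled through the inductive definition. Your closing ``alternative route'' is in the right spirit, but as written it is only a sketch and does not repair the main argument, whose key step is the false surjectivity claim.
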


\begin{proof}
 The proof goes along an induction on the critical exponent and is completely the same as in~\loccit, except with a small change in the 
 case~$r<\lfloor\frac{-k_0(\Delta)}{2}\rfloor$ where we use Lemma~\ref{lemBSS2.17iiBijectiveRestrictionSimpleCase} instead of transfers. In fact this is no real change, 
 because Lemma~\ref{lemBSS2.17iiBijectiveRestrictionSimpleCase} was proven by using transfers for simple characters on different vector spaces. 
\end{proof}

\begin{proof}[Proof of Proposition~\ref{propSurjRestrictions}]
 \begin{enumerate}
  \item The proof uses an induction along~$r$. If~$r=n$ then~$\C(\Delta)$ and~$\C(\Delta_0)$ are singletons. Thus the restriction map is surjective. 
  In the case~$r<n$ take a semisimple character~$\theta_0\in\C(\Delta_0)$ and use the induction hypothesis to obtain an extension~$\ti{\theta}\in\C(\Delta(1+))$ 
  of~$\theta_0(1+)$. By Lemma~\ref{lemTriangleToSquareExtension} there exists a common extension of~$\ti{\theta}$ and~$\theta_0$ into~$\C(\Delta)$. This finishes the proof of the first part.
  \item Statement~\ref{thmSurjRestrictions.i} provides the surjectivity. For the injectivity consider the following diagram 
  \[\begin{matrix}
   \C(\Delta) & \rightarrow &\C(\Delta_0)\\
   \lhookdownarrow & \circlearrowleft & \lhookdownarrow\\
   \prod_{i\in I} \C(\Delta_i) & \stackrel{\sim}{\longrightarrow} & \prod_{i\in I} \C(\Delta_{0,i})\\
  \end{matrix},\]
  where the bottom map is a bijection by Lemma~\ref{lemBSS2.17iiBijectiveRestrictionSimpleCase}. This proves the injectivity. 

 \end{enumerate}
\end{proof}

We need a notion of the direct sum of strata 
with lattice sequences of possibly different~$o_D$-periods. 
Let~$\Delta$ and~$\Delta'$ be two strata. 
Write~$e(\Lambda,\Lambda')$ for the fraction~$\frac{e(\Lambda|F)}{\gcd(e(\Lambda|F),e(\Lambda'|F))}$. Then~$e(\Lambda,\Lambda')\Lambda$ and~$e(\Lambda',\Lambda)\Lambda'$ have the same~$o_F$-period and we define the direct sum of~$\Delta$ and~$\Delta'$ via
\[\Delta\oplus\Delta':=[e'\Lambda\oplus e\Lambda',\max(ne',n'e),\max(re',r'e),\beta\oplus\beta'],\ e=e(\Lambda,\Lambda'),\ e'=e(\Lambda',\Lambda).\]
%
This is consistent with the direct sum of strata given in~\S \ref{subsecSemisimpleStrataFirstDef}.
Taking the above definition into account we have to be a little bit careful with restrictions. Suppose~$\Delta$ and~$\Delta'$ are two semisimple 
strata of the same period and suppose~$r=r'+1$. The restriction of an 
element~$\theta\in\C(\Delta\oplus\Delta')$ to~$\End_D(V')$ is an element of~$\C(\Delta'(1+))$. Thus we need to 
extend~$\theta|_{H(\Delta'(1+))}$ to~$H(\Delta')$. Proposition~\ref{propRestrictionMapIsBijective} states that this extension is unique if~$\Delta'$ 
and~$\Delta'(1+)$ have the same group level.

%

For a positive integer~$k$ we define the~$k$-multiple
~$k\Delta$ of a stratum~$\Delta=[\Lambda,n,r,\beta]$
to be the stratum~$[k\Lambda,kn,kr,\beta]$. 

\begin{definition}\label{defRestriction}
Let~$\Delta$ and~$\Delta'$  be semisimple strata such that~$(\Delta\oplus\Delta')|_V$ and~$\Delta$ have same group level. We define the restriction
\[res_{\Delta\oplus\Delta',\Delta}:\ \C(\Delta\oplus\Delta')\rightarrow\C(\Delta)\] as the map
\[res_{e'\Delta,(e'\Delta)(k-e'r)}^{-1}\circ res_{\Delta\oplus\Delta',(e'\Delta)(k-e'r).},\]
where~$e=e(\Lambda,\Lambda'),e'=e(\Lambda',\Lambda)$ and~$k=\max(e'r,er')$. 
This is the usual restriction followed by extension. We call these maps~\emph{external restrictions}, and we call the usual restrictions as~\emph{internal restrictions}. Thus every internal restriction can be interpreted as an external one, but not every external restrictions comes from an internal one. 
\end{definition}

The~$\dag$-construction has the following influence on external restrictions.

\begin{proposition}\label{propIndofRestMapFromCopyOfSimpleBlock}
 Let~$\Delta_i$,~$i\in I$ be semisimple strata such that there are two different indexes~$i_1,i_2$ such that~$\Delta_{i_1}$ is equal to~$\Delta_{i_2}$. 
 Suppose further that~$(\oplus_I\Delta_i)|_{V^{i_1}}$ has the same group level as~$\Delta_{i_1}$. Then 
 the maps~$res_{\oplus_I\Delta_i,\Delta_{i_1}}$ and~$res_{\oplus_I\Delta_i,\Delta_{i_2}}$ coincide. 
\end{proposition}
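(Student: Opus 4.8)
The plan is to deduce the equality of the two restriction maps from a single symmetry of the whole datum: the $D$-linear automorphism of $V=\bigoplus_{i\in I}V^i$ which interchanges the two equal blocks $V^{i_1}$ and $V^{i_2}$ and fixes the rest. The point will be that this automorphism both normalises $\bigoplus_{i\in I}\Delta_i$ and normalises every semisimple character in $\C(\bigoplus_{i\in I}\Delta_i)$, so that conjugating by it preserves $\theta$ while exchanging its $i_1$- and $i_2$-block restrictions.

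Concretely, first I would fix a $D$-linear isomorphism $\phi\colon V^{i_1}\to V^{i_2}$ realising the equality $\Delta_{i_1}=\Delta_{i_2}$, chosen so that $\phi$ carries the lattice sequence $\Lambda^{i_1}$ (with the scaling it receives inside the direct sum) term by term to $\Lambda^{i_2}$ and conjugates $\beta_{i_1}$ to $\beta_{i_2}$; this is possible because equal blocks share their scaling factor in the generalised direct sum. Let $g\in G$ act as $\phi$ on $V^{i_1}$, as $\phi^{-1}$ on $V^{i_2}$, and as the identity on $V^i$ for $i\notin\{i_1,i_2\}$. Then $g$ stabilises every lattice of $\Lambda=\bigoplus_i V^i$ (the sum of the scaled $\Lambda^i$), so $g\in P(\Lambda)$, and $g$ commutes with $\beta=\bigoplus_i\beta_i$; hence $g\in C_{A^\times}(\beta)\cap P(\Lambda)=P(j_E(\Lambda))\subseteq\mf{n}(j_E(\Lambda))$, with $E=F[\beta]$. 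Also $g$ fixes $\bigoplus_{i\in I}\Delta_i$, since it centralises $\beta$ and stabilises $\Lambda$.

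The central step is the invariance $g.\theta=\theta$ for every $\theta\in\C(\bigoplus_{i\in I}\Delta_i)$. This is nothing but the normalisation requirement in the inductive definition of a semisimple character, Definition~\ref{defSemisimpleCharSpliCase}, which by Proposition~\ref{propDefSemisimpleCharInductiveNonSplit} carries over to the non-split case: $\theta$ is normalised by $\mf{n}(j_E(\Lambda))\cap G$, and $g$ is an element of this group. Granting this, I would finish by a block computation: by the Iwahori decomposition of $H(\bigoplus_{i\in I}\Delta_i)$ along the splitting $(V^i)_i$ (Proposition~\ref{propIwahoriSt05no5.5}), conjugation by $g$ maps $H(\Delta_{i_1})$ onto $H(\Delta_{i_2})$ through $\phi$ and commutes with the extension step entering the external restrictions of Definition~\ref{defRestriction}, whence
\[res_{\bigoplus_{i\in I}\Delta_i,\,\Delta_{i_2}}(\theta)=res_{\bigoplus_{i\in I}\Delta_i,\,\Delta_{i_2}}(g.\theta)=\phi.\bigl(res_{\bigoplus_{i\in I}\Delta_i,\,\Delta_{i_1}}(\theta)\bigr),\]
and under the identification $\Delta_{i_1}=\Delta_{i_2}$ given by $\phi$ this is exactly the asserted coincidence of the two maps.

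The main obstacle I expect is purely bookkeeping: one must pin down the scaling (and translation) conventions in the generalised direct sum of strata so that the block-swap $g$ genuinely lies in $P(\Lambda)$ — lying merely in $\mf{n}(\Lambda)$ would not be enough to force $g.\theta=\theta$ from the normalisation property — and one must check that the external restriction $res_{\bigoplus_{i\in I}\Delta_i,\Delta_{i_k}}$, defined as an internal restriction followed by the canonical extension from $\C(\Delta_{i_k}(\cdot+))$, is $g$-equivariant; here the uniqueness of that extension (Proposition~\ref{propRestrictionMapIsBijective}), together with the equal group levels of the $\Delta_i$, is what makes everything compatible.
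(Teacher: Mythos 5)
Your swap-symmetry idea is attractive, but the central step as you justify it has a genuine gap. You deduce $g.\theta=\theta$ for every $\theta\in\C(\oplus_{i\in I}\Delta_i)$ from the normalisation clause of Definition~\ref{defSemisimpleCharSpliCase}, applied with $E=F[\beta]$, $\beta=\oplus_i\beta_i$. However, the direct sum $\oplus_{i\in I}\Delta_i$ is in general \emph{not} a semisimple stratum: already the two equal blocks merge (the algebra generated by $\beta_{i_1}\oplus\beta_{i_2}$ is $E_{i_1}$ embedded diagonally), and in the situation where the proposition is actually used (the proof of Proposition~\ref{propFirstPropertiesOfGeneralizedTransfer}, with sums like $\Delta\oplus\Delta'\oplus\Delta$ for endo-equivalent but distinct $\Delta,\Delta'$) blocks of different summands merge as well. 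Consequently $\C(\oplus_I\Delta_i)$ is by definition $\C(\tilde\Delta)$ for an equivalent semisimple stratum $\tilde\Delta$, and the normalisation property you invoke (via Proposition~\ref{propDefSemisimpleCharInductiveNonSplit}) is normalisation by $\mf{n}(j_{\tilde E}(\Lambda))$ with $\tilde E=F[\tilde\beta]$. Your element $g$ centralises $\beta$ but need not centralise $\tilde\beta$, so $g\in P(j_{F[\beta]}(\Lambda))$ only gives $g.\theta\in\C(\oplus_I\Delta_i)$, not $g.\theta=\theta$; the intertwining formula of Theorem~\ref{thmTransferAndInt} is likewise stated relative to $\tilde\beta$ and does not obviously contain $g$. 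So the key invariance is not yet established in the generality in which the proposition is applied.

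The repair is to do first what the paper does anyway: factor both external restrictions as $res_{\Delta_{i_1}\oplus\Delta_{i_2},\Delta_{i_j}}\circ res_{\oplus_I\Delta_i,\Delta_{i_1}\oplus\Delta_{i_2}}$ (legitimate since all strata have the same group level and, $\Delta_{i_1}$ being equal to $\Delta_{i_2}$, the extension steps are literally the same map), and then run your argument on the two-block stratum $\Delta_{i_1}\oplus\Delta_{i_2}$ alone. That doubled stratum \emph{is} semisimple, with its algebra $E_{i_1}$ diagonally embedded, so there your swap lies in $C_{A^\times}(\beta)\cap P(\Lambda)=P(j_E(\Lambda))\subseteq\mf{n}(j_E(\Lambda))$, genuinely normalises every character, and exchanges the two block restrictions, which gives the claim. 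For comparison, the paper's proof avoids the symmetry altogether: after the same factorisation it observes that both restrictions $\C(\Delta_{i_1}\oplus\Delta_{i_2})\to\C(\Delta_{i_1})$ admit the common inverse $(\ )^{\ddag,(0,0)}$ of Proposition~\ref{propDdag} (bijectivity from Theorem~\ref{thmSurjRestrictions}\ref{thmSurjRestrictions.ii}), hence coincide. Your localized swap argument is a valid alternative to that step, but the reduction to the doubled stratum is not optional bookkeeping --- it is what makes the normalisation property applicable.
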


\begin{proof}
 At first the restriction maps~$res_{\Delta_{i_1}\oplus\Delta_{i_2},\Delta_{i_1}}$ and~$res_{\Delta_{i_1}\oplus\Delta_{i_2},\Delta_{i_2}}$ coincide because they have the same inverse
 which is the~$\dag$-map~$( )^{\dag,(0,0)}$, see~\S\ref{subsecDag}. The statement follows now from 
 \[res_{\oplus_I\Delta_i,\Delta_{i_j}}=res_{\Delta_{i_1}\oplus\Delta_{i_2},\Delta_{i_j}}\circ res_{\oplus_I\Delta_i,\Delta_{i_1}\oplus\Delta_{i_2}}\]
 for~$j=1,2$.
\end{proof}

\subsection{Generalization of transfer}
The idea of transfer is the following. Let us illustrate this roughly for the simple case: Say~$\Delta$ is a simple stratum with defining sequence~$(\Delta(j))_j$. 
A character~$\theta\in\C(\Delta)$ is constructed inductively using the characters~$\psi_{\beta(j)}$, determinants and characters~$\chi_j$ on~$E(j)^\times$. 
Now, given another simple stratum~$\Delta'$ of the same group level as~$\Delta$ such that~$\beta'$ has the same minimal polynomial as~$\beta$ one just takes the 
same data~$\chi_j$ to construct a character~$\theta'$. This~$\theta'$ is called the transfer of~$\theta$ from~$\Delta$ to~$\Delta'$. 
This idea  defines a bijection between~$\C(\Delta)$ and~$\C(\Delta')$. 
This construction does not need a fixed ambient vector space for both strata. And in this section we are going to generalize it to semisimple strata in a rigorous way.

\begin{definition}\label{defEndoEquivalentStrata}
 Two semisimple strata~$\Delta$ and~$\Delta'$ (possibly on different vector spaces and possibly over different skew-fields) which have the same group level and the same degree are called endo-equivalent 
 if there is a bijection~$\zeta$ from~$I$ to~$I'$ such that, for all~$i\in\I$, the direct sum~$\Res_F(\Delta_i)\oplus\Res_F(\Delta'_{\zeta(i)})$ is equivalent to a simple stratum. 
\end{definition}

\begin{proposition}\label{propEndoEquivalenceOfStrataIsEquivalenceRelation}
 \begin{enumerate}
  \item If~$\Delta$ and~$\Delta'$ are endo-equivalent then the map~$\zeta$ from Definition~\ref{defEndoEquivalentStrata} is uniquely determined, i.e. there is no other map
 ~$\zeta'$ from~$I$ to~$I'$ such that~$\Res_F(\Delta_i)\oplus\Res_F(\Delta'_{\zeta'(i)})$ is equivalent to a simple stratum, for all indexes~$i\in I$.  
  \item On semisimple strata endo-equivalence is an equivalence relation. 
 \end{enumerate}
\end{proposition}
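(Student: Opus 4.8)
The plan is to reduce both assertions to the already-established theory of semisimple strata by passing through the $\ddag$-construction and restriction of scalars, so that all strata involved live in a common ambient vector space over $F$, where Proposition~\ref{propMatchingGLDStrata} (uniqueness of the matching), Corollary~\ref{corEquivalentCriteriaForIntertwining} and Corollary~\ref{corEndoEquivalenceForSimpleStrata} become available. First I would treat (1): suppose $\zeta$ and $\zeta'$ both witness endo-equivalence of $\Delta$ and $\Delta'$. By Corollary~\ref{corEqualDegreesFromIntertwininSameGrouplevelSameDegree}/Corollary~\ref{corEndoEquivalenceForSimpleStrata}\ref{corEndoEquivalenceForSimpleStrataAss.iv} the paired blocks have matching ramification and inertia data, so after applying suitable $\ddag$-constructions (Definition~\ref{defFirstDdagForStrata}) we may assume $\Lambda$ and $\Lambda'$ are lattice chains of the same $F$-period and, after conjugating by an element of $A^\times$, that $\Lambda=\Lambda'$; restriction of scalars $\Res_F$ reduces further to $D=F$, using Proposition~\ref{propGeneralizedPureSimple} and Theorem~\ref{thmEquivToSemisimpleStratum} to see that the equivalence-to-simple-stratum conditions are preserved. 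Then, if $\zeta(i_1)=\zeta'(i_2)=j$ with $i_1\neq i_2$, both $\Delta_{i_1}\oplus\Delta'_j$ and $\Delta_{i_2}\oplus\Delta'_j$ are equivalent to simple strata, so by Corollary~\ref{corEndoEquivalenceForSimpleStrata}\ref{corEndoEquivalenceForSimpleStrataAss.ii} the sum $\Delta_{i_1}\oplus\Delta_{i_2}$ is equivalent to a simple stratum; but $\Delta_{i_1}\oplus\Delta_{i_2}$ is a sub-sum of the semisimple stratum $\Delta$, contradicting Definition~\ref{defSemisimpleStratum}. Since $\zeta$ is determined by the dimensions $\dim_D V^i=\dim_D V'^{\zeta(i)}$ together with these pairings (exactly as in Proposition~\ref{propMatchingGLDStrata}), injectivity of the assignment forces $\zeta=\zeta'$.

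For (2), reflexivity is immediate: taking $\zeta=\mathrm{id}$, each $\Delta_i\oplus\Delta_i$ is equivalent to a simple stratum because $\Delta_i$ is simple (apply Theorem~\ref{thmDiagonalization}, or note the diagonal embedding directly). Symmetry is trivial from the definition, replacing $\zeta$ by $\zeta^{-1}$ and using that $\Delta'_{\zeta(i)}\oplus\Delta_i$ is the same stratum as $\Delta_i\oplus\Delta'_{\zeta(i)}$ up to reordering the two summands. For transitivity, suppose $\Delta$ is endo-equivalent to $\Delta'$ via $\zeta$ and $\Delta'$ to $\Delta''$ via $\zeta'$, all of the same group level and degree. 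I would again reduce to the case $D=F$ and $\Lambda=\Lambda'=\Lambda''$ a common lattice chain by $\ddag$-construction, conjugation, and $\Res_F$; then for each $i$, the strata $\Delta_i\oplus\Delta'_{\zeta(i)}$ and $\Delta''_{\zeta'(\zeta(i))}\oplus\Delta'_{\zeta(i)}$ are both equivalent to simple strata, so Corollary~\ref{corEndoEquivalenceForSimpleStrata}\ref{corEndoEquivalenceForSimpleStrataAss.ii} gives that $\Delta_i\oplus\Delta''_{\zeta'(\zeta(i))}$ is equivalent to a simple stratum. Hence $\zeta'\circ\zeta$ witnesses endo-equivalence of $\Delta$ and $\Delta''$, and by part~(1) it is the unique such matching.

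The main obstacle I anticipate is the bookkeeping in the reductions: ensuring that a single $\ddag$-construction (or a compatible pair of them) simultaneously puts all the blocks of $\Delta$, $\Delta'$ (and $\Delta''$) on conjugate principal lattice chains of a common period without destroying the block structure or the "equivalent to a simple stratum" conditions, and checking that $\Res_F$ and $\ddag$ interact correctly with direct sums of strata in the generalized sense of the subsection (with the $e,e'$ normalisation factors). Once the strata are aligned in a common $\End_F(V)$ with a common lattice chain, the arguments are purely formal applications of Corollary~\ref{corEndoEquivalenceForSimpleStrata} and the semisimplicity axiom; the subtlety is entirely in justifying that the alignment is harmless, for which Theorem~\ref{thmEquivToSemisimpleStratum}, Proposition~\ref{propGeneralizedPureSimple}, and Corollary~\ref{corEndoEquivalenceForSimpleStrata}\ref{corEndoEquivalenceForSimpleStrataAss.iii}\ref{corEndoEquivalenceForSimpleStrataAss.iv} (invariance of degree, inertia, ramification, and critical exponent under these operations) supply exactly what is needed.
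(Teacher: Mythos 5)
Your proposal is correct and takes essentially the same route as the paper: the published proof is exactly a reduction to the blockwise simple case via the transitivity statement Corollary~\ref{corEndoEquivalenceForSimpleStrata}\ref{corEndoEquivalenceForSimpleStrataAss.ii} (quoted together with~\cite[1.9]{bushnellHenniart:96} and a~$\ddag$-construction for the bookkeeping), and your uniqueness argument --- two witnesses~$\zeta\neq\zeta'$ would make~$\Delta_{i_1}\oplus\Delta_{i_2}$ equivalent to a simple stratum for distinct~$i_1,i_2$, contradicting Definition~\ref{defSemisimpleStratum} --- is the intended way to fill in part (i). One caution: the intermediate normalisation ``conjugate by an element of~$A^\times$ so that~$\Lambda=\Lambda'$ (resp.~$\Lambda=\Lambda'=\Lambda''$)'' is in general impossible, since endo-equivalent strata may live on~$D$-vector spaces of different dimensions, but it is also unnecessary: Corollary~\ref{corEndoEquivalenceForSimpleStrata}\ref{corEndoEquivalenceForSimpleStrataAss.ii} does not require a common ambient space, only equal~$r$,~$n$ and period, which the scaling built into the generalized direct sum (together with the~$\ddag$/level and Proposition~\ref{propGeneralizedPureSimple} considerations you already invoke) provides.
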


The equivalence class of a semisimple stratum under endo-equivalence is called the endo-class of the stratum. We define the group level and the degree of an endo-class of a semisimple stratum to be the respective group level and degree of the semisimple stratum. 
We will denote the map~$\zeta$ in Definition~\ref{defEndoEquivalentStrata} by~$\zeta_{\Delta,\Delta'}$.

\begin{proof}
 This result follows directly from the transitivity of endo-equivalence for simple strata:
 Corollary~\ref{corEndoEquivalenceForSimpleStrata}\ref{corEndoEquivalenceForSimpleStrataAss.ii} and from~\cite[(1.9)]{bushnellHenniart:96}, using a~$\dag$-construction.
\end{proof}

\subsubsection{Transfer for a fixed division algebra~$D$}

\begin{definition}
 Given semisimple characters~$\theta^j\in\C(\Delta^j)$,~$j=1,\ldots,l$, we define the tensor product~$\theta^1\otimes\ldots\otimes\theta^l$ with respect to~$(\Delta^j)_{j=1,\ldots,l}$ to be the complex valued map
 on~$H(\bigoplus_j\Delta^j)$ which is defined under the Iwahori decomposition of~$\bigoplus_jV^j$ via
 \[(\otimes_j\theta^j)(u_-xu_+)=\prod_j\theta(x_j).\] 
 This definition depends on the choice of the strata.
\end{definition}

\begin{definition}\label{defTransferD}
We call a pair~$(\theta,\Delta)$ with~$\theta\in\C(\Delta)$ a parametrized semisimple characters. Given two parametrized semisimple characters~$(\theta,\Delta)$ and~$(\theta',\Delta')$ we call~$(\theta',\Delta')$ a transfer of~$(\theta,\Delta)$ if~$\Delta$ and~$\Delta'$ are endo-equivalent and~$\theta\otimes\theta'\in\C(\Delta\oplus\Delta')$
\end{definition}

\begin{proposition}\label{propFirstRemarksOnDefOftransferD}
Suppose~$\Delta$ and~$\Delta'$ are endo-equivalent semisimple strata and~$\theta\in\C(\Delta)$. 
\begin{enumerate}
 \item\label{propFirstRemarksOnDefOftransferDi} There exists exactly one semisimple character~$\theta'\in\C(\Delta')$ such that~$(\theta',\Delta')$ is a transfer of~$(\theta,\Delta)$.
 \item\label{propFirstRemarksOnDefOftransferDii} If~$\Delta''$ is a semisimple stratum equivalent to~$\Delta$ and~$\Delta'''$ is a semisimple stratum equivalent to~$\Delta'$ then~$(\theta',\Delta''')$ is the transfer of~$(\theta,\Delta'')$.
 \item\label{propFirstRemarksOnDefOftransferDiii} Suppose~$V=V'$ and~$I(\Delta,\Delta')\neq \emptyset$ then~$I (\Delta,\Delta')\subseteq I(\theta,\theta')$.
\end{enumerate}
\end{proposition}

\begin{proof}
\begin{enumerate}
 \item The desired~$\theta'$ is the image of~$\theta$ under~$res_{\Delta\oplus\Delta',\Delta'}\circ res_{\Delta\oplus\Delta',\Delta}^{-1}$. 
 \item The groups~$H(\Delta\oplus\Delta')$ and~$H(\Delta''\oplus\Delta''')$ differ in general, and to avoid confusion we write subscripts on~$\theta\otimes\theta'$.  We have to 
 show~$(\theta\otimes\theta')_{\Delta'',\Delta'''}\in\C(\Delta''\oplus\Delta''')$. 
 Consider~$\ti{\theta}:=\tau_{\Delta\oplus\Delta',\Delta''\oplus\Delta'''}((\theta\otimes\theta')_{\Delta,\Delta'})$. This is a character which decomposes under the Iwahori decomposition with respect to~$V\oplus V'$ with restrictions~$\ti{\theta}|_V=\theta$ and~$\ti{\theta}_{V'}=\theta'$. 
 Thus~$(\theta\otimes\theta')_{\Delta'',\Delta'''}$ is a character equal to~$\ti{\theta}$. 
 
 \item  We apply~$\tau_{\Delta\oplus\Delta,\Delta\oplus\Delta'}$ on~$\theta\otimes\theta$ to 
 obtain a semisimple character~$\theta\otimes\ti{\theta}'\in\C(\Delta\oplus\Delta')$ such that
 \[I(\Delta\oplus\Delta,\Delta\oplus\Delta')\subseteq I(\theta\otimes\theta,\theta\otimes\ti{\theta}').\]
 We have~$\theta\otimes\ti{\theta}'\in\C(\Delta\oplus\Delta')$ and~$\theta\otimes\theta'\in\C(\Delta\oplus\Delta')$ and the uniqueness statement in~\ref{propFirstRemarksOnDefOftransferDi} implies~$\theta'=\ti{\theta}'$ and therefore~$I(\Delta,\Delta')$ intertwines~$\theta$ with~$\theta'$. 
\end{enumerate}
\end{proof}

We call~$\theta'$ a transfer of~$\theta$ from~$\Delta$ to~$\Delta'$ if~$(\theta',\Delta')$ is a transfer of~$(\theta,\Delta)$. 
We denote the map~$res_{\Delta\oplus\Delta',\Delta'}\circ res_{\Delta\oplus\Delta',\Delta}^{-1}$ by~$\tau_{\Delta,\Delta'}$ and call it a~\emph{transfer map}.

\begin{proposition}\label{propFirstPropertiesOfGeneralizedTransfer}
 Transfer is an equivalence relation on the class of parametrized semisimple characters.  External restrictions maps are transfer maps. 
\end{proposition}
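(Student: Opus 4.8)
The plan is to prove the two assertions of Proposition~\ref{propFirstPropertiesOfGeneralizedTransfer} separately, with the bulk of the work in the reflexivity, symmetry and transitivity of the relation ``$\theta'$ is a transfer of $\theta$''.

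First I would record that the relation is reflexive and symmetric almost by definition. For reflexivity one uses that $\theta\otimes\theta\in\C(\Delta\oplus\Delta)$ --- this is exactly the $\ddag$-construction of Proposition~\ref{propDdag} applied with the sequence $(s_k)=(0,0)$ (so $\Delta\oplus\Delta=\Delta^{\ddag,(0,0)}$ up to the scaling bookkeeping of the generalized direct sum), hence $\theta$ is its own transfer. Symmetry is immediate: if $\theta\otimes\theta'\in\C(\Delta\oplus\Delta')$ then, applying the canonical isomorphism of the ambient space $V\oplus V'\cong V'\oplus V$ which swaps the two summands and intertwines the Iwahori decompositions, $\theta'\otimes\theta\in\C(\Delta'\oplus\Delta)$, so $\theta$ is a transfer of $\theta'$. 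Here I would invoke Proposition~\ref{propEndoEquivalenceOfStrataIsEquivalenceRelation} to know that endo-equivalence of strata (the hypothesis needed to even speak of transfers) is already an equivalence relation, so the domain of the relation is stable under these manipulations.

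The transitivity is the main obstacle. Suppose $\theta'=\tau_{\Delta,\Delta'}(\theta)$ and $\theta''=\tau_{\Delta',\Delta''}(\theta')$, i.e. $\theta\otimes\theta'\in\C(\Delta\oplus\Delta')$ and $\theta'\otimes\theta''\in\C(\Delta'\oplus\Delta'')$; we must show $\theta\otimes\theta''\in\C(\Delta\oplus\Delta'')$. The strategy I would follow mirrors the proof of transitivity of intertwining for simple characters (Proposition~\ref{propTranstivityOfIntertwiningSimpleCharacters}): form the threefold direct sum $\Delta\oplus\Delta'\oplus\Delta''$, which is endo-equivalent to each of its pairwise sub-sums by Proposition~\ref{propEndoEquivalenceOfStrataIsEquivalenceRelation}, and show that there is a semisimple character $\Theta\in\C(\Delta\oplus\Delta'\oplus\Delta'')$ with internal restrictions $\theta,\theta',\theta''$ to the three summands. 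To build $\Theta$ I would first produce a character on $H(\Delta\oplus\Delta')$ extending block-wise data, then glue in the $\Delta''$-summand using Lemma~\ref{lemTriangleToSquareExtension} (the triangle-to-square extension lemma): the restrictions of $\theta\otimes\theta'$ and of $\theta'\otimes\theta''$ to the overlapping $\Delta'$-block coincide (both equal $\theta'$), which is precisely the compatibility hypothesis of that lemma, giving a common extension. Once $\Theta$ exists, its internal restriction to $\Delta\oplus\Delta''$ lies in $\C(\Delta\oplus\Delta'')$ by Theorem~\ref{thmSurjRestrictions}\ref{thmSurjRestrictions.ii} (applied to the decomposition of $V\oplus V'\oplus V''$ into $(V\oplus V'')\oplus V'$, noting the associated splitting condition is met since each simple block of the middle stratum meets $V\oplus V''$ --- indeed it is contained in it), and this restriction is $\theta\otimes\theta''$ by Proposition~\ref{propIndofRestMapFromCopyOfSimpleBlock} and the transitivity of internal restriction maps. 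Some care is needed with the scalings $e,e'$ entering the generalized direct sum and with the possibility $r\neq r'\neq r''$, where one passes to a common group level via Proposition~\ref{propRestrictionMapIsBijective} as in Definition~\ref{defRestriction}; I would handle this by first replacing all three strata by equivalent ones of a common $F$-period (allowed by Proposition~\ref{propFirstRemarksOnDefOfGeneralizedtransfer}) and a common value of $r$ using the bijectivity of the relevant restriction maps.

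Finally, the assertion that external restriction maps are transfer maps: given $\Delta_0\oplus\Delta_1$ split by a non-zero $V^0$, I would show $res_{\Delta_0\oplus\Delta_1,\Delta_0}=\tau_{\Delta_1\oplus\Delta_0,\Delta_0}\circ(\text{nothing})$ is literally the map $res_{\Delta\oplus\Delta_0,\Delta_0}\circ res_{\Delta\oplus\Delta_0,\Delta}^{-1}$ defining $\tau_{\Delta,\Delta_0}$ with $\Delta=\Delta_0\oplus\Delta_1$, since for $\theta\in\C(\Delta_0\oplus\Delta_1)$ one has $\theta=(\theta|_{\Delta_0})\otimes(\theta|_{\Delta_1})\otimes(\cdots)$ compatibly, so that $res_{\Delta_0\oplus\Delta_1,\Delta_0}(\theta)$ is exactly the generalized transfer of $\theta$ from $\Delta_0\oplus\Delta_1$ to $\Delta_0$ once one checks $\theta\otimes(\theta|_{\Delta_0})\in\C((\Delta_0\oplus\Delta_1)\oplus\Delta_0)$; the last membership follows from Theorem~\ref{thmSurjRestrictions} applied to the decomposition that places a copy of $V^0$ alongside $V^0\oplus V^1$, together with Proposition~\ref{propIndofRestMapFromCopyOfSimpleBlock} to identify the two $\Delta_0$-restrictions. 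The external-versus-internal distinction of Definition~\ref{defRestriction} is absorbed into the group-level adjustment via Proposition~\ref{propRestrictionMapIsBijective}, which I expect to be routine once transitivity is in hand.
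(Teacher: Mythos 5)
Your skeleton — reflexivity via the $\ddag$-construction, symmetry by swapping summands, transitivity through the triple sum $\Delta\oplus\Delta'\oplus\Delta''$, and the identification of external restrictions with transfer maps via Proposition~\ref{propIndofRestMapFromCopyOfSimpleBlock} — is the same as the paper's, and the easy parts and the second assertion are essentially right (modulo the slip that a block of the triple-sum stratum \emph{meets} $V\oplus V''$ non-trivially rather than being contained in it). The genuine gap is the step where you build the intermediate character $\Theta\in\C(\Delta\oplus\Delta'\oplus\Delta'')$ by ``gluing'' $\theta\otimes\theta'$ and $\theta'\otimes\theta''$ along the common block $\Delta'$ using Lemma~\ref{lemTriangleToSquareExtension}. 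That lemma does not perform this kind of gluing: its hypotheses are a character $\ti{\theta}\in\C(\Delta(1+))$ on the \emph{whole} space one level up together with a character on \emph{one} block at the current level, agreeing on $H(\Delta_0(1+))$; it extends across levels, not across two overlapping same-level blocks. A statement gluing two same-level characters on overlapping blocks that agree on the overlap is not proved anywhere in the paper and would itself require an argument of comparable depth to the transitivity you want, so the step as written does not go through, and the subsequent claim that $\Theta$ restricts to $\theta\otimes\theta''$ on $\Delta\oplus\Delta''$ inherits the gap.

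The paper avoids any gluing: all the external restriction maps from $\C(\Delta\oplus\Delta'\oplus\Delta'')$ and from the pairwise sums down to $\C(\Delta)$, $\C(\Delta')$, $\C(\Delta'')$ are \emph{bijections} by Theorem~\ref{thmSurjRestrictions}\ref{thmSurjRestrictions.ii} (endo-equivalence forces every block of the equivalent semisimple stratum to meet each of $V,V',V''$), and the commutativity of the resulting diagram of restrictions immediately gives $\tau_{\Delta',\Delta''}\circ\tau_{\Delta,\Delta'}=\tau_{\Delta,\Delta''}$; the second assertion is then the observation that $res\circ res^{-1}$ through $\C(\Delta\oplus\Delta'\oplus\Delta)$ is the restriction map because either copy of $\Delta$ gives the same restriction (Proposition~\ref{propIndofRestMapFromCopyOfSimpleBlock}). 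Your transitivity argument can be repaired in exactly this spirit without any gluing lemma: set $\Theta:=res_{\Delta\oplus\Delta'\oplus\Delta'',\,\Delta\oplus\Delta'}^{-1}(\theta\otimes\theta')$; its restriction to $\Delta'\oplus\Delta''$ restricts to $\theta'$ on $\Delta'$, so by injectivity of $res_{\Delta'\oplus\Delta'',\Delta'}$ it equals $\theta'\otimes\theta''$ (which lies in $\C(\Delta'\oplus\Delta'')$ by hypothesis); hence $\Theta$ restricts to $\theta''$ on $\Delta''$, and by triviality on the unipotent parts (Proposition~\ref{propIwahoriSt05no5.5}) its restriction to $H(\Delta\oplus\Delta'')$ is the function $\theta\otimes\theta''$, which therefore lies in $\C(\Delta\oplus\Delta'')$, as required.
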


\begin{proof}
 Suppose we are given three endo-equivalent semisimple strata~$\Delta,\Delta'$ and~$\Delta''$. 
 Consider the following commutative diagram consisting of external restriction maps, which are here bijections by 
 Proposition~\ref{propSurjRestrictions}\ref{thmSurjRestrictions.ii}. 
 \[\begin{matrix}
     \C(\Delta) & \leftarrow & \C(\Delta\oplus\Delta') & \ra & \C(\Delta') \\
     \uparrow & \nwarrow & \uparrow & \nearrow & \uparrow \\
     \C(\Delta\oplus\Delta'')  & \leftarrow & \C(\Delta\oplus\Delta'\oplus\Delta'') & \rightarrow & \C(\Delta'\oplus \Delta'')\\
     & \searrow & \downarrow  & \swarrow & \\
     & & \C(\Delta'') & &  \\
   \end{matrix}
\]
The commutativity of this diagram proves that~$\tau_{\Delta',\Delta''}\circ\tau_{\Delta,\Delta'}$ is equal to~$\tau_{\Delta,\Delta''}$, and this finishes the proof of the 
first assertion.

For the second assertion, given semisimple strata~$\Delta$ and~$\Delta'$ the following map:
\[\C(\Delta\oplus\Delta')\stackrel{res^{-1}}{\longrightarrow} \C(\Delta\oplus\Delta'\oplus\Delta)\stackrel{res}{\longrightarrow}\C(\Delta),\]
is the restriction map, because the restriction map on the right does not depend on the copy of~$\Delta$ which is used for the restriction, by 
Proposition~\ref{propIndofRestMapFromCopyOfSimpleBlock}. 
\end{proof}

\begin{remark}\label{remEquivalenceOfDefintionsOfTransfer}
In~\cite{secherreStevensIV:08} the authors extended  the notion of transfer to simple strata over~$D$. It is transitive and they proved in~\emph{loc.cit.}~Theorem 2.17 that 
internal restrictions are transfer maps, according to their definition of transfer, and it extends to external restriction maps by~\emph{loc.cit.}~Theorem 2.13. 
Thus their notion of transfer and the notion of transfer in Definition~\ref{defTransferD} coincide, 
by Proposition~\ref{propFirstPropertiesOfGeneralizedTransfer}.
\end{remark}

\subsubsection{Transfers between different Brauer classes}
The key idea to move between sets of semisimple characters for different simple algebras over~$F$ with possibly different Brauer class is from~\cite{secherreI:04} where he
defined the transfer for simple characters by extension of scalers to~$L$ followed by the transfer map for the split case. We follow this procedure to generalize it to semisimple strata. 
But at first let us state an unramified extension theorem for semisimple characters whose proof is exactly as in part~\ref{propDefSemisimpleCharInductiveNonSplit.i} of 
Proposition~\ref{propDefSemisimpleCharInductiveNonSplit}. 

\begin{proposition}\label{propExtensionFromDeltaToDeltaotimesL}
 Let~$\Delta$ be a semisimple stratum and~$\ti{L}$ be an unramified extension of~$F$ then the restriction map from~$\C(\Delta\otimes \ti{L})$ into~$\C(\Delta)$ is surjective.
\end{proposition}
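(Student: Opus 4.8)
The restriction map is well defined: by Theorem~\ref{thmEquivToSemisimpleStratum} the stratum $\Delta\otimes\ti{L}$ is again semisimple, and under the canonical embedding of $G$ into the unit group of $\End_D(V)\otimes_F\ti{L}$ one has $H(\Delta)\subseteq H(\Delta\otimes\ti{L})$ (this inclusion is itself part of what gets proved below). By transitivity of restriction — if the composite $\C(\Delta\otimes L\ti{L})\to\C(\Delta\otimes\ti{L})\to\C(\Delta)$ is onto then so is the second arrow — we may assume $L\subseteq\ti{L}$, so that $\Delta\otimes\ti{L}$ and $\Delta\otimes L$ are split semisimple strata and $\C(\Delta)$ is the image of $\C(\Delta\otimes L)$ under restriction to $H(\Delta)=H(\Delta\otimes L)\cap G$. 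It therefore suffices to extend any $\tilde{\theta}\in\C(\Delta\otimes L)$ to some $\Theta\in\C(\Delta\otimes\ti{L})$ and then restrict $\Theta$ back to $H(\Delta)\subseteq H(\Delta\otimes L)$.

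The plan is to transcribe verbatim the proof of Proposition~\ref{propDefSemisimpleCharInductiveNonSplit}, with $\ti{L}|L$ playing the role of $L|F$ there. Since every finite unramified extension of a non-archimedean local field is cyclic, $\ti{L}|L$ is Galois; write $\Gamma:=\Gal(\ti{L}|L)$ and note $p\nmid|\Gamma|$, so $H^1(\Gamma,Q)$ is trivial for every pro-$p$ group $Q$ carrying a $\Gamma$-action, and Lemma~\ref{lemCohom} is available for every pair of the congruence subgroups occurring in the inductive definition. As in part~\ref{propDefSemisimpleCharInductiveNonSplit.i} one checks by induction on $k_0$ that $H(\beta,\Lambda)$ and $J(\beta,\Lambda)$ for $\Delta\otimes L$ are exactly the $\Gamma$-fixed points of those for $\Delta\otimes\ti{L}$, the only non-formal point being that $\Gamma$-invariants commute with the defining product $P(j_E(\Lambda))\cdot\bigl(H(\gamma,\Lambda)\cap P_{\lfloor k_0/2\rfloor+1}(\Lambda)\bigr)$, which is Lemma~\ref{lemCohom}; in particular $H(\Delta)\subseteq H(\Delta\otimes\ti{L})$. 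For the extension of characters I would argue by induction on the critical exponent, exactly as in part~\ref{propDefSemisimpleCharInductiveNonSplit.ii}: for $k_0=-\infty$ the trivial character extends trivially; if $r\geq\lfloor -k_0/2\rfloor$, pick $\gamma$ from a defining sequence for $\Delta\otimes L$ with $r_\gamma=-k_0$, extend $\psi_{\gamma-\tilde{\beta}}\tilde{\theta}$ by the induction hypothesis (the stratum with entry $\gamma$ having strictly larger critical exponent) and glue back by $\psi_{\tilde{\beta}-\gamma}$; if $r<\lfloor -k_0/2\rfloor$, run a secondary induction on $r$ starting at $r=\lfloor -k_0/2\rfloor$, extending $\tilde{\theta}(1+)$ into $\C(\Delta(1+)\otimes\ti{L})$ first and then completing the extension across $P_{r+1}(j_E(\Lambda))$.

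The one step that requires genuine work is this last completion across $P_{r+1}(j_E(\Lambda))$: one must extend, block by block, the characters $\chi_j$ of the relevant unit groups attached to the blocks of $\Delta\otimes\ti{L}$ to the next congruence subgroup in such a way that the product of the $\chi_j'(1^j x)$ agrees with the character cut out by $\tilde{\theta}$ on $P_{r+1}(j_E(\Lambda))$. This is possible because $\Delta\otimes L$ and $\Delta\otimes\ti{L}$ have the same group level and the same critical exponent (Theorem~\ref{thmEquivSemisimpleSplittingAndRestriction}, unramified base change changing no ramification datum), so that the reduced-norm images on the unit groups of levels $r+1$ and $r+2$ coincide (Proposition~\ref{propRestrictionMapIsBijective}); using $\bigotimes_j\chi_j'$ to complete $\tilde{\theta}(1+)$ then yields the required $\Theta\in\C(\Delta\otimes\ti{L})$, and $\Theta|_{H(\Delta)}=\tilde{\theta}|_{H(\Delta)}=\theta$. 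Everything else is a line-by-line copy of the $L|F$ argument already carried out in Proposition~\ref{propDefSemisimpleCharInductiveNonSplit}.
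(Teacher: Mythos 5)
Your overall route is the same as the paper's: the paper proves this proposition simply by repeating the argument of Proposition~\ref{propDefSemisimpleCharInductiveNonSplit}, and your reduction to the case $L\subseteq\ti{L}$ followed by a transcription of that proof with $\ti{L}|L$ in place of $L|F$ is exactly that. There is, however, one genuinely wrong step in your justification. You assert that $p\nmid[\ti{L}:L]$ because $\ti{L}|L$ is unramified, and deduce from coprimality of orders that $H^1(\Gamma,Q)$ vanishes for every pro-$p$ group $Q$ occurring in Lemma~\ref{lemCohom}. Unramified extensions exist in every degree, including degrees divisible by $p$ (already $[L:F]=d$ may be divisible by $p$), so this coprimality argument is simply false. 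The vanishing you need is nevertheless true, but for the reason the paper invokes elsewhere (Lemma~\ref{lemExSeqDelta}, and implicitly in part (i) of Proposition~\ref{propDefSemisimpleCharInductiveNonSplit}): the intersections $Q_1\cap Q_2$ to which Lemma~\ref{lemCohom} is applied carry $\Gamma$-stable congruence filtrations whose successive quotients are $\kappa_{\ti{L}}$-vector spaces with semilinear Galois action, so additive Hilbert 90 (together with Lang's theorem for the reductive quotients of the parahoric-type groups $P(j_E(\Lambda))$ after extension of scalars) gives the required cohomological triviality. That sentence of yours must be replaced by this filtration/Hilbert 90 argument; with it, the group-level statements $H(\Delta\otimes L)=H(\Delta\otimes\ti{L})^{\Gamma}$ and their analogues for $J$ go through as in the paper.

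A smaller repair: for the last step, the completion across $P_{r+1}(j_E(\Lambda))$, you cite Proposition~\ref{propRestrictionMapIsBijective}, but in the paper this step is not a consequence of that proposition; it is the elementary compatible extension of characters of abelian unit groups carried out explicitly in part (ii) of Proposition~\ref{propDefSemisimpleCharInductiveNonSplit} (extend the $\chi_j$ on $\prod_j P_{\lfloor (r+1)/e\rfloor+1}(o_{E_j})$ to $\prod_j P_{\lfloor r/e\rfloor+1}(o_{E_j})$ so that the product of the $\chi'_j(1^jx)$ recovers the given character on $P_{\lfloor r/e\rfloor+1}(o_E)$), and the same elementary argument transcribes with the decomposition of $E\otimes_F\ti{L}$ in place of that of $E\otimes_F L$. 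With these two corrections your proof coincides with the paper's.
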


\begin{definition}\label{defGeneralizedTransferDifferentDandD'}
Suppose now that~$\Delta$ and~$\Delta'$ for~$D$ and~$D'$, respectively, are two endo-equivalent semisimple strata and take an unramified field extension~$\ti{L}|F$ 
which splits~$D$ and~$D'$. We define the (generalized) transfer map from~$\C(\Delta)$ to~$\C(\Delta')$ as follows: For~$\theta\in\C(\Delta)$ we choose an 
extension~$\theta_{\ti{L}}$ of~$\theta$ into~$\C(\Delta\otimes\ti{L})$. We define
\[\tau_{\Delta,\Delta'}(\theta):=\tau_{\Delta\otimes \ti{L},\Delta'\otimes\ti{L}}(\theta_{\ti{L}})|_{H(\Delta')}.\]
\end{definition}

\begin{lemma}\label{lemWellDefGeneralizedTransferDifferentDandD'}
 The map~$\tau_{\Delta,\Delta'}$ is well defined, i.e. it does not depend on the choice of~$\theta_{\tilde{L}}$ and the choice of~$\tilde{L}$ and
  it is consistent with the Definition~\ref{defTransferD}. 
\end{lemma}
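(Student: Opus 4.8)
The plan is to verify the three assertions of the lemma --- consistency with Definition~\ref{defGeneralizedTransfer} when $D=D'$, independence of the auxiliary extension $\theta_{\tilde L}$, and independence of the unramified splitting field $\tilde L$ --- by reducing each to facts about the transfer over a \emph{split} algebra that are already available: Proposition~\ref{propFirstPropertiesOfGeneralizedTransfer} (transfer over a fixed $D$ is an equivalence relation and external restriction maps are transfer maps), Proposition~\ref{thmSurjRestrictions} (surjectivity and bijectivity of external restrictions), Proposition~\ref{propExtensionFromDeltaToDeltaotimesL} (surjectivity of extension of scalars to an unramified field), plus the naturality of the whole construction for the $\Gal(\tilde L|F)$-action and for further unramified base change. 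As a preliminary I would record that for any unramified $\tilde L|F$ splitting $D$ one has $H(\Delta\otimes\tilde L)\cap G=H(\Delta)$ and $\C(\Delta\otimes\tilde L)|_{H(\Delta)}=\C(\Delta)$ (this is the content underlying Propositions~\ref{propDefSemisimpleCharInductiveNonSplit} and~\ref{propExtensionFromDeltaToDeltaotimesL}), so that $\theta_{\tilde L}$ makes sense; and that $\Delta\otimes\tilde L$, $\Delta'\otimes\tilde L$ are endo-equivalent semisimple strata over split algebras by Theorem~\ref{thmEquivSemisimpleSplittingAndRestriction} and Proposition~\ref{propEndoEquivalenceOfStrataIsEquivalenceRelation}, so $\tau_{\Delta\otimes\tilde L,\Delta'\otimes\tilde L}$ is defined and bijective.

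\emph{Independence of the choice of $\theta_{\tilde L}$.} First, every element of $H(\Delta')$ is a $D'$-linear automorphism of $V'$, hence fixed by the $\Gal(\tilde L|F)$-action on $\End_{D'}(V')\otimes\tilde L$; so for any character $\chi$ of $H(\Delta'\otimes\tilde L)$ and $\sigma\in\Gal(\tilde L|F)$ one has $(\sigma.\chi)|_{H(\Delta')}=\chi|_{H(\Delta')}$. Second, the split-case transfer is $\Gal(\tilde L|F)$-equivariant, being built out of external restriction maps for direct sums (Proposition~\ref{propFirstPropertiesOfGeneralizedTransfer}), which are natural for automorphisms of the ambient data; hence $\tau_{\Delta\otimes\tilde L,\Delta'\otimes\tilde L}(\sigma.\theta_{\tilde L})|_{H(\Delta')}=\tau_{\Delta\otimes\tilde L,\Delta'\otimes\tilde L}(\theta_{\tilde L})|_{H(\Delta')}$. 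Third, and this is the substantive point, any two extensions of a fixed $\theta\in\C(\Delta)$ differ by an element of the fibre of the restriction map $\C(\Delta\otimes\tilde L)\to\C(\Delta)$, and from the block-wise description of semisimple characters --- as assembled from the characters $\chi_j$ of the unit groups of the fields $E_{ij}$ appearing in $E\otimes\tilde L=\prod_{ij}E_{ij}$ --- the isomorphism $\tau_{\Delta\otimes\tilde L,\Delta'\otimes\tilde L}$ carries this fibre onto the fibre over $\theta':=\tau_{\Delta\otimes\tilde L,\Delta'\otimes\tilde L}(\theta_{\tilde L})|_{H(\Delta')}$ of $\C(\Delta'\otimes\tilde L)\to\C(\Delta')$, because the matching $\zeta$ underlying the transfer identifies, field by field, the ``diagonal'' unit-group data over $D$ with that over $D'$ compatibly with the descent from $\tilde L$ (the same Hilbert~90/cohomology-vanishing input as in Lemma~\ref{lemCohom}). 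Consequently $\tau_{\Delta\otimes\tilde L,\Delta'\otimes\tilde L}(\eta)|_{H(\Delta')}$ depends only on $\eta|_{H(\Delta)}$, which is the assertion.

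\emph{Consistency with Definition~\ref{defGeneralizedTransfer}.} Assume $D=D'$ and choose $\tilde L\supseteq L$, so that $\Delta\oplus\Delta'$ is a semisimple stratum over $D$. Given $\theta\in\C(\Delta)$, let $\theta'$ be its transfer in the sense of Definition~\ref{defGeneralizedTransfer}, i.e.\ $\theta\otimes\theta'\in\C(\Delta\oplus\Delta')$. Extend $\theta\otimes\theta'$ to $\xi\in\C((\Delta\oplus\Delta')\otimes\tilde L)$ by Proposition~\ref{propExtensionFromDeltaToDeltaotimesL} and set $\theta_{\tilde L}:=\xi|_{H(\Delta\otimes\tilde L)}$, an extension of $\theta$. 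Since the external restriction maps $\C((\Delta\oplus\Delta')\otimes\tilde L)\to\C(\Delta\otimes\tilde L)$ and $\to\C(\Delta'\otimes\tilde L)$ are transfer maps (Proposition~\ref{propFirstPropertiesOfGeneralizedTransfer}), we get $\xi|_{H(\Delta'\otimes\tilde L)}=\tau_{\Delta\otimes\tilde L,\Delta'\otimes\tilde L}(\theta_{\tilde L})$; restricting to $H(\Delta')$ and invoking the independence just established, $\theta'=\xi|_{H(\Delta')}=\tau_{\Delta,\Delta'}(\theta)$ in the sense of Definition~\ref{defGeneralizedTransferDifferentDandD'}. For independence of $\tilde L$: any two unramified splitting fields lie in a common one, so it suffices to treat $\tilde L\subseteq\tilde L''$; extend $\theta_{\tilde L}$ further to $\theta_{\tilde L''}\in\C(\Delta\otimes\tilde L'')=\C((\Delta\otimes\tilde L)\otimes_{\tilde L}\tilde L'')$ via Proposition~\ref{propExtensionFromDeltaToDeltaotimesL}, use that the split-case transfer commutes with unramified base change (it is pinned down by matching the numerical data $\chi_j$, which is insensitive to the coefficient field) to get $\tau_{\Delta\otimes\tilde L'',\Delta'\otimes\tilde L''}(\theta_{\tilde L''})|_{H(\Delta'\otimes\tilde L)}=\tau_{\Delta\otimes\tilde L,\Delta'\otimes\tilde L}(\theta_{\tilde L})$, and restrict both sides to $H(\Delta')$.

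The main obstacle is the fibre comparison in the independence step: one must know precisely that $\tau_{\Delta\otimes\tilde L,\Delta'\otimes\tilde L}$ maps the fibre of $\C(\Delta\otimes\tilde L)\to\C(\Delta)$ into the fibre of $\C(\Delta'\otimes\tilde L)\to\C(\Delta')$; everything else is formal bookkeeping with restriction maps and direct sums. I would obtain this from the simple-stratum case of~\cite{secherreI:04}, where exactly this compatibility of the transfer with extension of scalars is established, and propagate it to the semisimple case block by block using the Iwahori-decomposition formalism (Proposition~\ref{propIwahoriSt05no5.5}) together with the fact that external restrictions to simple blocks are transfer maps (Propositions~\ref{thmSurjRestrictions} and~\ref{propIndofRestMapFromCopyOfSimpleBlock}).
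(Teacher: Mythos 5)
Your proposal is correct and follows essentially the same route as the paper: the substantive point (independence of the choice of $\theta_{\tilde L}$) is delegated to S\'echerre's simple-case compatibility of transfer with unramified scalar extension \cite[3.53]{secherreI:04} and propagated to the semisimple case block by block, consistency with Definition~\ref{defGeneralizedTransfer} is read off inside $\C((\Delta\oplus\Delta')\otimes\tilde L)$ using that external restrictions are transfer maps, and independence of $\tilde L$ is handled by nesting unramified extensions. The only differences are cosmetic: your Galois-equivariance remarks and the ``$\chi_j$-data'' heuristic are not load-bearing (the paper instead applies its consistency step over the larger base field and reduces to the strict case by a $\ddag$-construction before invoking S\'echerre).
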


\begin{proof}
 \begin{enumerate}
  \item \label{lemWellDefGeneralizedTransferDifferentDandD'.i} We start with the consistence to Definition~\ref{defTransferD}. Suppose~$D=D'$ and take~$\theta\in\C(\Delta)$ and an 
  extension~$\theta_{\ti{L}}\in\C(\Delta\otimes\ti{L})$. The transfer~$\theta'_{\ti{L}}$ of~$\theta_{\ti{L}}$ is the unique semisimple character 
  of~$\C(\Delta'\otimes\ti{L})$ such that~$\theta_{\ti{L}}\otimes\theta'_{\ti{L}}$ is an element of~$\C((\Delta\oplus\Delta')\otimes\ti{L})$. 
  Thus,~$\theta_{\ti{L}}\otimes \theta'_{\ti{L}}|_{H(\Delta\oplus\Delta')}$ is an element of~$\C(\Delta\oplus\Delta')$.
  \item By part~\ref{lemWellDefGeneralizedTransferDifferentDandD'.i} we have for~$\ti{L}\subseteq\ti{L}'$ that~$\tau_{\Delta\otimes\ti{L},\Delta'\otimes\ti{L}}$ can be 
  verified using~$\tau_{\Delta\otimes\ti{L}',\Delta'\otimes\ti{L}'}$. And thus the definition of~$\tau_{\Delta,\Delta'}$ does not depend on the choice of~$\ti{L}$. 
  \item In the case where~$\Delta$ and~$\Delta'$ are simple we can use a~$\dag$-construction to reduce to the simple strict case. The statement is then~\cite[Theorem 3.53]{secherreI:04}, more precisely it proves the independence of the choice of~$\theta_L$.
  \item In the semisimple case we have a reduction to the simple case, by the formula
  \[\tau_{\Delta\otimes \ti{L},\Delta'\otimes\ti{L}}(\theta_{\ti{L}})=\otimes_i\tau_{\Delta_i\otimes \ti{L},\Delta'_{\zeta(i)}\otimes\ti{L}}((\theta_{\ti{L}})_i)\in\C(\bigoplus_{i\in I}\Delta'^{\zeta(i)}).\]
\end{enumerate}

\end{proof}

Thus, this defines the~\emph{ generalized transfer} of~$\theta$ from~$\Delta$ to~$\Delta'$. The definition does only depend on the equivalence class of~$\Delta$ as this is
the case for~$\tau_{\Delta\otimes \ti{L},\Delta'\otimes\ti{L}}$ by Proposition~\ref{propFirstRemarksOnDefOftransferD}\ref{propFirstRemarksOnDefOftransferDii}.

\subsection{Potentially semisimple characters}

\begin{definition}\label{defPssCharacter}
For an endo-equivalence class~$\mf{E}$ of semisimple strata we denote by~$\C(\mf{E})$ the class of all semisimple characters~$\theta$ for which there is a 
stratum~$\Delta\in\mf{E}$ such that~$\theta\in\C(\Delta)$.
A~\emph{potentially semisimple character (pss-character)} is a map~$\Theta$ from~$\mf{E}$ to~$\C(\mf{E})$ such that
\begin{enumerate}
 \item~$\Theta(\Delta)$ is an element of~$\C(\Delta)$, for all~$\Delta\in\mf{E}$, and
 \item the values of~$\Theta$ are related by transfer, for all semisimple strata~$\Delta,\Delta'\in\mf{E}$, i.e.~$\tau_{\Delta,\Delta'}(\Theta(\Delta))$ is equal 
 to~$\Theta(\Delta')$.
\end{enumerate}

We write sometimes~$\Theta=\Theta_\mf{E}$ to indicate the domain~$\mf{E}$. We define the degree and group level of~$\Theta_\mf{E}$ to be the respective degree and group level of~$\mf{E}$ and we denote the degree by~$\deg(\Theta)$. Two potentially semisimple characters~$\Theta_\mf{E}$ and~$\Theta'_{\mf{E}'}$ of the same group level and the same degree are called~\emph{endo-equivalent} if there are semisimple 
strata~$\Delta\in\mf{E}$ and~$\Delta'\in\mf{E}'$ such that~$\Theta(\Delta)$ and~$\Theta'(\Delta')$ intertwine.  
\end{definition}


\newcommand{\ImfE}{I_{\mf{E}}}
\newcommand{\ImfEP}{I_{\mf{E}'}}

\begin{definition}[comparison pairs]
Let~$\mf{E}$ and~$\mf{E}'$ be endo-classes of semisimple strata of the same group level and the same degree. We identify all index sets of semisimple
strata in~$\mf{E}$ to an index set~$\ImfE$. Suppose there is a bijection~$\zeta$ from~$I_{\mf{E}}$ to~$I_{\mf{E}'}$.
We call a pair~$(\Delta,\Delta')\in\mf{E}\times\mf{E}'$ a~\emph{$\zeta$-comparison pair} if~$D=D'$ and 
for all~$i\in\ImfE$ the vector spaces~$V^i$ and~$V'^{\zeta(i)}$ have the same~$D$-dimension.
\end{definition}

\begin{theorem}\label{thmEndo}
 Suppose~$\Theta_\mf{E}$ and~$\Theta'_{\mf{E}'}$ are two pss-characters. 
 Then the following assertions are equi\-valent:
 \begin{enumerate}
  \item~$\Theta'_{\mf{E}'}$ and~$\Theta_{\mf{E}}$ are endo-equivalent. 
  \item There is a bijection~$\zeta$ from~$I_\mf{E}$ to~$I_{\mf{E}'}$ such that for every~$\zeta$-comparison pair~$(\Delta,\Delta')$  
  the semisimple characters~$\Theta(\Delta)$ and~$\Theta'(\Delta')$ intertwine. 
 \end{enumerate}
\end{theorem}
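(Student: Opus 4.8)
The plan is to prove the non-trivial implication, namely that endo-equivalence of $\Theta_\mf{E}$ and $\Theta'_{\mf{E}'}$ forces the intertwining of $\Theta(\Delta)$ and $\Theta'(\Delta')$ for \emph{every} $\zeta$-comparison pair, once $\zeta$ is correctly chosen. The reverse direction is immediate from the definition of endo-equivalence for pss-characters (one simply picks any $\zeta$-comparison pair and uses the hypothesis). So assume there exist $\Delta_0\in\mf{E}$ and $\Delta'_0\in\mf{E}'$ with $\theta_0:=\Theta(\Delta_0)$ and $\theta'_0:=\Theta'(\Delta'_0)$ intertwining. By doubling (applying a $\ddag$-construction, Proposition~\ref{propDdag}, or replacing $\Lambda$ and $\Lambda'$ by suitable positive multiples) and by passing to common group level $k$ via Proposition~\ref{propRestrictionMapIsBijective}, we may arrange that $\theta_0$ and $\theta'_0$ have the same lattice period and $r_0=r'_0$; then Corollary~\ref{corEqualDegreesFromIntertwiningSemisimpleCharactersSamer} applies and Theorem~\ref{thmMatching} produces the matching $\zeta:=\zeta_{\theta_0,\theta'_0}:I\ra I'$. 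This is the candidate bijection; the first task is to check it is canonical, i.e. independent of the choice of $\Delta_0,\Delta'_0$ within $\mf{E},\mf{E}'$. This follows because on blocks the matching is governed by endo-equivalence of the simple constituents $\Delta_{0,i}\oplus\Delta'_{0,\zeta(i)}$ being equivalent to a simple stratum, which by Proposition~\ref{propEndoEquivalenceOfStrataIsEquivalenceRelation} and the transitivity of transfer (Proposition~\ref{propFirstPropertiesOfGeneralizedTransfer}) depends only on the endo-classes; so $\zeta=\zeta_{\mf{E},\mf{E}'}$ is well defined, and moreover $\dim_D V^i=\dim_D V'^{\zeta(i)}$ for the particular $\Delta_0,\Delta'_0$ chosen, hence $e(E_i|F)=e(E'_{\zeta(i)}|F)$, $f(E_i|F)=f(E'_{\zeta(i)}|F)$ block-wise.

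Next I would reduce to the simple (single-block) case. Fix an arbitrary $\zeta$-comparison pair $(\Delta,\Delta')$, so $D=D'$ and $\dim_D V^i=\dim_D V'^{\zeta(i)}$ for all $i$. Using Proposition~\ref{propIwahoriSt05no5.5}, each $\Theta(\Delta)$ decomposes, via the Iwahori decomposition attached to the splitting of $\beta$, into the block restrictions $\Theta(\Delta)_i\in\C(\Delta_i)$, and likewise for $\Theta'(\Delta')$; and two semisimple characters intertwine as soon as, after matching blocks by a $D$-linear isomorphism $g$ with $gV^i=V'^{\zeta(i)}$ (which exists precisely because the $D$-dimensions agree), each $g.\Theta(\Delta)_i$ intertwines $\Theta'(\Delta')_{\zeta(i)}$ — this is the content of the matching theorem combined with the triviality of the characters on the unipotent parts (so one glues the block intertwiners into a single element of $\prod_i A^{i,\zeta(i)}$). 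Thus it suffices to prove: for each $i$, the simple characters $\Theta(\Delta_i)$ and $\Theta'(\Delta'_{\zeta(i)})$, which live in $\C$ of the endo-equivalent \emph{simple} strata $\Delta_i$ and $\Delta'_{\zeta(i)}$ over the \emph{same} $D$ and whose underlying spaces have equal $D$-dimension, intertwine (after transporting by $g_i$).

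For that simple-block statement I would argue as follows. By construction of pss-characters, $\Theta(\Delta_i)$ is the transfer of $\theta_{0,i}$ from $\Delta_{0,i}$ to $\Delta_i$, and $\Theta'(\Delta'_{\zeta(i)})$ is the transfer of $\theta'_{0,\zeta(i)}$ from $\Delta'_{0,\zeta(i)}$; moreover $\theta_{0,i}$ and $\theta'_{0,\zeta(i)}$ intertwine (they are block restrictions of the intertwining $\theta_0,\theta'_0$, up to the matching). Since $\Delta_{0,i}\oplus\Delta'_{0,\zeta(i)}$ is equivalent to a simple stratum, $\theta_{0,i}$ is in fact endo-equivalent to $\theta'_{0,\zeta(i)}$, hence (pulling back through $g_i$ so that both strata sit on the same space with the same $D$-dimension) both block characters have equal degree, ramification index and inertia degree, so they can be transferred to a \emph{common} lattice sequence on a common space (Corollary~\ref{corconsructingEmbeddingsFromNumericalData} producing the embedding, then Theorem~\ref{thmEqualDegreeFromIntertwining} / Proposition~\ref{propEqualDegreesForIntersectingSetsOfSimpleCharacters} guaranteeing the numerical data match). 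Now invoke transitivity of intertwining for simple characters of the same group level and degree, Proposition~\ref{propTranstivityOfIntertwiningSimpleCharacters}: the chain $\Theta(\Delta_i)\leftrightsquigarrow\theta_{0,i}\leftrightsquigarrow\theta'_{0,\zeta(i)}\leftrightsquigarrow\Theta'(\Delta'_{\zeta(i)})$ (transfers are special cases of intertwining by Proposition~\ref{propFirstRemarksOnDefOfGeneralizedtransfer}, and the middle link is the hypothesis) shows $\Theta(\Delta_i)$ intertwines $\Theta'(\Delta'_{\zeta(i)})$. Reassembling the blocks completes the proof.

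The main obstacle I anticipate is the bookkeeping in the reduction step: showing that block-wise intertwining of the $g_i.\Theta(\Delta)_i$ with $\Theta'(\Delta')_{\zeta(i)}$ genuinely assembles into one element of $G$ intertwining $\Theta(\Delta)$ with $\Theta'(\Delta')$ — one must check the chosen $g_i$'s can be taken compatibly (same $D$, the matching $\zeta$ respects the splittings, and the $\ddag$-doubling used to equalize periods does not disturb the endo-class, which is Proposition~\ref{propIndofRestMapFromCopyOfSimpleBlock} together with the remark that the $\ddag$-construction is compatible with transfer). Everything else is a fairly mechanical combination of Theorem~\ref{thmMatching}, Proposition~\ref{propTranstivityOfIntertwiningSimpleCharacters}, and the transitivity of generalized transfer.
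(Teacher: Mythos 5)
Your overall architecture is the same as the paper's: use the matching theorem to produce the bijection $\zeta$ block by block, observe that the block restrictions of the values of a pss-character form ps-characters (transfer commutes with block restriction), and then settle the simple blocks by the known simple theory; the assembly of block intertwiners via the Iwahori decomposition (Proposition~\ref{propIwahoriSt05no5.5}) is also implicit in the paper. However, there is a genuine gap at the one place where the real content sits: your chain $\Theta(\Delta_i)\leftrightsquigarrow\theta_{0,i}\leftrightsquigarrow\theta'_{0,\zeta(i)}\leftrightsquigarrow\Theta'(\Delta'_{\zeta(i)})$ treats the two transfer links as instances of intertwining and then applies Proposition~\ref{propTranstivityOfIntertwiningSimpleCharacters}. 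But the spaces underlying $\Delta_{0,i}$ and $\Delta_i$ (and likewise on the primed side) have in general different $D$-dimensions, and ``intertwining'' is only defined -- and transitivity only proved -- for characters on the same space (same group level and degree, same $G$); Proposition~\ref{propFirstRemarksOnDefOfGeneralizedtransfer} identifies generalized transfer with an intertwining-type transfer only when $V=V'$. So the two outer links of your chain are not intertwining relations, and the transitivity proposition cannot be chained through them.

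What is actually needed at this point is precisely the statement that for \emph{ps}-characters, intertwining of the values at one comparison pair implies intertwining at every comparison pair, i.e.\ that endo-equivalence is insensitive to the change of realization and of $D$-dimension. This is not a formal consequence of transitivity on a fixed space; it is the known simple case of the theorem, which the paper invokes directly (via \cite[1.11]{broussousSecherreStevens:12} together with Proposition~\ref{propTranstivityOfIntertwiningSimpleCharacters}). If you replace your chain by an appeal to that simple-case result -- your hypothesis gives that the ps-characters $\Theta_i$ and $\Theta'_{\zeta(i)}$ attached to the blocks are endo-equivalent, and the simple case then yields intertwining of $\Theta(\Delta)_i$ and $\Theta'(\Delta')_{\zeta(i)}$ for every $\zeta$-comparison pair -- the rest of your argument (canonicity of $\zeta$ by uniqueness of the matching, block-wise reduction, reassembly) goes through and coincides with the paper's proof.
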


By Proposition~\ref{propMatchingSameDegreeAndSameGroupLevel} the map~$\zeta$ is uniquely determined and we call it the~\emph{matching} from~$\Theta$ to~$\Theta'$. 
From Theorem~\ref{thmEndo} follows also that endo-equivalence is an equivalence relation, and we call the equivalence classes~\emph{endo-classes}. 
%

\begin{proof}
 This theorem is known for ps-characters by~\cite[Theorem~1.11]{broussousSecherreStevens:12} and Proposition~\ref{propTranstivityOfIntertwiningSimpleCharacters}. Suppose the pss-characters~$\Theta'_{\mf{E}'}$ and~$\Theta_{\mf{E}}$ are endo-equivalent,
 then there are semisimple strata~$\Delta\in\mf{E}$ and~$\Delta'\in\mf{E}'$ such that~$\Theta(\Delta)$ and~$\Theta'(\Delta')$ intertwine. Then there is a bijection~$\zeta$ from~$\ImfE$ 
 to~$\ImfEP$ such that~$\Theta(\Delta)_i$ and~$\Theta'(\Delta')_{\zeta(i)}$ intertwine. Thus, for every~$i\in \ImfE$, the ps-characters~$\Theta_i$ and~$\Theta'_{\zeta(i)}$
 corresponding to~$\Theta(\Delta)_i$ and~$\Theta'(\Delta')_{\zeta(i)}$ are endo-equivalent and thus the case of ps-characters together with Iwahori decompositions finishes the proof.  
\end{proof}

In the next section we will give the notion of endo-equivalence for certain semisimple characters.

\begin{definition}\label{defEndoEqChar}
\begin{enumerate}
 \item A semisimple stratum~$\Delta$ is called~\emph{full} if~$r=0$, a semisimple character~$\theta$ is called~\emph{full} if there is a full semisimple stratum~$\Delta$ such that~$\theta\in\C(\Delta)$, and a pss-character is called~\emph{full} if there is a full semisimple stratum in its domain.
An endo-class of a full pss-character is called~\emph{full}.
\item An endo-class of a ps-character is also called a~\emph{simple} endo-class and we denote the set of all full simple endo-classes by~$\mc{E}$.
\item Two full semisimple characters~$\theta$ and~$\theta'$ are called endo-equivalent if there 
are full semisimple strata~$\Delta$ and~$\Delta'$ 
such that~$\theta\in\C(\Delta)$ and~$\theta'\in\C(\Delta')$ and  such that the pss-characters~$\Theta$ and~$\Theta'$ with~$\Theta(\Delta)=\theta$ and~$\Theta'(\Delta')=\theta'$ are endo-equivalent. 
We call the equivalence class of~$\theta$ the endo-class of~$\theta$ and denote it by~$\mc{E}(\theta)$.
\end{enumerate}
\end{definition}
The set of endo-classes of full semisimple characters is in canonical bijection to the set of endo-classes of full pss-characters.

\section{Endo-parameters}\label{secEndoPar}

We classify the intertwining classes of full semisimple characters of~$G$ using full simple  endo-classes and numerical invariants. This data is called an endo-parameter. 


\begin{definition}
\begin{enumerate}
 \item An~\emph{endo-parameter} is a map~$f$ from the set~$\mathcal{E}$ of endo-classes of full ps-characters to the set of non-negative integers with finite support. 
 \item We define the~\emph{degree}~$\deg(f)$ of an endo-parameter~$f$ as the number
 \[\sum_{[\Theta]\in\mc{E}}f([\Theta])\deg(\Theta).\]
 \item An endo-parameter~$f$ is called an 
 ~\emph{endo-parameter for~$G$} if
 \begin{enumerate}
  \item the degree of~$f$ is equal to the degree of~$A$, i.e.~$\dim_DV \deg(D)$, 
  \item and~$f([\Theta])$ is divisible by~$\frac{\deg(D)}{\gcd(\deg(\Theta),\deg(D))}$.
 \end{enumerate}
 \end{enumerate}
\end{definition}

The second main theorem of this article is 

\begin{theorem}[\textbf{2nd Main Theorem}]\label{thmEndoParameter}
 There is a canonical bijection from the set of intertwining classes of full semisimple characters of~$G$ to the set of endo-parameters for~$G$. 
 The map has the following form: The intertwining class of a full semisimple character~$\theta$ is mapped to the endo-parameter~$f_\theta$
 which is supported on the set of the endo-classes corresponding to the~$\theta_i$. The value of~$f_\theta$ at~$\mathcal{E}(\theta_i)$ is
 defined to be the degree of~$\End_{E_i\otimes_FD}(V^i)$ over~$E_i$, i.e. it is the square root of the~$E_i$-dimension of~$\End_{E_i\otimes_FD}(V^i)$. 
\end{theorem}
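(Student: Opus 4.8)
The plan is to verify separately that the assignment $\theta\mapsto f_\theta$ of the statement (i) is constant on intertwining classes and lands among endo-parameters for $G$, (ii) is injective on intertwining classes, and (iii) is surjective; canonicity is then immediate from the explicit formula for the map. I fix notation once. A full semisimple character $\theta\in\C(\Delta)$ decomposes through its block restrictions $\theta_i\in\C(\Delta_i)$, with $\Delta_i$ simple on $V^i$ and $E_i=F[\beta_i]$. Since $\Delta$ is semisimple, no sum $\Delta_{i_1}\oplus\Delta_{i_2}$ with $i_1\neq i_2$ is equivalent to a simple stratum, so by Definitions~\ref{defSemisimpleStratum} and~\ref{defEndoEquivalentStrata} the $\theta_i$ carry pairwise distinct simple endo-classes; hence $f_\theta$ is unambiguously prescribed block by block, and the map $i\mapsto\mathcal{Endo}(\theta_i)$ is a bijection of $I$ onto $\supp(f_\theta)$. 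Writing $B_i=\End_{E_i\otimes_F D}(V^i)=C_{\End_D(V^i)}(E_i)$, the value $f_\theta(\mathcal{Endo}(\theta_i))$ is $\deg(B_i/E_i)$, and $B_i$ is Brauer-equivalent over $E_i$ to $(E_i\otimes_F D)^{\mathrm{op}}$, whose index over $E_i$ is $\deg D/\gcd(\deg D,[E_i:F])$ by the local invariant formula, while $\deg(B_i/E_i)=\dim_D V^i\cdot\deg D/[E_i:F]$ by the double centralizer theorem.

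For (i): if $\theta$ and $\theta'$ intertwine, the Matching Theorem~\ref{thmMatching} furnishes a bijection $\zeta\colon I\to I'$ with $\dim_D V^i=\dim_D V'^{\zeta(i)}$ and $D$-isomorphisms $g_i$ such that $g_i.\theta_i$ intertwines $\theta'_{\zeta(i)}$ inside $\GL_D(V'^{\zeta(i)})$. Conjugation by a $D$-isomorphism does not change the endo-class, and two intertwining simple characters on a fixed space lie in one simple endo-class (Theorem~\ref{thmEndo}), so $\mathcal{Endo}(\theta_i)=\mathcal{Endo}(\theta'_{\zeta(i)})$; Corollary~\ref{corEqualDegreesFromIntertwiningSemisimpleCharactersSamer} gives $[E_i:F]=[E'_{\zeta(i)}:F]$, whence $\deg(B_i/E_i)=\deg(B'_{\zeta(i)}/E'_{\zeta(i)})$ and $f_\theta=f_{\theta'}$. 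That $f_\theta$ is an endo-parameter for $G$ is a bookkeeping computation: $\deg f_\theta=\sum_i\deg(B_i/E_i)\,[E_i:F]=\deg D\sum_i\dim_D V^i=\dim_D V\cdot\deg D$; and $\deg(B_i/E_i)$, being the degree of a matrix algebra over the division algebra underlying $E_i\otimes_F D$, is a multiple of the index $\deg D/\gcd(\deg D,[E_i:F])$.

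For (ii): suppose $f_\theta=f_{\theta'}$ for full semisimple characters $\theta,\theta'$ of $G$. Identifying $\supp(f_\theta)=\supp(f_{\theta'})$ through the bijections $i\mapsto\mathcal{Endo}(\theta_i)$ and $i'\mapsto\mathcal{Endo}(\theta'_{i'})$ yields a bijection $\zeta\colon I\to I'$ with $\mathcal{Endo}(\theta_i)=\mathcal{Endo}(\theta'_{\zeta(i)})$ and $\deg(B_i/E_i)=\deg(B'_{\zeta(i)}/E'_{\zeta(i)})$; since endo-equivalent simple characters have equal $[E:F]$, this forces $\dim_D V^i=\dim_D V'^{\zeta(i)}$. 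Thus, writing $\Theta_\mf{E}$ and $\Theta'_{\mf{E}'}$ for the pss-characters of $\theta$ and $\theta'$, the pair $(\Delta,\Delta')$ is a $\zeta$-comparison pair; by Theorem~\ref{thmEndo} it is enough to know that $\Theta_\mf{E}$ and $\Theta'_{\mf{E}'}$ are endo-equivalent, for then the same theorem gives that $\theta=\Theta(\Delta)$ and $\theta'=\Theta'(\Delta')$ intertwine. Endo-equivalence of the two semisimple pss-characters I would in turn deduce from the blockwise endo-equivalences $\Theta_i\sim\Theta'_{\zeta(i)}$ (they share a simple endo-class) by assembling, through the surjectivity of external restriction maps (Theorem~\ref{thmSurjRestrictions}, Lemma~\ref{lemTriangleToSquareExtension}) and the transfer formalism of Section~\ref{secEndoClass}, a pair of semisimple strata on a common space carrying the blockwise generalized transfers of the $\Theta_i$ and $\Theta'_{\zeta(i)}$ as intertwining semisimple characters — the synthesis counterpart of the easy reduction in the proof of Theorem~\ref{thmEndo}. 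For (iii), given an endo-parameter $f$ for $G$, for each $c\in\supp(f)$ I would fix a ps-character $\Theta_c$ in $c$; the divisibility hypothesis makes $n_c:=f(c)\deg(c)/\deg D$ a positive integer, and the embedding theory of Section~\ref{secLattices} (Theorem~\ref{thmBroussousLemaire1punkt1}, Corollary~\ref{corconsructingEmbeddingsFromNumericalData}) together with the transfer between Brauer classes of Definition~\ref{defGeneralizedTransferDifferentDandD'} produces a full simple stratum $\Delta_c$ on a right $D$-space $V^c$ of $D$-dimension $n_c$ with $\theta_c:=\Theta_c(\Delta_c)$ a full simple character of $\GL_D(V^c)$ of endo-class $c$. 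After rescaling to a common $D$-period, $\Delta:=\bigoplus_c\Delta_c$ on $V:=\bigoplus_c V^c$ is semisimple (distinct block endo-classes), and $\dim_D V=\sum_c n_c=\deg f/\deg D$ coincides with the $D$-dimension of the module of the given $G$; gluing the $\theta_c$ via Theorem~\ref{thmSurjRestrictions}, Lemma~\ref{lemCDeltaPsia} and Corollary~\ref{corDeltakNonTrivialIntersec} gives $\theta\in\C(\Delta)$ with block restrictions $\theta_c$, and by construction $f_\theta=f$.

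The step I expect to be the real obstacle is this synthesis, used in both (ii) and (iii): promoting blockwise data — a family of simple endo-classes with prescribed $D$-dimensions, realized block by block — to a single semisimple character on the direct-sum space, equivalently showing that the restriction $\C(\Delta)\to\prod_i\C(\Delta_i)$ is onto for a semisimple $\Delta$ whose blocks are pairwise non-endo-equivalent, and that the corresponding gluing holds at the level of pss-characters. Everything else — the matching theorem, the numerical identities via the double centralizer theorem and local invariants, and the fact (Corollary~\ref{corIntertwiningIsEquivalenceRelation}) that intertwining is an equivalence relation among full semisimple characters of $G$ — is then routine.
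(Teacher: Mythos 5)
Your part (i) and the numerical bookkeeping (degree and divisibility of $f_\theta$) are fine and run along the paper's lines. The genuine gap is the synthesis step you yourself single out, and the way you propose to handle it would not work. In (iii) you assert that $\Delta:=\bigoplus_c\Delta_c$ is semisimple ``because of distinct block endo-classes'' and that the $\theta_c$ can then be glued into $\C(\Delta)$ via Theorem~\ref{thmSurjRestrictions}, Lemma~\ref{lemCDeltaPsia} and Corollary~\ref{corDeltakNonTrivialIntersec}. Neither claim holds as stated. An endo-class records the character, not merely the stratum: two non-intertwining full simple characters can be attached to the very same simple stratum, so distinctness of the classes $c$ does not prevent two of your chosen strata $\Delta_c$ from having a direct sum equivalent to a simple stratum, and $\bigoplus_c\Delta_c$ need not be semisimple. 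Moreover, even when the direct sum is semisimple, the restriction map $\C(\Delta)\to\prod_i\C(\Delta_i)$ is not onto in general: by condition (c) of Definition~\ref{defSemisimpleCharSpliCase}, the restriction of $\theta$ to $H^{1+r_0}(\gamma,\Lambda)$ must be $\psi_{\beta-\gamma}\theta_0$ for a single $\theta_0\in\C(\Lambda,r_0,\gamma)$, and the associated splitting of the approximation $\gamma$ may merge several blocks, which forces the deep parts of the $\theta_i$ on merged blocks to be transfers of one another; an arbitrary tuple $(\theta_i)_i$ violates this, and Theorem~\ref{thmSurjRestrictions} only gives surjectivity onto a single block, not joint surjectivity. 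The statement that repairs exactly this is Proposition~\ref{propAddingSemisimpleCharacters}: it glues two semisimple characters at the cost of replacing the stratum on one side by another one having only the same set of characters, and the paper first performs this gluing and only afterwards builds the lattice sequence for $G$ and transfers, the semisimplicity of the resulting stratum $\Delta_f$ being inherited from the stratum produced by that proposition via Corollary~\ref{corEndoEquivalenceForSimpleStrata}\ref{corEndoEquivalenceForSimpleStrataAss.ii}. Your proposal never invokes this proposition, and without it the surjectivity construction does not go through.

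The same missing synthesis also undermines your route for (ii): you want to apply Theorem~\ref{thmEndo} to the two semisimple pss-characters, which requires first proving them endo-equivalent, i.e.\ exhibiting semisimple strata in the two endo-classes whose values intertwine --- and that is again the unproved gluing. The detour is unnecessary: as in the paper, once $\zeta$ and the equalities $\dim_DV^i=\dim_DV'^{\zeta(i)}$ are in hand, one applies Theorem~\ref{thmEndo} blockwise to the ps-characters attached to $\theta_i$ and $\theta'_{\zeta(i)}$ (they lie in the same simple endo-class and $(g_i.\Delta_i,\Delta'_{\zeta(i)})$ is a comparison pair), obtaining blockwise intertwining, and then assembles a global intertwiner using the Iwahori decomposition of $H(\Delta)$ and the triviality of $\theta$ on the unipotent parts (Proposition~\ref{propIwahoriSt05no5.5}). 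So your (i) stands, but both (ii) and (iii) as written rest on gluing blockwise data onto a fixed direct-sum stratum, which fails in general; the correct substitute, Proposition~\ref{propAddingSemisimpleCharacters}, is absent from your argument.
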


The proof needs a proposition which shows that one can add up any two given semisimple characters to obtain a new semisimple character. 

\begin{proposition}[{\cite[Lemma 12.7]{KSS:21}}]\label{propAddingSemisimpleCharacters}
 Let~$\theta\in\C(\Delta)$ and~$\theta'\in\C(\Delta')$ be two semisimple characters with~$\Delta$ and~$\Delta'$ of the same period and~$r=r'$.  
 Then there is a semisimple stratum~$\Delta''$ split by~$V\oplus V'$, such that~$\Delta''|_V=\Delta$ (after lowering the second entry from~$n''$ to~$n$),~$\C(\Delta''|_{V'})=\C(\Delta')$
 and such that~$\theta\otimes\theta'$ is an element of~$\C(\Delta'')$.
\end{proposition}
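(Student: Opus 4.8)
I would argue by induction on $N:=\max(n,n')-r$, running the argument in $\End_D(V'')$ directly and feeding in the $\End_D(V)$-versions of the tools already set up above rather than quoting the split case as a black box. Fix once and for all the common lattice sequence $\Lambda'':=\Lambda\oplus\Lambda'$ on $V'':=V\oplus V'$, which is a legitimate $o_D$-lattice sequence since $\Lambda$ and $\Lambda'$ have the same $o_D$-period. For $N=0$ both $\Delta$ and $\Delta'$ are zero-strata, so $\theta$ and $\theta'$ are trivial; one takes $\Delta''=[\Lambda'',\max(n,n'),\max(n,n'),0]$, a zero semisimple stratum split by $V\oplus V'$ with $\Delta''|_V=\Delta$, $\C(\Delta''|_{V'})=\C(\Delta')$, and $\theta\otimes\theta'$ the trivial character of $H(\Delta'')=P(\Lambda'')$, hence in $\C(\Delta'')$.

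For the inductive step ($N>0$, so not both strata zero), choose members $\Delta(1)$ and $\Delta'(1)$ of defining sequences for $\Delta$ and $\Delta'$ (Definition \ref{defDefiningSequence}): these are semisimple with entries $\gamma$, $\gamma'$, equivalent to $\Delta(1+)$, $\Delta'(1+)$, carry the lattice sequences $\Lambda$, $\Lambda'$ and the integers $n$, $n'$, and have $r+1=r'+1$, so the induction parameter has dropped by one. By the induction hypothesis applied to $\Delta(1)$ and $\Delta'(1)$ there is a semisimple stratum $\tilde\Delta$ on $V''$, split by $V\oplus V'$, with $\tilde\Delta|_V=\Delta(1)$, $\C(\tilde\Delta|_{V'})=\C(\Delta'(1))$, and $\theta(1+)\otimes\theta'(1+)\in\C(\tilde\Delta)$; write $\tilde\gamma$ for its entry. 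Next I would lift $\tilde\Delta$ from level $r+1$ to level $r$. Using Definition \ref{defDerivedChar} write $\theta=\psi_{\beta-\gamma+c}\,\theta_0$ with $\theta_0\in\C(\Delta(1)(1-))$ extending $\theta(1+)$ and $c\in\mf a_{\Lambda,-r-1}$, and $\theta'=\psi_{\beta'-\gamma'+c'}\,\theta'_0$ similarly; since $\C(\tilde\Delta|_{V'})=\C(\Delta'(1))$ one transports $c'$ to a correction relative to $\tilde\gamma$ on the $V'$-block. Applying the translation principle (Theorem \ref{thmTranslation}) on $V''$, together with the derived-stratum criterion (Theorem \ref{thmDerived}) and Lemma \ref{lemSimpletameCorConjSplitting}, I obtain a semisimple stratum $\Delta''$ on $V''$, split by $V\oplus V'$, with $\Delta''(1+)$ equivalent to $\tilde\Delta$, $\Delta''|_V=\Delta$ and $\C(\Delta''|_{V'})=\C(\Delta')$. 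Finally $H(\Delta'')$ is determined by the block data (Lemma \ref{lemSemisimplecharacterDeterminesTheGroupOneLevelLower}), so $\theta\otimes\theta'$ is a well-defined character of $H(\Delta'')$ whose restriction to $H(\Delta''(1+))$ is $\theta(1+)\otimes\theta'(1+)\in\C(\Delta''(1+))$ and whose block restrictions are $\theta$ and $\theta'$; Lemma \ref{lemTriangleToSquareExtension} produces a semisimple character in $\C(\Delta'')$ with these restrictions, and Corollary \ref{corDeltakNonTrivialIntersec} identifies it with $\theta\otimes\theta'$, completing the induction.

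As an alternative one can reduce directly to the split case: extend $\theta$, $\theta'$ to $\theta_L\in\C(\Delta\otimes L)$, $\theta'_L\in\C(\Delta'\otimes L)$ by Proposition \ref{propExtensionFromDeltaToDeltaotimesL}, apply \cite[13.5]{kurinczukSkodlerackStevens:16} over the split algebra $\End_D(V'')\otimes_F L$ to obtain a semisimple stratum $\Sigma$ with $\Sigma|_{V\otimes L}=\Delta\otimes L$, $\C(\Sigma|_{V'\otimes L})=\C(\Delta'\otimes L)$ and $\theta_L\otimes\theta'_L\in\C(\Sigma)$, and then descend by replacing $\Sigma$ with a $\Gal(L|F)$-stable semisimple stratum having the same restriction to $V\otimes L$ and the same set of semisimple characters on $V'\otimes L$, after which its fixed algebra gives the required $\Delta''$ over $D$; the cohomological bookkeeping here is exactly that used in the proofs of Propositions \ref{propDefSemisimpleCharInductiveNonSplit} and \ref{propDdag} (triviality of the relevant $H^1(\Gal(L|F),-)$).

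The main obstacle, common to both routes, is the passage from level $r+1$ to level $r$: one must simultaneously realise the lift $\psi_c$ on the $V$-block \emph{and} the lift prescribed by $\C(\Delta')$ on the $V'$-block as coming from a single semisimple stratum $\Delta''$ on $V''$, while keeping $\Delta''|_V$ literally equal to $\Delta$. Concretely this is the step where one must know that the derived stratum $\partial_{\tilde\gamma}$ of the candidate lift is again equivalent to a semisimple stratum on $V''$; in the split-case route the same difficulty is disguised as the failure of the stratum produced by \cite[13.5]{kurinczukSkodlerackStevens:16} to be $\Gal(L|F)$-equivariant, and the need to conjugate it to a Galois-stable one without disturbing $\Sigma|_{V\otimes L}=\Delta\otimes L$.
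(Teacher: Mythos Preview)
Your first route is precisely the paper's approach: the paper's proof is the single sentence ``literally the same as in \cite[13.5]{kurinczukSkodlerackStevens:16}, substituting Theorem~\ref{thmTranslation} for the translation principle and Lemma~\ref{lemTriangleToSquareExtension} for \cite[13.4]{kurinczukSkodlerackStevens:16}'', and your inductive outline is exactly a spelling-out of that argument over~$D$, citing those same two tools together with the derived-stratum criterion.

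One small point in your final step deserves more care. Lemma~\ref{lemTriangleToSquareExtension} only pins down the restriction to \emph{one} block~$\Delta_0$ together with the restriction to~$H(\Delta''(1+))$; it does not force the other block restriction to be~$\theta'$. So the character~$\vartheta\in\C(\Delta'')$ it produces satisfies~$\vartheta|_V=\theta$ and~$\vartheta(1+)=\theta(1+)\otimes\theta'(1+)$, whence~$\vartheta=\theta\otimes\vartheta'$ for some~$\vartheta'\in\C(\Delta')$ extending~$\theta'(1+)$, but a priori~$\vartheta'\neq\theta'$. Corollary~\ref{corDeltakNonTrivialIntersec} does not close this gap: it compares two sets~$\C(\Delta)$,~$\C(\Delta')$ given matching block restrictions, whereas here you need to place the specific character~$\theta\otimes\theta'$ in~$\C(\Delta'')$. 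The fix is the usual one from the split proof: write~$\theta'=\vartheta'\psi_{a'}$ with~$a'\in\mf a'_{-r-1}$, set~$a=0\oplus a'$, and replace~$\Delta''$ by the stratum obtained via Lemma~\ref{lemCDeltaPsia} (or equivalently rerun the construction of~$\Delta''$ so that the derived stratum on the~$V'$-block matches~$s'(\beta'-\gamma'+c')$ rather than~$s'(\beta'-\gamma')$); this keeps~$\Delta''|_V=\Delta$ and~$\C(\Delta''|_{V'})=\C(\Delta')$ while yielding~$\theta\otimes\theta'\in\C(\Delta'')$.

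Your alternative base-change route is not what the paper does, and the obstacle you flag there (producing a $\Gal(L|F)$-stable~$\Sigma$ with~$\Sigma|_{V\otimes L}=\Delta\otimes L$ on the nose) is genuine and would require additional work; the direct induction over~$D$ avoids it entirely.
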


\begin{proof}
 The proof is literally the same as in \loccit. For the translation principle one takes Theorem~\ref{thmTranslation} and 
 instead of Lemma~\cite[Lemma 12.4]{KSS:21} we take Lemma~\ref{lemTriangleToSquareExtension}. For the proof of the modification of~\cite[Lemma~12.6]{KSS:21} we use Corollary~\ref{corconsructingEmbeddingsFromNumericalData}. 
\end{proof}

\newcommand{\zetathetathetaP}{\zeta_{\theta,\theta'}}

\begin{proof}[Proof of Theorem~\ref{thmEndoParameter}]
We have to show that~$f_\theta$ and~$f_{\theta'}$ coincide for intertwining full semisimple characters~$\theta$ and~$\theta'$. The characters~$\theta_i$ and~$\theta'_{\zeta_{\theta,\theta'}(i)}$
intertwine by a~$D$-linear isomorphism~$g_i$ from~$V^i$ to~$V^{\zetathetathetaP(i)}$. Thus~$\theta_i$ and~$\theta_{\zetathetathetaP(i)}$
define the same endo-class and therefore~$f_\theta$ and~$f_{\theta'}$ have the same support. The vector spaces~$V^i$ and~$V^{\zetathetathetaP(i)}$ have the same dimension 
over~$D$, and~$E_i$ and~$E'_{\zetathetathetaP(i)}$ have the same degree over~$F$ by Corollary~\ref{propMatchingSameDegreeAndSameGroupLevel}. Thus~$f_\theta$ and~$f_{\theta'}$ coincide. 
 
 Surjectivity: Let us now take an endo-parameter~$f$ for~$G$. For every simple endo-class~$c$ in the support of~$f$, we pick a full simple character~$\theta_{c}$ which
 occurs in the image of one of the ps-characters in~${c}$. Proposition~\ref{propAddingSemisimpleCharacters} provides a semisimple 
 character~$\theta\in\C(\Delta)$ with restrictions~$\theta_{c}$,~$c\in\supp(f)$. To get a semisimple character for~$G$ we need to find an appropriate lattice 
 sequence. This is done as follows: Let~$M_{c}$ be a minimal~$E_{c}\otimes D$-module. It has dimension~$\frac{\deg(c)\dim_FD}{\gcd(\deg(c),\deg(D))}$ over~$F$. Now take a 
 vertex~$\Gamma_{c}$ in the Bruhat--Tits building of~$C_{\End_D(M_{c})}(E_{c})$ and apply the inverse of~$j_{E_{c}}$ to obtain an~$o_{E_{c}}$-$o_D$-lattice sequence~$\Lambda_{c}$ in~$M_{c}$.
 The vector space~$V$ is  a~$D$-vector space isomorphic to the direct sum of~$M_{c}$s, where~$M_{c}$ occurs with 
 multiplicity~$m_c:=f(c)\frac{\gcd(\deg(c),\deg(D))}{\deg(D)}$. Let~$\Delta_f$  be the direct sum of the simple strata~$[\Lambda_{c},n_{c},0,\beta_{c}]^{\oplus m_c}$ 
 over~$c\in\supp(f)$. The stratum~$\Delta_f$ is semisimple by a small argument using 
 Corollary~\ref{corEndoEquivalenceForSimpleStrata}\ref{corEndoEquivalenceForSimpleStrataAss.ii}, because~$\Delta$ is semisimple. We take the transfer~$\theta_f$ of~$\theta$ from~$\Delta$ 
 to~$\Delta_f$. Then~$f=f_{\theta_f}$ by construction, because
 \[  f_{\theta_f}(c)\deg(c)=\dim_DV^{c}\deg(D)=f(c)\gcd(\deg(c),\deg(D))\dim_DM_{c},\]
 and~$M_{c}$ has dimension~$\frac{\deg(c)}{\gcd(\deg(c),\deg(D))}$ over~$D$ as a simple~$E_{c}\otimes_F D$-module. Thus~$f_{\theta_f}(c)=f(c)$.

Injectivity: Take two full semisimple characters~$\theta\in\C(\Delta)$ and~$\theta'\in\C(\Delta')$  such that~$f_\theta=f_{\theta'}$. Then there is a bijection~$\zeta:\ I\ra I'$ such that~$\theta_i$ 
and~$\theta_{\zeta(i)}$ define the same endo-class~$[\Theta_i]$. For~$i\in I$ the $D$-dimensions of~$V^i$ and~$V^{\zeta(i)}$ coincide
because~$f_\theta([\Theta_i])=f_{\theta'}([\Theta_i])$. Take an isomorphism~$g_i$ between them.
Thus~$(g_i.\Delta_i,\Delta'_{\zeta(i)})$ is a comparison pair for
the  ps-characters attached to~$\theta_i$ and~$\theta'_{\zeta(i)}$. Thus~$g_i.\theta_i$ and~$\theta'_{\zeta(i)}$ intertwine by~Theorem~\ref{thmEndo}. Thus~$\theta$ 
and~$\theta'$ intertwine, because~$\theta$ and~$\theta'$ decompose under the respective Iwahori decompositions. 
\end{proof}

\def\Circlearrowleft{\ensuremath{%
  \rotatebox[origin=c]{180}{$\circlearrowleft$}}}

\bibliographystyle{plain}
\bibliography{/home/zhou/LaTeX/bib/bibliography}
\end{document}